\theoremstyle{plain}
\newtheorem{thm}{Theorem}[section]
\newtheorem{cor}[thm]{Corollary}
\newtheorem{lem}[thm]{Lemma}
\newtheorem{prop}[thm]{Proposition}
\newtheorem{conjecture}[thm]{Conjecture}
\theoremstyle{definition}
\newtheorem{defi}[thm]{Definition}
\theoremstyle{remark}
\newtheorem{rem}[thm]{Remark}
\numberwithin{equation}{section}
\newcommand{\average}{{\mathchoice {\kern1ex\vcenter{\hrule height.4pt
width 6pt depth0pt} \kern-11pt} {\kern1ex\vcenter{\hrule
height.4pt width 4.3pt depth0pt} \kern-8pt} {} {} }}
\newcommand{\ave}{\average\int}
\def\Z{\mathbb{Z}}
\def\R{\mathbb{R}}
\newcommand{\ep}{\varepsilon}
\newcommand{\eps}{\varepsilon}
\begin{document}

\title[Stable solutions to the fractional Allen-Cahn equation]{Stable solutions to the fractional Allen-Cahn equation in the nonlocal perimeter regime}

\author{Xavier Cabr\'e}
\author{Eleonora Cinti}
\author{Joaquim Serra}

\address{X. Cabr\'e\textsuperscript{\,1,2},
\textsuperscript{1\,}ICREA, Pg. Lluis Companys 23, 08010 Barcelona, Spain, and \linebreak
\textsuperscript{2\,}Universitat Polit\`ecnica de Catalunya, Departament de Matem\`{a}tiques and IMTech, Diagonal 647,
08028 Barcelona, Spain}
\email{xavier.cabre@upc.edu}

\address{E. Cinti, Universit\`a degli Studi di Bologna, Dipartimento di Matematica, Piazza di Porta San Donato 5,
40126 Bologna, Italy}
\email{eleonora.cinti5@unibo.it}

\address{J. Serra, Eidgen\"ossische Technische Hochschule Z\"urich, R\"amistrasse 101, 8092 Zurich, Switzerland}
\email{joaquim.serra@math.ethz.ch}

\thanks{The authors are supported by grants MTM2017-84214-C2-1-P and RED2018-102650-T funded by MCIN/AEI/10.13039/501100011033 and by ``ERDF A way of making Europe'', and
are part of the Catalan research group 2017 SGR 1392. X.C. is member of the Barcelona
Graduate School of Mathematics. E.C. was supported by the Italian grant FFABR 2017. J.S. was supported by Swiss NSF Ambizione Grant PZ00P2 180042 and by the European Research Council (ERC) under the Grant Agreement No 948029.
}

\begin{abstract}
We study stable solutions to the fractional Allen-Cahn equation \linebreak  $(-\Delta)^{s/2} u = u-u^3$, $|u|<1$ in $\R^n$. For every  $s\in (0,1)$ and dimension $n\geq 2$, 
we establish sharp energy estimates, density estimates, and the convergence of blow-downs to stable nonlocal $s$-minimal cones.
As a consequence, we obtain a new classification result:  if for some pair $(n,s)$, { with $n\ge 3$}, hyperplanes  are the only stable nonlocal $s$-minimal cones in $\R^n\setminus\{0\}$, then every stable solution to the fractional Allen-Cahn equation in $\R^n$ is 1D, namely, its level sets are parallel hyperplanes.

Combining this result with the classification of stable $s$-minimal cones in $\R^3\setminus\{0\}$ for $s\sim 1$ obtained by the authors in a recent paper, we give positive answers to the ``stability conjecture'' in $\R^3$ and to the ``De Giorgi  conjecture'' in $\R^4$ for the fractional Allen-Cahn equation when the order $s\in (0,1)$ of the operator is sufficiently close to $1$.

\end{abstract}

\maketitle

\tableofcontents

\section{Introduction}\label{intro}

\subsection{Two classical models in phase transitions vs.\ minimal surfaces}\label{sub-1.1}

The Peierls-Nabarro model was introduced in the early 1940's in the context of crystal dislocations \cite{Peierls, Nabarro} and also arises in  the study of phase transitions with  line-tension effects \cite{ABS} and boundary vortices in thin magnetic films \cite{K}. This model concerns the energy functional
\[
I_\ep(u)  :=   
\frac \ep  4[u]_{H^{1/2}(\R^n)}^2 +\negmedspace\int_{\R^n} W(u)\,dx, \ \quad [u]_{H^{1/2}(\R^n)}^2 := \iint_{\R^n\times \R^n}\negmedspace \frac{\big|u(x)-u(\bar x)\big|^2}{|x-\bar x|^{n+1}}  \,dx \,d\bar x,
\]
where $u: \R^n \rightarrow (-1,1)$ and $W(u):= 1+\cos(\pi u)$. 

The very related Allen-Cahn functional, introduced later, in 1958, within the context of  the Van Der Walls-Cahn-Hilliard theory for phase transitions in fluids \cite{CahnHil}, is also tightly connected to the Ginzburg-Landau theory of superconductivity. It is defined as
\[
J_\ep(u) :=  \frac{\ep^2}{2} [u]_{H^{1}(\R^n)}^2 +\int_{\R^n} W(u)\,dx,   \qquad   [u]_{H^{1}(\R^n)}^2 := \int_{\R^n} |\nabla u|^2\,dx, 
\]
where $u: \R^n \rightarrow (-1,1)$ and $W(u):= \frac 1 4 (1-u^2)^2$. 

In both models $\ep>0$ is a parameter and $W(u)$ is a so-called {\em double-well potential}, namely, a function with two minima (or wells) at the values $u=-1$ and $u= +1$ which correspond to two ``stable phases''. 

Critical points $u\in C^2(\R^n)$ of $I_\ep$ and $J_\ep$ (more precisely, of their localized versions presented later in Subsection~\ref{subsectionstable}) solve,\footnote{Up to modifying the multiplicative constants in the definitions of $[\,\cdot\,]_{H^s}$ and $(-\Delta)^s$ in order to make them consistent.} respectively, the Peierls-Nabarro and Allen-Cahn equations:
\[
\ep(-\Delta)^{1/2}  u + W'(u) = 0 \qquad \mbox{and} \qquad  \ep^2(-\Delta) u + W'(u) = 0.
\]

A deep link between any of the two models and minimal surfaces is found when investigating the asymptotic behaviour of sequences of minimizers of  $I_\ep$ and $J_\ep$ as $\ep\downarrow 0$. Indeed, as a consequence of $\Gamma$-convergence results of Alberti, Bouchitt\'e, and Seppecher  \cite{ABS} and of Modica and Mortola  \cite{MM}, respectively, the following holds:

\vspace{4pt}
 {\it If $u_{\ep_k}:\R^n \rightarrow \R$ is a sequence of minimizers (in every bounded set) of either   $I_{\ep_k}$ or $J_{\ep_k}$
and $\ep_k\downarrow 0$, then (up to a subsequence)
\begin{equation}\label{assymp}
u_{\ep_k} \, \stackrel{L^1_{\rm loc}}{\longrightarrow} \, \chi_{E}-\chi_{\R^n\setminus E}, \qquad  \mbox{where $E\subset \R^n$ is a minimizer of the perimeter.} 
\end{equation}
}In other words, for every $\lambda\in(-1,1)$ the level sets $\{u_\ep=\lambda\}$ converge,  as $\ep\downarrow 0$ and up to a subsequence, to an area minimizing hypersurface (in particular minimal, i.e., with zero mean curvature).

\subsection{The fractional Allen-Cahn energies}\label{sub-1.2}
The classical functionals $I_\ep$ and $J_\ep$ introduced above belong to the more general family of Allen-Cahn energies
\[
E_{s, \ep} (u)  :=  \frac{\ep^s}{4} [u]_{H^{s/2}(\R^n)}^2 +\int_{\R^n} W(u)\,dx , 
\]
where $s\in (0,2]$ and
\[
\qquad [u]_{H^{s/2}(\R^n)}^2 :=  (2-s)\iint_{\R^n\times \R^n} \frac{\big|u(x)-u(\bar x)\big|^2}{|x-\bar x|^{n+s}} \,dx\,d\bar x.
\]
Note that the normalization factor $2-s$ in the definition  of $[u]_{H^{s/2}(\R^n)}^2$ guarantees that
\[[u]_{H^{s/2}(\R^n)}^2 \rightarrow \int_{\R^n} |\nabla u|^2\,dx\qquad \mbox{as $s\uparrow 2$}\]
 (up to a dimensional multiplicative constant).  
The functionals $E_{s,\ep}$  have been extensively studied in the literature; see among others \cite{C-SM, SV-1, C-Si1, CSi, CC2, S-new} and references therein.
Note that  the classical functionals $I_{\ep}$ and $J_\ep$ correspond to the cases $s=1$ and $s=2$ of $E_{s,\ep}$. 

Critical points $u: \R^n \to (-1,1)$ of $E_{s,\ep}$ (in fact, of the natural localized version of $E_{s,\ep}$ given in Subsection~\ref{subsectionstable})  satisfy the fractional Allen-Cahn equation 
\begin{equation}\label{fract}
\ep^s(-\Delta)^{s/2} u + W'(u) =0, \quad  |u|\le 1,
\end{equation} 
in $\R^n$ (up to a positive multiplicative constant in front of $(-\Delta)^{s/2} u$).
Similarly to the cases $s=1$ and $s=2$,  for every $s\in [1,2]$ the energies  $E_{s, \ep}$  (suitably localized and renormalized) $\Gamma$-converge to the classical perimeter as $\ep\downarrow 0$ ---see \cite{SV-gamma}. 
As a consequence,  if $u_{\ep_k}$ is a sequence of minimizers (in every bounded set) of  $E_{s,\ep_k}$ with $s\in [1,2]$, then   \eqref{assymp} holds (up to a subsequence).
Interestingly, a new qualitative behaviour is found for  $s\in(0,1)$, where the asymptotic analysis of $E_{s, \ep}$ as $\ep \downarrow 0$ leads to the so-called $s$-{\em minimal surfaces} (or {\em nonlocal $s$-minimal surfaces}), introduced by Caffarelli, Roquejoffre, and Savin in \cite{CRS}.  These new geometric objects generalize classical minimal surfaces (which are recovered as a limit case when $s\uparrow 1$) and share with them several structural properties; see \cite{CRS, FV, CSV, CCS} for the precise definition and several results on $s$-minimal surfaces.

\subsection{Bernstein's and De Giorgi's conjectures} \label{sub-1.3}

Let us recall the classical Bernstein's problem: 

{\it If the graph of a function defined in all of $\R^{n-1}$ is a minimal surface in $\R^n$, is the function necessarily linear? }

This question has deep connections with the regularity theory for the (co-dimen-\-sion one) Plateau problem in any dimension and, more precisely, with the optimal dimensional bounds on the singular set of minimal surfaces ---as shown by De Giorgi in \cite{DG-Bern}. Moreover, its study contributed to striking developments in the theory of minimal surfaces. The question was completely answered in the late 1960's in a series of groundbreaking works, which established that: 
\begin{enumerate}
\item[(i)]  Hyperplanes are the only graphs of functions defined on $\R^{n-1}$ which are minimal surfaces in $\R^n$ as long as $n\le 8$ (Bernstein \cite{Bern}, Fleming \cite{Fle},  De Giorgi \cite{DG1, DG-Bern}, Almgren \cite{Alm}, and Simons \cite{Simons}). 
\item[(ii)] There exist entire minimal graphs in $\R^n$ which are not hyperplanes in dimensions $n\ge 9$ (Bombieri, De Giorgi, and Giusti \cite{BDG}).
\end{enumerate}

Bernstein's problem finds a counterpart for certain ``critical points'' of $J_\ep$ in a famous conjecture by De Giorgi (1978):
\begin{conjecture}[\cite{DG}]\label{DGconj}{\it 
Let $u\in C^2(\R^{n})$, $|u|<1$, be a solution of $-\Delta u=u-u^3$ in~$\R^n$ satisfying $\partial_{x_{n}} u >0$. 
Then, if $n\le 8$,  $u$ must be 1D, that is, all its level sets $\{u=\lambda\}$ must be  hyperplanes.}
\end{conjecture}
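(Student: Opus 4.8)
\medskip
\noindent\textbf{Proof strategy (sketch).}
The plan is to combine the stability inherited from monotonicity with sharp energy estimates and a blow-down analysis, so as to reduce matters to the classification of stable minimal cones. I would first extract stability from monotonicity: differentiating $-\Delta u=u-u^3$ in $x_n$ shows that $v:=\partial_{x_n}u>0$ solves the linearized equation $-\Delta v=(1-3u^2)v$, and the existence of a positive solution of this equation forces, by the classical ground-state argument (write an arbitrary test function as $v\psi$ and integrate by parts), the second variation $Q_u(\xi):=\int_{\R^n}\bigl(|\nabla\xi|^2-(1-3u^2)\xi^2\bigr)\,dx$ to be $\ge 0$ for all $\xi\in C^\infty_c(\R^n)$; hence $u$ is stable.

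Next I would establish the \emph{sharp energy estimate} $E(u,B_R):=\int_{B_R}\bigl(\tfrac12|\nabla u|^2+W(u)\bigr)\,dx\le C\,R^{\,n-1}$; for monotone $u$ this is the Ambrosio--Cabr\'e comparison/calibration argument. With this bound in hand I would run the blow-down analysis: for $u_R(x):=u(Rx)$ the energy estimate gives compactness, the $\Gamma$-convergence of the rescaled Allen--Cahn functional to the perimeter identifies the $L^1_{\rm loc}$ limit with $\chi_E-\chi_{\R^n\setminus E}$ for some set $E$ of locally minimal perimeter, the monotonicity formula upgrades the blow-down to a \emph{cone}, and both stability and (since $u$ is monotone) the graph property of the level sets pass to the limit. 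This is the classical-operator counterpart of the ``convergence of blow-downs to stable nonlocal $s$-minimal cones'' announced in the abstract; the convergence of level sets in the Hausdorff, and not merely $L^1_{\rm loc}$, sense is what requires the density estimates.

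The next step would be to classify and conclude. For $n\le 7$, Simons' theorem says that the only stable minimal hypercones in $\R^n$ are hyperplanes, so the blow-down is flat; for $n=8$ the Simons cone is stable, but the graph structure inherited from $\partial_{x_n}u>0$ makes the blow-down a minimal \emph{graph} over $\R^7$, hence a hyperplane by Bernstein's theorem (valid exactly up to $\R^7$) --- this is the source of the extra dimension over the ``stability conjecture''. Finally I would upgrade flatness of one blow-down to one-dimensionality of $u$ itself through Savin's improvement-of-flatness theorem for minimizers of the Allen--Cahn energy (into which the density estimates enter decisively): once a level set is flat at one scale, it stays flat at all scales, so $u$ is $1$D. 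In dimensions $n=2,3$ the whole chain shortcuts via the Sternberg--Zumbrun geometric stability inequality: testing $Q_u$ against $|\nabla u|\,\varphi_R$ gives $\int\bigl(|A|^2|\nabla u|^2+\bigl|\nabla_T|\nabla u|\bigr|^2\bigr)\varphi_R^2\,dx\le\int|\nabla u|^2|\nabla\varphi_R|^2\,dx$, and with a logarithmic cutoff the right-hand side is $O\bigl(1/\log R\bigr)\to 0$ precisely because $E(u,B_R)\le C\,R^{\,n-1}$ with $n\le 3$; hence the level sets are totally geodesic and $u$ is $1$D.

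The main obstacle will be the range $4\le n\le 8$, where the difficulty is genuine: del Pino--Kowalczyk--Wei constructed, for every $n\ge 9$, a monotone solution whose level sets are not hyperplanes (modeled on the Bombieri--De Giorgi--Giusti minimal graph over $\R^8$), so any argument must break down past $n=8$. The dimension bound has to be used in the two delicate places that I expect to be the real bottlenecks: (a) the classification of stable minimal cones, available only for $n\le 7$ --- at $n=8$ one must instead exploit the graph structure together with Bernstein, which is why monotonicity, rather than mere stability, is needed; and (b) the passage from ``the blow-down is a hyperplane'' to ``$u$ is $1$D'', where layered solutions straddling a singular minimal graph have to be excluded, and where Savin's argument currently requires the extra hypothesis $\lim_{x_n\to\pm\infty}u(x',x_n)=\pm 1$ (via the fact that a monotone solution with this uniform limit is a local minimizer, to which the improvement-of-flatness theorem applies). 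Removing that hypothesis for all $4\le n\le 8$ is exactly what keeps the conjecture open.
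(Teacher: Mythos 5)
This statement is an open conjecture, not a theorem of the paper: it is attributed to De Giorgi's 1978 paper and is presented in this article purely as a conjecture, with the paper's own contributions directed at its fractional analogues (Conjectures~\ref{DGconj2b} and \ref{DGconj3}) and with Conjecture~\ref{DGconj} itself remaining unproved for $4\le n\le 8$. You correctly recognize this --- your last paragraph explicitly concedes that the chain does not close in precisely the range where the conjecture is open --- so your text is an accurate roadmap through the known partial results (stability from monotonicity, the $CR^{n-1}$ energy bound, blow-down to a stable minimizing cone, Simons' classification for $n\le 7$, Bernstein for $n=8$, Savin's improvement of flatness under $\lim_{x_n\to\pm\infty}u=\pm1$, and the Sternberg--Zumbrun/Liouville shortcut for $n\le 3$) rather than a proof, and there is nothing in the paper to compare it against.

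One technical point worth flagging even within your sketch: the claim that ``the graph property of the level sets passes to the limit'' in the blow-down is more delicate than the corresponding claim for stability. Stability does pass to the limit (this is exactly the mechanism used in Section~\ref{sec-6} of the paper in the fractional setting), but the blow-down of a family of graphs $\{x_n=\phi_\lambda(x')\}$ can degenerate into a vertical cylinder $C\times\R$ with $C\subset\R^{n-1}$ a nontrivial cone, at which point the Bernstein theorem for graphs over $\R^{n-1}$ no longer applies. This is why the $n=8$ step in De Giorgi's heuristic is not rigorous without the uniform-limit hypothesis, and it is another facet of the same obstruction you already name in (b): monotonicity alone does not yield a minimizer, and the graph structure alone does not survive the blow-down in a usable form. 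So the real gap is not an oversight in your plan but the one you identify --- the conjecture genuinely remains open for $4\le n\le 8$ without the additional assumption $\lim_{x_n\to\pm\infty}u=\pm1$.
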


Conjecture \ref{DGconj} was first proved, about twenty years after it was raised, in dimensions  $n=2$ and $n=3$, by Ghoussoub and Gui \cite{GG} and  Ambrosio and Cabr\'e \cite{AmbrC}, respectively.
Almost ten years later, in the celebrated paper  \cite{Savin}, Savin adressed the conjecture in the dimensions $4\le n\le 8$, and he succeeded in proving it under the additional assumption
\begin{equation}\label{limits}
 \lim_{x_{n}\to \pm \infty} u = \pm 1.
\end{equation}
Short after, Del Pino, Kowalczyk, and Wei \cite{dPKW} established the existence of a counterexample to the conjecture in dimensions  $n\ge 9$.

Since for $s\in [1,2)$ the functionals $J_\ep$ and $E_{s,\ep}$ have the identical asymptotic behaviour \eqref{assymp},  there are no heuristic reasons to prefer $J_\ep$ to $E_{s,\ep}$ when $s\in [1,2)$, in the statement of the De Giorgi conjecture. This motivates
\begin{conjecture}\label{DGconj2}
Conjecture \ref{DGconj}  also holds for  $(-\Delta)^{s/2} u=u-u^3$ when $s\in [1,2)$. 
\end{conjecture}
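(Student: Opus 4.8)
We outline a strategy towards Conjecture~\ref{DGconj2}; it parallels the resolution of the classical De Giorgi conjecture and crucially exploits the fact --- recalled above via \cite{SV-gamma} --- that for $s\in[1,2)$ the rescaled Allen--Cahn energies $\Gamma$-converge to the \emph{classical} perimeter (with a logarithmic renormalization precisely at $s=1$). Consequently, and in sharp contrast with the regime $s\in(0,1)$ treated in this paper, blow-downs of a solution $u$ will converge to \emph{classical} minimal cones, and the classification of those cones in low dimensions is what will force 1D symmetry. The first, standard step is that monotonicity implies stability: if $v:=\partial_{x_n}u>0$, differentiating the equation gives $(-\Delta)^{s/2}v=(1-3u^2)v=-W''(u)v$, so $v$ is a positive solution of the linearized equation, and the ground-state substitution $\xi=v\psi$ (\`a la Berestycki--Caffarelli--Nirenberg and its fractional analogues) yields
\[
\frac{2-s}{4}\iint_{\R^n\times\R^n}\frac{|\xi(x)-\xi(\bar x)|^2}{|x-\bar x|^{n+s}}\,dx\,d\bar x+\int_{\R^n}W''(u)\,\xi^2\,dx\ \ge\ 0,\qquad\xi\in C^\infty_c(\R^n),
\]
i.e.\ $u$ is a stable solution, so that the full machinery for stable solutions becomes available.

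The heart of the argument is the blow-down analysis. One first establishes the sharp energy estimate: for $s\in[1,2)$ and $u$ stable, the (localized) Allen--Cahn energy of $u$ in $B_R$ is at most $C R^{n-1}$, with an extra factor $\log R$ when $s=1$. This is obtained by adapting the energy estimates of the present paper to the regime $s\ge1$; the decisive point is that here the natural one-dimensional comparison configuration has energy $\asymp R^{n-1}$ in $B_R$ --- up to a $\log R$ at the threshold $s=1$, and in contrast with the $\asymp R^{n-s}$ governing the case $s\in(0,1)$. Rescaling $u_R(x):=u(Rx)$, this bounds uniformly the renormalized energies of $u_R$ in $B_1$ at the scale $\ep=1/R$, so by the $\Gamma$-convergence and compactness of \cite{SV-gamma}, along a subsequence $u_R\to\chi_E-\chi_{\R^n\setminus E}$ in $L^1_{\rm loc}$ with $\partial E$ a classical minimal surface. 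Three properties of $u$ survive the limit: monotonicity in $x_n$ makes $E$ a subgraph in the $x_n$-direction; an Almgren-type monotonicity formula in the Caffarelli--Silvestre extension (scale-invariant along the blow-down sequence, and available for all $s\in(0,2)$ since the extension weight $y^{1-s}$ is of Muckenhoupt $A_2$ class) makes $E$ a cone; and lower semicontinuity of the second variation makes $\partial E$ a \emph{stable} minimal cone. Moreover, exactly as in the classical setting, under the additional hypothesis \eqref{limits} the blow-down is a perimeter \emph{minimizer} in $\R^n$.

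One then classifies the limiting cone. If it is minimizing and $n\le7$, Simons' theorem forces it to be a hyperplane; if $n=8$, the only nontrivial minimizing cone is the Simons cone, which --- being invariant under $x_8\mapsto-x_8$, indeed under the symmetry exchanging the two $\R^4$ factors --- is not a subgraph in any direction, and is hence excluded by the monotonicity-derived subgraph structure. Thus for $n\le8$ the blow-down is a hyperplane, which means $u$ is asymptotically flat at all scales; one then concludes via the fractional counterpart (for $s\in[1,2)$) of Savin's improvement-of-flatness and rigidity theory \cite{Savin} --- a nonlocal Harnack inequality coupled with the $\Gamma$-convergence estimates --- that $u(x)=g(e\cdot x)$ for some unit vector $e$ and one-dimensional profile $g$, as desired. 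For $n\le3$ the blow-down can be bypassed: for $n=2$ the geometric stability inequality of Sternberg--Zumbrun type for the fractional Allen--Cahn, tested with $\xi=\varphi|\nabla u|$, already forces the level sets of $u$ to be flat; and for $n=3$ the same inequality together with the bound $E(u,B_R)\le CR^{n-1}$ does so in the spirit of Ambrosio--Cabr\'e.

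The main obstacle --- and the reason the statement is posed as a conjecture --- is the same as in the classical case: for $4\le n\le7$ the classification of \emph{stable} (as opposed to minimizing) minimal hypersurface cones is, in general, a major open problem, so that without the asymptotic assumption \eqref{limits}, which upgrades the blow-down from stable to minimizing, the argument cannot be closed. A secondary, purely technical obstacle intrinsic to the nonlocal framework is carrying out the sharp energy estimate and the improvement-of-flatness scheme in the presence of the degenerate/singular extension weight $y^{1-s}$ (unbounded near $y=0$ when $s\in(1,2)$), together with the bookkeeping imposed by the logarithmic renormalization at $s=1$; these difficulties, however, are understood for the relevant ranges of $s$.
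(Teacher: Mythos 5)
The statement you are addressing is Conjecture~\ref{DGconj2}, which the paper does not prove: it is posed as an open conjecture, and the paper's own contribution lies in the regime $s\in(0,1)$ (Conjecture~\ref{DGconj2b}, proved for $n=4$ and $s$ close to $1$ via Theorem~\ref{thmclas} and the classification of stable $s$-minimal cones in $\R^3$). So there is no proof in the paper to compare against, and your text is, as you yourself say, a strategy outline rather than a proof. That honesty is to your credit, but several of the steps you present as available are in fact not established for $s\in[1,2)$, and naming them matters. First, the sharp energy estimate $\mathcal E_{B_R}(u)\le CR^{n-1}$ (with $\log R$ at $s=1$) is known for \emph{minimizers} (\cite{CC1,CC2}); the paper explicitly remarks after Theorem~\ref{thm1} that such estimates are not expected for merely stable solutions, and no analogue of Theorem~\ref{BV}/Corollary~\ref{energy-est} is available in the regime $s\in[1,2)$. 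Second, even granting a flat blow-down, the rigidity step you invoke (``Savin-type improvement of flatness for $s\in[1,2)$'') is a theory for \emph{minimizers} (\cite{Savin,S-new,S-new2}); it does not apply to a general monotone or stable solution, which is precisely why the paper's route for monotone solutions is different: classify \emph{stable} solutions in $\R^{n-1}$ (so that $u^\pm$ are 1D), use Proposition~\ref{implication} to upgrade $u$ to a minimizer, and only then apply the minimizer rigidity. By contrast, in the genuinely nonlocal regime $s\in(0,1)$ the improvement of flatness of \cite{dPSV} holds for all solutions, which is what makes Theorem~\ref{thmclas} possible and has no known counterpart for $s\in[1,2)$.

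Finally, the core obstruction you identify — classification of stable minimal cones in dimensions $4\le n\le 7$ — is indeed the reason the statement is a conjecture, but note that your dimension bookkeeping conflates two distinct inputs: to get monotone solutions 1D in $\R^n$ one needs stable solutions classified in $\R^{n-1}$ \emph{and} minimizers classified in $\R^n$, rather than a classification of the blow-down cone of $u$ itself; the subgraph argument excluding the Simons cone does not substitute for either. In short, your proposal is a reasonable heuristic roadmap consistent with the known partial results (e.g.\ $n=2$ for all $s$, $n=3$ for $s\in[1,2)$, $n=4$ for $s=1$, and $4\le n\le 8$ under \eqref{limits}), but it does not constitute a proof of Conjecture~\ref{DGconj2}, and the paper contains none.
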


On the other hand, for nonlocal $s$-minimal surfaces  the analogue of the Bernstein's problem is well understood for $s\in (0,1)$ sufficiently close to $1$. Indeed, it follows, by 
combining the results in \cite{CRS},  \cite{CV},  \cite{SV}, and \cite{FV} that: 
\vspace{6pt}

{
\em For $ n\le 8$, there is a dimensional constant $s_*\in [0,1)$ such that, if $s\in [s_*,1)$, hyperplanes are the only graphs $\{x_n = \phi(x_1, \dots, x_{n-1})\}$ which are $s$-minimal surfaces in $\R^n$ ---for $n=2$ and $n=3$ this is known with $s_*=0$.
}

\vspace{6pt}
The heuristics thus suggest that the De Giorgi conjecture should be true for monotone critical points of $E_{\ep,s}$ also in the range $s\in (s_*,1)$. That is:
\begin{conjecture}\label{DGconj2b}{\it 
Conjecture \ref{DGconj}  also holds for  $(-\Delta)^{s/2} u=u-u^3$ when $s\in (s_*,1)$, where $s_*\in[0,1)$ is a dimensional constant. 
}
\end{conjecture}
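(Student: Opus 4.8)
The plan is to deduce the conjecture from a classification of \emph{stable nonlocal $s$-minimal cones}, to which one reduces it. A first, standard step removes the monotonicity: if $u$ solves $(-\Delta)^{s/2}u=u-u^3$, $|u|<1$, with $\partial_{x_n}u>0$, then $v:=\partial_{x_n}u>0$ solves the linearized equation, and using $v$ as a multiplier to complete the square in the second variation of the energy shows that $u$ is a \emph{stable} solution. So it suffices to prove the stronger ``stability conjecture'': every stable solution in $\R^n$ is 1D. (For the purely monotone case one can do slightly better: monotone solutions are local minimizers, and this will eventually lower by one the dimension of the cone one has to classify, which is what allows one to reach $\R^4$ from a classification in $\R^3$.)

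The core of the argument is a blow-down analysis. First I would establish the sharp energy estimate $\mathcal E_s(u,B_R)\le C\,R^{n-s}$ for $s\in(0,1)$ --- the ``nonlocal perimeter regime'', where the natural scaling of the fractional Allen-Cahn energy of a transition layer is that of the $s$-perimeter of a ball, namely $R^{n-s}$ --- by building a suitable radial competitor and comparing energies. Next I would prove density estimates for the level sets $\{u=\lambda\}$ and a clean-ball property (a definite fraction of mass on each side of the interface in every ball where $u$ is not already close to $\pm1$), which follow from the energy bound together with the equation; and a monotonicity formula for a suitably rescaled energy (on the Caffarelli-Silvestre extension), valid in this range of $s$. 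With these tools, compactness yields that the blow-downs $u_R(x):=u(Rx)$ converge in $L^1_{\mathrm{loc}}$, along a subsequence, to $\chi_E-\chi_{\R^n\setminus E}$ with $E$ a cone, and both the stability inequality and the $\Gamma$-convergence to the $s$-perimeter pass to the limit, so that $\partial E$ is a \emph{stable} nonlocal $s$-minimal cone.

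Now one invokes the hypothesis: if in $\R^n\setminus\{0\}$ --- respectively in $\R^{n-1}$, after the dimension reduction available in the monotone case, where the relevant object is a \emph{minimizing} cone --- the only stable $s$-minimal cones are hyperplanes, then $E$ is a half-space. The last, and hardest, step is to upgrade ``the blow-down is flat'' to ``$u$ is 1D'', and I expect this to be the main obstacle: one shows that flatness of the blow-down propagates to all scales via an improvement-of-flatness iteration, so every level set of $u$ is a uniformly Lipschitz graph over a hyperplane; then $\partial_{x_n}u$ and $\partial_e u$, for $e$ any horizontal direction, are two solutions of the linearized equation with the first one positive, and a Liouville-type theorem (ratio of positive solutions of such degenerate-elliptic equations is constant) forces $u$ to depend on a single variable. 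Equivalently, the rigidity can be run through the equality case of the monotonicity formula: the energy density at infinity equals that of a hyperplane, hence the monotonicity quantity is constant in $R$, hence $u$ is 1D. Feeding in the classification of stable $s$-minimal cones in $\R^3\setminus\{0\}$ for $s$ close to $1$ then gives the stability conjecture in $\R^3$ and --- combining monotone $\Rightarrow$ minimizer with a Federer-type dimension reduction for minimizing $s$-minimal cones --- the De Giorgi conjecture in $\R^4$, in both cases for $s\in(0,1)$ sufficiently near $1$; the conjecture in full generality stays open precisely because the cone classification (the nonlocal Bernstein problem for stable cones) is open for general $(n,s)$.
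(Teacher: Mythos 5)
Your overall architecture (monotone $\Rightarrow$ stable; blow-down analysis; classification of stable $s$-minimal cones; improvement of flatness; dimension reduction via minimality for the monotone case) is the right one and matches the paper's strategy. However, the step on which your plan rests---proving the sharp energy bound $\mathcal E_{B_R}(u)\le C R^{n-s}$---is where it breaks down: you propose to obtain it ``by building a suitable radial competitor and comparing energies.'' That comparison is available only for \emph{minimizers}. For a merely stable solution one only knows $\frac{d^2}{dt^2}\big|_{t=0}\mathcal E_{\Omega}(u+t\xi)\ge 0$ against compactly supported perturbations, which gives no inequality of the form $\mathcal E(u)\le\mathcal E(v)$ for a competitor $v$; so the estimate does not follow. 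Since the entire point of the stability conjecture is to go beyond minimality, one needs a genuinely different mechanism. The paper's substitute is the sharp $BV$ estimate $\int_{B_R}|\nabla u|\,dx\le C(1-s)^{-1}R^{n-1}$ of Theorem~\ref{BV}, proved by adapting the translation technique of Cinti--Serra--Valdinoci for stable nonlocal minimal surfaces (translate $u$, plug $\max\{u,u_t\}$ and $\min\{u,u_t\}$ into the stability inequality, bound a bilinear interaction term, and close via Simon's covering Lemma~\ref{lem_abstract}). The Sobolev energy bound then follows by an interpolation inequality, and the \emph{full} energy bound additionally requires the new control $\mathcal E^{\rm Pot}_{B_{R-R_0}}(u)\le C\,\mathcal E^{\rm Sob}_{B_R}(u)$ of Proposition~\ref{DircontrolsPot2}, which is not automatic. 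These two ingredients are the real content of Sections~\ref{sec-3}--\ref{sec-4} of the paper and cannot be replaced by a competitor argument; without them the blow-down machinery never gets started.

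Two further points. First, since stability (unlike minimality) does not pass automatically to $L^1_{\rm loc}$ limits under $\Gamma$-convergence, showing that the blow-down cone is weakly stable for the $s$-perimeter requires the \emph{convergence of localized energies} $\mathcal E^{\rm Sob}_{B_{R'}}(u_{R_k})\to \mathcal E^{\rm Sob}_{B_{R'}}(u_\infty)$ together with $R_k^{s}\int_{B_{R'}}W(u_{R_k})\to 0$ (Proposition~\ref{conv-L^1}); the latter again uses Proposition~\ref{DircontrolsPot2}, and your ``$\Gamma$-convergence passes to the limit'' glosses over this. Second, your parenthetical alternative ``equivalently, the rigidity can be run through the equality case of the monotonicity formula'' does not work as stated: constancy of the monotonicity quantity forces the extension $U$ to be homogeneous of degree $0$, hence $u$ itself to be a characteristic function of a cone---which contradicts $|u|<1$ unless the solution is trivial. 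Equality at infinity only shows that the \emph{blow-down} is a cone, not that $u$ is 1D; the passage from a flat blow-down to 1D-ness is done in the paper via the Dipierro--Serra--Valdinoci improvement of flatness (Theorem~\ref{123}), which directly gives $u(x)=\phi(e\cdot x)$ from flatness of level sets at all large scales.
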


The articles by Cabr\'e and Sol\`a-Morales \cite{C-SM}, Cabr\'e and  Sire \cite{CSi}, and  Sire and Valdinoci \cite{Si-V} proved Conjectures \ref{DGconj2}  and \ref{DGconj2b} in dimension $n=2$, for $s$ in the whole range $(0,2)$ ---that is $s_*=0$.
Later, Cabr\'e and Cinti \cite{CC1,CC2} established Conjecture~\ref{DGconj2} for $n=3$, $s\in [1,2)$. {{In \cite{DFV}, Dipierro, Farina, and Valdinoci proved Conjecture 1.3 for $n = 3$ and $s_* = 0$.}}
Very recently, Figalli and Serra \cite{FS} have proved it for $n=4$, $s=1$. 
 For all $s\in (1,2)$, the existence of monotone solutions to \eqref{fract} which are not $1D$ in dimensions $n\ge9$ has been  announced  in \cite{CLW} ---to appear in a work by Chan, D\'avila, del Pino, Liu, and Wei \cite{CDPLW}.

In this paper we will prove Conjecture \ref{DGconj2b} in dimension $n=4$ for $s\in (s_*,1)$, for some $s_*\in (0,1)$. We will also give a sufficient condition (in terms of a rigidity statement on $s$-minimal cones) guaranteeing that Conjecture \ref{DGconj2b} holds for  some pair $(n, s)$, with $s\in (0,1)$ and $n\ge 3$.

Under the additional assumption \eqref{limits}, Conjecture~\ref{DGconj2} has been proved by Savin \cite{S-new,S-new2} for  $4\le n\le 8$, $s\in [1,2)$.
Finally, Conjectures \ref{DGconj2}  and \ref{DGconj2b} have been proven, also under the additional assumption \eqref{limits}, in  Dipierro, Serra, and Valdinoci \cite{dPSV} in two situations: for $n=3$ and $s_*=0$, as well as  for $4\le n\le 8$ and  $s\in (s_*,1)$ with $s_*<1$ is sufficiently close to $1$.

\subsection{Monotone vs.\ stable solutions} \label{sub-1.4}
The assumption  $\partial_{x_{n}} u >0$ easily yields that $u$ is a {\em stable solution}, i.e., a critical point of the localized version of $E_{s,\ep}$ (presented below in Subsection~\ref{subsectionstable}) with nonnegative second variation.
The additional assumption \eqref{limits} on limits at infinity is only used in \cite{Savin, S-new, S-new2, dPSV} to guarantee that $u$ is, in addition, a minimizer of $E_{s,\ep}$ in every bounded set 
of~$\R^n$.
To address the conjecture without the additional assumption \eqref{limits}, it is natural to introduce the two limit functions $u^{\pm} := \lim_{x_{n} \to \pm\infty} u$. These functions depend only on the first $n-1$ variables $x_1, \dots, x_{n-1}$ and are stable solutions of \eqref{fract} in $\R^{n-1}$.
It is not difficult to prove (see Proposition \ref{implication}), 
that if $u^\pm$ are 1D then $u$ is a minimizer. As a consequence, the following implication holds for all $s\in (0,2]$:
$$
\left.
\begin{array}{c}
\text{Stable sol'ns of \eqref{fract} in $\R^{n-1}$ are 1D}
\\
\text{and}\\
\text{Minimizers of \eqref{fract} in $\R^n$ are 1D}
\end{array}
\right\}
 \Rightarrow \text{Monotone sol'ns of \eqref{fract} in $\R^n$ are 1D.}
$$

The difficult problem of classifying stable solutions to \eqref{fract}  is connected to the following well-known conjecture for minimal surfaces:
\begin{conjecture}\label{conjstableminimal}{\it 
Stable embedded minimal hypersurfaces in  $\R^n$ are hyperplanes as long as $n\le 7$.}
\end{conjecture}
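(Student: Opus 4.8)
The statement is the classical \emph{stable Bernstein problem}, still open in general; the plan below is the approach --- at its core due to Schoen, Simon, and Yau, and refined in recent years --- that one would pursue toward it. Let $\Sigma^{n-1}\subset\R^n$ be a complete stable embedded minimal hypersurface. Passing to a suitable cover we may assume $\Sigma$ is two-sided, so that, writing $A$ for its second fundamental form, stability reads
\[
\int_\Sigma |A|^2\,\varphi^2\,d\mathcal H^{n-1}\ \le\ \int_\Sigma |\nabla_\Sigma\varphi|^2\,d\mathcal H^{n-1}\qquad\text{for every }\varphi\in C^\infty_c(\Sigma).
\]
The goal is to show that $A\equiv 0$, which forces $\Sigma$ to be a hyperplane.

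The engine is the interplay of this inequality with the Simons equation on $\Sigma$: because the ambient space is flat, $\tfrac12\Delta_\Sigma|A|^2=|\nabla_\Sigma A|^2-|A|^4$, and one also has the improved Kato inequality $|\nabla_\Sigma A|^2\ge\bigl(1+\tfrac{2}{n-1}\bigr)|\nabla_\Sigma|A||^2$. Testing stability with $\varphi=|A|^{q}\eta$ for a cutoff $\eta$ and a parameter $q>0$, and using the Simons identity together with the Kato bound to absorb the terms containing $|\nabla_\Sigma|A||$, one arrives at a Caccioppoli-type estimate
\[
\int_\Sigma |A|^{2+2q}\,\eta^2\,d\mathcal H^{n-1}\ \le\ C(n,q)\int_\Sigma |A|^{2q}\,|\nabla_\Sigma\eta|^2\,d\mathcal H^{n-1},
\]
valid exactly when $q\in\bigl(1-\sqrt{2/(n-1)}\,,\,1+\sqrt{2/(n-1)}\bigr)$, the interval set by the Kato constant. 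Choosing $\eta$ to be a logarithmic cutoff between $B_R$ and $B_{R^2}$ and invoking a Euclidean volume bound $\mathcal H^{n-1}(\Sigma\cap B_R)\le C\,R^{n-1}$ --- automatic, e.g., when $\Sigma$ is the boundary of an area-minimizing set, and otherwise an extra hypothesis to be removed --- an iteration in $R$ yields $\int_\Sigma|A|^{2+2q}=0$ provided $q$ can additionally be taken larger than $(n-3)/2$. Intersecting the two constraints on $q$ leaves a nonempty range for $n$ small, concretely $n\le 6$, and the argument closes.

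The principal obstacle is the borderline dimension $n=7$: there the admissible interval for $q$ no longer reaches $(n-3)/2$, the iteration breaks down, and the threshold $n\le 7$ is in fact sharp, since the Hardt--Simon leaves asymptotic to the Simons cone are smooth complete stable non-planar minimal hypersurfaces in $\R^8$. Reaching $n=6,7$ --- and, just as importantly, removing the volume-growth hypothesis --- demands genuinely new input, and two lines have proved effective. One is to combine weighted versions of the stability inequality with prescribed-mean-curvature ($\mu$-bubble) comparisons, which sidestep both the volume bound and the logarithmic cutoff, together with curvature estimates intrinsic to the now $3$-dimensional surface $\Sigma$ --- the mechanism behind the recent resolution of the case $n=4$. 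The other is a blow-down / dimension-reduction scheme that extracts a \emph{stable minimal cone} in $\R^n\setminus\{0\}$ and then rules it out via a rigidity classification of such cones --- precisely the strategy carried out, in its nonlocal incarnation, in the present paper. A further reduction needed throughout is the passage to the two-sided case, one-sided stability being governed by a different second variation. For these reasons the conjecture remains open in general: the low-dimensional cases $n\le 3$ are classical, $n=4$ has been settled recently, and $n=7$ is the outstanding target.
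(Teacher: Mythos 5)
You have correctly recognized that this statement is an \emph{open} conjecture, not a theorem of the paper: the authors state it as Conjecture~\ref{conjstableminimal} purely for context, noting that it is known only for $n=3$ (Fischer-Colbrie--Schoen, do Carmo--Peng) and $n=4$ (Chodosh--Li), and remains open for $5\le n\le 7$. There is therefore no proof in the paper against which to compare your argument; your sketch of the Schoen--Simon--Yau stability/Simons/Kato scheme, the admissible range of $q$, the role of the Euclidean volume bound, the Hardt--Simon counterexamples in $\R^8$ making $n\le 7$ sharp, and the $\mu$-bubble input behind the $n=4$ case is an accurate survey of the state of the art rather than a proof, and you say as much. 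The one thing worth flagging explicitly is that the blow-down plus stable-cone-classification route you mention at the end is exactly the mechanism the present paper deploys, in its \emph{nonlocal} incarnation, to prove its actual results (Theorem~\ref{thmclas} and Corollaries~\ref{corclas2}--\ref{corclas3}); Conjecture~\ref{conjstableminimal} itself is only an analogy used to motivate Conjecture~\ref{DGconj3}.
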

A positive answer to this conjecture is known for $n=3$,  by a result of Fischer-Colbrie and Schoen \cite{FishS} and of Do Carmo and Peng \cite{dCP} from the late seventies, and for $n=4$ by a very recent paper of Chodosh and Li~\cite{CL}. It remains open for $5\le n\le 7$.
Instead, the analogue of Conjecture \ref{conjstableminimal} for area-minimizing  (a stronger notion than stability)  hypersurfaces is completely understood in every dimension: it holds, indeed, if and only if $n\leq 7$. 
This shows that extending a result for minimizers to stable solutions may be a very difficult problem. 
Still, Conjecture~\ref{conjstableminimal} suggests the so-called ``stability conjecture'': 
\begin{conjecture}\label{DGconj3}{\it 
Let $u\in C^2(\R^{n})$, $|u|<1$, be a stable critical point of  $E_{s, \ep}$.
Then,  if $n\le 7$, $u$ must be $1D$ provided $s \in (s_* ,2]$, where $s_*\in [0,1)$ is a dimensional constant.}
\end{conjecture}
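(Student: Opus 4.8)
The plan is to prove Conjecture~\ref{DGconj3} by reducing the classification of stable solutions to a geometric rigidity statement for stable nonlocal $s$-minimal cones, via a blow-down analysis --- in the spirit of De Giorgi's reduction of the Bernstein problem to the classification of minimal cones. After normalizing $\ep=1$, let $u\in C^2(\R^n)$, $|u|<1$, be a stable solution of \eqref{fract} with $W(u)=\tfrac14(1-u^2)^2$, i.e.\ of $(-\Delta)^{s/2}u=u-u^3$ in $\R^n$. For $R>0$ put $u_R(x):=u(Rx)$; then $u_R$ is again a stable solution, now of the equation with $\ep=1/R$, so that $R\uparrow\infty$ drives us into the $\ep\downarrow 0$ asymptotics which, for $s\in(0,1)$, is governed by the nonlocal $s$-perimeter. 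The target is to show that along a subsequence $u_R\to\chi_E-\chi_{\R^n\setminus E}$ in $L^1_{\mathrm{loc}}$ with $\partial E$ a \emph{stable} nonlocal $s$-minimal \emph{cone}, and then to propagate flatness of $E$ back to $u$.

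Two analytic ingredients make this rigorous. The first is a \emph{sharp energy estimate}: for all $R\ge 1$,
\[
E_{s,1}(u;B_R)\le C(n,s)\,R^{\,n-s},
\]
where $E_{s,1}(u;B_R)$ is the $\ep=1$ localized energy in $B_R$ and $R^{\,n-s}$ is the natural nonlocal-perimeter growth rate. I would obtain it from the stability inequality, testing the second variation with a competitor built from $u$ itself cut off by a radial function at scale $R$ (the nonlocal analogue of the Sternberg--Zumbrun / Ambrosio--Cabr\'e test-function argument); the fact that for $s<1$ the $1D$ layer decays only algebraically is precisely what produces the exponent $n-s$ and, crucially, what makes the estimate hold in \emph{every} dimension $n\ge 2$. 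The second ingredient is a pair of \emph{density estimates}: there are $\delta,c>0$ such that $u(x_0)\ge\delta$ forces $|\{u>-1+\delta\}\cap B_r(x_0)|\ge c\,r^n$ for $r\ge 1$, and symmetrically for the phase $-1$; these follow by combining the energy estimate with a De Giorgi--type iteration/comparison as in the theory of $s$-minimal surfaces. The energy estimate gives compactness of $\{u_R\}$ with the correct scaling, while the density estimates guarantee that no mass is lost in the limit and that $E$ has uniformly positive density from both sides.

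With these, the renormalized energies $R^{s-n}E_{s,1}(u_R;B_\rho)$ are uniformly bounded, so by the $\Gamma$-convergence of $E_{s,\ep}$ to the nonlocal $s$-perimeter (Caffarelli--Roquejoffre--Savin, and its compactness refinements) a subsequence of $u_R$ converges to $\chi_E-\chi_{\R^n\setminus E}$ in $L^1_{\mathrm{loc}}$ with $\partial E$ nonlocal $s$-minimal; passing the second variation to the $\Gamma$-limit shows $\partial E$ is stable, and a monotonicity formula for the renormalized energy --- available because $u$ is an entire solution whose energy grows exactly at the critical power --- forces $E$ to be a cone. This is already the content of the conditional theorem announced in the abstract. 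To finish, if hyperplanes are the only stable nonlocal $s$-minimal cones in $\R^n\setminus\{0\}$, then $E$ is a half-space; one then uses the density and energy estimates together with an energy-comparison argument to upgrade $u$ to a \emph{minimizer} of $E_{s,1}$ in every ball, and a nonlocal improvement-of-flatness / Savin-type regularity argument yields that a minimizer with flat blow-down has parallel hyperplanes as level sets, i.e.\ $u$ is $1D$. For $n\le 3$ and $s$ close to $1$ the required stable-cone classification is available, giving Conjecture~\ref{DGconj3} in $\R^3$ (and, via the reduction to stable solutions in dimension $n-1$, the De Giorgi conjecture in $\R^4$), for $s\sim 1$.

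The main obstacle is exactly this last input: the classification of \emph{stable} --- not merely area-minimizing --- nonlocal $s$-minimal cones in $\R^n\setminus\{0\}$ for $n\le 7$. As the excerpt stresses for the local case $s=2$, where the analogue, Conjecture~\ref{conjstableminimal}, is known only up to $n=4$, excluding singular \emph{stable} cones is genuinely harder than excluding singular \emph{minimizing} ones, and for $s$ bounded away from $1$ essentially nothing is known. The remaining steps --- the sharp energy estimate, the density estimates, the $\Gamma$-limit passage, the cone property, and the flatness propagation --- are, by contrast, robust variational machinery; it is the stable-cone rigidity that confines the proof of Conjecture~\ref{DGconj3} to low dimensions and to $s$ near $1$.
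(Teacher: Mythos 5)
Your overall plan — blow down, identify a stable nonlocal $s$-minimal cone, classify it, and propagate flatness back to $u$ — is indeed the paper's strategy behind Theorem~\ref{thmclas} and Corollary~\ref{corclas2}. But two of your intermediate steps diverge from what the paper does, and one of them contains a genuine gap.

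First, on the energy estimate. You propose to derive the sharp bound $\mathcal E_{B_R}(u)\le CR^{n-s}$ by plugging a cut-off of $u$ into the second-variation inequality, in the spirit of Sternberg--Zumbrun or Ambrosio--Cabr\'e. That route is pursued in Appendix~\ref{app-B} of the paper and gives only the weaker inequality $\mathcal E^{\rm Pot}_{B_R}(u)\le C\big(\mathcal E^{\rm Sob}_{B_{R+1}}(u)+R^{n-s}\big)$: it controls the potential energy \emph{in terms of} the Sobolev energy, plus an error, but it does not bound the Sobolev energy itself. The sharp bound in Theorem~\ref{thm1} is instead obtained from a different mechanism: a universal $BV$ estimate $\int_{B_R}|\nabla u|\le CR^{n-1}$ proved by the sliding/translation comparison argument of \cite{CSV} (Theorem~\ref{BV}), an interpolation step bounding $\mathcal E^{\rm Sob}$ by the $BV$ norm (Corollary~\ref{energy-est}), and a cube decomposition that trades Sobolev for potential energy (Proposition~\ref{DircontrolsPot2}). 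Without the $BV$ estimate as an independent input, the test-function approach alone does not close the argument.

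Second, and more importantly, your closing step is not correct. You propose, once the blow-down cone $E$ is a half-space, to ``upgrade $u$ to a minimizer of $E_{s,1}$ in every ball'' and then invoke a Savin-type improvement-of-flatness for minimizers. There is no such upgrade available in general: the whole point of Theorem~\ref{thmclas}, and of the stability conjecture \ref{DGconj3}, is that one cannot pass from stability to minimality \emph{a priori}. A stable solution with asymptotically flat level sets is only known to be a minimizer \emph{after} it is proven to be 1D (via the 1D foliation argument, Proposition~\ref{implication}), so invoking minimality here is circular. The paper avoids this entirely: the improvement-of-flatness result it uses, Theorem~\ref{123} from \cite{dPSV}, is stated for an arbitrary entire solution with level sets trapped in flattening slabs, with no minimality hypothesis, and it is applied directly to the stable solution $u$ once the Hausdorff convergence of level sets to a hyperplane (Theorem~\ref{thm2}) is in hand. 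Replacing your minimizer step by a direct application of this solution-level improvement of flatness is the fix. Finally, note that the blow-down cone need not be smooth away from $0$, while the rigidity hypothesis (Definition~\ref{defiwhatcones}) concerns smooth cones; the paper bridges this by a Federer dimension-reduction argument (Proposition~\ref{good} and Lemma~\ref{hwioheoithe}) that your sketch does not address and that is not a formality.
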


{
As explained before, once Conjecture \ref{DGconj3}  is known to hold for some $(n-1,s)$  then  Conjecture \ref{DGconj} / \ref{DGconj2} / \ref{DGconj2b}   for $(n,s)$ can be reduced to the question of proving 1D symmetry of  minimizers, which is well-understood thanks to the results in \cite{Savin, S-new, S-new2, dPSV}.
}

With the exception of the case $n=3$ and $s=1$ considered in \cite{FS}, before our work
Conjecture~\ref{DGconj3} was open in every dimension $n\ge 3$,  with the case of the Laplacian ($s=2$) being a long standing open problem. The case $n=2$ is simpler (thanks to a certain ``parabolic manifold'' type property, it can be proved through some Liouville theorems initiated in \cite{BCN, AmbrC}) and has been established for the whole range $s\in (0,2]$: for $s=2$ in \cite{GG, AmbrC,AAC},  for $s=1$ in \cite{C-SM}, and  for $0<s<2$ in \cite{CSi,Si-V}.

In this paper we establish Conjecture \ref{DGconj3} when $n=3$ and $s\in (s_*,1)$, for some $s_*<1$. From this and a result for minimizers from~\cite{dPSV}, we deduce Conjecture \ref{DGconj2b} when $n=4$ and  $s\in (s_*,1)$.

\subsection{Main result: a new classification theorem}\label{sub-1.5}
We study stable solutions of the fractional Allen-Cahn equation
\begin{equation}\label{theequation}
 (-\Delta)^{s/2}u + W'(u)=0,  \quad |u|<1\quad  \mbox{in }\R^n, 
\end{equation}
with $s\in(0,1)$, where 
\begin{equation} \label{quarticpot}
W(u):= \frac 1 4 (1-u^2)^2
\end{equation}
is the standard quartic double-well potential with wells at $\pm 1$. Note that this is equation \eqref{fract} with $\ep=1$ (we can always assume this value of the parameter after scaling).
For the precise definition of {\em stable solution} to \eqref{theequation}, see Subsection \ref{subsectionstable} below.

The main goal of the paper is to establish the following classification result.
For brevity, we use in its statement the following terminology. Recall that, as mentioned before, stable solutions to the fractional Allen-Cahn equation in all of $\R^2$ have been already classified.
\begin{defi}\label{defiwhatcones}
For $n\ge 3$, we say that {\em hyperplanes are the only stable $s$-minimal cones in~$\R^n\setminus \{0\}$} when the following holds:
if $ 3\le  m \le n$ and  $\Sigma\subset \R^m$ is a stable 
$s$-minimal  cone in $\R^m\setminus \{0\}$, whose boundary  $\partial \Sigma$ is nonempty and smooth away from~$0$,  then necessarily $\partial \Sigma$   is a hyperplane (up to sets of measure zero).

The notion of (smooth away  from $0$) stable $s$-minimal cone in $\R^m\setminus \{0\}$ is exactly that of \cite[Definition 1.1]{CCS}, which we recall in Definition \ref{defstablecone} below.\footnote{In this paper and in \cite{CCS} the perturbations to define stability do not modify the cone in a neighborhood of the origin (even if in \cite{CCS} we used the terminology ``stable $s$-minimal cones in~$\R^m$'').}
\end{defi}

{
\begin{rem}
Let us comment on Definition \ref{defiwhatcones}, which will be an assumption of our main result, Theorem \ref{thmclas} below. 
\begin{itemize}
\item First, we will not be assuming  that stable  $s$-minimal cones in $\R^2\setminus \{0\}$ must be flat: the integer $m$ in the definition is at least 3. 
This is important because (similarly as it happens in the classical case $s=1$)  certain $s$-minimal symmetric cones like the ``cross'' $\{x_1 x_2>0\}$ are stable in $\R^2 \setminus \{0\}$,  at least if $s$ is sufficiently close to $1$.
\item Second,  the reason why, given a dimension $n$ we need an hypothesis  in lower dimensions  $3\le m\le n$, is  the use of a  dimension reduction argument of Federer type in the proof of Theorem \ref{thmclas}.
This is also why Definition \ref{defiwhatcones} concerns only  cones with smooth trace on the sphere (the dimension reduction argument will always allow us to suppose this). 
Now, in this dimension reduction,  one needs to consider the case $m=2$ (and not only $m\ge3$). However, for the case $m=2$ we do not need to assume any flatness hypothesis  since we will actually prove in Lemma \ref{hwioheoithe} the following property, which holds true for all $s\in(0,1)$:  any cone in $\R^n$ of the form $\tilde\Sigma \times \R^{n-2}$ and such that  $\chi_\Sigma- \chi_{\Sigma^c}$ can  be obtained as limit of some sequence of stable critical points of $E_{s,\ep}$ (with vanishing parameter $\ep$) must be a half-space.
\end{itemize}
\end{rem}
}

The following is our main result.

\begin{thm}\label{thmclas}
Assume that, for some pair $(n,s)$ with $n\ge 3$ and $s\in (0,1)$, hyperplanes are the only stable $s$-minimal cones in $\R^n\setminus \{0\}$.

Then, every stable solution of \eqref{theequation}-\eqref{quarticpot} is a 1D layer solution, namely, $u(x)= \phi(e\cdot x)$ for some direction $e\in S^{n-1}$ and increasing function $\phi: \R \rightarrow (-1,1)$. 
\end{thm}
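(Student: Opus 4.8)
The plan is to combine four ingredients, following the strategy that worked for $s$-minimal surfaces and for the classical Allen--Cahn equation: (1) sharp energy estimates for stable solutions, (2) a monotonicity formula that makes blow-downs well defined, (3) a compactness/convergence statement saying that blow-downs converge (in the appropriate sense) to $\chi_E - \chi_{E^c}$ with $\partial E$ a stable $s$-minimal cone, and (4) the hypothesis that such cones are flat, together with a Federer-type dimension reduction to deal with all intermediate dimensions. These are precisely the tools announced in the abstract (sharp energy estimates, density estimates, convergence of blow-downs to stable nonlocal $s$-minimal cones), so I would invoke them as already established earlier in the paper.

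First I would fix a stable solution $u$ of \eqref{theequation}--\eqref{quarticpot} and consider the rescalings $u_R(x) := u(Rx)$, which are stable solutions of the $\ep$-version of the equation with $\ep = R^{-1} \to 0$. The sharp energy estimate gives $E_{s,\ep}(u_R; B_1) \le C$ uniformly in $R$ (this is the step that genuinely requires stability and the restriction $s \in (0,1)$, i.e. the ``nonlocal perimeter regime''), so by the compactness theory for the energies $E_{s,\ep}$ one extracts a subsequence $u_{R_k} \to \chi_E - \chi_{E^c}$ in $L^1_{\mathrm{loc}}$, where $E$ is a set with locally finite $s$-perimeter. Density estimates (again a consequence of stability and the energy bound) prevent the limit from being trivial and guarantee clean convergence of level sets. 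Then I would use the monotonicity formula for the localized energy to show that the limit set $E$ is a \emph{cone}: the monotone quantity is constant on the blow-down, which forces $0$-homogeneity. Stability passes to the limit — this is the content of the ``stable critical points of $E_{s,\ep}$ with vanishing $\ep$'' language in the Remark after Definition \ref{defiwhatcones} — so $\partial E$ is a stable $s$-minimal cone in $\R^n \setminus \{0\}$.

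Next comes the dimension-reduction step. A priori $\partial E$ might be singular at points other than the origin, so one cannot directly apply the flatness hypothesis, which is stated only for cones smooth away from $0$. Here I would run Federer's argument: if $\partial E$ has a singularity at some $y_0 \ne 0$, blow up again at $y_0$ to obtain a stable $s$-minimal cone of the form $\tilde\Sigma \times \R$ (splitting off the direction $y_0$), and iterate. The iteration terminates at a cone of the form $\tilde\Sigma \times \R^{n-m}$ with $\tilde\Sigma \subset \R^m$ a stable $s$-minimal cone, smooth away from $0$, for some $3 \le m \le n$ — or else it terminates in the degenerate case $m = 2$ (or $m \le 1$), which is handled separately: by Lemma \ref{hwioheoithe} any such splitting cone $\tilde\Sigma \times \R^{n-2}$ arising as a limit of stable critical points of $E_{s,\ep}$ must be a half-space, for all $s \in (0,1)$, \emph{without} any flatness assumption in $\R^2$. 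In the case $m \ge 3$ the hypothesis of the theorem applies and $\partial \tilde\Sigma$ is a hyperplane. Either way, every blow-up of $\partial E$ is flat, so by the standard consequence of the dimension-reduction machinery $\partial E$ itself has empty singular set and, being a cone, is a hyperplane; hence $E$ is a half-space.

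Finally, once the blow-down of $u$ is a half-space $\chi_H - \chi_{H^c}$ for \emph{every} subsequence, I would upgrade this to 1D symmetry of $u$ itself. The mechanism is: the level sets of $u_R$ converge, as $R \to \infty$, to a hyperplane with a fixed normal $e$ (uniqueness of the blow-down follows because the monotonicity quantity has a limit, so all blow-downs have the same density and, being half-spaces through the origin, are comparable — or, alternatively, one uses that a ``minimal cone'' which is a hyperplane forces the original set/solution to be trapped between parallel hyperplanes via the density estimates). Being trapped between two parallel hyperplanes of normal $e$, $u$ is then a minimizer of $E_s$ (by an argument in the spirit of Proposition \ref{implication}, or by a direct sliding/calibration argument), and a minimizer whose level sets are asymptotically flat is 1D by the known one-dimensional symmetry results for minimizers; monotonicity of the profile $\phi$ follows from the maximum principle.

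The step I expect to be the main obstacle is the \textbf{convergence of blow-downs to a genuine $s$-minimal cone with the stability property preserved} — i.e. showing simultaneously that (a) the $L^1_{\mathrm{loc}}$ limit of the $u_{R_k}$ is a characteristic function (no ``energy is lost to the diffuse interface''), which is where the density estimates and the sharpness of the energy bound are essential, and (b) the limit cone inherits stability in the precise sense needed to invoke Definition \ref{defiwhatcones} and Lemma \ref{hwioheoithe}. Getting the constant in the energy estimate to be \emph{sharp} — rather than just finite — is what makes the density estimates and hence the clean geometric limit work, and this is exactly the delicate analysis that is special to the regime $s \in (0,1)$.
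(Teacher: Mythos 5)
Your proposal tracks the paper's strategy closely through the first two stages: blow-downs of $u$ converge (using the sharp energy estimates, density estimates, and monotonicity formula) to $\chi_\Sigma - \chi_{\Sigma^c}$ for a nontrivial cone $\Sigma$ that inherits stability; then a Federer-type dimension reduction, using the flatness hypothesis for $3\le m\le n$ and Lemma~\ref{hwioheoithe} for the residual case $m=2$, forces $\Sigma$ to be a half-space. You correctly flag that the reduction only works because the limiting cones carry BV bounds, a monotonicity formula, density estimates, and improvement of flatness — which is exactly why the paper introduces the class $\mathcal A$ of limits of stable $\ep$-critical points (Proposition~\ref{good}); these properties are not known for general weakly stable sets.

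The genuine gap is in your final step. You propose passing from ``all blow-downs are half-spaces'' to 1D symmetry by first showing that $u$ is a \emph{minimizer} (``in the spirit of Proposition~\ref{implication}, or by a direct sliding/calibration argument''), and then invoking rigidity of minimizers. But Proposition~\ref{implication} is about \emph{monotone} solutions and uses the 1D structure of the limits $u^{\pm}$; a general stable solution trapped asymptotically between parallel hyperplanes has no monotonicity to exploit, and there is no calibration/sliding argument turning ``stable $+$ asymptotically flat'' into ``minimizer'' short of redoing the entire improvement-of-flatness machinery. The paper avoids minimality altogether: the Hausdorff convergence of level sets from Theorem~\ref{thm2} furnishes precisely the asymptotic-flatness hypothesis \eqref{ASS-R} of Theorem~\ref{123} (Theorem~1.2 of \cite{dPSV}), which is an improvement-of-flatness theorem valid for \emph{arbitrary} solutions — not only minimizers or monotone solutions — and immediately gives $u(x) = \phi(e\cdot x)$. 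Incidentally, your concern about uniqueness of the blow-down is moot for the same reason: Theorem~\ref{123} permits the normal direction $e_R$ of the trapping hyperplanes to vary with $R$; one only needs the flatness defect $a(R)\to 0$, which follows from subsequential convergence by the usual contradiction argument.
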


\begin{rem}\label{otherW}
As it will be clear from the proofs, throughtout the paper we do not need $W$ to be given exactly by  $W(u)= \frac 1 4  (1-u^2)^2$, nor to be even. It can be replaced everywhere by any function $W\in C^3([-1,1])$   satisfying $W(\pm1)=0$, $W>0$ in $(-1,1)$,  $W\le  W(t_0)$ {in $(-1,1)$} for some $t_0\in(-1,1)$,  $\{t\in [-1,1]\,:\,W'(t)=0\} =\{-1,t_0,1\}$,  $W''(\pm1) >0$, and $W''(t_0)<0$. For instance, $W$ can be replaced everywhere by the Peierls-Nabarro potential $1+\cos (\pi u)$. However, for simplicity, we will write the results for the potential \eqref{quarticpot}. 

Note also that the results of Subsection~\ref{sub-2.1} apply to any $C^3$ potential $W$.
\end{rem}

Thanks to the theorem,
to establish Conjecture \ref{DGconj3} in $\R^n$, $n\geq 3$, it suffices to prove that hyperplanes are the only stable $s$-minimal cones in $\R^n\setminus \{0\}$ for $s\in (s_*,1)$. Recently, in~\cite{CCS} we have
proved that  planes are the only stable $s$-minimal cones in $\R^3\setminus \{0\}$ for $s\in (s_*,1)$, for some $s_*<1$. As a consequence of this last result, we obtain the two following corollaries. 
First, we can prove the stability conjecture in $\R^3$ for $s\in (0,1)$ sufficiently close to $1$.

\begin{cor}\label{corclas2}
 Conjecture \ref{DGconj3} holds in $\R^3$ 
---that is, every stable critical point of  $E_{s, \ep}$ in $\R^3$ is $1D$ for $s\in (s_*,1)$ with $s_*<1$ sufficiently close to 1. 
\end{cor}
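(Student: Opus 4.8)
\textbf{Proof proposal for Corollary \ref{corclas2}.}

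The plan is to combine Theorem \ref{thmclas} with the classification of stable $s$-minimal cones in $\R^3 \setminus \{0\}$ for $s$ close to $1$ obtained in \cite{CCS}. The key observation is that Theorem \ref{thmclas} reduces the stability conjecture (Conjecture \ref{DGconj3}) in a given dimension $n$ to a purely geometric statement about nonlocal minimal cones, namely that hyperplanes are the only stable $s$-minimal cones in $\R^n \setminus \{0\}$ in the sense of Definition \ref{defiwhatcones}.

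First I would recall that, in the sense of Definition \ref{defiwhatcones}, asserting that ``hyperplanes are the only stable $s$-minimal cones in $\R^3 \setminus \{0\}$'' amounts to the following: if $m \in \{3\}$ (the only integer with $3 \le m \le 3$) and $\Sigma \subset \R^m$ is a stable $s$-minimal cone in $\R^m \setminus \{0\}$ with $\partial\Sigma$ nonempty and smooth away from the origin, then $\partial\Sigma$ is a plane. But this is precisely the content of the main theorem of \cite{CCS}: there exists $s_* < 1$ such that for all $s \in (s_*, 1)$, planes are the only stable $s$-minimal cones in $\R^3 \setminus \{0\}$. Thus the hypothesis of Theorem \ref{thmclas} is satisfied for the pair $(n,s) = (3,s)$ whenever $s \in (s_*,1)$, with the same $s_*$ as in \cite{CCS}.

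Applying Theorem \ref{thmclas} with $n = 3$ then yields immediately that every stable solution of \eqref{theequation}--\eqref{quarticpot} in $\R^3$ is a 1D layer solution $u(x) = \phi(e\cdot x)$ for $s \in (s_*,1)$. By Remark \ref{otherW}, the same argument applies verbatim with the quartic potential replaced by any admissible $W$ (in particular the Peierls--Nabarro potential), so the statement holds for every stable critical point of $E_{s,\ep}$ in $\R^3$ with $s \in (s_*,1)$, which is exactly Conjecture \ref{DGconj3} in $\R^3$.

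There is essentially no obstacle here: the corollary is a direct logical consequence of Theorem \ref{thmclas} (whose proof is the substance of the paper) and the external input from \cite{CCS}. The only point requiring a word of care is bookkeeping of the constant $s_*$ — one must check that the $s_*$ appearing in Definition \ref{defiwhatcones}'s hypothesis for $n = 3$ can be taken equal to the dimensional constant from \cite{CCS}, which it can, since in dimension $3$ the dimension-reduction in Theorem \ref{thmclas} only ever requires the $m = 3$ (and trivially $m = 2$, handled unconditionally by Lemma \ref{hwioheoithe}) cases of Definition \ref{defiwhatcones}.
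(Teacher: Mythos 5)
Your proof is correct and follows essentially the same route as the paper: Theorem \ref{thmclas} with $n=3$ reduces the statement to the classification of (smooth away from $0$) stable $s$-minimal cones in $\R^3\setminus\{0\}$, which is exactly the main result of \cite{CCS}. The only small tangent is the appeal to Remark \ref{otherW} for the Peierls--Nabarro potential — the paper's $E_{s,\ep}$ and Conjecture \ref{DGconj3} are already stated for the quartic $W$, so this is extra and not needed — but it does no harm.
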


Using Corollary \ref{corclas2}, we establish the De Giorgi type conjecture in $\R^4$  for $s\in (0,1)$ sufficiently close to $1$.

\begin{cor}\label{corclas3}
Conjecture \ref{DGconj2b} holds in $\R^4$
---that is, every monotone solution of \eqref{theequation} in $\R^4$ is $1D$ for $s\in (s_*,1)$ with $s_*<1$ sufficiently close to 1.
\end{cor}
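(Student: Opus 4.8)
The plan is to deduce Corollary~\ref{corclas3} from Corollary~\ref{corclas2} (the stability conjecture in $\R^3$) together with the known $1$D classification of \emph{minimizers} of \eqref{theequation} in $\R^4$, running the general reduction recalled in Subsection~\ref{sub-1.4}. I would first fix $s_*<1$ large enough that, for every $s\in(s_*,1)$, both Corollary~\ref{corclas2} holds in $\R^3$ and the minimizer classification of Dipierro--Serra--Valdinoci~\cite{dPSV} applies in $\R^4$. Then I would argue in three steps.

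\emph{Step 1: pass to the limit profiles.} Let $u$ solve \eqref{theequation}--\eqref{quarticpot} in $\R^4$ with $\partial_{x_4}u>0$; as recalled in Subsection~\ref{sub-1.4}, monotonicity forces $u$ to be stable. Since $u$ is monotone in $x_4$ and $|u|<1$, the pointwise limits $u^{\pm}(x'):=\lim_{t\to\pm\infty}u(x',t)$ exist for all $x'\in\R^3$, and by the regularity theory for \eqref{theequation} the translates $u(x',x_4+t)$ converge, as $t\to\pm\infty$, to $u^{\pm}$ in $C^2_{\rm loc}$; hence $u^{\pm}$ solve \eqref{theequation} in $\R^3$ and depend only on $x'$. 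The key point here is that stability is inherited in the limit: inserting a fixed compactly supported test function into the second variation of $u(x',x_4+t)$ and letting $t\to\pm\infty$ shows that $u^{\pm}$ are themselves stable solutions of \eqref{theequation} in $\R^3$. Corollary~\ref{corclas2} then gives that $u^{+}$ and $u^{-}$ are $1$D.

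\emph{Step 2: upgrade $u$ to a minimizer.} Since $u^{\pm}$ are $1$D, Proposition~\ref{implication} yields that $u$ is a minimizer of the localized fractional Allen-Cahn energy in every bounded subset of $\R^4$. \emph{Step 3: classify minimizers.} For $s\in(s_*,1)$, every minimizer of \eqref{theequation}--\eqref{quarticpot} in $\R^4$ is $1$D --- this is contained in~\cite{dPSV}, and can also be recovered from the sharp energy and density estimates together with the convergence of blow-downs to minimizing $s$-minimal cones established in the present paper, combined with the flatness of minimizing $s$-minimal cones in $\R^4$ for $s$ near~$1$. Putting the three steps together, $u$ is $1$D, which is precisely Conjecture~\ref{DGconj2b} in~$\R^4$.

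The only genuinely delicate point inside this scheme is the transfer of stability to the limit profiles $u^{\pm}$ (and, more routinely, checking that they are nonconstant and solve the equation in $\R^3$); the two substantial inputs --- Corollary~\ref{corclas2}, which itself rests on Theorem~\ref{thmclas} and the cone classification of~\cite{CCS}, and the minimizer rigidity in $\R^4$ --- enter as black boxes. I would stress that one cannot shortcut this by applying Theorem~\ref{thmclas} directly with $n=4$, since that would require knowing that hyperplanes are the only stable $s$-minimal cones in $\R^4\setminus\{0\}$, which is not available; it is precisely the monotonicity hypothesis that lets one trade the open $\R^4$ stable-cone problem for the solved (for $s$ near $1$) $\R^3$ one plus minimizer rigidity.
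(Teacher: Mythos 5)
Your proof follows the same three-step reduction as the paper: deduce the $1$D structure of the limit profiles $u^{\pm}$ from Corollary~\ref{corclas2}, upgrade $u$ to a minimizer in $\R^4$ via Proposition~\ref{implication}, and conclude with the minimizer rigidity of~\cite{dPSV}. One small imprecision worth flagging: $u^{\pm}$ may be identically $\pm 1$, in which case $|u^{\pm}|\not<1$, so they do not solve \eqref{theequation} in the sense required and Corollary~\ref{corclas2} does not literally apply; however, Proposition~\ref{implication} is already phrased to allow the alternative ``increasing $1$D layer \emph{or} identically $\pm 1$,'' so the argument closes once this trivial dichotomy is stated explicitly (as the paper does).
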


We recall that in \cite{FS} Figalli and Serra proved Conjecture \ref{DGconj2b} in $\R^4$ for $s=1$.
\subsection{Definitions: localized energy,  minimizers, and stable solutions}\label{subsectionstable}
Thro-\-ughout the paper we consider solutions $u: \R^n\to (-1,1)$ to integro-differential equations 
\begin{equation}\label{equK}
L_K u+ W'(u )=0 \quad \mbox{in } \Omega,
\end{equation}
where  $W\in C^3(\R)$,  $\Omega\subset \R^n$ is an open set, and
\begin{equation}\label{LK}
L_Ku(x):= \int_{\R^n} \big(u(x)-u(\bar x)\big)K(x-\bar x)\,d\bar x.
\end{equation}

Although in some parts of the paper we will take $L_K = (-\Delta)^{s/2}$, i.e., $K(z) = (2-s) |z|^{-n-s}$,
some other parts will hold and will be stated for more general kernels~$K$. Throughout the article,
$K$ belongs to the ellipticity class $\mathcal L_2$ of Caffarelli and Silvestre~\cite{CS3}. That is, we assume $s\in (0,2)$,
\begin{equation}\label{L0}
 K(z) =K(-z),
\quad
\frac{(2-s)\lambda}{|z|^{n+s}} \le  K(z) \le \frac{(2-s)\Lambda}{|z|^{n+s}},
\end{equation}
and that $K$ is of class $C^2$ away from the origin with derivatives satisfying
\begin{equation}\label{L2}
  \max\bigl\{|z|\, |\partial_{e}  K(z)| \,,\, |z|^2 |\partial_{ee} K(z) |\bigr\} \le \frac{(2-s)\Lambda}{|z|^{n+s}},
\end{equation}
for all $z\in\R^n\setminus \{0\}$ and $e\in S^{n-1}$. Here $\lambda$ and $\Lambda$ are given positive constants.

By a solution of \eqref{equK}, we mean a bounded function in all of $\R^n$ which is, in $\Omega$, a $C^2$ strong solution. 
Note  that, since $K$ is in the class $\mathcal L_2$, $W\in C^3(\R)$, and $L_K$ is translation invariant, any measurable function $u:\R^n \to (-1,1)$ which solves \eqref{equK} in the sense of distributions, i.e.,  $\int uL_K \xi+ W'(u )\xi=0$ for all $\xi \in C^\infty_c(\Omega)$, belongs to $C^2(\Omega)$ and hence it is a strong solution. Indeed, this follows from the existing regularity theory for nonlocal elliptic equations, as explained in Appendix~\ref{app-C}.

We now recall the standard definitions of localized energy, minimizers, and stable solution in this more general framework.

We first point out that \eqref{equK} is the Euler-Lagrange equation of the localized energy functional
\begin{equation}\label{energyW}
 \mathcal E_{\Omega} (v) :=  \mathcal E_{\Omega}^{\rm Sob} (v)   + \mathcal E_{\Omega}^{\rm Pot} (v) ,
 \end{equation}
 where 
 \[
 \mathcal E_{\Omega}^{\rm Sob} (v) := \frac{1}{4}\iint_{(\R^n\times\R^n)\setminus (\Omega^c \times \Omega^c)} 
  |v(x)-v(\bar x)|^2 K(x-\bar x) \,dx\,d\bar x,   \quad \mathcal E_{\Omega}^{\rm Pot} (v) :=\int_{\Omega} W(v)\,dx.
  \]
Here and throughout the paper, $\Omega^c$ denotes the complement of $\Omega$ in $\R^n$. 
Note that $\mathcal E_{\Omega}(v)$ is finite for every function $v$ which is bounded in $\R^n$ and Lipschitz in a neighborhood of $\overline\Omega$, since $s\in (0,2)$.

We say that $u$ is a {\em minimizer} of  $\mathcal E_\Omega$ (or a minimizer of \eqref{equK} in $\Omega$)  if $\mathcal E_\Omega(u)<\infty$ and  $\mathcal E_\Omega(u)\leq \mathcal E_\Omega(v)$ for all $v$ satisfying $v=u$ outside of $\Omega$.

If $u$ is a minimizer of $\mathcal E_\Omega$, then
\begin{equation}\label{secondvar}
\frac{d^2}{dt^2} \Big|_{t=0}  \mathcal E_\Omega(u + t \xi)  \ge 0 \quad \mbox{for all Lipschitz function } \xi \text{ with } \xi =0 \text{ in } \Omega^c.
\end{equation}
This motivates the definition of stable solution.
Within this framework, $u$ is a stable solution if and only if $u$ is a critical point of $\mathcal E_\Omega$ for which
the second variation of $\mathcal E_\Omega$ at $u$ ---with respect to compactly supported perturbations vanishing outside $\Omega$--- is nonnegative. In other words:

\begin{defi}[\textbf{Stability}]\label{stability}
Given an open set $\Omega\subset\R^n$, we say that a solution $u$ of \eqref{equK} is {\em stable} if \eqref{secondvar} holds, or equivalently,
\begin{equation}\label{stable}
\frac{1}{2}\iint_{\R^n\times\R^n} |\xi(x)-\xi(\bar x)|^2 K(x-\bar x)\,dx\,d\bar x + \int_{\R^n} W''(u)\xi^2\,dx\geq 0 
\end{equation}
for all Lipschitz functions $\xi$ in $\R^n$ with compact support in $\R^n$ and such that $\xi=0$ in $\Omega^c$.
As a particular case, we say that $u$ is a stable solution of  $\eqref{theequation}$ if \eqref{stable} holds   with $K(z)=(2-s) |z|^{-n-s}$ for all Lipschitz functions $\xi$  with compact support in~$\R^n$.
\end{defi}

We conclude this section giving the notion of fractional $s$-perimeter, as introduced by Caffarelli, Roquejoffre, and Savin in \cite{CRS}. Let $s\in (0,1)$. Given a set $E$ in $\R^n$ and a bounded open set $\Omega\subset \R^n$, we define the fractional $s$-perimeter of $E$ in $\Omega$ as
\begin{equation}\label{def-per}
\hspace{-.181cm} P_{s}(E,\Omega):=\iint_{(E\cap \Omega) \times (\R^n\setminus E)}\frac{1}{|x-\bar x|^{n+s}}\,dx \,d\bar x + \iint_{(\Omega\setminus E)\times(E \setminus \Omega)}\frac{1}{|x-\bar x|^{n+s}}\,dx \,d\bar x.
\end{equation}

Observe that, in our notation, $P_s(E,\Omega)=2\mathcal E^{\rm Sob}_\Omega(\chi_E)$,  where $\chi_E$ denotes the characteristic function of the set $E$ and we choose here $K(z)=|z|^{-n-s}$.

\section{Further new results and organization of the paper} \label{sec-2}
Theorem \ref{thmclas} will follow as a combination of several new results ---which in addition are of independent interest--- on stable solutions to integro-differential semilinear equations. They are valid in any dimension $n\ge 2$ and include: 
\begin{itemize}
\item A sharp $BV$ estimate;
\item A bound of $\mathcal E^{\rm Pot}$ by $\mathcal E^{\rm Sob}$,  a cube decomposition, and an energy estimate;
\item A density estimate;
\item The convergence of blow-downs to stable $s$-minimal cones.
 \end{itemize}

These results are presented in detail in the next subsections.
 
\subsection{$BV$ estimate} \label{sub-2.1}

For $s\in (0,1)$, in Section \ref{sec-3} we establish $BV$ and Sobolev estimates for stable solutions to the semilinear equation $L_K(u)+W'(u)=0$ in a ball~$B_{2R}$. Their proof follow the techniques introduced in \cite{CSV} within the context of stable nonlocal $s$-minimal surfaces.  Our estimates are optimal in their dependence on the radius $R$
and universal in the sense that they are independent of the potential~$W$. Note that a universal $BV$ estimate is somehow surprising since it corresponds to a quantity of higher order than the one of the equation, which is $s$. 

By the same method and independently of our work, the $BV$ and Sobolev estimates \eqref{BVest} and \eqref{energy-est-eq} have also been established by Gui and Li~\cite[Proposition~1.7]{GL}. In the current paper we additionally keep track on how the constants in the estimates blow-up as $s \uparrow1$. Such information could be useful in some applications.\footnote{This was the case in our previous paper \cite{CCS}, where we needed the dependence on $s$ (as $s\uparrow 1$) in the $BV$ estimate from \cite{CSV} for stable $s$-minimal sets.}

\begin{thm}\label{BV}
Given $s_0>0$, let $n\ge 2$, $s\in(s_0,1)$, $W$ be any $C^3(\R)$ function, and $K$ satisfy \eqref{L0} and \eqref{L2}. Let $R>0$ and $u:\R^n\to (-1,1)$ be a stable solution of $L_K u+W'(u)=0$ in $B_{2R}\subset\R^n$.

Then,
\begin{equation}\label{BVest}
\int_{B_R}|\nabla u|\,dx  \le \frac{C}{1-s} R^{n-1},\end{equation}
where  $C$ is a constant which depends only on $n$, $\lambda$, $\Lambda$, and $s_0$. 
\end{thm}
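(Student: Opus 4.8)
The plan is to follow the stability-argument scheme of Caffarelli–Savin–Valdinoci \cite{CSV} adapted from nonlocal minimal surfaces to the semilinear setting, testing the stability inequality \eqref{stable} with a carefully chosen competitor built from the gradient of $u$. The heuristic is that $\partial_e u$ (for a fixed direction $e$) solves the linearized equation $L_K(\partial_e u) + W''(u)\,\partial_e u = 0$, which means $\partial_e u$ is a ``Jacobi field'': morally it lies in the kernel of the second-variation quadratic form. So the natural test function is $\xi = \partial_e u \cdot \eta$ for a cutoff $\eta$, and plugging this into \eqref{stable} should make the zeroth-order term $\int W''(u)\xi^2$ cancel against the bulk of the Dirichlet-type term, leaving only an error controlled by $\|\nabla\eta\|$ and a ``carré du champ'' remainder. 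Summing over $e = e_1,\dots,e_n$ turns $\sum_e (\partial_e u)^2$ into $|\nabla u|^2$, and after a Cauchy–Schwarz / rearrangement one arrives at a bound of the form
\[
\int_{B_R} \frac{|\nabla u|}{?}\;\lesssim\; \Big(\text{something involving }\|\nabla\eta\|_{L^2}\text{ and }[\,u\,]\Big),
\]
which, with $\eta$ a standard cutoff between $B_R$ and $B_{2R}$, yields the $R^{n-1}$ scaling.

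**First I would** record the linearized equation for $\partial_e u$ and the algebraic identity that, for any antisymmetric kernel $K$ and functions $v,\varphi$,
\[
\iint (v(x)-v(\bar x))(\varphi^2(x)v(x)-\varphi^2(\bar x)v(\bar x))K
= \iint |\varphi(x)v(x)-\varphi(\bar x)v(\bar x)|^2 K - \iint v(x)v(\bar x)|\varphi(x)-\varphi(\bar x)|^2 K,
\]
which is the nonlocal analogue of the pointwise identity $\nabla(\varphi^2 v)\cdot\nabla v = |\nabla(\varphi v)|^2 - v^2|\nabla\varphi|^2$. Using this with $v=\partial_e u$, $\varphi=\eta$ and the linearized equation to replace $W''(u)(\partial_e u)^2$, the stability inequality applied to $\xi = \eta\,\partial_e u$ collapses to
\[
0 \;\le\; \iint \partial_e u(x)\,\partial_e u(\bar x)\,|\eta(x)-\eta(\bar x)|^2\,K(x-\bar x)\,dx\,d\bar x.
\]
(One must first check $\xi=\eta\,\partial_e u$ is an admissible, compactly supported Lipschitz competitor — this uses interior $C^2$ regularity of $u$, available from Appendix~\ref{app-C}, and a truncation argument.) Then I would **sum over $e=e_1,\dots,e_n$**, use $\sum_e \partial_e u(x)\partial_e u(\bar x) = \nabla u(x)\cdot\nabla u(\bar x)$, and for the right-hand side split into the diagonal-ish region $|x-\bar x|\lesssim$ (distance to the cutoff transition) where one keeps things exact, and the far region which is controlled by the kernel decay; the key point is that $\nabla u(x)\cdot\nabla u(\bar x) \le \tfrac12(|\nabla u(x)|^2+|\nabla u(\bar x)|^2)$, turning the double integral into a single integral $\int_{B_{2R}} |\nabla u(x)|^2 \big(\int |\eta(x)-\eta(\bar x)|^2 K(x-\bar x)\,d\bar x\big)\,dx$.

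**The crux** is then to upgrade this $L^2$-type bound on $\nabla u$ into the $L^1$ (that is, $BV$) estimate \eqref{BVest} with the sharp power $R^{n-1}$, and here I expect the main obstacle. The naive route gives $\int_{B_R}|\nabla u|^2 \lesssim R^{n-s}$, which after Cauchy–Schwarz only yields $\int_{B_R}|\nabla u| \lesssim R^{(n+(n-s))/2}$, a worse power than $R^{n-1}$ when $s<1$. To get the optimal power one needs the refined ``competitor'' argument from \cite{CSV}: instead of a single cutoff, one uses a logarithmic-type cutoff (or a family of cutoffs at dyadic scales and sums), exploiting that the far-interaction integral $\int |\eta(x)-\eta(\bar x)|^2 K$ is itself only of size $\sim R^{-s}$ times a slowly varying factor, and one plays off the $L^2$ bound against the trivial bound $|\nabla u|\le \|W'\|_\infty$-type interior gradient estimates to interpolate down to $L^1$ with the right scaling. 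Concretely I would (i) prove the scale-invariant estimate $\int_{B_R}|\nabla u|^2 \le \frac{C}{1-s}R^{n-s}$ from the stability computation above, then (ii) combine it with interior estimates $\sup_{B_R}|\nabla u| \le C$ (universal, from the equation and $|u|<1$, via Appendix~\ref{app-C}) — actually for the $BV$ bound one wants to avoid losing the $s$-dependence, so I would instead (ii') follow \cite{CSV} and test stability against $\eta\,\partial_e u$ with $\eta$ chosen so that $\int|\eta(x)-\eta(\bar x)|^2K(x-\bar x)\,d\bar x$ is uniformly bounded by $C/(1-s)$ times $R^{-?}$ but arranged to directly produce $\int|\nabla u|$ rather than $\int|\nabla u|^2$ — this is done by choosing the perturbation to involve $|\nabla u|$ to the first power via a clever normalization, exactly the technical heart of the \cite{CSV} $BV$ estimate. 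Tracking how every constant depends on $s_0$ and blows up like $(1-s)^{-1}$ as $s\uparrow 1$ requires keeping the factor $2-s$ from \eqref{L0} explicit throughout and noting the far-integral $\int_{|z|>r}(2-s)|z|^{-n-s}\,dz = \frac{(2-s)\omega_{n-1}}{s}r^{-s} \to \omega_{n-1}r^{-1}$ stays bounded, while it is the transition-region integral $\int_{|z|<R}|z|^{2-n-s}(2-s)\,dz \sim \frac{(2-s)}{2-s}R^{2-s} = R^{2-s}$... — here the $(1-s)^{-1}$ actually enters through the need to sum $\sim \log$-many dyadic scales or through a boundary term, and I would isolate that step and state the dependence as $C/(1-s)$ as in the theorem, deferring the fully careful bookkeeping to Section~\ref{sec-3}.
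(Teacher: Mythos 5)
Your proposed route — testing the stability inequality with $\xi=\eta\,\partial_e u$ and invoking the linearized equation — is the fractional Sternberg--Zumbrun device. You correctly derive the resulting positivity statement
\[
0 \;\le\; \iint \nabla u(x)\cdot\nabla u(\bar x)\,|\eta(x)-\eta(\bar x)|^2\,K(x-\bar x)\,dx\,d\bar x,
\]
and you also correctly diagnose that this yields only an $L^2$-type bound on $\nabla u$, which loses against the target $R^{n-1}$ scaling. But your proposed fix — ``test stability against $\eta\,\partial_e u$ with a clever normalization so as to produce $\int|\nabla u|$ directly, this being the technical heart of the CSV $BV$ estimate'' — misidentifies what CSV actually do. Their mechanism, which the paper follows closely, never tests stability with $\eta\,\partial_e u$. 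It works as follows: one perturbs the domain by a small translation flow $\Psi_t(y)=y+t\varphi(y)\boldsymbol v$ and compares the energy of $u$ with that of $u_t := u\circ\Psi_t^{-1}$. The key algebraic input is the rearrangement identity
\[
\mathcal E(M_t)+\mathcal E(m_t)\;=\;\mathcal E(u)+\mathcal E(u_t)\;-\;\tfrac12\iint \big(u(x)-u_t(x)\big)_+\big(u(\bar x)-u_t(\bar x)\big)_-\,K(x-\bar x)\,dx\,d\bar x,
\]
where $M_t=\max(u,u_t)$, $m_t=\min(u,u_t)$. Since $M_t$ and $m_t$ are admissible competitors, stability bounds $\mathcal E(M_t)+\mathcal E(m_t)$ from below (modulo an $o(t^2)$ error), while an elementary expansion bounds $\mathcal E(u_t)+\mathcal E(u_{-t})-2\mathcal E(u)$ from above by $Ct^2\mathcal E^{\rm Sob}(u)$. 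Playing these off against each other isolates the \emph{interaction term} on the left; dividing by $t^2$ and sending $t\to0$ gives, for each direction $\boldsymbol v$,
\[
\big\|(\partial_{\boldsymbol v}u)_+\big\|_{L^1(B_1)}\,\big\|(\partial_{\boldsymbol v}u)_-\big\|_{L^1(B_1)}\;\lesssim\;\mathcal E^{\rm Sob}_{B_4}(u).
\]
This \emph{product} of $L^1$ norms, combined with the trivial observation that $|\int_{B_1}\partial_{\boldsymbol v}u|\le 2|B_1^{(n-1)}|$ (because $|u|\le1$), is what delivers $\int_{B_1}|\nabla u| \lesssim 1+\sqrt{\mathcal E^{\rm Sob}_{B_4}(u)}$ --- that is, $L^1$ control directly, not $L^2$. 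The quadratic test-function approach cannot produce this: the interaction term only appears because one compares $u$ against $u_t$ via max/min, not because of a normalization of $\eta\partial_e u$.

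Two further concrete ingredients of the actual proof are absent from your sketch: (a) the interpolation inequality $(1-s)\mathcal E^{\rm Sob}_{B_4}(u)\le C\big(1+\int_{B_4}|\nabla u|\big)$, obtained by writing $|u(x)-u(\bar x)|\le|x-\bar x|\int_0^1 V(x+t(\bar x-x))\,dt$ and using that $|u|\le1$; this is where the single factor $(1-s)^{-1}$ comes in, not from a logarithmic cutoff or a dyadic sum. And (b) Simon's abstract absorption lemma (Lemma~\ref{lem_abstract}) to close the self-improving estimate $\int_{B_1}|\nabla u|\le C_\delta(1-s)^{-1}+\delta\int_{B_4}|\nabla u|$: it is this abstract lemma, not the $\sup|\nabla u|\le C$ interior bound you invoke, that absorbs the tail term. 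Incidentally, the Sternberg--Zumbrun inequality you derive is not wasted --- it is used verbatim in the paper, but in Section~\ref{sec-7} (the proof of Lemma~\ref{hwioheoithe}, inequality \eqref{I2-I3}), for a different purpose.
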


{
Two remarks on this result are in order.
\begin{rem}\label{remscaling}We emphasize that, in Theorem \ref{BV}, the constant $C$ is completely independent of the potential $W$, which is an arbitrary $C^3$ function ---in particular it is not necessarily a double-well potential.
As a consequence, the estimate  \eqref{BVest} is scale invariant: the estimate for any $R>0$ follows from the case $R=1$  applied to the rescaled function $\tilde u = u(R\,\cdot\,)$, which solves $L_K \tilde u+\widetilde W'(\tilde u)=0$ in $B_{2}$, for $\widetilde W := R^sW$.
\end{rem}

\begin{rem} In the case of  solutions $u_\ep :\R^n\to \R$ of the fractional  Allen-Cahn equation  \eqref{fract} (recall that $\ep>0$ is a small parameter), Theorem \ref{BV} reads:
\[
u_\ep\mbox{ stable in $B_2$} \quad \Rightarrow \quad \int_{B_1}|\nabla u_\ep|\,dx  \le \frac{C}{1-s}.
\]
We emphasize that the constant in the right-hand side of the previous estimate is independent of $\ep$. We do not expect the blow-up rate  $(1-s)^{-1}$ as $s\uparrow 1$ to be sharp, but for such uniform in $\ep$ estimate to hold, the right-hand side needs to blow up at least at the rate $(1-s)^{-1/2}$ ---we actually expect the power $-{1/2}$ to be the sharp rate. 
Indeed, by the argument sketched next, for all $n\ge 2$ and  $s\in (0,1)$ there exists $\ep = \ep(s)>0$ and a stable solution $u_\ep: \R^n\to \R$ of \eqref{fract}  in $B_2$ satisfying $\int_{B_1}|\nabla u_\ep|\,dx  \ge c(1-s)^{-1/2}$, where $c$ stays bounded away from zero as $s\uparrow 1$. 

To show this, consider the set  \[E_* : = \bigcup_{k\in \mathbb Z} \,\big\{ 2kC_*(1-s)^{1/2} \le x_n \le (2k+1)C_*(1-s)^{1/2}\big\},\] whose boundary consists of parallel hyperplanes arranged in a $C_*(1-s)^{1/2}$-periodic fashion.  Let us prove first that $E_*$ is stable $s$-minimal set in $(-1,1)^n\supset B_1$, provided $C_*$ is chosen large enough.
To verify this, we can use the stability criterium found in \cite{DDPW,FFFMM} involving the nonlocal analogue $c_s^2$ of the squared norm of the second fundamental form of $\partial E_*$. To this end, one first checks that $c_s^2$ behaves as the quantity $(C_*(1-s)^{1/2})^{-1-s}$.
Hence, the stability inequality in $(-1,1)^n$ reads 
\begin{equation}\label{wheiothewohw}
\sum_{j}  C_*^{-1-s} \int_{\Gamma_j}\eta^2 d\mathcal H^{n-1}_x \le  C(1-s)^{\frac{1+s}{2}}\sum_{i,j} \int_{\Gamma_i}  \int_{\Gamma_j} \frac{\big|\eta(x)-\eta(\bar{x})\big|^2}{|x-\bar{x}|^{n+s}}d\mathcal H^{n-1}_xd\mathcal H^{n-1}_{\bar{x}}, 
\end{equation}
for all $\eta\in C^{0,1}_c([-1,1]^n)$ where $\Gamma_j : = \{x_n = j C_*(1-s)^{1/2} \}$.

By the fractional Poincar\' e inequality for the $H^{(1+s)/2}$ seminorm in $[-1,1]^{n-1}$ (applied to the functions $\eta_{|_{\Gamma_j}} $, $\Gamma_j \cong  \R^{n-1}$) we have
\[
\int_{\Gamma_j}\eta^2 d\mathcal H^{n-1}_x \le  C (1-s) \int_{\Gamma_j}  \int_{\Gamma_j} \frac{\big|\eta(x)-\eta(\bar{x})\big|^2}{|x-\bar{x}|^{n+s}}d\mathcal H^{n-1}_xd\mathcal H^{n-1}_{\bar{x}},
\]
for all $\eta\in C^{0,1}_c([-1,1]^n)$, where $C$ is uniformly bounded as $s\uparrow 1$.  Therefore, since ${(1-s)}/{(1-s)^{\frac{1+s}{2}}} \to 1$ as $s \uparrow 1$, \eqref{wheiothewohw} will hold provided that we choose $C_*$ large enough. This makes $E_*$ stable. 

At the same time, it is known \cite{Cetal} that  the fractional Allen-Cahn equation   \eqref{fract}  admits periodic 1D solutions for all large enough periods (relatively to $\ep$). Hence, given $s\in (0,1)$, for every  $0<\ep \le c(1-s)^{1/2}$  there exists a periodic 1D solution $u_{*,\ep}(x_n)$ of \eqref{fract} such that $\{u_{*,\ep}=0\} = \partial E_*$. 
Now, as $\ep\downarrow 0$ we have, by construction,  $u_{*,\ep} \to  \chi_{E_*} - \chi_{E_*^c} $ and hence one can show that, for $\ep$ small enough depending on $s$,   
$u_{*,\ep}$ inherits from $E_*$ the stability in $(-1,1)^n$.  Finally, note that for such construction  $\int_{B_1}|\nabla u_\ep|\,dx$ blows up at the rate $(1-s)^{-1/2}$ (by the coarea formula), as claimed.
\end{rem}
}

As a simple consequence of Theorem \ref{BV} we deduce the following estimate for the Sobolev part of the energy  of stable solutions to \eqref{equK}.
\begin{cor}\label{energy-est}
Given $s_0>0$, let $n\ge 2$, $s\in(s_0,1)$, $W$ be any $C^3(\R)$ function, and $K$ satisfy \eqref{L0} and \eqref{L2}.  Let $R>0$ and $u:\R^n\to (-1,1)$  be a stable solution of $L_K u+W'(u)=0$  in $B_{2R}\subset \R^n$. 

Then,
\begin{equation}\label{energy-est-eq}
\mathcal E^{\rm Sob}_{B_R}(u)\le  \frac{C}{(1-s)^2} R^{n-s},
 \end{equation}
where  $C$ is a constant which depends only on $n$, $\lambda$, $ \Lambda$, and $s_0$. 
\end{cor}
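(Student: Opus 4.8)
The plan is to reduce the Gagliardo double integral in $\mathcal E^{\rm Sob}_{B_R}(u)$ to a $W^{s,1}$-type quantity and then interpolate between the $BV$ bound of Theorem~\ref{BV} and the trivial bound $|u|<1$.

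First, since $|u|<1$ we have $|u(x)-u(\bar x)|^2\le 2|u(x)-u(\bar x)|$, and since $K(z)\le (2-s)\Lambda |z|^{-n-s}\le 2\Lambda|z|^{-n-s}$ by \eqref{L0}; using also the symmetry of the integrand and that on the domain of $\mathcal E^{\rm Sob}_{B_R}$ at least one variable lies in $B_R$,
\[
\mathcal E^{\rm Sob}_{B_R}(u)\le 2\Lambda\int_{B_R}\left(\int_{\R^n}\frac{|u(x)-u(\bar x)|}{|x-\bar x|^{n+s}}\,d\bar x\right)dx .
\]
I would then split the inner integral into a far part $\{\bar x\notin B_{3R/2}\}$ and a near part $\{\bar x\in B_{3R/2}\}$. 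For the far part, $x\in B_R$ forces $|x-\bar x|>R/2$, so bounding $|u(x)-u(\bar x)|\le 2$ and integrating $|z|^{-n-s}$ over $\{|z|>R/2\}$ gives a contribution $\le C_n s^{-1}R^{n-s}\le C_n s_0^{-1}R^{n-s}$.

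For the near part, $x\in B_R$ and $\bar x\in B_{3R/2}$, so the segment $[x,\bar x]$ lies in the convex ball $B_{3R/2}\subset B_{2R}$, where $u\in C^2$; hence $|u(x)-u(\bar x)|\le |x-\bar x|\int_0^1|\nabla u(x+t(\bar x-x))|\,dt$. Substituting $z=\bar x-x$, using Tonelli to bring $\int_0^1 dt$ outside, and for each fixed $t$ and $z$ changing variables $x\mapsto w=x+tz$ — here one uses that $w=(1-t)x+t(x+z)$ is a convex combination of a point of $B_R$ and a point of $B_{3R/2}$, hence lies in $B_{3R/2}$, while $z$ ranges in $B_{5R/2}$ — the near part is bounded by
\[
\int_{B_{5R/2}}|z|^{1-n-s}\,dz\cdot\int_{B_{3R/2}}|\nabla u|\,dw=\frac{C_n}{1-s}R^{1-s}\int_{B_{3R/2}}|\nabla u| .
\]

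It remains to estimate $\int_{B_{3R/2}}|\nabla u|$ by Theorem~\ref{BV}; this is the only delicate point, since a direct application would only control $\int_{B_R}|\nabla u|$. The key observation is that stability of $u$ in $B_{2R}$ implies stability in every sub-ball $B_\rho(y)\subset B_{2R}$: any Lipschitz $\xi$ vanishing outside $B_\rho(y)$ also vanishes outside $B_{2R}$, so \eqref{stable} for $B_{2R}$ gives it for $B_\rho(y)$. I would then cover $B_{3R/2}$ by a dimensional number $N=N(n)$ of balls $B_{R/2}(y_i)$ with $y_i\in B_R$, so that $B_{R}(y_i)=B_{2\cdot(R/2)}(y_i)\subset B_{2R}$; applying Theorem~\ref{BV} (in its translation-invariant version, using translation invariance of $L_K$) on each of these and summing yields $\int_{B_{3R/2}}|\nabla u|\le \frac{C}{1-s}R^{n-1}$ with $C=C(n,\lambda,\Lambda,s_0)$. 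Plugging this into the near-part bound gives $\frac{C}{(1-s)^2}R^{n-s}$, which together with the far-part estimate proves \eqref{energy-est-eq}. Alternatively, one could first normalize $R=1$ via the scale invariance noted in Remark~\ref{remscaling}. The main obstacle is thus entirely this mild mismatch between the ball $B_R$ in $\mathcal E^{\rm Sob}_{B_R}$ and the slightly larger ball on which $BV$ control is needed; the sub-ball stability plus covering removes it at the cost of only a dimensional constant, with no loss in the power of $(1-s)^{-1}$.
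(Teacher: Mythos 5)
Your proof is correct in substance and rests on the same core idea as the paper's: bound the Gagliardo double integral by interpolating between the $L^\infty$ bound $|u|<1$ and the $BV$ control of Theorem~\ref{BV}, via the fundamental theorem of calculus and Tonelli. The paper does this once, cleanly, after rescaling to $R=1$: it cites the interpolation inequality $(1-s)\mathcal E^{\rm Sob}_{B_1}(u_R)\le C\bigl(1+\int_{B_1}|\nabla u_R|\bigr)$ from the proof of Theorem~\ref{BV} (formula \eqref{key2}), applied in the same ball $B_1$ where the $BV$ estimate is available, and so needs no covering. The key point there is that in the decomposition $B_1\times B_1$ vs.\ $B_1\times B_1^c$, the ``far'' part has $|x-\bar x|$ small near $\partial B_1$, and the factor $(1-s)$ coming from $K\le(2-s)\Lambda|z|^{-n-s}$ exactly cancels the $(1-s)^{-1}$ from the boundary-layer integral $\int_{B_1}(1-|x|)^{-s}dx$. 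Your decomposition instead splits $\bar x$ at $B_{3R/2}$, so your far part is genuinely far ($|x-\bar x|>R/2$) and trivially $O(R^{n-s})$, but the price is that the near part involves $\int_{B_{3R/2}}|\nabla u|$, a slightly larger ball than $B_R$ on which stability is only given in the double, forcing the covering step. Both routes give the same power $(1-s)^{-2}$; yours trades the paper's boundary-layer cancellation for a covering argument, so neither is strictly simpler.

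One small technical point in the covering: the family $\{B_{R/2}(y_i):y_i\in B_R\}$ does not cover $B_{3R/2}$ with a uniformly bounded number of balls, because for $|x|\uparrow 3R/2$ the admissible centers $y\in B_R$ with $|x-y|<R/2$ degenerate to a single boundary point. The fix is elementary: cover $\overline{B_{3R/2}}$ by a dimensional number of balls $B_{R/4}(y_i)$ with $y_i\in\overline{B_{3R/2}}$ (a maximal $R/4$-net), note that then $B_{R/2}(y_i)\subset B_{2R}$ so $u$ is stable in $B_{R/2}(y_i)$, and apply the translated Theorem~\ref{BV} with radius $R/4$. This yields $\int_{B_{3R/2}}|\nabla u|\le \frac{C(n,\lambda,\Lambda,s_0)}{1-s}R^{n-1}$ as you need, and the rest of the argument goes through unchanged.
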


\subsection{Control of $\mathcal E^\text{Pot }$ by $\mathcal E^\text{Sob}$}\label{pot0} 
 In Section  \ref{sec-4}  we need to assume $W$ to be a double-well potential. For simplicity we take $W$ to be the quartic potential \eqref{quarticpot}, although all proofs apply to the more general class of double-well potentials described in Remark~\ref{otherW}.

The following result states that $\mathcal E^{\rm Sob}$ in $B_R$ controls $\mathcal E^{\rm Pot}$ in a slightly smaller ball.
Here we can take $s\in (0,2]$ and thus we include the case of the Laplacian. To our knowledge, this result is new even for the Laplacian.{\footnote{For $s=2$ and in the particular case of stable solutions in all of $\R^n$, Villegas \cite[Proposition~1.5]{Vill} has recently proved that the quotient of Sobolev and potential energies in $B_R$ tends to $1$ as $R\uparrow \infty$.}}
To state our result for $s=2$,
we recall that if $K_s(z) \asymp (2-s)|z|^{-n-s}$  (where  $X\asymp Y$ here means $X\le CY$ and $Y \le CX$) we have $L_{K_s} v \to  a_{ij}\partial_{ij} v$ for all $v\in C^2_c(\R^n)$ as $s\uparrow 2$ (see the computations in \cite[Section 6] {CS-reg}), where $a_{ij}  \asymp {\rm Id}$  (that is $a_{ij}\partial_{ij}$ is some translation invariant second order elliptic operator). 
In the following proposition, $L_{K}$ is meant to be, when $s=2$, this limiting second order elliptic operator.

\begin{prop}\label{DircontrolsPot2}
Given $s_0>0$, let $n\ge 2$, $s\in(s_0,2]$,  $W(u)=\frac 1 4 (1-u^2)^2$, and $K$ satisfy \eqref{L0} and \eqref{L2}. Let $R>0$ and $u:\R^n\rightarrow (-1,1)$ be a stable solution of $L_{K}u + W'(u)=0$ in~$B_{2R}$.  

Then, 
\begin{equation}\label{Sobcontrols} \mathcal E^{\rm Pot}_{B_{R-R_0}}(u) \le C\mathcal E^{\rm Sob}_{B_R}(u)\end{equation}
whenever $R> R_0$, where $C$ and $R_0$ are positive constants which depend only on $n$, $\lambda$, $\Lambda$, and $s_0$.
\end{prop}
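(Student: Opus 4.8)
The plan is to prove the contrapositive-flavored statement: if $\mathcal E^{\rm Sob}_{B_R}(u)$ is small, then $u$ is close to one of the two wells on most of $B_{R-R_0}$, which forces $\mathcal E^{\rm Pot}_{B_{R-R_0}}(u)$ to be even smaller. The scaling $\tilde u = u(R\,\cdot)$ (as in Remark~\ref{remscaling}, now with $\widetilde W = R^sW$) does not leave $W$ fixed, so instead I would work directly at scale $R$ and exploit the universal gradient control. First I would recall that, by interior elliptic estimates for $L_K u = -W'(u)$ with $|u|\le 1$ (Appendix~\ref{app-C}), $u$ is uniformly $C^{1}$ (in fact $C^{2,\alpha}$) on $B_{3R/2}$ with bounds depending only on $n,\lambda,\Lambda,s_0$; in particular $\|\nabla u\|_{L^\infty(B_{3R/2})}\le C_0$. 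Combined with Theorem~\ref{BV}, this gives for every $\rho\in(0,R)$ and every hyperplane slicing that the measure of the ``transition region'' $T_\delta:=\{x\in B_\rho : |u(x)|\le 1-\delta\}$ is controlled: by the coarea formula and the Lipschitz bound, $|T_\delta|\le \frac{C}{\delta}\cdot C_0^{n-1}\cdot\#\{\text{level sets crossing}\}$—more cleanly, $\int_{B_\rho}(1-u^2)\,|\nabla u|\,dx \le C_0 \int_{B_\rho}|\nabla u|\,dx$ is not quite what we want, so the key is rather a one-dimensional clearing-out along gradient flow lines.

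The cleaner route is a pointwise/ODE comparison. Fix a point $x_0\in B_{R-R_0}$ and consider the gradient flow line (or simply a segment in the direction $\nabla u(x_0)/|\nabla u(x_0)|$); along a line $\ell$ through $x_0$ the restriction $g(t):=u(x_0+te)$ satisfies $g'' = $ (second derivative), and using the equation plus the uniform $C^2$ bound one shows that once $|g(t)|$ leaves a neighborhood of a well it must travel a definite distance. More robustly, I would use the following soft argument: since $W(u)=\frac14(1-u^2)^2$ and $W'(\pm1)=0$, $W''(\pm1)=2>0$, there is $\delta_0>0$ and $c_0>0$ with $W(t)\le c_0\,\big(W(t)^{1/2}+|t\mp1|\big)\cdot(\text{something})$… — actually the effective inequality is $W(t) \le C\,\mathrm{dist}(t,\{-1,1\})^2$ and $\mathrm{dist}(t,\{-1,1\})\le C\,W(t)^{1/2}$ uniformly on $[-1,1]$, so $\mathcal E^{\rm Pot}$ is comparable to $\int (1-u^2)^2$. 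Then the crucial claim is
\[
\int_{B_{R-R_0}} (1-u^2)^2\,dx \;\le\; C\iint_{(\R^n\times\R^n)\setminus(\Omega^c\times\Omega^c)} |u(x)-u(\bar x)|^2 K(x-\bar x)\,dx\,d\bar x
\]
with $\Omega = B_R$. To get this I would cover $B_{R-R_0}$ by unit cubes $Q_j$; on each $Q_j$ either $u$ is within $\delta_0$ of a single well on (say) $99\%$ of $Q_j$, in which case the potential contribution from $Q_j$ is bounded by $C$ times the Sobolev energy measured between $Q_j$ and its neighbors (because to be near $+1$ on $Q_j$ but the global Sobolev energy to be tiny forces $u$ near $+1$ on an expanding neighborhood, and the potential is quadratically small there); or $u$ oscillates by at least $\delta_0$ on a set of definite measure inside $Q_j$, in which case $Q_j$ carries a definite amount of Sobolev energy $\ge c>0$ and also at most $C$ of potential energy (trivially, since $|u|\le1$), so $\mathcal E^{\rm Pot}(Q_j)\le \frac{C}{c}\,\mathcal E^{\rm Sob}(Q_j)$. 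Summing over $j$, using that the kernel $K$ is integrable at infinity ($s>0$) so cross-terms between far cubes are summable, yields \eqref{Sobcontrols} with $R_0$ the size needed for the ``expanding neighborhood'' argument near $\partial B_R$.

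The main obstacle, and the step I expect to need the most care, is the ``near-well'' case: showing that if $u$ is, say, $\ge 1-\delta_0$ on a fixed fraction of a cube $Q_j\subset B_{R-R_0}$ while the total Sobolev energy in $B_R$ is small, then in fact $\int_{Q_j} W(u)\,dx\le C\,\mathcal E^{\rm Sob}_{B_R}(u)$. This is where the nonlocality genuinely helps but is delicate: one must propagate closeness to the well across cubes via the kernel bound $K(z)\ge (2-s)\lambda|z|^{-n-s}$, and control how the error accumulates; the Lipschitz bound on $u$ (from Theorem~\ref{BV}-type estimates and interior regularity) is what keeps the ``transition set'' from being everywhere thin without being empty. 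The role of $R_0$ is precisely to have enough room between $B_{R-R_0}$ and $\partial B_R=\partial\Omega$ so that the nonlocal tail $\iint_{B_{R-R_0}\times B_R^c}$ is already counted and the propagation argument never needs information from outside $\Omega$. For $s=2$ the same scheme works with $L_K$ the limiting local operator and the propagation step becomes the classical one (closeness to a well on a large ball plus small Dirichlet energy forces smallness of $W(u)$), which is where the footnoted result of Villegas fits as a strengthened version in the global case.
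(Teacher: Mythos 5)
Your cube decomposition and the two-case structure are the right framework (and essentially what the paper does), but the dichotomy as you state it---``either $u$ is within $\delta_0$ of a well on $99\%$ of $Q_j$, or $u$ oscillates by $\delta_0$ on a set of definite measure''---is false as a statement about arbitrary functions. It omits the case where $u$ sits near an intermediate value (think $u\equiv \tfrac12$ on all of $Q_j$): there the potential energy per cube is bounded below, yet the Sobolev energy can vanish, so your cube-by-cube bound $\mathcal E^{\rm Pot}(Q_j)\le \tfrac{C}{c}\mathcal E^{\rm Sob}(Q_j)$ fails. Ruling out this middle regime is precisely where \emph{stability} must enter, and it is the crux of the proof; your proposal never invokes the stability inequality. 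The paper's Lemma \ref{tuttookintype1} (Step 1) does it as follows: if $|u|\le c_0$ throughout a cube $Q$ of diameter $D$, then $-W''(u)\ge\nu_0>0$ there, and a scaled cutoff $\xi$ supported in $Q$ plugged into \eqref{stable} gives $\nu_0\int\xi^2 \lesssim (2-s)\iint |\xi(x)-\xi(\bar x)|^2|x-\bar x|^{-n-s}\,dx\,d\bar x$; the left side scales like $D^n$ and the right like $D^{n-s}$, a contradiction for $D$ large. That forces a definite oscillation ($\ge c_0/2$) whenever $Q$ touches $\{|u|\le c_0/2\}$, which is what justifies the ``definite Sobolev energy'' you want.

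Your near-well case is also genuinely nontrivial, and the proposal only acknowledges the difficulty without resolving it. The paper splits this into two lemmas. Lemma \ref{notype1impliestype2} uses the equation and a barrier (not stability): if $u>c_0/2$ throughout a large cube $Q$, then $L_K u = -W'(u)\ge\nu_1$ wherever $u\le 1-c_0$, and comparison with a scaled supersolution gives $u\ge 1-c_0$ on the concentric half-cube $Q'$---so ``not touching $\{|u|\le c_0/2\}$'' upgrades to ``uniformly near a well'' on $Q'$, closing the dichotomy without any ``$99\%$'' hedge. Lemma \ref{tuttookintype2} then treats cubes where $1-|u|\le c_0$ throughout, via $v:=1\mp u\ge 0$ globally, $v\le c_0$ in $Q$, and $L_K v\le -\tfrac12 v$ in $Q$, followed by a nonlocal Poincar\'e/duality argument with a test function concentrated in $Q'$; this yields $\int_Q v^2 \lesssim (2-s)\int_Q\int_{\R^n}\tfrac{|v(x)-v(\bar x)|^2}{|x-\bar x|^{n+s}}\,dx\,d\bar x$, and hence the potential bound. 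Finally, a smaller point: the Proposition is an unconditional inequality, not a smallness statement, and your ``if $\mathcal E^{\rm Sob}_{B_R}$ is small then\dots'' framing is not needed and does not obviously normalize, since rescaling replaces $W$ by $R^sW$ and so changes the potential.
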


To prove Proposition \ref{DircontrolsPot2} we cover the ball $B_{R-R_0}$ by cubes of size comparable to $R_0$ and with controlled overlapping, and we show that the contribution of each of these cubes to the total Sobolev energy  controls  its contribution to the potential energy.  
We believe that this cube decomposition, which holds also for the classical Allen-Cahn equation, may be of independent interest.

In Appendix~\ref{app-B}, we present an alternative proof ---only in the case $s=2$, for brevity---  of a weaker version of the estimate of Proposition \ref{DircontrolsPot2} in which an error term appears on the  right-hand side of \eqref{Sobcontrols}. The error is of order $R^{n-s}$ when $s\in(0,1)$, $R^{n-1}$ for $s\in(1,2]$, and $R^{n-1}\log R$ for $s=1$. Even if the result is weaker, we give this alternative proof since  it is  new even for the Laplacian (to the best of our knowledge), very short, and interesting in itself. It relies on taking a suitable test function on the stability inequality and on elementary estimates.

Combining Proposition  \ref{DircontrolsPot2}  with Corollary \ref{energy-est} we obtain the following bound for the full energy of stable solutions in a ball.
\begin{thm}\label{thm1}
Given $s_0>0$, let  $n\ge 2$, $s\in(s_0,1)$, $W(u)=\frac 1 4 (1-u^2)^2$, and $K$ satisfy \eqref{L0} and \eqref{L2}. Let $R\geq 1$ and $u:\R^n\rightarrow (-1,1)$ be a stable solution of $L_K u+W'(u)=0$ in $B_{2R}\subset\R^n$. 

Then,
\begin{equation}\label{egest}
\mathcal E_{B_R}(u) \le \frac{C}{(1-s)^2}R^{n-s},
\end{equation}
where  $C$ is a constant which depends only on $n$, $\lambda$, $\Lambda$, and $s_0$.
\end{thm}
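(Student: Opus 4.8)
The plan is to derive \eqref{egest} by adding the two bounds already at our disposal: Corollary~\ref{energy-est}, which controls the Sobolev energy $\mathcal E^{\rm Sob}_{B_R}(u)$ by $C(1-s)^{-2}R^{n-s}$, and Proposition~\ref{DircontrolsPot2}, which controls the potential energy by the Sobolev one. Two minor gaps must be bridged: Proposition~\ref{DircontrolsPot2} bounds $\mathcal E^{\rm Pot}$ only in the smaller ball $B_{R-R_0}$, and it is stated only for $R>R_0$, so the range $1\le R\le R_0$ needs a separate (trivial) treatment.

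In the main regime $R>R_0$, I would write $\mathcal E_{B_R}(u)=\mathcal E^{\rm Sob}_{B_R}(u)+\mathcal E^{\rm Pot}_{B_R}(u)$ and split the ball as $B_R=B_{R-R_0}\cup(B_R\setminus B_{R-R_0})$. On the annulus I use the crude bound $0\le W\le\frac14$ on $(-1,1)$ and $|B_R\setminus B_{R-R_0}|\le C(n)R_0R^{n-1}$, giving $\mathcal E^{\rm Pot}_{B_R}(u)\le\mathcal E^{\rm Pot}_{B_{R-R_0}}(u)+CR^{n-1}$. Then Proposition~\ref{DircontrolsPot2} yields $\mathcal E^{\rm Pot}_{B_{R-R_0}}(u)\le C\,\mathcal E^{\rm Sob}_{B_R}(u)$ and Corollary~\ref{energy-est} yields $\mathcal E^{\rm Sob}_{B_R}(u)\le C(1-s)^{-2}R^{n-s}$, so altogether $\mathcal E_{B_R}(u)\le C(1-s)^{-2}R^{n-s}+CR^{n-1}$. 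Since $R\ge1$ and $s\le1$ give $R^{n-1}\le R^{n-s}$, and since $(1-s)^{-2}\ge1$, the lower-order term is absorbed and \eqref{egest} follows.

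For $1\le R\le R_0$ (a range that is nonempty only if $R_0>1$), I would simply estimate $\mathcal E^{\rm Pot}_{B_R}(u)\le\frac14|B_R|\le C(n)R^n\le C(n)R_0^s\,R^{n-s}$, which is a constant multiple of $R^{n-s}$ with constant depending only on $n,\lambda,\Lambda,s_0$ (through $R_0$); adding Corollary~\ref{energy-est} again gives \eqref{egest}.

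I do not expect any genuine obstacle, since the two substantive inputs are already proved; the only point requiring care is to check that each error term---$R^{n-1}$ from the annulus and $R^n$ from the small-radius case---is dominated by $(1-s)^{-2}R^{n-s}$ uniformly for $R\ge1$ and $s\in(s_0,1)$, which is precisely why the argument is split according to whether $R>R_0$ or $R\le R_0$.
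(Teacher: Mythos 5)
Your argument is correct and follows essentially the same route as the paper: combine Corollary~\ref{energy-est} for the Sobolev part with Proposition~\ref{DircontrolsPot2} for the potential part, handling the radii where the proposition does not apply or does not reach the full ball by a crude $W\le 1/4$ bound. The only cosmetic difference is that the paper first obtains the potential estimate in $B_{R/2}$ (requiring $R\ge 2R_0$) and then invokes a covering-and-scaling argument to pass to $B_R$, whereas you bound the annulus $B_R\setminus B_{R-R_0}$ directly, which is slightly more explicit and avoids the covering step.
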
 

For $s\in [1,2]$, \cite{CC1, CC2} established the related energy estimates $\mathcal E_{B_R}(u) \le C R^{n-1}$, for $s\in (1,2]$, and  $\mathcal E_{B_R}(u) \le C R^{n-1}\log R$, for $s=1$, in the case of minimizers in $B_{2R}\subset\R^n$. However, such strong estimates are not expected to hold if one considers stable solutions ---assuming stability only in the double ball $B_{2R}$ as in Theorem \ref{thm1}, not in the whole $\R^n$--- instead of minimizers.

\subsection{Density estimates}  \label{sub-2.3} 
Section  \ref{sec-5}  deals with density estimates for stable solutions of $(-\Delta)^{s/2}u + W'(u)=0$ in all of $\R^n$. 
We need to assume the operator $L_K$ to be the fractional Laplacian $(-\Delta)^{s/2}$ since an important ingredient in their proof is the  monotonicity formula from \cite{CC2}, which uses the extension property of $(-\Delta)^{s/2}$  and thus is not available for other kernels. In addition, our proof uses crucially \eqref{BVest}  and thus needs  $s<1$.

\begin{prop} \label{density}
Let  $n\ge 2$, $s\in (0,1)$, and $W(u)=  \tfrac 1 4 (1-u^2)^2$. Let $u:\R^n\rightarrow (-1,1)$  be a stable solution of $(-\Delta)^{s/2}u + W'(u)=0$ in $\R^n$. 

Then, for every $\bar c\in (0,1)$ there exist positive constants $\omega_0$ and $R_0$, which depend only on $\bar c$, $n$, and $s$, such that the following holds. For every $R\geq R_0$, if
\begin{equation}\label{hp-density}
R^{-n}\int_{B_R} |1+u| \, dx \leq \omega_0\qquad \left(\mbox{respectively,}\quad R^{-n}\int_{B_R} |1-u| \, dx \leq \omega_0\right),
\end{equation}
then
\begin{equation}\label{th-density}
\left\{ u\ge -\bar c\right\}\cap B_{R/2} = \varnothing\qquad  \left(\mbox{respectively,}\quad \left\{ u\le \bar c\right\}\cap B_{R/2} = \varnothing\right).
\end{equation}
\end{prop}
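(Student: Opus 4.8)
The plan is to prove Proposition~\ref{density} by a De Giorgi--type iteration combined with the sharp $BV$ estimate \eqref{BVest} and the monotonicity formula from \cite{CC2}. The smallness hypothesis \eqref{hp-density} says that $u$ is close to $-1$ in an averaged $L^1$ sense on $B_R$; the goal is to upgrade this to $u$ being close to $-1$ \emph{uniformly} on $B_{R/2}$. I would work with the phase $w := 1+u \ge 0$, so that \eqref{hp-density} reads $R^{-n}\int_{B_R} w\,dx \le \omega_0$, and I want to conclude $w \le 1-\bar c$ on $B_{R/2}$, i.e.\ $\{w \ge 1-\bar c\}\cap B_{R/2}=\varnothing$.

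\smallskip

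\textbf{Step 1: a decay/dimension-reduction mechanism for the energy.} First I would localize: by Theorem~\ref{thm1} (or directly Corollary~\ref{energy-est} and Proposition~\ref{DircontrolsPot2}), the rescaled energy $r^{s-n}\mathcal E_{B_r}(u)$ is bounded for all $r \ge 1$ up to $\sim R$. The monotonicity formula of \cite{CC2} gives that a suitable renormalized energy (in the extension variables) is monotone in $r$, hence has a limit; crucially this yields an estimate of the form $\mathcal E_{B_\rho}(u) \le C(\rho/R)^{n-s}\,\mathcal E_{B_R}(u) + (\text{lower order})$ once one knows $u$ is ``mostly in one phase'' on $B_R$. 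The point is that smallness of $R^{-n}\int_{B_R} w$ forces the energy density at small scales to be much smaller than the energy density of a 1D transition layer, which is the obstruction to $u$ leaving a single phase.

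\smallskip

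\textbf{Step 2: from small energy to an $L^\infty$ bound via a clean-up lemma.} Here the $BV$ bound \eqref{BVest} is the new essential ingredient: if $u$ were not close to $-1$ somewhere in $B_{R/2}$, say $u(x_0)\ge -\bar c$, then by interior gradient estimates for \eqref{equK} (standard nonlocal elliptic regularity, as in Appendix~\ref{app-C}) there is a ball $B_{c_0}(x_0)$ on which $u \ge -\bar c/2$, say, hence $w \ge 1 - \bar c/2 \ge c(\bar c) > 0$ on a definite ball. This forces $\int_{B_{c_0}(x_0)} w \ge c$, and more importantly it forces a definite amount of potential energy: $\mathcal E^{\rm Pot}_{B_1(x_0)}(u) \ge c(\bar c)>0$. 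By Proposition~\ref{DircontrolsPot2} this in turn gives $\mathcal E^{\rm Sob}_{B_2(x_0)}(u) \ge c(\bar c) > 0$, a definite amount of Sobolev energy concentrated near $x_0$. Combined with the monotonicity formula this propagates: a definite energy density at scale $1$ near $x_0$ forces, by monotonicity, $R^{s-n}\mathcal E_{B_R}(u) \ge c(\bar c) > 0$, which (via the $BV$/coarea argument relating energy to the measure of the transition region, and hence to $\int_{B_R} w$) contradicts $R^{-n}\int_{B_R} w \le \omega_0$ provided $\omega_0$ is small enough and $R \ge R_0$ large enough. The quantitative link ``$\int_{B_R} w$ small $\Rightarrow$ $\mathcal E^{\rm Sob}_{B_R}(u)/R^{n-s}$ small'' is where one uses that $w$ small in $L^1$ together with $|u|<1$ and the $BV$ bound controls $P_s$ of the superlevel sets, hence the Sobolev energy.

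\smallskip

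\textbf{Main obstacle.} The delicate point is closing the loop quantitatively and uniformly as $R\to\infty$ with constants depending only on $\bar c$, $n$, $s$: one must carefully track how the monotonicity formula transfers ``definite energy at unit scale'' to ``definite energy density at scale $R$'', and ensure the lower-order/error terms (which in the nonlocal setting come from the tails of $u$ outside $B_{2R}$ and the extension) are genuinely negligible against the main term for $R \ge R_0$. I expect this — rather than the local regularity or the clean-up lemma, which are routine — to be the technical heart of the argument, and I would handle it by a contradiction/compactness scheme: if the conclusion failed, take $R_j\to\infty$ and $\omega_j\to 0$ with points $x_j\in B_{R_j/2}$ where $u_j(x_j)\ge-\bar c$; rescale $B_{R_j}$ to $B_1$, use the uniform energy bounds from Theorem~\ref{thm1} plus the $BV$ bound \eqref{BVest} to extract a limit which is either identically $-1$ (contradicting $u(x_j)\ge -\bar c$ passing to a subsequence of centers) or a nontrivial stable solution / stable $s$-minimal cone carrying positive energy on $B_R$ for all $R$, contradicting $R^{-n}\int w \to 0$. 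The compactness of blow-downs to stable $s$-minimal cones announced in Section~\ref{sec-2} is exactly the tool that makes this limit argument rigorous.
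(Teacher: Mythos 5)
Your proposal captures the right skeleton (argue by contradiction; get a definite potential-energy density near the ``bad'' point $x_0$; use the monotonicity formula from \cite{CC2} to propagate this to all larger scales $\rho$; control potential by Sobolev via Proposition~\ref{DircontrolsPot2}; compare against what the smallness $R^{-n}\int_{B_R}|1+u|\leq\omega_0$ forces). Steps~1 and~2 are in the spirit of the paper's argument. However, the step you single out as the ``technical heart'' --- the quantitative link from $\int_{B_R}|1+u|$ small plus the $BV$ bound to smallness of the renormalized Sobolev energy --- is precisely the place where the paper inserts the key tool that your sketch is missing: the Gagliardo--Nirenberg/Brezis--Mironescu interpolation inequality of Lemma~\ref{interpo}, applied with $k=-1$, $V=\omega_0$, and $P=C(1-s)^{-1}$ from \eqref{BVest}. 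This yields $R^{s-n}[u]_{W^{s,1}(B_R)}\leq C\,\omega_0^{1-s}$ and, since $|u|\leq 1$, $\mathcal E^{\rm Sob}_{B_R}(u)\leq CR^{n-s}\omega_0^{1-s}$. Your suggestion that the $BV$ bound ``controls $P_s$ of the superlevel sets'' is not directly available: $\int|\nabla u|$ bounds $\int_{-1}^1\mathrm{Per}(\{u>t\})\,dt$ (classical perimeter), not $\int_{-1}^1 P_s(\{u>t\})\,dt$; passing from the former to the latter is exactly an interpolation statement of the same nature as Lemma~\ref{interpo}, so the concrete mechanism is still missing in your sketch.

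The compactness/contradiction fallback you propose for this step has a genuine problem. The hypotheses give only $L^1$-smallness of $1+u$ on $B_R$, i.e.\ $u_{R}\to-1$ in $L^1(B_1)$ along the sequence; but the rescaled functions $u_R(x)=u(Rx)$ have Lipschitz constants of order $R$ and can therefore take values $\geq-\bar c$ on sets of vanishing measure, so $L^1$-convergence alone cannot contradict the existence of a point $x_0\in B_{R/2}$ with $u(x_0)\geq-\bar c$. To upgrade $L^1$-closeness to pointwise/Hausdorff information one needs precisely the density estimate being proved. In the paper this issue is acknowledged in the other direction: the Hausdorff-convergence conclusion \eqref{wngioewhtioeh2} of Theorem~\ref{thm2} is obtained \emph{as a consequence of} Proposition~\ref{density}, while the $L^1$-convergence of Proposition~\ref{conv-L^1} does not suffice for a pointwise contradiction. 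So invoking ``the compactness of blow-downs to stable $s$-minimal cones'' as the tool that ``makes this limit argument rigorous'' would be circular. One also needs the constants to be uniform over all stable solutions (not one fixed $u$), which a soft compactness scheme would have to address separately. In short: your outline is in the right direction, but it omits the one quantitative estimate (Lemma~\ref{interpo}) that actually closes the loop, and the proposed substitute argument cannot work without already knowing the conclusion.
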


\subsection{Convergence of blow-downs}  \label{sub-2.4} 
The goal of Section  \ref{sec-6}  is to establish   that the blow-downs of entire stable solutions to $(-\Delta)^{s/2}u + W'(u)=0$ in $\R^n$ converge, up to a subsequence, to the characteristic function of a stable nonlocal $s$-minimal cone. This convergence is local, both in the $L^1$-sense and in the sense of the Hausdorff distance between the level sets and the (boundary of the) cone.  {To be precise, following \cite[Definitions  1.1 and 2.2]{CCS}, we recall now two notions of stability for the fractional perimeter $P_s$ defined in $\eqref{def-per}$. 

The first one is the notion of stability for a cone that is smooth away from the origin. It is the notion used in Definition \ref{defiwhatcones} above.
\begin{defi}\label{defstablecone}
Let $\Sigma\subset \R^{n}$ be a cone with nonempty boundary of class $C^2$ away from the origin.
We say that  $\Sigma$  is a stable cone for the $s$-perimeter in $\R^n\setminus \{0\}$, or stable $s$-minimal cone in $\R^n\setminus \{0\}$,  if
\begin{equation}\label{stabilityweak}
\liminf_{t\to 0} \frac{1}{t^2} \big(P_s(\phi^t_X(\Sigma), B_1)-P_s(\Sigma, B_1))\ge  0
\end{equation}
for all vector fields $X\in C^\infty_c(B_1\setminus\{0\}, \R^n)$. Here $\phi^t_X:\R^n\to \R^n$ denotes the integral flow of $X$ at time $t$ (which is a smooth diffeomorphism for $t$ small).
\end{defi}

The following is the notion of {\em weak stability} for a general set $E$ with finite $s$-perimeter, as given in \cite[Definition 2.2]{CCS}.
\begin{defi}\label{def-weakly}
A set $E\subset \R^n$ with $P_s(E,\Omega)<\infty$ is said to be {\em weakly stable}  in $\Omega$ for the $s$-perimeter if for every given vector field  $X=X(x,t)\in C^\infty_c(\Omega\times(-1,1); \R^n)$ we have
\[
\liminf_{t\to 0} \frac{1}{t^2} \big(P_s(\phi^t_X(E), \Omega)-P_s(E, \Omega)\big)\ge  0,
\]
where $\phi^t_X $ denotes the integral flow of $X$ at time $t$ (with $0$ as initial time).
\end{defi}

In contrast with this last notion, Definition \ref{defstablecone} assumes the vector field $X=X(x)$ to be autonomous. However, since a cone $\Sigma$ in Definition \ref{defiwhatcones} is smooth outside of the origin, it is simple to see that $\Sigma$ is a stable $s$-minimal cone in $\R^n\setminus\{0\}$ (in the sense of Definition \ref{defstablecone})  if and only if $\Sigma$ is a weakly stable set in $\R^n\setminus\{0\}$ for the fractional perimeter $P_s$ (in the sense of Definition \ref{def-weakly}) .}

In our following result, we will prove the limiting cone $\Sigma$ to be weakly stable not only in $\R^n\setminus\{0\}$, but in $\R^n$. In its statement we use the notation 
\[
\overline D(E;x) = \limsup_{r\to 0} \frac{|E\cap B_r{(x)}|}{|B_r|}  \qquad \mbox{and}\qquad \underline D(E;x) = \liminf_{r\to 0} \frac{|E\cap B_r{(x)}|}{|B_r|} 
\]
for the upper and lower densities of a set $E\subset\R^n$ at a point $x\in \R^n$.

\begin{thm}\label{thm2}
Let  $n\ge 2$, $s\in (0,1)$, and $W(u)=\frac 1 4 (1-u^2)^2$. Let $u:\R^n\rightarrow (-1,1)$  be a stable solution of $(-\Delta)^{s/2}u + W'(u)=0$ in $\R^n$. 

Then, for every given  blow-down sequence $u_{R_j}(x)= u(R_jx)$ with $R_j\uparrow \infty$, there is a subsequence $R_{j_k}$ such that
\[ u_{R_{j_k}} \rightarrow \chi_{\Sigma}-\chi_{\Sigma^c} \quad \mbox{ in }L^1(B_1)\]
for some cone $\Sigma$ which is a weakly stable set in $\R^n$ for the fractional perimeter~$P_s$ and which is nontrivial (not equal to $\R^n$ or $\varnothing$ up to sets of measure zero).

In addition,  up to changing $\Sigma$ in a set of measure zero,  we have
\begin{equation}\label{wngioewhtioeh1}
 x\in \partial \Sigma \qquad \Leftrightarrow\qquad  0<\underline D(x; \Sigma) \le \overline D(x; \Sigma)< 1
\end{equation}
and, for all given $c\in(-1,1)$ and $\rho\ge 1$, we have
\begin{equation}\label{wngioewhtioeh2}
  d_{\rm Hausdorff}\big(\, \{u_{R_{j_k}} \ge   c\}\cap B_\rho\,,\,  \Sigma \cap B_\rho\,\big) \rightarrow 0  \qquad \text{as } k\uparrow\infty,
\end{equation}
where $d_{\rm Hausdorff}(X,Y) = \inf\{ d>0 \,: \, X\subset Y+B_d \ \mbox{and} \ Y\subset X+B_d\}$  denotes the standard Hausdorff distance between subsets of $\R^n$.
\end{thm}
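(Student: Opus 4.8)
The plan is to combine the quantitative estimates already established earlier in the excerpt --- the sharp $BV$ estimate (Theorem \ref{BV}), the energy estimate (Theorem \ref{thm1}), and the density estimates (Proposition \ref{density}) --- with a compactness argument and the monotonicity formula from \cite{CC2}. First I would fix a blow-down sequence $u_{R_j}(x) = u(R_jx)$. Each $u_{R_j}$ is a stable solution of $(-\Delta)^{s/2}u_{R_j} + R_j^s W'(u_{R_j}) = 0$ in all of $\R^n$ (after the obvious scaling of the equation), and by Remark \ref{remscaling} the $BV$ estimate \eqref{BVest} is scale-invariant, so $\int_{B_\rho}|\nabla u_{R_j}|\,dx \le C(\rho) (1-s)^{-1}$ uniformly in $j$, for every fixed $\rho$. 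Hence $\{u_{R_j}\}$ is bounded in $BV_{\rm loc}(\R^n)$ and, being also bounded in $L^\infty$, a subsequence $u_{R_{j_k}}$ converges in $L^1_{\rm loc}(\R^n)$ and a.e.\ to some limit $v$ with $|v|\le 1$. The first task is to identify $v$ as $\chi_\Sigma - \chi_{\Sigma^c}$ for a cone $\Sigma$: this is where the density estimates of Proposition \ref{density} enter. They force $v$ to take only the values $\pm 1$ a.e.\ (no intermediate phase can survive at unit scale in the blow-down, by the usual clean-up/iteration argument using \eqref{hp-density}--\eqref{th-density} on dyadic annuli), so $v = \chi_\Sigma - \chi_{\Sigma^c}$ for some measurable set $\Sigma$; that $\Sigma$ is a cone follows from the monotonicity formula of \cite{CC2} applied to $u$, whose limit as $R\to\infty$ being constant forces the blow-down to be invariant under dilations. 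Nontriviality of $\Sigma$ follows again from the density estimates: if $\Sigma$ were (essentially) empty or all of $\R^n$, then for large $R$ the hypothesis \eqref{hp-density} would hold at scale $R$, forcing $u$ to be $\equiv -1$ or $\equiv +1$ near infinity, which together with the equation and $|u|<1$ gives a contradiction (or one directly invokes that $u$ is assumed to have $|u|<1$ so cannot be identically $\pm1$).

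Next I would establish the weak stability of $\Sigma$ as a set for $P_s$ in all of $\R^n$ (Definition \ref{def-weakly}). The mechanism is the standard $\Gamma$-convergence-type passage from the stability inequality \eqref{stable} for $u_{R_{j_k}}$ to the second-variation inequality \eqref{stabilityweak}/Definition \ref{def-weakly} for $P_s$. Concretely, given a vector field $X \in C^\infty_c(\R^n\times(-1,1);\R^n)$, one takes as competitor the function $u_{R_{j_k}}\circ (\phi^{-t}_X)$ and uses that minimizers/stable solutions of the rescaled Allen-Cahn energy have their Sobolev energy $\mathcal E^{\rm Sob}$ converging (after dividing by the appropriate normalizing constant) to a multiple of $P_s$, while the potential part is controlled by Proposition \ref{DircontrolsPot2} and contributes the right bulk term; the key point is lower semicontinuity of the relevant quadratic form and the fact that the energy estimate \eqref{egest} gives the right scaling $R^{n-s}$ so that the renormalized energies stay bounded. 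This is morally the argument of \cite{CSV, CCS} transplanted to the equation level, and since we only need the \emph{inequality} $\liminf_t t^{-2}(P_s(\phi^t_X\Sigma)-P_s(\Sigma))\ge 0$, not an energy identity, the semicontinuity works in our favor. I expect this is the technically heaviest step, because one must carefully handle the interface contribution versus the bulk contribution to $\mathcal E^{\rm Sob}$ in the limit and make sure the perturbation $\phi^t_X$ (which can move the $\pm1$ regions as well as the interface) does not create uncontrolled bulk energy --- but Proposition \ref{DircontrolsPot2} is precisely designed to absorb that.

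The remaining assertions are more geometric. For \eqref{wngioewhtioeh1}, after modifying $\Sigma$ on a null set I would use the density estimates once more: the clean-up argument shows that at every point $x$, either $u_{R_{j_k}}\to -1$ uniformly near $x$ (so $x$ is an interior point of $\Sigma^c$, density $0$), or $u_{R_{j_k}}\to +1$ uniformly (interior of $\Sigma$, density $1$), or neither, in which case a standard contradiction with \eqref{th-density} applied at small scales around $x$ shows both densities lie strictly between $0$ and $1$; conversely a point of strictly intermediate density cannot be interior to either phase, hence lies on $\partial\Sigma$. This simultaneously yields the Hausdorff convergence \eqref{wngioewhtioeh2}: the inclusion $\{u_{R_{j_k}}\ge c\}\cap B_\rho \subset \Sigma + B_d$ follows because a point where $u_{R_{j_k}}\ge c$ that is far from $\Sigma$ would force, via the density estimate \eqref{th-density} (or rather its elementary consequence that $u$ cannot stay far from $-1$ on a large ball lying in $\Sigma^c$), a large mass of $\{u_{R_{j_k}}\ge c\}$ inside a ball of definite size contained in $\Sigma^c$, contradicting the $L^1$ convergence $u_{R_{j_k}}\to \chi_\Sigma-\chi_{\Sigma^c}$; the reverse inclusion $\Sigma\cap B_\rho \subset \{u_{R_{j_k}}\ge c\}+B_d$ is symmetric, using that $u_{R_{j_k}}$ is close to $+1$ in the interior of $\Sigma$. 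A compactness/contradiction formulation (suppose the Hausdorff distance stays $\ge \delta$ along a further subsequence and derive a violation of $L^1$ convergence) packages this cleanly. The one point requiring care is uniformity in $\rho$: since $R_j\uparrow\infty$, at any fixed $\rho$ the rescaled equation has vanishing potential coefficient $R_j^{-s}\cdot$ ... wait --- rather, it has coefficient $R_j^s$ \emph{in front of nothing}; one works with $v_{R_j}:=u(R_j\cdot)$ solving $(-\Delta)^{s/2}v_{R_j}=-R_j^sW'(v_{R_j})$, and the density estimates of Proposition \ref{density} are stated for the fixed equation with $\varepsilon=1$, so they must be reread through scaling --- but Proposition \ref{density} applies to $u$ itself at large radii $R$, which is exactly the blow-down regime, so no rescaling of that proposition is needed; one simply chases the translation of its conclusion \eqref{th-density} back to statements about $u_{R_{j_k}}$ on fixed balls.
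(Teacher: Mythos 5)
Your overall plan follows the paper's route quite closely: $BV$ compactness for the $L^1_{\rm loc}$ limit, the monotonicity formula of \cite{CC2} for the cone structure, passing the stability inequality to the limit via a competitor of the form $u_{R_{j_k}}\circ\phi_X^{-t}$, and density estimates for the Hausdorff convergence. Two remarks on differences: for nontriviality you argue via density estimates and a Liouville-type contradiction, whereas the paper simply notes that $\Phi(1)>0$ and the monotone quantity $\Phi$ passes to the limit, giving a nonzero limiting energy; both work, yours needs a bit more care to turn "$u\le -\bar c$ everywhere" into a contradiction but it does go through. For the weak stability passage your sketch is morally correct; the paper's actual mechanism is a Taylor expansion in $t$ of the rescaled localized energy $\mathcal E^R_{B_1}(u_{R,t})$, with the stability of $u_R$ killing the $O(t)$ and $O(t^2)$ terms and the cubic error controlled by the uniform bound on $\mathcal E^R_{B_1}(u_R)$ coming from Theorem \ref{thm1} and the vanishing potential energy.

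There is, however, a genuine gap in the step where you identify the limit $v$ with $\chi_\Sigma-\chi_{\Sigma^c}$. You attribute the fact that $v\in\{\pm1\}$ a.e.\ to Proposition \ref{density}, via a "clean-up/iteration argument on dyadic annuli," but the density estimates by themselves do not rule out a persistent intermediate phase: their hypothesis \eqref{hp-density} requires as input precisely the kind of smallness of the intermediate region that you are trying to prove. What actually forces $W(v)=0$ a.e.\ is the vanishing of the rescaled potential energy. The cheapest way to see it is from the energy estimate of Theorem \ref{thm1}: $\int_{B_1}W(u_R)\,dx = R^{-n}\int_{B_R}W(u)\,dx \le CR^{-s}\to 0$, so by Fatou $W(v)=0$ a.e.\ and $v=\pm1$ a.e. The paper obtains the sharper statement $R^s\int_{B_{R'/2}}W(u_R)\to 0$ (needed anyway for the stability limit) from the Pohozaev/monotonicity identity, but the crude energy bound already suffices for the identification of $v$. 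The density estimates come in only afterwards, to upgrade $L^1$ convergence to the Hausdorff statement \eqref{wngioewhtioeh2} and to produce the good representative of $\Sigma$ in \eqref{wngioewhtioeh1}, as you correctly describe at the end of your proposal.
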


The proof of Theorem \ref{thm2} puts together all our results stated above in this section.
More precisely, the first part of Theorem \ref{thm2}  ($L^1$-convergence) is the content of Proposition~\ref{conv-L^1}, the proof of which uses the  $BV$ and energy estimates from Theorems \ref{BV} and \ref{thm1}, as well as the monotonicity formula from \cite{CC2}. 
The second part of the statement (uniform convergence) follows then from the density estimates of Proposition \ref{density}.

\subsection{Proofs of the main result and its corollaries}  \label{sub-2.5} 
In Section \ref{sec-7} we prove Theorem \ref{thmclas} by combining  Theorem \ref{thm2} and the ``improvement of flatness'' results established by Dipierro, Serra, and Valdinoci in \cite{dPSV}. 
There are some nontrivial technical details involved in our proof, like a dimension reduction argument that allows us to assume that the stable cones obtained after blow-down are smooth away from $0$ (as required in Definition \ref{defiwhatcones}). It is important to deal  with this  smoothness issue to guarantee the applicability of Theorem \ref{thmclas} in concrete cases. For instance, from \cite{CCS}, in $\R^3$ we only know how to classify stable cones that are smooth away from $0$.

Finally, also in Section \ref{sec-7}  we will give the straightforward proofs of Corollaries~\ref{corclas2} and \ref{corclas3}.

\subsection{Smooth stable $s$-minimal surfaces in $\R^3$ are planes when $s\sim 1$}\label{sub-2.6} 
In Appendix \ref{app-A}, the arguments of Section \ref{sec-7}  will be easily modified to establish the flatness of $C^2$ stable $s$-minimal surfaces in all of $\R^n$ whenever all stable $s$-minimal cones are known to be hyperplanes up to dimension $n$. As a consequence we deduce that every $C^2$ stable $s$-minimal surface in $\R^3$ must be a plane if $s\in (s_*,1)$ for some $s_* < 1$, since our main result  in \cite{CCS} states that planes are the only stable $s$-minimal cones in $\R^3\setminus\{0\}$ (smooth away from $0$) for all $s \in(s_*, 1)$. This was already announced without proof in \cite[Corollary 1.3]{CCS}.

More precisely, we have the following result:

\begin{thm}\label{thmRn}
Assume that, for some pair $(n,s)$ with $n\ge 3$ and $s\in (0,1)$, hyperplanes are the only stable $s$-minimal cones in $\R^n\setminus\{0\}$. 

Let $E$ be an open  subset of $\R^n$, with $\partial E$ nonempty and of class $C^2$, and such that $E$ is a weakly stable set for the $s$-perimeter in $\R^n$ (as defined in {Definition~\ref{def-weakly}}). 

Then, $E$ is a half-space.
\end{thm}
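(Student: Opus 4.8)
\textbf{Plan of proof for Theorem \ref{thmRn}.}
The plan is to adapt the blow-down/dimension-reduction scheme used for Theorem \ref{thmclas} to the geometric setting of the nonlocal $s$-perimeter, replacing the role of the semilinear solution $u$ by the set $E$ itself. First I would observe that, by translating and normalizing, it suffices to show that $\partial E$ is a hyperplane. Since $\partial E$ is $C^2$ and globally a stable hypersurface for $P_s$ (in the sense of Definition \ref{def-weakly}, in all of $\R^n$), I would first invoke the known interior regularity and the density/BV-type estimates for stable $s$-minimal sets (the analogues of Theorems \ref{BV}--\ref{thm1} in the geometric setting, already available from \cite{CSV}, \cite{CCS}) to guarantee that the family of rescalings $E_R := R^{-1}E$ has locally uniformly bounded $s$-perimeter. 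Together with the monotonicity formula for the $s$-perimeter of \cite{CRS}, this yields that along any sequence $R_j\uparrow\infty$ there is a subsequence with $\chi_{E_{R_{j_k}}}\to\chi_\Sigma$ in $L^1_{\rm loc}$, where $\Sigma$ is a cone with locally finite $s$-perimeter, and the monotonicity formula forces $\Sigma$ to be a genuine cone (the density is constant along the blow-down).

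The key step is then to show that this blow-down cone $\Sigma$ inherits weak stability in all of $\R^n$ (not merely in $\R^n\setminus\{0\}$), and that one may run a Federer-type dimension reduction to reduce to a stable cone that is smooth away from the origin in some dimension $3\le m\le n$, or else splits off Euclidean factors down to a $2$-dimensional profile. Weak stability passes to the $L^1_{\rm loc}$-limit because the second-variation functional for $P_s$ is, after the usual manipulations (writing the first variation as a nonlocal mean-curvature term that vanishes for $C^2$ stable sets, and controlling the quadratic form), lower semicontinuous under $L^1_{\rm loc}$-convergence of sets with locally bounded $s$-perimeter — this is exactly the argument behind Definition \ref{def-weakly} being stable under limits, used in \cite{CCS}. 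For the dimension reduction, at a singular point $y\in\partial\Sigma\setminus\{0\}$ one blows up again; the new cone is translation invariant in the direction $y$, hence of the form $\tilde\Sigma\times\R$, and iterating lowers the "true" dimension of the singular profile. When this profile has dimension exactly $2$, one is in the situation covered by Lemma \ref{hwioheoithe} (quoted in the remark after Definition \ref{defiwhatcones}): any cone of the form $\tilde\Sigma\times\R^{n-2}$ arising as such a limit must be a half-space. Otherwise the profile is a stable $s$-minimal cone in $\R^m\setminus\{0\}$, smooth away from $0$, with $3\le m\le n$, and the hypothesis of the theorem forces its boundary to be a hyperplane.

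Having shown every blow-down of $\partial E$ is a hyperplane through the origin, I would conclude by the standard "improvement-of-flatness implies flatness" argument: the blow-down being a hyperplane means that for $R$ large $\partial E_R$ is trapped, in $B_1$, in a thin slab, so by the improvement-of-flatness theorem for stable (indeed, minimizing-at-this-scale, or directly via the density estimates) $s$-minimal sets — available in \cite{CRS}, \cite{CCS}, or through the results of \cite{dPSV} invoked in Section \ref{sec-7} — the slab thickness decays geometrically as one zooms in further, forcing $\partial E$ itself to coincide with a hyperplane, i.e. $E$ is a half-space. A small point to handle is uniqueness of the blow-down limit: a priori different sequences $R_j$ could give different hyperplanes, but the monotonicity formula pins down the density, and a connectedness/continuity argument in $R$ (the normal direction of the slab varies continuously and takes values in a discrete set once flatness is small) shows the limit hyperplane is unique, which is what the improvement-of-flatness iteration needs.

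\textbf{Main obstacle.} I expect the delicate part to be the dimension reduction together with the passage of weak stability to the limit cone \emph{including the vertex}: one must be careful that the perturbations allowed in Definition \ref{def-weakly} (supported near the origin as well) do not destroy the inequality in the limit, and that the Federer reduction is compatible with the smoothness requirement built into Definition \ref{defiwhatcones}. This is precisely the technical heart flagged in Section \ref{sec-7}, and the role of Lemma \ref{hwioheoithe} is to close the $m=2$ base case where no flatness hypothesis is assumed.
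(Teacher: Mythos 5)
Your plan follows the same overall strategy as the paper: a blow-down to a stable cone, Federer dimension reduction down to the smooth-cone hypothesis (with a special argument in the base case), and a conclusion via improvement of flatness. The paper, however, packages the key properties more systematically. It introduces a class $\mathcal A'$ of sets that arise as $L^1_{\rm loc}$ limits of weakly stable sets with $C^2$ boundaries (Definition~\ref{defAprime}), and Proposition~\ref{good2} establishes for this class the exact analogues of the five properties of Proposition~\ref{good} (BV/energy estimates, monotonicity formula, density estimates, improvement of flatness, and closedness under blow-up). This framing is not cosmetic: the dimension reduction you describe requires taking blow-ups of blow-downs, and the point of $\mathcal A'$ is that each intermediate cone stays in a class where all the good estimates hold. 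Your proposal implicitly assumes this but does not formalize it; make sure you can justify that, at every stage, the relevant estimates and the stability inequality survive the limiting process.

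Two places where your write-up glosses over genuine points handled more carefully in the paper. First, the monotonicity formula of \cite{CRS} is proved there for \emph{minimizers}; the paper (Proposition~\ref{good2}, item (2)) gets it for weakly stable $C^2$ sets by observing that strict stability on compact subsets makes such sets locally minimizing against sufficiently small Lipschitz perturbations, which is all the Pohozaev-type argument needs. Simply citing \cite{CRS} for the monotonicity of $\Phi_E$ is not quite enough. Second, you invoke Lemma~\ref{hwioheoithe} for the $m=2$ base case, but that lemma is stated for cones in $\mathcal A$ — limits of stable Allen–Cahn solutions, and its proof tests the Allen–Cahn stability inequality with $|\nabla u_j|\psi$. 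In the geometric setting of $\mathcal A'$ one needs the analogue using the nonlocal Simons/Jacobi-type stability inequality for $C^2$ stable $s$-minimal surfaces (as in \cite{CSV, DDPW, FFFMM}); the paper also leaves this adaptation implicit (``by the same argument''), but you should not cite Lemma~\ref{hwioheoithe} verbatim, since its hypotheses are not met. Finally, your auxiliary ``uniqueness of the blow-down direction via a connectedness/continuity argument'' is superfluous: once one blow-down is a half-space, the Hausdorff convergence from the density estimates plus the improvement-of-flatness property of $\mathcal A'$ directly upgrades flatness at a sequence of large scales to flatness at all scales, which is how the paper (Remark~\ref{thesame}) concludes.
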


\begin{cor}\label{corR3}
Let $E\subset \R^3$ be a $C^2$ open  subset, with $\partial E\not = \varnothing$, which is weakly stable for the $s$-perimeter in $\R^3$  (as defined in {Definition~\ref{def-weakly}}). 

If $s$ is sufficiently close to 1, then $E$ is a half-space.
\end{cor}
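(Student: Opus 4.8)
The plan is to deduce Corollary~\ref{corR3} as an immediate special case of Theorem~\ref{thmRn}, combined with the classification of stable $s$-minimal cones in $\R^3\setminus\{0\}$ from~\cite{CCS}. First I would recall the main result of~\cite{CCS}: there exists $s_*<1$ such that, for every $s\in(s_*,1)$, every stable $s$-minimal cone in $\R^3\setminus\{0\}$ that is smooth away from the origin is a half-space. Note that in the statement of Theorem~\ref{thmRn}, the hypothesis ``hyperplanes are the only stable $s$-minimal cones in $\R^n\setminus\{0\}$'' is precisely Definition~\ref{defiwhatcones}, which for $n=3$ only concerns cones $\Sigma\subset\R^m$ with $3\le m\le 3$, i.e.\ $m=3$ — so the hypothesis for $n=3$ is exactly the conclusion of the $\R^3$ classification in~\cite{CCS}. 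Hence, for $s\in(s_*,1)$, the pair $(3,s)$ satisfies the hypothesis of Theorem~\ref{thmRn}.

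With this in hand, the proof of the corollary is one line: let $E\subset\R^3$ be a $C^2$ open set with $\partial E\ne\varnothing$ which is weakly stable for the $s$-perimeter in $\R^3$ in the sense of Definition~\ref{def-weakly}, and assume $s\in(s_*,1)$. Since the pair $(3,s)$ satisfies the hypothesis of Theorem~\ref{thmRn}, we conclude directly that $E$ is a half-space. I would phrase the statement ``$s$ sufficiently close to $1$'' as ``$s\in(s_*,1)$ with $s_*<1$ the dimensional constant from~\cite{CCS}'', consistent with the phrasing used for Corollaries~\ref{corclas2} and~\ref{corclas3}.

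There is essentially no obstacle here — the entire content is in Theorem~\ref{thmRn} and in the cited cone classification of~\cite{CCS}; Corollary~\ref{corR3} is a pure specialization to $n=3$. The only point deserving a word of care is the matching of definitions: one should check that ``weakly stable for the $s$-perimeter in $\R^3$'' as used in the corollary is literally the hypothesis of Theorem~\ref{thmRn} (it is, by construction of both statements), and that the $C^2$ regularity of $\partial E$ assumed in the corollary is what Theorem~\ref{thmRn} requires (again it is). So the proof is simply: apply Theorem~\ref{thmRn} with $n=3$, using that its hypothesis holds for $s\in(s_*,1)$ by the main theorem of~\cite{CCS}.
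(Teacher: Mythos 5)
Your proposal is correct and takes exactly the same route as the paper: the paper also proves Corollary~\ref{corR3} by specializing Theorem~\ref{thmRn} to $n=3$ and invoking the classification of smooth stable $s$-minimal cones in $\R^3\setminus\{0\}$ from~\cite{CCS} to verify its hypothesis for $s$ close to $1$. Your careful note about Definition~\ref{defiwhatcones} reducing to the single case $m=3$ when $n=3$ is precisely the reason the deduction is immediate.
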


Theorem \ref{thmRn} will follow from a blow-down procedure. Although the general strategy is similar to the classical one for minimizers of the perimeter, to deal with stable solutions one finds several technical issues that are completely analogous to those in Section \ref{sec-7}  for the semilinear equation. This close analogy was our reason to postpone the proof of Corollary 1.3 in \cite{CCS} (i.e., Corollary \ref{corR3}  here) to Appendix~\ref{app-A} of the current paper.

\section{$BV$ and Sobolev energy estimates}\label{sec-3} 

Throughout the paper, $L_K$ denotes the operator \eqref{LK}, where $K$ satisfies \eqref{L0} and \eqref{L2}.

This section extends some results and techniques introduced in \cite{CSV}, within the context of stable nonlocal minimal surfaces, to stable solutions of semilinear nonlocal problems.

Following \cite{SV-mon,CSV}, we consider translations of the solution $u$ in some direction $\boldsymbol {v}\in S^{n-1}$ and we compare the energies of $u$ and of $u(\cdot+t\boldsymbol v)$.
Since we need to consider perturbations vanishing outside the domain, following the notation of \cite{CSV} we introduce a Lipschitz radial cut-off function $\varphi_4$ such that $\varphi_4\equiv 1$ in $B_{2}$,  $\varphi_4\equiv 0$ outside~$B_4$, 
and $|\nabla \varphi_4| \leq 1/2$. For this we take $\varphi_4$ to be linear, as a function of the radius $|x|$, in $B_4\setminus B_2$.

For $\boldsymbol v\in S^{n-1}$ and $t\in(-1,1)$ we consider the map
\begin{equation}\label{perturb}
\Psi_{t}(y):= y + t\varphi_4 (y)\boldsymbol v \qquad\text{for $y\in\R^n$.}
\end{equation}
Clearly, $\Psi_t$ is a Lipschitz diffeomorphism from $\R^n$ onto itself. In particular, both $\Psi_t$ and $\Psi_t^{-1}$ are Lipschitz and coincide with the identity outside $B_4$. Given a function $u$ defined in all of~$\R^n$, we introduce
\[ u_{t}(x) := u\bigl(\Psi_{t}^{-1}(x)\bigr).\]
Even if we do not denote its dependence on $\boldsymbol v$, the function $u_t$ depends on both $\boldsymbol v$ and $t$. We finally set 
$$
M_t(x):=\max\{u(x),u_t(x)\} \quad \text{ and } \quad m_t(x):=\min\{u(x),u_t(x)\}. 
$$

Notice that $u_t$, $M_t$, and $m_t$ all coincide with $u$ outside $B_4$.

We now prove now the analogue of Lemma 2.1 in \cite{CSV}  for the energy \eqref{energyW} associated to \eqref{equK}.
\begin{lem}\label{lem2A}
Let $n\ge 2$, $s\in (0,1)$, let $W$ be any continuous  function in $\R$, and $K$ satisfy \eqref{L0} and \eqref{L2}. Let $u\in C^1(B_6) \cap L^\infty(\R^n)$ be a given function.

Then, for all $t\in(-1,1)$ we have that
\begin{equation}\label{eqlemAstables}
\mathcal E_{B_4}(u_{t}) + \mathcal E_{B_4}(u_{-t}) - 2\mathcal E_{B_4}(u) \le C  t^2 \mathcal E^{\rm Sob}_{B_4}(u),
\end{equation}
where $C$ is a constant which depends only on $n$, $\lambda$, and $\Lambda$.
\end{lem}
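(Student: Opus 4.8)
The plan is to follow the scheme of Lemma 2.1 in \cite{CSV}, adapting it from the purely nonlocal perimeter functional to the semilinear energy $\mathcal E_{B_4} = \mathcal E^{\rm Sob}_{B_4} + \mathcal E^{\rm Pot}_{B_4}$. The key observation is that the potential part $\mathcal E^{\rm Pot}_{B_4}(v)=\int_{B_4} W(v)\,dx$ is \emph{invariant} under the volume-preserving-type change of variables induced by $\Psi_t$ only in an approximate sense, so we must track the error. More precisely, since $u_t = u\circ \Psi_t^{-1}$ and $\Psi_t$ is a Lipschitz diffeomorphism equal to the identity outside $B_4$, a change of variables gives $\int_{B_4} W(u_t)\,dx = \int_{B_4} W(u(y))\, |\det D\Psi_t(y)|\,dy$. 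Writing $J_t(y):=|\det D\Psi_t(y)|$, one has the expansion $J_t = 1 + t\,\varphi_4\,\mathrm{div}\,\boldsymbol v$-type first-order term plus $O(t^2)$; but $\boldsymbol v$ is constant, so $\mathrm{div}(\varphi_4\boldsymbol v) = \boldsymbol v\cdot\nabla\varphi_4$, and crucially $J_t + J_{-t} - 2 = O(t^2)$ pointwise, with the $O(t^2)$ bound depending only on $n$ and $\|\nabla\varphi_4\|_\infty\le 1/2$. Hence
\[
\mathcal E^{\rm Pot}_{B_4}(u_t) + \mathcal E^{\rm Pot}_{B_4}(u_{-t}) - 2\mathcal E^{\rm Pot}_{B_4}(u) = \int_{B_4} W(u(y))\,\big(J_t(y)+J_{-t}(y)-2\big)\,dy = O(t^2),
\]
but the right-hand side is bounded by $Ct^2 \|W\|_{L^\infty(u(B_4))}|B_4|$, which is \emph{not} of the form $Ct^2\mathcal E^{\rm Sob}_{B_4}(u)$ — so this naive route fails to give the stated estimate, and this is the main obstacle.

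The resolution, exactly as in \cite{CSV}, is to avoid changing variables in the potential term and instead exploit a cancellation coming from the $\max$/$\min$ decomposition together with the pointwise identity $W(M_t) + W(m_t) = W(u) + W(u_t)$, which holds because $\{M_t,m_t\} = \{u,u_t\}$ as unordered pairs at each point. The strategy is therefore: first, prove the inequality \eqref{eqlemAstables} with $u_t$, $u_{-t}$ replaced by $M_t$, $m_t$ — i.e. show $\mathcal E_{B_4}(M_t)+\mathcal E_{B_4}(m_t) - 2\mathcal E_{B_4}(u) \le Ct^2\mathcal E^{\rm Sob}_{B_4}(u)$; then deduce \eqref{eqlemAstables} itself. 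For the potential parts this is immediate: $\mathcal E^{\rm Pot}_{B_4}(M_t)+\mathcal E^{\rm Pot}_{B_4}(m_t) = \mathcal E^{\rm Pot}_{B_4}(u)+\mathcal E^{\rm Pot}_{B_4}(u_t)$ exactly. For the Sobolev parts, one uses the elementary pointwise inequality $|M_t(x)-M_t(\bar x)|^2 + |m_t(x)-m_t(\bar x)|^2 \le |u(x)-u(\bar x)|^2 + |u_t(x)-u_t(\bar x)|^2$ (valid for any two functions $u,u_t$ and their pointwise max and min), which after integrating against $K(x-\bar x)$ over $(\R^n\times\R^n)\setminus(B_4^c\times B_4^c)$ gives $\mathcal E^{\rm Sob}_{B_4}(M_t)+\mathcal E^{\rm Sob}_{B_4}(m_t)\le \mathcal E^{\rm Sob}_{B_4}(u)+\mathcal E^{\rm Sob}_{B_4}(u_t)$. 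Combining the two,
\[
\mathcal E_{B_4}(M_t)+\mathcal E_{B_4}(m_t) - 2\mathcal E_{B_4}(u) \le \mathcal E^{\rm Sob}_{B_4}(u_t) - \mathcal E^{\rm Sob}_{B_4}(u),
\]
so it remains only to estimate the \emph{Sobolev} energy defect $\mathcal E^{\rm Sob}_{B_4}(u_t) - \mathcal E^{\rm Sob}_{B_4}(u)$ and, for the passage back from $(M_t,m_t)$ to $(u_t,u_{-t})$, the analogous quantity $\mathcal E^{\rm Sob}_{B_4}(u_t)+\mathcal E^{\rm Sob}_{B_4}(u_{-t}) - \mathcal E^{\rm Sob}_{B_4}(M_t) - \mathcal E^{\rm Sob}_{B_4}(m_t)$.

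Estimating these Sobolev defects is a purely kinetic computation identical to the one in \cite[Lemma 2.1]{CSV}, and I would simply cite/reproduce it: writing the double integral defining $\mathcal E^{\rm Sob}_{B_4}$, performing the change of variables $x=\Psi_t(y)$, $\bar x=\Psi_t(\bar y)$ in the region where both points are hit by $\Psi_t$ (and handling the mixed region near $\partial B_4$ separately), and Taylor-expanding the resulting kernel $K(\Psi_t(y)-\Psi_t(\bar y))\,J_t(y)J_t(\bar y)$ around $t=0$. The first-order term in $t$ integrates against $|u(y)-u(\bar y)|^2$; since $u\in C^1(B_6)$, the odd-in-$t$ first-order contributions cancel when one forms the symmetric combination $\mathcal E^{\rm Sob}_{B_4}(u_t) + \mathcal E^{\rm Sob}_{B_4}(u_{-t}) - 2\mathcal E^{\rm Sob}_{B_4}(u)$, leaving an $O(t^2)$ remainder controlled by $\sup_{y,\bar y}\frac{|K(\Psi_t(y)-\Psi_t(\bar y)) - (\text{linearization})|}{\ldots}$ times $\iint |u(y)-u(\bar y)|^2 K(y-\bar y)$, the bounds \eqref{L0}--\eqref{L2} on $K$ and its first two derivatives ensuring the remainder kernel is pointwise $\le C t^2 K(y-\bar y)$ with $C=C(n,\lambda,\Lambda)$ (using $|t\nabla\varphi_4|\le 1/4$ so $\Psi_t$ stays bi-Lipschitz with uniform constants). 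This yields $\mathcal E^{\rm Sob}_{B_4}(u_t)+\mathcal E^{\rm Sob}_{B_4}(u_{-t}) - 2\mathcal E^{\rm Sob}_{B_4}(u) \le Ct^2\mathcal E^{\rm Sob}_{B_4}(u)$, and the one-sided estimates needed above are obtained the same way. Putting everything together gives \eqref{eqlemAstables}. The main difficulty, to reiterate, lies not in any single step but in recognizing that one must \emph{not} naively change variables in $\mathcal E^{\rm Pot}$ — the $\max/\min$ trick of \cite{CSV} is essential precisely because it makes the potential contributions cancel identically, reducing the whole estimate to the known Sobolev computation.
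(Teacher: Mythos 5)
Your proposal goes wrong at the very first step, where you dismiss the direct change-of-variables route. You write that $J_t + J_{-t} - 2 = O(t^2)$ and conclude that the ``naive'' argument fails because it produces a term of the form $C t^2 \|W\|_\infty |B_4|$ rather than $C t^2 \mathcal E^{\rm Sob}_{B_4}(u)$. But in fact there is \emph{no} quadratic error: $D\Psi_t(y) = I + t\,\boldsymbol v\otimes \nabla\varphi_4(y)$ is a rank-one perturbation of the identity, so by the matrix determinant lemma
\[
J_t(y) = \det\bigl(I + t\,\boldsymbol v\otimes\nabla\varphi_4(y)\bigr) = 1 + t\,\boldsymbol v\cdot\nabla\varphi_4(y) = 1 + t\,\partial_{\boldsymbol v}\varphi_4(y),
\]
which is \emph{exactly} linear in $t$. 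Since $|t|<1$ and $|\partial_{\boldsymbol v}\varphi_4|\le 1/2$ this is positive, so no absolute value is needed, and $J_t + J_{-t}\equiv 2$ identically. Hence
\[
\mathcal E^{\rm Pot}_{B_4}(u_t) + \mathcal E^{\rm Pot}_{B_4}(u_{-t}) = \int_{B_4} W(u(y))\bigl(J_t(y)+J_{-t}(y)\bigr)\,dy = 2\mathcal E^{\rm Pot}_{B_4}(u)
\]
with no remainder whatsoever. This exact cancellation, not any $\max/\min$ identity, is the key observation in the paper's proof, and once you have it the lemma reduces entirely to the Sobolev computation you already know how to do from \cite[Lemma 2.1]{CSV}.

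The $\max/\min$ detour you then take does not repair the argument; it is borrowed from a \emph{different} lemma in the paper (Lemma~\ref{lemEFtdelta}, where $M_t,m_t$ genuinely appear) and does not address the statement at hand. Your inequality $\mathcal E_{B_4}(M_t)+\mathcal E_{B_4}(m_t)-2\mathcal E_{B_4}(u)\le \mathcal E^{\rm Sob}_{B_4}(u_t)-\mathcal E^{\rm Sob}_{B_4}(u)$ compares $(M_t,m_t)$ against $(u,u_t)$, which is not the quantity $\mathcal E_{B_4}(u_t)+\mathcal E_{B_4}(u_{-t})-2\mathcal E_{B_4}(u)$ that \eqref{eqlemAstables} asks for, and the proposed ``passage back'' via $\mathcal E^{\rm Sob}_{B_4}(u_t)+\mathcal E^{\rm Sob}_{B_4}(u_{-t})-\mathcal E^{\rm Sob}_{B_4}(M_t)-\mathcal E^{\rm Sob}_{B_4}(m_t)$ mixes $u_{-t}$ with $M_t=\max(u,u_t)$ in a way that has no useful sign and does not close the argument. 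The correct structure is: the potential terms cancel exactly by the rank-one Jacobian identity; the Sobolev terms are handled by the CSV Taylor expansion of $K(\Psi_{\pm t}(y)-\Psi_{\pm t}(\bar y))J_{\pm t}(y)J_{\pm t}(\bar y)$, whose odd-in-$t$ part cancels in the symmetric sum and whose remainder is controlled by $C t^2 K(y-\bar y)$ thanks to \eqref{L0}--\eqref{L2}. No $\max/\min$ is needed here.
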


\begin{proof}
We set $A_4:=(\R^n\times \R^n) \setminus (B_4^c\times  B_4^c)$. We have
\[
\mathcal E_{B_4}(u_{\pm t}) = \frac{1}{4}\iint_{A_4} |u(\Psi_{\pm t}^{-1}(x))-u(\Psi_{\pm t}^{-1}(\bar x))|^2K(x-\bar x)\,dx\,d\bar x + \int_{B_4} W(u(\Psi_{\pm t}^{-1}(x)))\,dx.
\]

Changing variables $y= \Psi^{-1}_{\pm t}(x)$, $\bar y= \Psi^{-1}_{\pm t}(\bar x)$ in the integrals above (recall that $\Psi^{-1}_{\pm t}$ sends $B_4$ and $B_4^c$ onto themselves), we obtain the following expressions for the Sobolev and potential energies:
\[
\mathcal E^{\rm Sob}_{B_4}(u_{\pm t}) =  \frac{1}{4}\iint_{A_4} |u(y)-u(\bar y)|^2 K\bigl(\Psi_{\pm t}(y)-\Psi_{\pm t}(\bar y)) \,J_{\pm t}(y) \,J_{\pm t}(\bar y)\,dy \,d\bar y\,
\]
and
\[\mathcal E^{\rm Pot}_{B_4}(u_{\pm t})=\int_{B_4} W(u(y))\,J_{\pm t}(y)\,dy,\]
where $J_{\pm t}$  are the Jacobians. 
They are easily seen to be 
\[ J_{\pm t}(y)= 1 \pm t\partial_{\boldsymbol v}\varphi_4 (y) ,\]
which are positive quantities since $|t|<1$ and $|\partial_{\boldsymbol v}\varphi_4|\leq 1/2$.

Clearly,
\begin{equation}\label{pot-pm}\mathcal E^{\rm Pot}_{B_4}(u_{ t})+\mathcal E^{\rm Pot}_{B_4}(u_{- t})= \int_{B_4} W(u(y))(J_{t}(y)+J_{-t}(y))\,dy= 2\mathcal E^{\rm Pot}_{B_4}(u).
\end{equation}

To estimate the sum of the two Sobolev energies $\mathcal E^{\rm Sob}_{B_4}(u_{ t})+\mathcal E^{\rm Sob}_{B_4}(u_{- t})$, we first observe that, since $K\in \mathcal L_2$, then it satisfies the following estimates for its first and second derivatives:
$$\max\left\{|z||\partial_eK(z)|, |z|^2\sup _{|y-z|\leq|z|/2} |\partial_{ee}K(y)|\right\}\leq 2^{n+s+2}\frac{\Lambda}{\lambda} K(z)\leq 2^{n+3}\frac{\Lambda}{\lambda} K(z)$$
for every $z\in \R^n\setminus \{0\}$ and $e\in S^{n-1}$.
Using this fact and performing the same computations as in the proof of Lemma 2.1 in \cite{CSV} (we are taking there, with the notations of  \cite{CSV}, $R=4$ and $K^*=2^{n+3}(\Lambda/\lambda)K$), we deduce that
\begin{equation*}
\mathcal E^{\rm Sob}_{B_4}(u_t)+ \mathcal E^{\rm Sob}_{B_4}(u_{-t}) \leq 2\mathcal E^{\rm Sob}_{B_R}(u) + C t^2 \iint_{A_4} |u(y)-u(\bar y)|^2 K(y-\bar y)\,dy\,d\bar y,
\end{equation*}
where $C$ depends only on $n$, $\lambda$, and $\Lambda$.
This, combined with \eqref{pot-pm}, concludes the proof of the lemma.
\end{proof}

\begin{lem}\label{inner-stable}
Let $n\ge 2$, $s\in (0,1)$, let  $W$ be any $C^3([-1,1])$  function, and $K$ satisfy \eqref{L0} and \eqref{L2}. Let $u: \R^n\to (-1,1)$ be a stable solution of $L_Ku+W'(u)=0$ in $B_6\subset\R^n$.

Then, given
$\nu>0$, there exists $t_0\in (0,1)$  $($possibly depending on $\nu$, $W$, and $u$$)$,  such that
\begin{equation}\label{inner-stability}
\mathcal E_{B_4}(M_t) +  \mathcal E_{B_4}(m_t) -2\mathcal E_{B_4}(u)\geq -\nu t^2\qquad \mbox{for all\;\;} t\in(-t_0,t_0).
\end{equation}
\end{lem}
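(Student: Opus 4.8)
The plan is to deduce \eqref{inner-stability} from the stability inequality \eqref{stable} applied to a suitable perturbation built from the translations $u_t$. The guiding principle, as in \cite{CSV}, is that $M_t+m_t = u + u_t$ and $M_t - m_t = |u-u_t|$, so the left-hand side of \eqref{inner-stability} can be reorganized in terms of the symmetrized quantities. Concretely, I would first write, using the pointwise identity $M_t^2+m_t^2 = u^2 + u_t^2$ and $M_t+m_t=u+u_t$, a splitting of the Sobolev part: for any function $K$-bilinear form $\mathcal B$,
\[
\mathcal B(M_t,M_t)+\mathcal B(m_t,m_t) = \mathcal B(u,u)+\mathcal B(u_t,u_t) - \mathcal B(u-u_t,\,|u-u_t|) + \text{(cross terms)},
\]
handling the nonlocal cross terms via the elementary inequality for the kernel interaction of $\max$/$\min$ (the same one used in \cite{CSV}, Lemma~2.2). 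Combined with the potential part, where $W(M_t)+W(m_t)=W(u)+W(u_t)$ pointwise, this reduces \eqref{inner-stability} to controlling $\mathcal E_{B_4}(u_t)+\mathcal E_{B_4}(u_{-t})-2\mathcal E_{B_4}(u)$ from below, plus an error term involving $w_t := |u-u_t|$.

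The second step is to expand $\mathcal E_{B_4}(u_{\pm t})$ in $t$. Since $u\in C^2(B_6)$ solves $L_Ku+W'(u)=0$ and $\Psi_{\pm t}$ is a smooth-in-$t$ family of diffeomorphisms equal to the identity outside $B_4$, the function $t\mapsto \mathcal E_{B_4}(u_{\pm t})$ is smooth, its first derivative at $t=0$ vanishes in the sum (the $\pm t$ contributions cancel), and the second derivative produces exactly the second variation of $\mathcal E$ along the vector field $\varphi_4\boldsymbol v$. That is,
\[
\mathcal E_{B_4}(u_t)+\mathcal E_{B_4}(u_{-t})-2\mathcal E_{B_4}(u) = t^2\,\delta^2\mathcal E_{B_4}(u)[\xi] + o(t^2),\qquad \xi := \varphi_4\,\partial_{\boldsymbol v}u,
\]
where $\xi$ is Lipschitz with compact support (it vanishes outside $B_4\subset\R^n$, and $u\in C^1(B_6)$ gives the needed regularity). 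Here one uses that translating a solution in direction $\boldsymbol v$ produces, to first order, the derivative $\partial_{\boldsymbol v}u$, which solves the linearized equation; the standard computation relating the second variation of the energy of a solution under a flow to the Jacobi-type quadratic form $\tfrac12\iint|\xi(x)-\xi(\bar x)|^2K + \int W''(u)\xi^2$ applies. By stability \eqref{stable}, $\delta^2\mathcal E_{B_4}(u)[\xi]\ge 0$, hence the whole expression is $\ge o(t^2) \ge -\tfrac{\nu}{2}t^2$ for $|t|$ small.

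The remaining step is to absorb the error term $w_t=|u-u_t|$. Since $u\in C^1(B_6)$ and $u_t$ coincides with $u$ outside $B_4$, one has $\|w_t\|_{L^\infty}\le Ct$ and $w_t$ is supported in $\overline{B_4}$; moreover $w_t \to 0$ in, say, $H^{s/2}$-seminorm as $t\to 0$ after dividing by $t$, because $w_t/t \to |\partial_{\boldsymbol v}u|\,\varphi_4$ boundedly a.e. The terms in the reorganization that involve $w_t$ are of the form $t^2 \times (\text{bounded}) \times (\text{a quantity that is } o(1) \text{ as } t\to 0)$, using the $C^2$ regularity of $u$ in $B_6$ to control kernel interactions near the diagonal. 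I expect this error control to be the main obstacle: one must be careful that the $\max/\min$ cross-term inequality for the nonlocal energy does not merely give a bound by $C t^2\mathcal E^{\rm Sob}_{B_4}(u)$ (as in Lemma~\ref{lem2A}, which only yields an upper bound) but actually produces a lower bound with a genuinely $o(t^2)$ remainder. Collecting the $-\tfrac{\nu}{2}t^2$ from stability and a further $-\tfrac{\nu}{2}t^2$ from the error, valid for $|t|<t_0(\nu,W,u)$, gives \eqref{inner-stability} and completes the proof.
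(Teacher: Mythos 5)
Your proposal takes a genuinely different route from the paper, and I believe it has a fatal gap in the final step.

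The paper's argument is shorter and does not factor through $u_t$ at all. It simply observes that $M_t = u + t\xi_t$ with $\xi_t := (u_t-u)_+/t$ and $m_t = u + t\tilde\xi_t$ with $\tilde\xi_t := -(u_t-u)_-/t$, that these perturbations are Lipschitz, vanish outside $B_4$, and ---crucially--- have $L^\infty$-norms bounded uniformly in $t$ (since $u\in C^1(\overline B_4)$). Then it expands $\mathcal E_{B_4}(u+t\xi)$ in $t$ for a generic such $\xi$: the first-order term vanishes because $u$ is a critical point, the second-order term is nonnegative by stability, and the Taylor remainder in $W$ gives $\mathcal E_{B_4}(u+t\xi)-\mathcal E_{B_4}(u) \ge -C\|\xi\|_{L^\infty}^3 |t|^3$. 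Adding the $\xi_t$ and $\tilde\xi_t$ contributions, the cubic $|t|^3$ bound is $\ge -\nu t^2$ for $|t|$ small, which is exactly \eqref{inner-stability}. No max/min cross-term decomposition is involved.

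Your plan instead decomposes $\mathcal E_{B_4}(M_t)+\mathcal E_{B_4}(m_t)$ against $\mathcal E_{B_4}(u)+\mathcal E_{B_4}(u_t)$ via the identity \eqref{claimA-ii} and then tries to control $\mathcal E_{B_4}(u_t)-\mathcal E_{B_4}(u)$ (or its symmetrization in $\pm t$) from below using the second variation along $\xi = \varphi_4\,\partial_{\boldsymbol v}u$. The problem is that the cross term this decomposition produces,
\[
\frac{1}{2}\iint_{A_4}(u-u_t)_+(x)\,(u-u_t)_-(\bar x)\,K(x-\bar x)\,dx\,d\bar x,
\]
is \emph{subtracted}, is nonnegative, and is genuinely $O(t^2)$ with a limiting coefficient $\iint(\varphi_4\partial_{\boldsymbol v}u)_+(x)(\varphi_4\partial_{\boldsymbol v}u)_-(\bar x)K(x-\bar x)\,dx\,d\bar x$ that is nonzero unless $\varphi_4\partial_{\boldsymbol v}u$ has a fixed sign in $B_4$. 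So the decomposition only yields $\mathcal E_{B_4}(M_t)+\mathcal E_{B_4}(m_t) - 2\mathcal E_{B_4}(u) \le \mathcal E_{B_4}(u_t)-\mathcal E_{B_4}(u)$, which is an \emph{upper} bound and cannot be reversed by stability: stability gives only $\mathcal E_{B_4}(u_t)-\mathcal E_{B_4}(u) \ge o(t^2)$, not a positive multiple of $t^2$ that would absorb the subtracted cross term. Your Step 3 asserts that this error is ``a quantity that is $o(1)$ as $t\to 0$'' times $t^2$, but that is not so: $(u-u_t)/t \to \varphi_4\,\partial_{\boldsymbol v}u$ boundedly and not to zero, so the error is $O(t^2)$ and not $o(t^2)$. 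This is precisely the obstacle you flagged, and it is not surmountable along the route you propose. The fix is the paper's direct perturbation argument: writing $M_t, m_t$ as $u + t\cdot(\text{uniformly bounded Lipschitz perturbation})$ lets the third-order Taylor remainder do all the work without ever meeting the cross term.
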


\begin{proof}
As shown in Appendix~\ref{app-C}, we know that $u\in C^{2}(B_6)$. Thus, the fractional Sobolev semi-norm $\mathcal E_{B_4}$ of $u$ is finite. 

We claim that, given any Lipschitz function $\xi$ vanishing outside $B_4$  and $t\in (-1,1)$ such that $|u+t\xi|\le 1$, we have
\begin{equation}\label{2variation}
\mathcal E_{B_4}(u+t\xi)-\mathcal E_{B_4}(u)\geq - C \|\xi\|_{L^\infty(B_4)}^3|t|^3
\end{equation}
for some constant $C$ depending only on $W$.
Indeed, we have \begin{equation*}
\begin{split}
&\mathcal E_{B_4}(u+t\xi)-\mathcal E_{B_4}(u)=\\
&\hspace{1em}= \frac{t}{2}\iint_{\R^n\times \R^n} (u(x)-u(\bar x))(\xi(x)-\xi(\bar x))K(x-\bar x)\,dx\,d\bar x \\
&\hspace{3em}+\frac{t^2}{4}\iint_{\R^n\times \R^n}|\xi(x)-\xi(\bar x)|^2K(x-\bar x)\,dx\,d\bar x +\int_{B_4} \big(W(u+t\xi)-W(u)\big)\,dx\\
&\hspace{1em}= \frac{t}{2}\iint_{\R^n\times \R^n} (u(x)-u(\bar x))(\xi(x)-\xi(\bar x))K(x-\bar x)\,dx\,d\bar x +\int_{B_4} W'(u)t\xi \, dx\\
&\hspace{3em}+\frac{ t^2}{4} \iint_{\R^n\times \R^n} |\xi(x)-\xi(\bar x)|^2K(x-\bar x)\,dx\,d\bar x+\int_{B_4} W''(u)\frac{(t\xi)^2}{2}\,dx  \\
&\hspace{3em} +\int_{B_4}  W'''(u^*(t,x)) \frac{(t\xi )^3}{6}\,dx\\
\end{split}
\end{equation*}
for some $u^*(t,x)$  in the interval with endpoints  $u(x)$ and $u(x)+ t\xi(x)$ (and thus satisfying $|u^*|\leq 1$). Hence,  using that $u$ is a stable critical point and $W\in C^3$,
we deduce \eqref{2variation}.

We now choose $\xi:=\frac{(u_t-u)_+}{t}$ and $\tilde \xi:=-\,\frac{(u_t-u)_-}{t}$ in the above computations, where $(\cdot)_+$ and $(\cdot)_-$ denote the positive and negative parts. Observe that $M_t=u+t\xi$ and $m_t=u+t\tilde\xi$. Also, since $u$ and $u_t$ agree outside $B_4$ and $u$ is $C^1$ in $\overline B_4$, the $L^\infty$-norms of $\xi$ and $\tilde\xi$ are bounded by a constant independent of $t$ (in fact, depending only on $u$, given the cut-off function $\varphi_4$ defining the diffeormorphism $\Psi_t$). Therefore, from \eqref{2variation} applied to these choices $\xi$ and $\tilde\xi$, adding both inequalities we conclude \eqref{inner-stability}.
\end{proof}

With this consequence of stability in hands, we can now state the following result, which is the analogue of Lemma 2.4 in \cite{CSV}.
We recall that we have set 
$$A_4=(\R^n\times \R^n) \setminus (B_4^c\times B_4^c).$$

\begin{lem}\label{lemEFtdelta}
Let $n\ge 2$, $s\in (0,1)$, let  $W$ be any $C^3([-1,1])$  function, and $K$ satisfy \eqref{L0} and \eqref{L2}.   Let $u: \R^n\to (-1,1)$ be a stable solution of $L_Ku+W'(u)=0$ in $B_6\subset\R^n$.

Then, for every $\nu>0$ there exists $t_0>0$  $($possibly depending on $\nu$, $W$, and $u$$)$ such that 
\begin{eqnarray*}
&&\min\left\{ \iint_{A_4}(u(x)-u_t(x))_+(u(\bar x)-u_t(\bar x))_-K(x-\bar x)\,dx \,d\bar x,\right.\\
&&\hspace{2.2em}\left.\iint_{A_4}(u(x)-u_{-t}(x))_+(u(\bar x)-u_{-t}(\bar x))_-K(x-\bar x)\,dx \,d\bar x \right\}\leq \left(\eta+2\nu\right)t^2
\end{eqnarray*}
holds for $t\in(-t_0,t_0)$, where $\eta= C\mathcal E^{\rm Sob}_{B_4}(u)$ and $C$ depends only on $n$, $\lambda$, and $\Lambda$. Here, $(\cdot)_+$ and $(\cdot)_-$ denote the positive and negative parts, respectively, of a function.

\end{lem}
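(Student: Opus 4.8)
The plan is to combine the two preceding lemmas, exactly as in \cite[Lemma~2.4]{CSV}. Start from Lemma~\ref{inner-stable}: for every $\nu>0$ there is $t_0>0$ such that, for all $t\in(-t_0,t_0)$,
\[
\mathcal E_{B_4}(M_t)+\mathcal E_{B_4}(m_t)-2\mathcal E_{B_4}(u)\ge -\nu t^2
\qquad\text{and}\qquad
\mathcal E_{B_4}(M_{-t})+\mathcal E_{B_4}(m_{-t})-2\mathcal E_{B_4}(u)\ge -\nu t^2 ,
\]
the second inequality following from the first applied with $t$ replaced by $-t$ (enlarging $t_0$ if needed). On the other hand, Lemma~\ref{lem2A} gives
\[
\mathcal E_{B_4}(u_t)+\mathcal E_{B_4}(u_{-t})-2\mathcal E_{B_4}(u)\le C t^2 \mathcal E^{\rm Sob}_{B_4}(u)=\eta t^2 .
\]

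The key algebraic point is the pointwise/bilinear identity relating $u,u_t,M_t,m_t$: since $M_t=\max\{u,u_t\}$ and $m_t=\min\{u,u_t\}$, one has $\{M_t,m_t\}=\{u,u_t\}$ as an unordered pair of values at each point, so $M_t+m_t=u+u_t$ and the potential terms satisfy $\mathcal E^{\rm Pot}_{B_4}(M_t)+\mathcal E^{\rm Pot}_{B_4}(m_t)=\mathcal E^{\rm Pot}_{B_4}(u)+\mathcal E^{\rm Pot}_{B_4}(u_t)$. For the Sobolev part, expanding the quadratic forms and using $(a\vee b - c\vee d)^2+(a\wedge b-c\wedge d)^2 = (a-c)^2+(b-d)^2 - 2\big((a-c)_+(b-d)_- + (a-c)_-(b-d)_+\big)$ applied with $a=u(x)$, $b=u_t(x)$, $c=u(\bar x)$, $d=u_t(\bar x)$ — more precisely with the roles of the two arguments of $u-u_t$ — yields
\[
\mathcal E^{\rm Sob}_{B_4}(M_t)+\mathcal E^{\rm Sob}_{B_4}(m_t)
= \mathcal E^{\rm Sob}_{B_4}(u)+\mathcal E^{\rm Sob}_{B_4}(u_t)
- \iint_{A_4}(u(x)-u_t(x))_+(u(\bar x)-u_t(\bar x))_- K(x-\bar x)\,dx\,d\bar x
\]
(using the symmetry $K(z)=K(-z)$ to merge the two cross terms into one). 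Adding the potential identity, we get
\[
\mathcal E_{B_4}(M_t)+\mathcal E_{B_4}(m_t)-2\mathcal E_{B_4}(u)
= \big(\mathcal E_{B_4}(u_t)-\mathcal E_{B_4}(u)\big)
- \iint_{A_4}(u(x)-u_t(x))_+(u(\bar x)-u_t(\bar x))_- K(x-\bar x)\,dx\,d\bar x ,
\]
and the analogous identity for $-t$.

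Combining: subtract the two displayed identities (for $\pm t$) from the sum, i.e. add the $+t$ and $-t$ versions of the last identity and invoke both inner-stability bounds to bound the left-hand side below by $-2\nu t^2$, while Lemma~\ref{lem2A} bounds $\big(\mathcal E_{B_4}(u_t)-\mathcal E_{B_4}(u)\big)+\big(\mathcal E_{B_4}(u_{-t})-\mathcal E_{B_4}(u)\big)\le \eta t^2$. This gives
\[
\iint_{A_4}(u-u_t)_+(x)(u-u_t)_-(\bar x)K\,dx\,d\bar x + \iint_{A_4}(u-u_{-t})_+(x)(u-u_{-t})_-(\bar x)K\,dx\,d\bar x \le (\eta+2\nu)t^2 ,
\]
and since both summands are nonnegative (the product of a positive part and a negative part, against a nonnegative kernel), the minimum of the two is at most $(\eta+2\nu)t^2$, which is the claim. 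The only mild subtlety — and the step I would be most careful with — is the bookkeeping in the Sobolev identity: one must check that the two "cross" regions (where $u(x)>u_t(x)$ while $u(\bar x)<u_t(\bar x)$, and vice versa) combine, via $K(x-\bar x)=K(\bar x - x)$ and relabeling, into a single integral of $(u-u_t)_+(x)(u-u_t)_-(\bar x)$, with exactly the constant shown and over $A_4$ rather than all of $\R^n\times\R^n$ (legitimate since $u_t=u$ outside $B_4$, so the integrand vanishes on $B_4^c\times B_4^c$). Everything else is a direct combination of the two lemmas.
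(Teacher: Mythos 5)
Your proposal is correct and follows the same route as the paper's proof: you combine Lemma~\ref{lem2A} with Lemma~\ref{inner-stable} via the max/min rearrangement identity, sum the $+t$ and $-t$ contributions, and bound the minimum of two nonnegative integrals by their sum. The only slip is a typo in the abstract identity you display: the correction term should read $-2\big((a-b)_+(c-d)_-+(a-b)_-(c-d)_+\big)$ rather than $-2\big((a-c)_+(b-d)_-+(a-c)_-(b-d)_+\big)$ (so that under $a=u(x)$, $b=u_t(x)$, $c=u(\bar x)$, $d=u_t(\bar x)$ one obtains $(u-u_t)(x)$ and $(u-u_t)(\bar x)$, matching the lemma's statement); your parenthetical hedge and the Sobolev identity you then write are consistent with the corrected form, so nothing downstream is affected.
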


\begin{proof}
We first observe that, since $u$ is stable and is $C^2(B_6)$ (by Appendix~\ref{app-C}), it satisfies both estimates \eqref{eqlemAstables}  and \eqref{inner-stability}.
Recall also that $M_t$, $m_t$ (as defined in the beginning of the section), $u_t$,  and $u$ all coincide outside $B_4$. Moreover, it holds
\begin{equation}\label{pot}
\mathcal E^{\rm Pot}_{B_4}(M_t)+\mathcal E^{\rm Pot}_{B_4}(m_t)=\mathcal E^{\rm Pot}_{B_4}(u)+\mathcal E^{\rm Pot}_{B_4}(u_t).
\end{equation}

We consider now the Sobolev energy. We claim that
\begin{equation}\label{claimA-ii}
\begin{split}
&|M_t(x)-M_t(\bar x)|^2+|m_t(x)-m_t(\bar x)|^2 - |u(x)-u(\bar x)|^2 - |u_t(x)-u_t(\bar x)|^2\\
&\hspace{2em}=-2(u(x)-u_t(x))_+(u(\bar x)-u_t(\bar x))_-.
\end{split}
\end{equation}
Indeed, we first observe that, if $(u(x)-u_t(x))
(u(\bar x)-u_t(\bar x))\geq 0$, then the right-hand side of \eqref{claimA-ii} vanishes and that the equality is clear.

Assume now that $(u(x)-u_{t}(x))(u(\bar x)-u_{t}(\bar x))< 0$. 
Then, by symmetry between $x$ and $\bar x$, we may assume $u(x)>u_{ t}(x)$, $u(\bar x)<u_{t}(\bar x)$. Now, a simple computation shows that
\begin{equation*}
\begin{split}
&|M_t(x)-M_t(\bar x)|^2+|m_t(x)-m_t(\bar x)|^2 - |u(x)-u(\bar x)|^2 - |u_t(x)-u_t(\bar x)|^2\\
&\hspace{1em} = -2 u(x)u_t(\bar x) - 2 u_t(x)u(\bar x) + 2u(x)u(\bar x) +2 u_t(x)u_t(\bar x)\\
&\hspace{1em} = -2(u(x)-u_t(x))(u_t(\bar x)-u(\bar x)).
\end{split}
\end{equation*}
This concludes the proof of \eqref{claimA-ii}.

Using \eqref{pot} and \eqref{claimA-ii}, we deduce that 
\begin{eqnarray*}
&&\mathcal E_{B_4}(M_t)+\mathcal E_{B_4}(m_t) +\frac{1}{2} \iint_{A_4} (u(x)-u_{t}(x))_+(u(\bar x)-u_{t}(\bar x))_- K(x-\bar x)\,dx\,d\bar x\nonumber\\
 &&\hspace{1em} = \mathcal E_{B_4}(u)+\mathcal E_{B_4}(u_{ t})\nonumber 
\end{eqnarray*}
for every $t\in (0,1)$. As noticed in \cite{CSV}, the third term on the left-hand side is the important novelty with respect to the analogue equality in the local case (in which it does not appear). It will be responsible of the $BV$ estimate.

Indeed, analogously we have
\begin{eqnarray*}
&&\mathcal E_{B_4}(M_{-t})+\mathcal E_{B_4}(m_{-t})+ \frac{1}{2}\iint_{A_4} (u(x)-u_{-t}(x))_+(u(\bar x)-u_{- t}(\bar x))_- K(x-\bar x)dxd\bar x\nonumber\\
 &&\hspace{1em}  =\,\,  \mathcal E_{B_4}(u)+\mathcal E_{B_4}(u_{ -t}).\nonumber 
\end{eqnarray*}
Adding the last two  equalities, and using the key bounds \eqref{eqlemAstables} and \eqref{inner-stability}, we deduce
\begin{eqnarray}\label{step2}
&&\mathcal E_{B_4}(M_t)+\mathcal E_{B_4}(m_t)+\mathcal E_{B_4}(M_{-t})+\mathcal E_{B_4}(m_{-t}) + \nonumber \\
&&\hspace{2em} + \frac{1}{2}\iint_{A_4} (u(x)-u_{t}(x))_+(u(\bar x)-u_{t}(\bar x))_- K(x-\bar x)\,dx\,d\bar x +\\
 &&\hspace{2em} + \frac{1}{2}\iint_{A_4} (u(x)-u_{-t}(x))_+(u(\bar x)-u_{- t}(\bar x))_- K(x-\bar x)\,dx\,d\bar x \nonumber\\
 &&\hspace{1em}  =   2\mathcal E_{B_4}(u)+\mathcal E_{B_4}(u_t)+\mathcal E_{B_4}(u_{-t})\nonumber\\
&&\hspace{1em} \leq 4\mathcal E_{B_4}(u)+\eta t^2\nonumber\\
&&\hspace{1em} \leq \mathcal E_{B_4}(M_t)+\mathcal E_{B_4}(m_t)+\mathcal E_{B_4}(M_{-t})+\mathcal E_{B_4}(m_{-t}) + \big(\eta +2\nu)t^2
\end{eqnarray}
for $t\in(-t_0,t_0)$, with $t_0>0$ small enough (possibly depending on $\nu$, $W$, and $u$).  From this, the lemma follows immediately.
\end{proof}

The following result is the analogue of Lemma 2.5 in \cite{CSV}.

\begin{lem}\label{key}
Let $n\geq 2$, $\eta>0$, and $u\in C^1(B_2)$ satisfy $\|u\|_{L^\infty(\R^n)}\le 1$. Assume that for every $\boldsymbol v\in S^{n-1}$, there exists a sequence $t_k\rightarrow 0$,  such that
\begin{equation}\label{hp-lemkey}
\limsup_{k\rightarrow \infty}\frac{\|\big(u(\cdot)-u(\cdot-t_k\boldsymbol v)\big)_+\|_{L^1(B_1)}\ \|\big(u(\cdot)-u(\cdot-t_k\boldsymbol v)\big)_-\|_{L^1(B_1)}}{t_k^2}\leq \eta.
\end{equation}

Then, the following estimates hold:
\begin{equation}\label{min-v}
\min\left\{\int_{B_1}(\partial_{\boldsymbol v} u)_+\,dx\,,\,\int_{B_1}(\partial_{\boldsymbol v} u)_-\,dx\right\}\leq \sqrt \eta,
\end{equation}
\begin{equation}\label{max-v}
\max\left\{\int_{B_1}(\partial_{\boldsymbol v} u)_+\,dx\,,\,\int_{B_1}(\partial_{\boldsymbol v} u)_-\,dx\right\}\leq 2 |B_1^{(n-1)}|+\sqrt\eta,
\end{equation}
and
\begin{equation}\label{grad}
\int_{B_1}|\nabla u|\,dx\leq 2n  \left(  |B_1^{(n-1)}|+\sqrt \eta\right),
\end{equation}
where  $B_1^{(n-1)}$ denotes the unit ball of $\R^{n-1}$. 
\end{lem}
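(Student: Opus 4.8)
The plan is to reduce the three estimates to a single elementary limit, performed direction by direction. Fix $\boldsymbol v\in S^{n-1}$ and let $t_k\to 0$ be the sequence provided by \eqref{hp-lemkey}. Since $u(\cdot)-u(\cdot-t_k\boldsymbol v)=u(\cdot)-u\big(\cdot-(-t_k)(-\boldsymbol v)\big)$, replacing $(\boldsymbol v,t_k)$ by $(-\boldsymbol v,-t_k)$ if necessary — which alters neither the quotient in \eqref{hp-lemkey} nor the three conclusions, all invariant under $\boldsymbol v\mapsto-\boldsymbol v$ — and then passing to a subsequence, we may assume $t_k>0$. Because $u\in C^1(B_2)$ and $\overline{B_1}$ is a compact subset of the open set $B_2$, the difference quotient $\tfrac{1}{t_k}\big(u(\cdot)-u(\cdot-t_k\boldsymbol v)\big)$ converges to $\partial_{\boldsymbol v}u$ uniformly on $\overline{B_1}$; as $z\mapsto z_\pm$ is $1$-Lipschitz, so do its positive and negative parts, whence $\tfrac{1}{t_k}\|(u(\cdot)-u(\cdot-t_k\boldsymbol v))_\pm\|_{L^1(B_1)}\to\int_{B_1}(\partial_{\boldsymbol v}u)_\pm\,dx$. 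Setting $a:=\int_{B_1}(\partial_{\boldsymbol v}u)_+\,dx$ and $b:=\int_{B_1}(\partial_{\boldsymbol v}u)_-\,dx$, the quotient in \eqref{hp-lemkey} therefore converges to $a\,b$, so that $a\,b\le\eta$; in particular $\min\{a,b\}^2\le a\,b\le\eta$, which is exactly \eqref{min-v}.

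For \eqref{max-v} I would combine $\min\{a,b\}\le\sqrt\eta$ with a direction-independent a priori bound on $a-b=\int_{B_1}\partial_{\boldsymbol v}u\,dx$. Slicing $B_1$ by lines parallel to $\boldsymbol v$, Fubini's theorem together with the fundamental theorem of calculus along each chord gives $\int_{B_1}\partial_{\boldsymbol v}u\,dx=\int_{B_1^{(n-1)}}\big(u(y+r(y)\boldsymbol v)-u(y-r(y)\boldsymbol v)\big)\,dy$, where $y$ ranges over the orthogonal projection of $B_1$ onto $\boldsymbol v^{\perp}\cong\R^{n-1}$, a copy of $B_1^{(n-1)}$, and $r(y)=\sqrt{1-|y|^2}$ (here one again uses that $u$ is $C^1$ up to $\partial B_1$). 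Since $|u|\le 1$, each bracket is bounded by $2$, so $|a-b|\le 2|B_1^{(n-1)}|$. By the symmetry of the statement we may assume $b=\min\{a,b\}\le\sqrt\eta$; then $a=(a-b)+b\le 2|B_1^{(n-1)}|+\sqrt\eta$ and $b\le\sqrt\eta\le 2|B_1^{(n-1)}|+\sqrt\eta$, giving \eqref{max-v}.

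Finally, \eqref{grad} follows by summing over coordinate directions. For each fixed $\boldsymbol v$ the two previous estimates yield $\int_{B_1}|\partial_{\boldsymbol v}u|\,dx=a+b\le 2\big(|B_1^{(n-1)}|+\sqrt\eta\big)$; applying this with $\boldsymbol v=e_1,\dots,e_n$ and using $|\nabla u|\le\sum_{i=1}^{n}|\partial_{e_i}u|$ gives $\int_{B_1}|\nabla u|\,dx\le 2n\big(|B_1^{(n-1)}|+\sqrt\eta\big)$. The argument is short and I do not expect a real obstacle: the only ingredient beyond the hypothesis is the geometric a priori bound $|a-b|\le 2|B_1^{(n-1)}|$ from one-dimensional slicing, and the only point needing a little care is the sign bookkeeping in the first step, which — since flipping the sign of $t_k$ merely interchanges $a$ and $b$ and leaves the product $a\,b$ and all three (symmetric) conclusions unchanged — is harmless.
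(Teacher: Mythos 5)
Your proof is correct and follows essentially the same route as the paper: reduce the hypothesis to $a\,b\le\eta$ (hence $\min\{a,b\}\le\sqrt\eta$), combine with the direction-independent a priori bound $\big|\int_{B_1}\partial_{\boldsymbol v}u\,dx\big|\le 2|B_1^{(n-1)}|$ to get the max-bound, and sum over coordinate directions for the gradient. The only cosmetic difference is that you obtain the a priori bound by slicing $B_1$ into chords parallel to $\boldsymbol v$ and applying the fundamental theorem of calculus, whereas the paper rewrites $\int_{B_1}\partial_{\boldsymbol v}u$ as the limit of $\frac{1}{t_k}\big(\int_{B_1+t_k\boldsymbol v}u-\int_{B_1}u\big)$ and bounds it by the measure of the symmetric difference; both give the same constant and require only $\|u\|_{L^\infty}\le 1$.
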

\begin{proof}
The  proof is the same as the one of Lemma 2.5 in \cite{CSV}. Here the situation is even simpler since, being $u\in C^1$, the directional derivatives of $u$ exist in the classical sense.
We sketch the main steps of the proof, for the convenience of the reader.

By assumption \eqref{hp-lemkey}, we have that
$$
\limsup_{k\rightarrow \infty}\frac{\min\left\{\|\big(u(\cdot)-u(\cdot-t_k\boldsymbol v)\big)_+\|_{L^1(B_1)},\, \|\big(u(\cdot)-u(\cdot-t_k\boldsymbol v)\big)_-\|_{L^1(B_1)}\right\}}{|t_k|}\leq \sqrt\eta.
$$
Passing to the limit as $t_k\rightarrow 0$ we immediately get \eqref{min-v}.

To prove \eqref{max-v}, we simply observe that
\[ \int_{B_1}\partial_{\boldsymbol v}u \,dx = \int_{B_1}\lim_{k\rightarrow \infty}\frac{u(x+t_k{\boldsymbol v})-u(x)}{t_k} \,dx =\lim_{k\rightarrow \infty}\frac{\int_{B_1+t_k\boldsymbol v} u \,dx-\int_{B_1}u\,dx}{t_k} \]
and hence, since  $\|u\|_{L^\infty(\R^n)}\le 1$,
\[\left|\int_{B_1}\partial_{\boldsymbol v} u\,dx\right|\le  \limsup_{k\rightarrow \infty}\frac{\bigl|(B_1+t_k{\boldsymbol v})\setminus B_1\big|
+\bigl|B_1\setminus (B_1+t_k{\boldsymbol v})\big|
}{|t_k|} \le 2 |B_1^{(n-1)}|.\]

This, together with \eqref{min-v}, leads to \eqref{max-v}, since $\partial_{\boldsymbol v}u=(\partial_{\boldsymbol v}u)_+ - (\partial_{\boldsymbol v}u)_-$. Moreover, using that $|\partial_{\boldsymbol v}u|=(\partial_{\boldsymbol v}u)_+ + (\partial_{\boldsymbol v}u)_-$, we also deduce that
\[\int_{B_1}|\partial_{\boldsymbol v} u|\,dx\le  2\left( |B_1^{(n-1)}| + \sqrt \eta \right).\]

Finally, since $|\nabla u|\leq |\partial _{e_1} u|+\ldots + |\partial _{e_n} u|$, we conclude \eqref{grad}.
\end{proof}

Before giving the proof of Theorem \ref{BV}, we recall the following abstract result due to L. Simon \cite{Simon} (see also Lemma 3.1 in \cite{CSV}).
\begin{lem}[\cite{Simon}]\label{lem_abstract}
Let $\beta\in \R$ and $C_0>0$. Let $S: \mathcal B \rightarrow [0,+\infty)$ be a nonnegative function defined on the class $\mathcal B$  of open balls contained in the unit ball $B_1$ of $\R^n$ and satisfying the following subadditivity property:
\[ S(B)\le \sum_{j=1}^N S(B^j) \quad \mbox{ whenever }  N\in\Z^+, \{B^j\}_{j=1}^N \subset \mathcal B, \text{ and } B \subset \bigcup_{j=1}^N B^j. \]

It follows that there exists a constant $\delta>0$, depending only on $n$ and $\beta$,  such that if
\begin{equation}\label{hp-lem}
 \rho^\beta S\bigl(B_{\rho/4}(x_0)\bigr) \le \delta \rho^\beta S\bigl(B_\rho(x_0)\bigr)+ C_0\quad \mbox{whenever }B_\rho(x_0)\subset B_1,
 \end{equation}
then
\[ S(B_{1/2}) \le CC_0\]
for some constant $C$ which depends only on $n$ and $\beta$.
\end{lem}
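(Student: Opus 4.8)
This is the abstract iteration (``hole-filling'') lemma of L. Simon, and the quickest option is to invoke \cite{Simon}; I sketch below how I would reprove it. Two elementary remarks set the stage. First, taking $N=1$ in the subadditivity hypothesis shows that $S$ is monotone, $B\subseteq B'\Rightarrow S(B)\le S(B')$; in particular $S$ is bounded on the class of balls that are well inside $B_1$, so the suprema used below are finite (when $\beta<0$ one inserts a truncation $\rho\ge\varepsilon$ and removes it at the end, all constants being independent of $\varepsilon$). Second, rewriting the hypothesis as
\[
S\bigl(B_{\rho/4}(x_0)\bigr)\le \delta\,S\bigl(B_\rho(x_0)\bigr)+\rho^{-\beta}C_0\qquad\text{whenever }B_\rho(x_0)\subseteq B_1,
\]
one sees that shrinking a ball by the factor $4$ multiplies $S$ by the small factor $\delta$, up to an additive error that is controlled provided $\delta$ is small relative to $4^{\beta}$; iterating this down the dyadic scales therefore produces a geometric gain.

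The plan is then: (i) fix $\rho\le\tfrac14$ and a centre $x$ with $B_\rho(x)\Subset B_1$, and iterate the displayed inequality at $x$ through the radii $\rho,\rho/4,\rho/16,\dots$ (all these balls stay inside $B_1$); choosing $\delta=\delta(n,\beta)$ with $\delta\,4^{-\beta}\le\tfrac12$, the resulting geometric series yields $S(B_{4^{-k}\rho}(x))\le \delta^{k}S(B_\rho(x))+C(\beta)(4^{-k}\rho)^{-\beta}C_0$ for every $k\ge0$; (ii) cover $B_{1/2}$ by $N\le C(n)4^{kn}$ balls $B_{4^{-k-1}}(x_i)$ with $x_i\in B_{1/2}$ (so that $B_{1/4}(x_i)\Subset B_1$), apply (i) with $\rho=\tfrac14$ to each, and sum via subadditivity, obtaining $S(B_{1/2})\le C(n)(4^{n}\delta)^{k}\sup_{|y|\le 3/4}S(B_{1/4}(y))+C(n,\beta)\,4^{k(n+\beta)}C_0$; (iii) choose $\delta$ so small that also $4^{n}\delta<1$, so the first term decays geometrically in $k$, and then run Simon's absorption step — organising the estimate as a self-improving inequality for the normalized supremum $Q_r:=\sup\{\rho^{\beta}S(B_\rho(x)):B_{2\rho}(x)\subseteq B_r\}$ over a nested family of balls $B_r\Subset B_1$ and iterating it a controlled number of times — to conclude $S(B_{1/2})\le C(n,\beta)C_0$.

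The main obstacle is precisely the bookkeeping in step (iii): each time the radius is enlarged by $4$ in the iteration, the centre has to be pushed a comparable amount away from $\partial B_1$, so the covering radii must be taken small relative to the available room and the suprema must be taken over the correct nested families of balls; one then needs the geometric gain $C(n)\delta$ to beat the dimensional multiplicity of the coverings before the error terms accumulate. Handling the finiteness of the normalized suprema when $\beta<0$ via the truncation, and summing the elementary geometric series with constants depending only on $n$ and $\beta$, are the remaining, routine, points.
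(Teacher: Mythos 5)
The paper does not prove this lemma; it states it as a known result of L.~Simon and refers the reader to \cite{Simon} (see also Lemma~3.1 in \cite{CSV}). Your first option --- invoke the reference --- is therefore exactly what the paper does.

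The sketch that follows has a genuine gap at step (iii), which is where the whole content of Simon's argument lives. The estimate you obtain at the end of step (ii), namely
\[
S(B_{1/2})\le C(n)(4^n\delta)^k\sup_{|y|\le 3/4}S(B_{1/4}(y))+C(n,\beta)\,4^{k(n+\beta)}C_0,
\]
does not yield the conclusion: the supremum on the right is not a priori finite (the hypothesis of the lemma only relates a quarter-radius ball to the full ball; it imposes no absolute bound on $S$), and in the application of the present paper $\beta=1-n$, so $n+\beta=1>0$ and the error term $4^{k(n+\beta)}C_0$ diverges as $k\uparrow\infty$, so iteration in $k$ alone is unprofitable. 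You correctly identify the absorption step as ``the main obstacle'' but then defer to ``Simon's absorption step'' without carrying it out; the normalized supremum $Q_r$ you introduce does not feed into the estimate from step (ii), and the nesting problem you raise yourself (each use of the hypothesis forces a strictly larger outer ball, which can leave the admissible class over which the supremum is taken) is precisely the obstruction that makes the lemma nontrivial. A correct proof requires a carefully weighted normalized supremum, a truncation argument to ensure its finiteness, and a self-improving inequality for it that can be absorbed; these constitute the heart of the proof and are not present in the sketch.
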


We can now give the proof of Theorem \ref{BV}.

\begin{proof}[Proof of Theorem \ref{BV}]

We divide the proof into two steps.

\vspace{3pt}

{\em -Step 1.} We show that if $s\in (s_0, 1)$ and $u: B_6 \to (-1,1)$ is a stable solution of the semilinear equation $L_Ku + W'(u) =0$ in  $B_6$ then, for any given $\delta >0$, we have the estimate
\begin{equation}\label{wnhiotheoiht}
\int_{B_1}|\nabla u| \,dx \le  \frac{C_\delta}{1-s} +  \delta\, \int_{B_4}|\nabla u|  \,dx
\end{equation}
where $C_\delta$ depends only on $\delta$, $n$, $\lambda$, $\Lambda$, and $s_0$ (in particular, it does not depend on~$W$).

Indeed, note that  in this setting Lemmas \ref{lem2A},  \ref{lemEFtdelta}, and \ref{key} apply to $u$. 

Hence, by Lemma \ref{lemEFtdelta}, for every $\nu >0$ there exists $t_0>0$ such that 
\begin{eqnarray*}
&&\min\left\{ \iint_{A_4}(u(x)-u_t(x))_+(u(\bar x)-u_t(\bar x))_-K(x-\bar x)\,dx \,d\bar x,\right.\\
&&\hspace{4em}\left.\iint_{A_4}(u(x)-u_{-t}(x))_+(u(\bar x)-u_{-t}(\bar x))_-K(x-\bar x)\,dx \,d\bar x \right\}\leq \left(\eta+2\nu\right)t^2
\end{eqnarray*}
holds for every $t\in (0,t_0)$, where
\begin{equation}\label{eta}
\eta=C\mathcal E^{\rm Sob}_{B_4}(u)
\end{equation}
and $C$ depends only on $n$, $\lambda$, and $\Lambda$.
Now, by \eqref{L0} and since $s\in (0,1)$, we have $K\ge (2-s) 2^{-n-s}\lambda \ge 2^{-n-1}\lambda$  in~$B_2$. We deduce that there is some sequence $t_k\in(-1,1)$ with
$t_k \rightarrow 0$ such that
$$
\limsup_{k\rightarrow \infty}\frac{\|\big(u(\cdot)-u(\cdot-t_k\boldsymbol v)\big)_+\|_{L^1(B_1)}\ \|\big(u(\cdot)-u(\cdot-t_k\boldsymbol v)\big)_-\|_{L^1(B_1)}}{t_k^2}\leq \eta ,
$$
after changing the value of $C$ in \eqref{eta}.

We can now apply Lemma \ref{key} and, thanks to \eqref{grad}, we arrive at
\begin{equation}\label{key1}
 \int_{B_1}|\nabla u|\,dx\leq C \left( 1+ \sqrt{\mathcal E^{\rm Sob}_{B_4}(u)}\right)<\infty,
\end{equation}
where $C$ depends on $n$, $\lambda$, and $\Lambda$.

 In order to keep track of the precise dependence of the constants on $s$, as $s\uparrow 1$, in what follows $C$ will denote (possibly different) positive constants which depend only on $n$, $\lambda$, $\Lambda$, and $s_0$.

Defining $V(z):= |\nabla u(z)|$ for $z\in B_4$ and $V(z):= 0$ for $z\in \R^n\setminus B_4$, and given $x$ and $\bar x$ both in $B_4$, note that we have
$$
|u(x)-u(\bar x)| =\left| \int_0^1 (\bar x -x ) \cdot \nabla u(x+t(\bar x -x))\,dt\right|
\leq |x -\bar x| \int_0^1 V(x+t(\bar x -x))\,dt.
$$
Using this, that $K\in \mathcal L_2$, and $\|u\|_{L^\infty(\R^n)}\leq 1$,  we deduce (thanks to the presence of the factor $1-s$ on the next left-hand side) that
\begin{eqnarray}\label{key2}
(1-s)\mathcal E^{\rm Sob}_{B_4}(u)
&\le & \frac{1}{2} \Lambda(1-s) \bigg(\iint_{B_4\times B_4} \frac{|u(x)-u(\bar x)|^2}{|x-\bar x|^{n+s}}  \,dx\,d\bar x  \nonumber \\
& & \hspace{4cm}
+ 2\iint_{B_4\times  B_4^c} \frac{2^2}{|x-\bar x|^{n+s}}\,dx \,d\bar x\bigg)\nonumber 
\\
& \le&  \frac{1}{2} \Lambda (1-s)\iint_{B_4\times B_4} \frac{|u(x)-u(\bar x)|^2}{|x-\bar x|^{n+s}}  \,dx\,d\bar x + C\nonumber 
\\
&\le &  \Lambda (1-s)\iint_{B_4\times B_4} \frac{| u(x)-u(\bar x)|}{|x-\bar x|^{n+s}}   \,dx\,d\bar x + C\nonumber 
\\
&\leq & \Lambda(1-s)\int_{B_4} dx \int_ {B_8} dy \, |y|^{1-n-s} \int_0^{1} dt \,V(x + ty) +C\nonumber 
\\
&\leq& \Lambda(1-s)\int_ {B_8} dy \, |y|^{1-n-s} \int_0^{1} dt \int_{\R^n} dx  \,V(x + ty) +C\nonumber 
\\        
& = &  \Lambda(1-s)\int_ {B_8} dy \, |y|^{1-n-s} \int_{\R^n}dz \, V(z) +C\nonumber \\
&\le & C\int_{\R^n}V(z)\,dz +C \nonumber \\
&=& C\left( 1 + \int_{B_4}|\nabla u|\,dx \right),
\end{eqnarray}
where $C$ depends only on $n$, $\lambda$, $\Lambda$, and $s_0$; in particular, it stays bounded as $s\uparrow 1$.

Hence,  \eqref{key1}, \eqref{key2}, and the Cauchy-Schwarz inequality lead to
\begin{equation}\label{key3}
\begin{split}
\int_{B_1}|\nabla u| \,dx &\le  C\left(1+ \frac{1}{(1-s)^{1/2}}\left(1+\int_{B_4}|\nabla u| \,dx  \right)^{1/2}\right)
 \\
 & \le  C\left(1+\frac{1}{\delta(1-s)}+\delta\right)  + \delta\, \int_{B_4}|\nabla u|  \,dx
 \end{split}
\end{equation}
for all $\delta>0$.

\vspace{3pt}

{{\em -Step 2.} Now, in order to conclude the proof of the theorem we observe that, since its statement is scaling invariant (it is independent of $W$), we may assume without loss generality that $R=1$. So, let now $u: \R^n \to (-1,1)$ be a stable solution of $L_Ku+W'(u)=0$ in $B_{2R}=B_2$. Given $B_\rho(x_0)\subset B_1$ then the rescaled function $\tilde u(y) =u\left( x_0 +\rho y/4\right)$ is a stable solution in $B_6$ (since $x_0+\rho B_1 \subset B_1 \Rightarrow  x_0+\frac{\rho}{4} B_6 \subset B_2$)} of the equation $L_{\tilde K}\tilde u+(\rho/4)^s W'(\tilde u)=0$,
where
\[   \tilde K(z) : = {(\rho/4)}^{n+s} K(\rho z/4) \quad \mbox{belongs again to }\mathcal L_2(s,\lambda, \Lambda).\]
Thus, rescaling the estimate \eqref{wnhiotheoiht}, which applies to $\tilde u$, we obtain
\begin{equation}
 \rho^{1-n} \,\int_{B_{\rho/4}(x_0)}|\nabla u| \,dx \le \delta \,\rho^{1-n}\, \int_{B_{\rho}(x_0)}|\nabla u|\,dx + \frac{C_\delta}{1-s} 
\end{equation}
for all $\delta>0$, where $C_\delta$ depends only on $\delta$, $n$, $\lambda$, $\Lambda$, and $s_0$.

Therefore, considering the subadditive function 
\[S(B):=\int_{B}|\nabla u|\,dx \]
on the class of balls,
and taking $\beta:=1-n$ and  $\delta$ as given by Lemma \ref{lem_abstract} (and hence depending only on $n$), we find that
\[  \int_{B_{1/2}}|\nabla u| \,dx = S(B_{1/2}) \le \frac{C}{1-s},\]
where $C$ is a constant which depends only on $n$, $\lambda$, $\Lambda$, and $s_0$.

By scaling and using  a standard covering argument, we obtain the same estimate with $B_{1/2}$ replaced by $B_1=B_R$, concluding the proof.
\end{proof}

\begin{proof}[Proof of Corollary \ref{energy-est}]
Let $s\in (0,1)$.  Let $u_R: =  u(R\, \cdot\,)$. We combine the estimate $\int_{B_1}|\nabla u_R|\,dx \le C(1-s)^{-1}$ of Theorem \ref{BV} with the  interpolation type inequality 
\[(1-s)\mathcal E^{\rm Sob}_{B_1}(u_R)  \le C\bigl(1+\int_{B_1}|\nabla u_R|\,dx\bigr)\] ---which we proved, in a different ball,  in \eqref{key2}--- to obtain $\mathcal E^{\rm Sob}_{B_1}(u_R) \le C (1-s)^{-2}$. Now, the corollary follows after rescaling.
\end{proof}

\section{Sobolev energy controls potential energy}\label{sec-4} 

In this section we give the  proof of Proposition \ref{DircontrolsPot2}. In particular, we will have $s\in (0,2]$ and we include the case of the classical Allen-Cahn equation. All results in this section require the cubes to have large enough diameter and, in particular, the equation to be posed in a sufficiently large ball (as in the statement Proposition~\ref{DircontrolsPot2}).

The proof is based on  a suitable cube decomposition ---which may be of independent interest--- and covering arguments. 
We identify two types of cubes: of `type I'' and of ``type~II''.
Cubes of type I will be those containing at least one point where $u\sim 0$.
Cubes of type II are cubes in which either $u\sim1$ or $u\sim -1$ in the whole cube.
By cube we always mean a set of the form $Q=x_0+(-l,l)^n$ for some $x_0\in\R^n$ and $l>0$.

The following three lemmas are used in the proof of Proposition \ref{DircontrolsPot2}.
Essentially, the first one (Lemma \ref{tuttookintype1}) is used to show that the contribution of a cube of type~I to the total Sobolev energy controls its contribution to the potential energy. The third lemma (Lemma \ref{tuttookintype2}) will prove this same property for cubes of type II.

The second lemma states that if a cube is not of type I (i.e., it contains no points where $u\sim0$) then the corresponding half-cube is of type II (i.e., we have either $u\sim 1$ or $u\sim -1$ in all of it). Thus, through an appropriate covering argument, this second lemma establishes that there is essentially a dichotomy between cubes of type I and cubes of type II.

Throughout this section $W$ is the double-well potential $\frac{1}{4}(1-u^2)^2$. It satisfies 
\begin{equation}\label{WWW}
\begin{cases}
-W''(t)= -W''(-t) \ge \nu_0 \quad &  \mbox{for } 0\le t\le c_0
\\
W''(t)= W''(-t) \ge \nu_0 \quad  &\mbox{for } 1-c_0 \le t\le 1
\\
-W'(t) = W'(-t) \ge \nu_1 \quad &  \mbox{for } \frac{c_0}{2}\le t\le 1-c_0
\end{cases}
\end{equation}
for some constants $c_0\in (0,\frac{2}{3})$, $\nu_0>0$, and $\nu_1>0$ which are totally universal. To check quickly their existence, note that once $c_0$ and $\nu_0$ have been chosen small enough to guarantee the first two properties, the third condition will be satisfied for $\nu_1$ small enough.

\begin{lem}\label{tuttookintype1}
Given $s_0>0$, let $n\ge 2$, $s\in (s_0,2]$,  $W(u)=\frac 1 4 (1-u^2)^2$, and $K$ satisfy \eqref{L0} and \eqref{L2}. Let $R>0$ and $u:\R^n\rightarrow (-1,1)$ be a stable solution of $L_{K}u + W'(u)=0$ in~$B_{2R}$.  

Then, there exist constants $D_0\ge 1$ and $d_0>0$, which depend only on $n$, $\lambda$, $\Lambda$, and $s_0$, such that the following statement holds true.

For every cube $Q\subset B_R$ satisfying ${\rm diam}(Q)\ge D_0$ and
\[
 \{ |u|\le c_0/2\}\cap Q\neq\varnothing
\]
for the constant $c_0$ in \eqref{WWW}, there are two cubes $Q^{(1)}$, $Q^{(2)}$ contained in $Q$ and with diameters $d_0$ for which
\begin{equation}\label{goalQi}
\inf_{Q^{(1)}} u - \sup_{Q^{(2)}} u  \ge c_0/4.
\end{equation}
\end{lem}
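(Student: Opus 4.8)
The plan is to exploit the fact that the solution is continuous with a uniform modulus of continuity on a slightly smaller ball, so that once $u$ takes a value near $0$ at some point $x_0 \in Q$, it stays bounded away from $\pm 1$ on a ball of definite radius around $x_0$; the idea is then to use the equation and the Harnack-type/interior regularity machinery to force $u$ both above and below the level $\pm c_0/4$ on two small subcubes. First I would fix $x_0 \in Q$ with $|u(x_0)| \le c_0/2$, say WLOG $u(x_0)\in[-c_0/2,c_0/2]$. By the interior $C^1$ (indeed $C^{2,\alpha}$) estimates from Appendix~\ref{app-C}, valid on $B_{2R}$ with constants depending only on $n,\lambda,\Lambda,s_0$ and on $\|W'\|_{C^1([-1,1])}$ (which is universal for the quartic potential), $u$ has a universal modulus of continuity; hence there is a universal radius $r_0>0$ such that $|u| \le 3c_0/4$ on $B_{r_0}(x_0)$. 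This requires $\operatorname{dist}(x_0,\partial B_{2R})$ to be bounded below, which is where the hypothesis $\operatorname{diam}(Q)\ge D_0$ and $Q\subset B_R$ enter: we need $D_0$ large enough (and $R$ correspondingly large) so that $B_{r_0}(x_0)\subset B_{2R}$ with the required margin for interior estimates.

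Next I would produce the two subcubes. On $B_{r_0}(x_0)$ we have $c_0/2 \le -W'(u) \le \ldots$ or its reflection only in the range $c_0/2 \le |u| \le 1-c_0$, but more robustly: since $u$ solves $L_K u = -W'(u)$ and $|u|$ is strictly less than $1$ with $-W'$ of one definite sign away from the zeros, one can run a sliding/barrier argument. Concretely, compare $u$ with solutions of the linear equation $L_K v = \mp \nu_1$ on $B_{r_0}(x_0)$: the point is that $u$ cannot remain pinned near the value $u(x_0)$ throughout $B_{r_0}(x_0)$ because the forcing term $-W'(u)$ is bounded away from $0$ there, which by the (nonlocal) maximum principle pushes $u$ up on one side and down on another. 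More precisely, I would show there exist points $y_+, y_- \in B_{r_0}(x_0)$ with $u(y_+) \ge u(x_0) + \kappa$ and $u(y_-) \le u(x_0) - \kappa$ for a universal $\kappa>0$ — arguing by contradiction: if $u \le u(x_0)+\kappa$ on all of $B_{r_0}(x_0)$, then on a slightly smaller ball $u$ stays in a range where $-W'(u)$ has a definite sign and size, and a standard supersolution/barrier comparison (using that $L_K$ applied to a smooth bump of height $\kappa$ is small when $\kappa$ is small, while the forcing $\nu_1$ is fixed) yields a contradiction. Then, using the universal modulus of continuity again, $u \ge u(x_0) + \kappa/2$ on a ball $B_{d_0}(y_+)$ and $u \le u(x_0) - \kappa/2$ on $B_{d_0}(y_-)$ for a universal $d_0$; choosing cubes $Q^{(1)} \subset B_{d_0}(y_+)$, $Q^{(2)} \subset B_{d_0}(y_-)$ of diameter $d_0$ gives $\inf_{Q^{(1)}} u - \sup_{Q^{(2)}} u \ge \kappa$, and shrinking if necessary so that $\kappa \ge c_0/4$ we are done. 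Throughout we must keep all these balls inside $B_{2R}$, which fixes the choice of $D_0$ in terms of $r_0, d_0$ and the interior-estimate margin.

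The main obstacle I anticipate is the quantitative oscillation step — showing that $u$ genuinely moves by a definite amount in both directions near $x_0$, with a constant depending only on $n,\lambda,\Lambda,s_0$ and not on the ambient radius $R$. The delicate point is that a purely local argument sees only the equation $L_K u = -W'(u)$, but $L_K u(x)$ depends on the values of $u$ everywhere in $\R^n$; one must therefore phrase the barrier comparison correctly for the nonlocal operator (using that $|u|\le 1$ globally to control the tail contributions) rather than for a purely local one. An alternative, perhaps cleaner route avoiding barriers: use that $u$ is not identically equal to a constant near $x_0$ (a nonconstant solution on a ball with $|u|<1$ has nontrivial gradient somewhere, by unique continuation / the equation itself), combined with a compactness argument on the class of stable solutions with the quartic potential, to extract the universal constants $\kappa, d_0, D_0$; the price is that this is less explicit, but it is robust and stays within the framework already set up in the paper. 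Stability itself is likely used only indirectly here — it is what guarantees the compactness of the relevant class of solutions and the uniform interior estimates — rather than through the stability inequality \eqref{stable} directly.
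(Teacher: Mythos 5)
There is a genuine gap in your argument, and it is concentrated precisely where you decide to set stability aside. You state ``Stability itself is likely used only indirectly here --- it is what guarantees the compactness of the relevant class of solutions and the uniform interior estimates --- rather than through the stability inequality \eqref{stable} directly.'' This is backwards: stability is the \emph{only} ingredient that makes the lemma true, and without it the statement is false. Indeed, $u\equiv 0$ solves $L_Ku+W'(u)=0$ in $\R^n$, is bounded with $|u|<1$, and satisfies every hypothesis except stability, yet it never oscillates and \eqref{goalQi} fails. Likewise, the barrier/sliding step you propose cannot get off the ground: once $u(x_0)\in[-c_0/2,c_0/2]$ and $u$ stays close to that value nearby, the forcing $-W'(u)$ is \emph{not} bounded away from zero --- property \eqref{WWW} gives $|W'|\ge\nu_1$ only in the band $c_0/2\le|t|\le 1-c_0$, whereas near the unstable well $t=0$ we have $W'(t)\approx -t\to 0$. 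A comparison with $L_Kv=\mp\nu_1$ therefore has nothing to push against, and the compactness-based alternative you sketch would run into the same $u\equiv 0$ obstruction.

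The paper's argument uses stability directly and in the opposite way: instead of forcing oscillation through the zeroth-order equation, it shows that \emph{lack} of oscillation on a large cube contradicts the stability inequality. If $\operatorname{osc}_Q u<c_0/2$ while $\{|u|\le c_0/2\}\cap Q\neq\varnothing$, then $|u|\le c_0$ throughout $Q$, whence $-W''(u)\ge\nu_0>0$ there (the potential is strictly concave near its local maximum $u=0$). Plugging this into \eqref{stable} gives $\nu_0\int\xi^2\le\frac12\iint|\xi(x)-\xi(\bar x)|^2K\,dx\,d\bar x$ for all $\xi\in C^1_c(Q)$. Rescaling a fixed bump to a cube of diameter $D$ makes the left side scale like $D^n$ and the right side like $D^{n-s}$, so the inequality fails once $D\ge D_0(n,\lambda,\Lambda,s_0)$. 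This is exactly where the hypothesis $\operatorname{diam}(Q)\ge D_0$ enters. Your Step~2 (pass from a definite oscillation $\operatorname{osc}_Q u\ge c_0/2$ to two cubes of fixed size $d_0$ via the uniform gradient bound) is fine and coincides with the paper's, but the quantitative oscillation step --- the heart of the lemma --- must be reworked around the stability inequality, not around a barrier for $L_Ku=-W'(u)$.
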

\begin{proof}
Within the proof, $C$ will denote different positive constants which depend only $n$, $\lambda$, $\Lambda$, and $s_0$. The constants $D_0$ and $d_0$ will be chosen to have this same dependence.

Let $Q\subset B_R$ be some cube satisfying ${\rm diam}(Q)=D \ge D_0$.

\vspace{3pt}
-{\em Step 1.} Let us  prove that  ${\rm osc }_Q u \ge c_0/2$.
Indeed, arguing by contradiction assume that
\[
 {\rm osc }_Q u < c_0/2,  \quad \mbox{and recall that we have }\{ |u|\le c_0/2\}\cap Q\neq\varnothing.
\]
Then, $|u|\le c_0$ in $Q$ and using \eqref{WWW}  and the stability inequality \eqref{stable}, we obtain 
\begin{eqnarray}\label{ineq2-s}
\nu_0 \int_{\R^n} \xi^2\,dx &\le &-\int_{\R^n} W''(u) \xi^2\,dx \nonumber
\\
&\le & \frac{1}{2}\iint_{\R^n\times \R^n} \big|\xi(x)-\xi(\bar x)\big|^2 K(x-\bar x)\,dx \,d\bar x\nonumber
\\
&\le& C(2-s) \iint_{\R^n\times \R^n} \frac{\big|\xi(x)-\xi(\bar x)\big|^2}{|x-\bar x|^{n+s}} \,dx \,d\bar x
\end{eqnarray}
for every $\xi\in C^1_c(Q)$.

Now, if the diameter $D$ of $Q$ is large enough we may contradict this inequality just by scaling. Indeed, denote by $Q_d$ the cube centered at $0$ and with diameter $d$, and let $x_0$ be the center of $Q$ and $D$ its diameter.  Choose (universally) a function $\eta\not\equiv 0$ in $C^\infty_c(Q_1)$. Take now $\xi(x)= \eta((x-x_0)/D)$ and notice that
\begin{equation}\label{ineq2-sbis}
(2-s)\iint_{\R^n\times \R^n} \frac{\big|\xi(x)-\xi(\bar x)\big|^2}{|x-\bar x|^{n+s}} \,dx \,d\bar x \le C D^{n-s} \Vert\nabla\eta\Vert_{L^\infty(Q_1)}^2.
\end{equation}
while
\begin{equation}\label{ineq2-sbis2}
 \int_{\R^n} \xi^2\,dx = cD^n
\end{equation}
Choosing $D\geq D_0\geq 1$ large enough (note that $D_0^{n-s}\leq D_0^{n-s_0}$), we contradict \eqref{ineq2-s}. This proves Step 1.

Note that the same argument applies in the case $s=2$ when we replace $2-s$ times the double integral by the classical Dirichlet norm.

\vspace{3pt}
-{\em Step 2}. We now use Step 1, and regularity estimates for the equation,
to show that there are two cubes $Q^{(1)}$, $Q^{(2)}$ contained in $Q$ and with diameters $d_0$ such that $\inf_{Q^{(1)}} u - \sup_{Q^{(2)}} u  \ge c_0/4$.

Indeed, first note that, as shown in Appendix~\ref{app-C}, we have the estimate $|\nabla u|\le \bar C$ in $B_R$, for some constant $\bar C$ depending only on the quantities stated in the beginning of the proof; here we use that $Q\subset B_R$ and thus $ \textrm{diam}(B_R) \ge \textrm{diam}(Q)=D\ge D_0\ge 1$. Let
$$
d_0= \min \left\{ D_0, \frac{c_0}{8 \bar C}\right\}.
$$
Now, by Step 1 there are two points $x_1,x_2\in \overline{Q}$ such that $u(x_1)-u(x_2)\ge c_0/2$.
Let $Q^{(i)}$ be any two cubes with diameter $d_0$ such that $x_i\in \overline{Q^{(i)}}\subset \overline Q$ (recall that $d_0\le D_0\le \textrm{diam}(Q)$). Then we readily show that \eqref{goalQi} is satisfied by these cubes.
\end{proof}

We can now state the second lemma.

\begin{lem}\label{notype1impliestype2}
Given $s_0>0$, let $n\ge 2$, $s\in (s_0,2]$,  $W(u)=\frac 1 4 (1-u^2)^2$, and $K$ satisfy \eqref{L0} and \eqref{L2}. Let $R>0$ and $u:\R^n\rightarrow (-1,1)$ be a stable solution of $L_{K}u + W'(u)=0$ in~$B_{2R}$.  

Then, there exists a constant $D_0\ge 1$, which depends only on $n$, $\lambda$, $\Lambda$, and $s_0$, such that the following statement holds true.

For every cube $Q\subset B_R$ with ${\rm diam}(Q)\ge D_0$ we have
\begin{equation}\label{firstcase}
u \ge 1-c_0  \mbox{ in }Q' \quad\text{ if } \, u > c_0/2  \mbox{ in }Q \
\end{equation}
and
\begin{equation}\label{secondcase}
u \le -1+c_0  \mbox{ in }Q'  \quad\text{ if } \, u <-c_0/2  \mbox{ in }Q ,
\end{equation}
where $c_0$ is the constant in \eqref{WWW} and $Q'$ is the cube with the same center as $Q$ and half its diameter.

\end{lem}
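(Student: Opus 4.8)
\textbf{Proof plan for Lemma \ref{notype1impliestype2}.} The plan is to prove \eqref{firstcase}; the statement \eqref{secondcase} follows by replacing $u$ with $-u$, which (by the symmetry of $W$) is again a stable solution. So assume $u>c_0/2$ in $Q$, where $Q\subset B_R$ is a cube of diameter $D\ge D_0$, with $D_0$ to be fixed. The key idea is a barrier/comparison argument exploiting the sign of $W'$: by \eqref{WWW}, as long as $u$ takes values in $[c_0/2,1-c_0]$ we have $-W'(u)\ge \nu_1>0$, so $L_K u=-W'(u)\ge \nu_1$ there. Thus, on the region where $u$ has not yet climbed above $1-c_0$, the function $u$ is a supersolution of $L_K u=\nu_1$ with a definite positive right-hand side, while being bounded below by $c_0/2$ on a cube of large diameter.

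First I would set up the argument by contradiction: suppose there is a point $x_*\in Q'$ (the concentric cube of diameter $D/2$) with $u(x_*)<1-c_0$. The goal is to reach a contradiction once $D_0$ is large. I would compare $u$ with a subsolution of the form $\psi(x)=c_0/2+\gamma\bigl(R_Q^2-|x-x_Q|^2\bigr)_+$ on a ball $B_{R_Q}(x_Q)\subset Q$ of radius $R_Q\sim D$ centered appropriately so that $x_*\in B_{R_Q/2}(x_Q)$, where $\gamma>0$ is a small universal constant. The point is that $L_K\psi$ (the fractional, or in the case $s=2$ classical, operator applied to this paraboloid cap) can be made uniformly smaller than $\nu_1$ on $B_{R_Q/2}(x_Q)$ by choosing $\gamma$ small, \emph{provided} we are careful about the nonlocal tails: $\psi$ is only defined on $B_{R_Q}(x_Q)$, and the ``tail'' contribution $\int_{B_{R_Q}(x_Q)^c}(\psi(x)-\psi(\bar x))K(x-\bar x)\,d\bar x$ could be large and negative. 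This is handled by extending $\psi$ outside by $-1$ (or by $c_0/2$) and using that $u>c_0/2>\psi$ wherever they are both ``in play'': concretely, one uses that $u\ge \psi$ on $\partial B_{R_Q}(x_Q)$ and $u\ge c_0/2\ge\psi$ outside (if $\psi$ is extended by $\le c_0/2$), so that the comparison function $u-\psi$ is a supersolution of $L_K(u-\psi)\ge 0$ in the region $\{u<1-c_0\}\cap B_{R_Q}(x_Q)$ with nonnegative ``exterior data,'' forcing $u\ge\psi$ there. Then at the center, $u(x_Q)\ge \psi(x_Q)=c_0/2+\gamma R_Q^2$, which exceeds $1-c_0$ once $R_Q$ (hence $D_0$) is large enough — contradicting that $u<1$ everywhere. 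One must iterate or choose the geometry so that this forces $u\ge 1-c_0$ at \emph{every} point of $Q'$, not just one; this is arranged by noting that any point of $Q'$ is the center of such a large ball still contained in $Q$.

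Alternatively, and perhaps more cleanly in the nonlocal setting, I would avoid explicit paraboloids and instead run the following iteration of sliding/measure arguments: knowing $u>c_0/2$ on the cube $Q$ of size $D$, one shows that $u$ must exceed $c_0/2+\delta_0$ on the concentric cube of size $D-C$, for a universal $\delta_0>0$ and universal $C$; this uses $L_K u\ge\nu_1$ together with the interior Harnack-type / growth estimates for $L_K$ (available since $K\in\mathcal L_2$), comparing $u-c_0/2\ge 0$ with the solution of $L_K w=\nu_1$ in a ball with zero exterior data, whose value at the center is $\ge c\nu_1 (\text{radius})^s$. Shrinking by a universal amount at each step and using that each step gains a fixed increment $\delta_0$ (as long as we remain below the level $1-c_0$ where the equation still pushes upward), after $\sim 1/\delta_0$ steps — costing a total universal reduction $\sim C/\delta_0$ in side length — we conclude $u\ge 1-c_0$ on the smaller cube. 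Choosing $D_0\ge 2C/\delta_0$ (or larger, and at least $1$) guarantees the smaller cube contains $Q'$, finishing the proof.

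\textbf{Main obstacle.} The genuine difficulty, compared with the local case $s=2$, is controlling the \emph{nonlocal tail}: the inequality $L_K u=-W'(u)\ge\nu_1$ only holds at points where $u\in[c_0/2,1-c_0]$, yet $L_K u(x)$ integrates $u(x)-u(\bar x)$ over all of $\R^n$, including regions where $u$ could be close to $-1$ and we have no pointwise information beyond $|u|<1$. The clean way around this is to only ever compare with \emph{subsolutions} $\psi$ that are $\le c_0/2\le u$ outside the cube $Q$ (so the exterior contribution to $L_K(u-\psi)$ has a favorable sign), and to work entirely inside $Q$ where $u>c_0/2$ is known; the positivity of $-W'$ on the whole band $[c_0/2,1-c_0]$ (the third line of \eqref{WWW}) is exactly what makes this band ``climbable'' in one shot. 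Once this tail issue is correctly encapsulated — essentially via the maximum principle for $L_K$ with the right exterior data — the rest is a scaling argument forcing $D_0$ large, and the case $s=2$ is identical with $L_K$ replaced by the limiting second-order operator as remarked before Proposition \ref{DircontrolsPot2}.
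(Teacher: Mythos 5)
Your overall strategy — a barrier/comparison argument exploiting the uniform positivity of $-W'$ on the band $[c_0/2,1-c_0]$ together with a barrier whose $L_K$ is small by scaling — is exactly the paper's. However, your specific barrier construction has a genuine gap that in the nonlocal setting is not merely a technical detail, and the paper's choice of barrier is designed precisely to avoid it.

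The gap is in how you handle the exterior data. You propose to extend the paraboloid $\psi$ outside the ball $B_{R_Q}(x_Q)$ either by the constant $c_0/2$ or by $-1$, arguing that in the former case "$u\ge c_0/2\ge\psi$ outside." But $u>c_0/2$ is a hypothesis only \emph{inside} $Q$; outside $Q$ the only information is $|u|<1$, so $u$ may well be below $c_0/2$ there, and the comparison $u\ge\psi$ on $\R^n\setminus U$ (needed for the nonlocal maximum principle) fails. If instead you extend by $-1$, the barrier has a jump of size $1+c_0/2$ across $\partial B_{R_Q}(x_Q)$, which makes $L_K\psi$ of order $1$ (not $O(R_Q^{-s})$) at points near that sphere — the tail integral picks up the jump at unit distance — and the subsolution inequality $L_K\psi\le\nu_1$ breaks down there. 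A second, smaller issue: you state that $\gamma$ is a "small universal constant," but for the comparison region $U=\{u<1-c_0\}$ to make sense one needs $\psi\le 1-c_0$ throughout, which forces $\gamma\lesssim D^{-2}$; with this choice $\psi$ climbs only by a fixed amount, so there is no "contradiction with $u<1$" — the argument should directly yield $u\ge 1-c_0$ rather than run by contradiction.

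The paper's fix is to abandon the paraboloid altogether and take a \emph{fixed} smooth profile $\eta$ that equals $1-c_0$ on the half-cube $Q_{1/2}$, interpolates smoothly down to $-1$ on $Q_1\setminus Q_{1/2}$, and is identically $-1$ outside $Q_1$; then $\tilde\eta(x)=\eta\bigl((x-x_0)/D\bigr)$. Scaling a fixed $C^2$ function by $D$ automatically gives $\|L_K\tilde\eta\|_{L^\infty}\le CD^{-s}\le\nu_1$ once $D\ge D_0$, the exterior data is handled since $\tilde\eta=-1<u$ outside $Q$ and $\tilde\eta\le 1-c_0$ on $Q\cap\{u\ge 1-c_0\}$, and the conclusion is immediate because $\tilde\eta\equiv 1-c_0$ on all of $Q'$ — no iteration over centers is needed. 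If you replace your paraboloid with such a rescaled fixed profile (transitioning smoothly to $-1$ over a length $\sim D$ and reaching $-1$ by $\partial Q$), your argument closes and becomes the paper's.
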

\begin{proof}
Within the proof, the constant $C$ will depend only $n$, $\lambda$, $\Lambda$, and $s_0$. The constant $D_0$ will be chosen to have this same dependence.

Let $Q\subset B_R$ be some cube satisfying ${\rm diam}(Q)=D \ge D_0$.

We prove only \eqref{firstcase} since, by the even symmetry of $W$,  \eqref{secondcase} follows applying \eqref{firstcase}  to $-u$.

Assume that $u >  c_0/2$ in $Q$. Using \eqref{WWW} we have 
\[  L_Ku  =- W'(u)\ge \nu_1\quad\mbox{in }Q\setminus \{u\ge 1-c_0\}.\]
 Let $\eta\in C^\infty(\R^n)$ with $\eta \equiv 1-c_0$ in $Q_{1/2}$,  $-1\le \eta\le  1-c_0$  in $Q_1$,  and $\eta \equiv -1$ outside $Q_1$ (recall that $Q_r$ denotes the cube centered at $0$ and with diameter $r$). Consider
\[\tilde\eta(x)= \eta((x-x_0)/D)\]
 where $x_0$ is the center of $Q$ and $D$ its diameter. Note that $D\ge D_0$ and that, by taking $D_0$ large enough, we will have
\[ \big|  L_K \tilde\eta \big| \le C D^{-s}\le C D_0^{-s} \le \nu_1 \]
with $C$ and $D_0$ as stated in the beginning of the proof.\footnote{{Here we use the presence of the factor $2-s$ on the upper bound for the kernel $K$ as in \eqref{ineq2-sbis}. Instead, when $s=2$, the bound is obvious.}}
Let us then show that
\begin{equation}\label{goal}
 u\ge \tilde \eta \quad\mbox{in  }Q.
\end{equation}

Indeed, let $U= Q\setminus \{u\ge 1-c_0\}$. Since $ \tilde \eta\le  1-c_0$  in $Q$ and $\tilde \eta\equiv -1$ outside of $Q$ we have $\tilde\eta\le u$ outside of $U$. On the other hand
\[ L_K (u-\tilde \eta) \ge  L_K u - \big|  L_K \tilde\eta\big| \ge   0 \quad \mbox{in }U,\]
and thus the maximum principle leads to \eqref{goal}. Finally, since by construction $\tilde \eta \equiv 1-c_0$ in $Q'$ the lemma follows.
\end{proof}

{
Finally, we can state our last auxiliary lemma in this section.
\begin{lem}\label{tuttookintype2}
Given $s_0>0$, let $n\ge 2$, $s\in (s_0,2]$,  $W(u)=\frac 1 4 (1-u^2)^2$, and $K$ satisfy \eqref{L0} and \eqref{L2}. Let $R>0$ and $u:\R^n\rightarrow (-1,1)$ be a stable solution of $L_{K}u + W'(u)=0$ in~$B_{2R}$.  

Assume that for some cube  $Q\subset B_R$ we have that
\[
1-|u| \le c_0 \quad \mbox{in } Q,
\]
where $c_0$ is the constant in \eqref{WWW}. 

Then
\begin{equation}\label{notseq2}
(2-s)\int_Q \int_{\R^n} \frac{|u(x)-u(\bar x)|^2}{|x-\bar x|^{n+s}} \,dx\,d\bar x \ge \kappa_0\int_{Q} (1-|u|)^2\,dx,
\end{equation}
where  $\kappa_0>0$ is a constant depending only on $n$, $\lambda$, $\Lambda$, $s_0$, and ${\rm diam}(Q)$.

When $s=2$,  the left hand side of \eqref{notseq2}  must be replaced by $\int_Q |\nabla u|^2\,dx$.
\end{lem}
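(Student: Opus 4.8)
The plan is to reduce the statement to a Poincaré-type inequality on the cube $Q$ applied to the function $w:=1-|u|$. Note that in $Q$ we have $1-|u|\le c_0$, so $u$ has a single sign on $Q$ (since $|u|\ge 1-c_0>0$ and $u$ is continuous, being $C^2$ by Appendix~\ref{app-C}); by replacing $u$ with $-u$ if necessary we may assume $u>0$ in $Q$, so that $w=1-u\ge 0$ on $Q$. The key point is that whenever $x,\bar x\in Q$ we have $|u(x)-u(\bar x)|=|w(x)-w(\bar x)|$, so the double integral on the left of \eqref{notseq2} controls (dropping the region where $\bar x\notin Q$, which only decreases the integrand since it is nonnegative) the Gagliardo $H^{s/2}$-seminorm of $w$ over $Q\times Q$:
\[
(2-s)\int_Q\int_{\R^n}\frac{|u(x)-u(\bar x)|^2}{|x-\bar x|^{n+s}}\,dx\,d\bar x\ \ge\ (2-s)\int_Q\int_{Q}\frac{|w(x)-w(\bar x)|^2}{|x-\bar x|^{n+s}}\,dx\,d\bar x\,=:\,[w]_{H^{s/2}(Q)}^2.
\]

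Next I would invoke the standard fractional Poincaré inequality on the (bounded, convex, Lipschitz) domain $Q$: there is a constant $c(n,s_0,\mathrm{diam}(Q))>0$ such that
\[
[w]_{H^{s/2}(Q)}^2\ \ge\ c\int_Q \big|w-\langle w\rangle_Q\big|^2\,dx,\qquad \langle w\rangle_Q:=\frac{1}{|Q|}\int_Q w.
\]
This inequality holds uniformly for $s$ bounded away from $0$ (the constant can be taken uniform for $s\in[s_0,2)$; one way to see this is by a compactness/contradiction argument, or by an explicit computation using that $Q$ is a cube — either way the $(2-s)$ normalization makes the bound non-degenerate as $s\uparrow 2$, and for $s\le 2-\delta_0$ the seminorm is comparable to a fixed one). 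To upgrade from $\|w-\langle w\rangle_Q\|_{L^2(Q)}^2$ to $\|w\|_{L^2(Q)}^2$ I would use the equation: since $u>c_0/2$ in $Q$, Lemma~\ref{notype1impliestype2} (or rather its proof) gives a quantitative one-sided bound — but more simply, one observes that the Sobolev energy also controls how far $u$ can be from a constant, and in fact $u$ cannot be too close to $1$ on all of $Q$ unless the left-hand side is large; combined with the mean-value argument this yields $\int_Q w^2\le C\int_Q|w-\langle w\rangle_Q|^2$. Alternatively, and more robustly, one shows $\langle w\rangle_Q^2\,|Q|\le C[w]_{H^{s/2}(Q)}^2$ directly by testing the stability inequality \eqref{stable} with a cutoff $\xi$ that equals a suitable constant on $Q'$ — exactly as in the proof of Lemma~\ref{notype1impliestype2} — using that $W''(u)\ge\nu_0>0$ in the range $1-c_0\le u<1$.

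Putting these together gives \eqref{notseq2} with $\kappa_0=\kappa_0(n,\lambda,\Lambda,s_0,\mathrm{diam}(Q))$. The case $s=2$ is identical, replacing $[w]_{H^{s/2}(Q)}^2$ by $\int_Q|\nabla w|^2=\int_Q|\nabla u|^2$ and using the classical Poincaré inequality on $Q$.

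I expect the main obstacle to be the uniformity of the constant in $s$, in two respects: first, that the fractional Poincaré constant on $Q$ stays bounded below as $s\uparrow 2$ (handled by the $2-s$ normalization, which makes $[\,\cdot\,]_{H^{s/2}(Q)}^2$ converge to the Dirichlet energy) and as $s\downarrow s_0>0$ (handled since $s$ is bounded away from $0$); and second, replacing $\|w-\langle w\rangle_Q\|_{L^2(Q)}$ by $\|w\|_{L^2(Q)}$, which genuinely needs the sign information $u>0$ on $Q$ together with the equation — here the cleanest route is to reuse the barrier/test-function construction already carried out in Lemma~\ref{notype1impliestype2}, noting that if $\langle w\rangle_Q$ were large compared to the seminorm then $u$ would be forced (by the maximum principle comparison there) to be close to $1$ on a large inner cube, contradicting either the largeness of $\langle w\rangle_Q$ or producing a competitor that violates the stability/energy bound. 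Everything else is routine.
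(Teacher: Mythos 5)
Your fractional Poincar\'e step is fine (and the normalization indeed keeps the constant nondegenerate over $s\in[s_0,2]$), but the upgrade from $\|w-\langle w\rangle_Q\|_{L^2(Q)}$ to $\|w\|_{L^2(Q)}$ --- which is where the real content of the lemma lies --- is not supplied by either of the two routes you sketch, and both would in fact fail.

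Your first route (``the Sobolev energy controls how far $u$ is from a constant, so $u$ cannot be too close to $1$ unless the LHS is large'') is false as stated: any function that is nearly constant, say $u\equiv 1-\varepsilon$ on a large neighbourhood of $Q$, makes the Gagliardo seminorm over $Q\times Q$, and even over $Q\times\R^n$, as small as you like while $\int_Q(1-u)^2=\varepsilon^2|Q|>0$. No analytic inequality in terms of $u$ alone can yield \eqref{notseq2}; the equation must enter. Your second route invokes the stability inequality \eqref{stable} with a cutoff supported where $W''(u)\ge\nu_0>0$. But if $W''(u)>0$ on the support of $\xi$, \emph{both} terms in \eqref{stable} are nonnegative, so stability holds trivially and yields no estimate whatever on $u$. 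You are also misremembering Lemma~\ref{notype1impliestype2}: that proof is a barrier/maximum-principle comparison using the equation, not a stability argument.

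The mechanism the paper actually uses is the pointwise differential inequality $L_Kv\le-\tfrac12v$ in $Q$ (where $v=1-u\ge 0$), which follows from $L_Kv=W'(u)-W'(1)$ and $W''\in[\tfrac12,2]$ near $u=1$. Testing this against a fixed nonnegative bump $\xi\in C^\infty_c(Q')$ with $\int\xi=1$ gives $\int v\,L_K\xi\le -\tfrac12\int v\xi$, which, after normalizing $\tilde v=v/\|v\|_{L^2(Q)}$ and writing $t=\tfrac1{|Q|}\int_Q\tilde v$, forces $t\lesssim\kappa^{1/2}$ where $\kappa$ is the Gagliardo seminorm; combined with Poincar\'e (which gives $t\sim|Q|^{-1/2}$ when $\kappa$ is small) this bounds $\kappa$ from below. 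Two further points: (i) that estimate genuinely needs the contribution of $Q\times(\R^n\setminus Q)$ to the seminorm --- so your reduction to $Q\times Q$ throws away information the proof uses to control $\int_{\R^n\setminus Q}(\tilde v-t)L_K\xi$ --- and (ii) the sign information $v\ge 0$ \emph{on all of $\R^n$}, not just on $Q$, is essential, since the differential inequality is only useful together with the global nonnegativity of $v$.
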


\begin{proof}
We will do the proof in the case  $1-c_0\le u\le 1$ in $Q$ (the case $-1\le u \le -1+c_0$ is similar).
Since for simplicity the lemma is stated for the explicit quartic potential $W(u) = \frac 1 4 (1-u^2)^2$ we may take  $c_0 := 1 -1/\sqrt 2$, and hence
\begin{equation} \label{21huwgiuw}
W'' = 3u^2-1 \in [1/2,2] \quad  \mbox{for } u\in  [1-c_0, 1].
\end{equation}

Let $v : = 1-u$. By assumption $v\ge 0$ in all of $\R^n$  and $v\le c_0$ in $Q$. Hence, using \eqref{21huwgiuw} and
\[
L_K v =  -L_K  u  = W'(u) = W'(u)-W'(1)  
\]
we obtain 
\begin{equation}
\label{wjhtiohweio}
-2 v \le L_K v \le -\frac{1}{2}v \quad \mbox{in }Q.
\end{equation}

Notice that 
\[
(2-s)\int_{Q} \int_{\R^n} \frac{|u(x)-u(\bar x)|^2}{|x-\bar x|^{n+s}} \,dx\,d\bar x  =  (2-s)\int_{Q} \int_{\R^n} \frac{|v(x)-v(\bar x)|^2}{|x-\bar x|^{n+s}} \,dx\,d\bar x
\]
and
\[
\int_{Q} W(u) dx \le  \int_{Q} \frac { 2^2(1-u)^2}{4} dx =  \int_{Q} v^2dx.
\]

Hence, to prove the lemma we need to show that there exists  a constant $C$ depending only on $n$, $\lambda$, $\Lambda$, $s_0$, and ${\rm diam}(Q)$, such that 
\[
C (2-s)\int_{Q} \int_{\R^n} \frac{|v(x)-v(\bar x)|^2}{|x-\bar x|^{n+s}} \,dx\,d\bar x  \ge \int_{Q} v^2dx.
\]

To prove this, let $\tilde v : = v/ \|v\|_{L^2(Q)}$ and let 
\[
 \kappa: = (2-s)\int_{Q} \int_{\R^n} \frac{|\tilde v(x)-\tilde v(\bar x)|^2}{|x-\bar x|^{n+s}} \,dx\,d\bar x.
\]
In the remaining of the proof we bound $\kappa$ by below.

First, by the fractional Poincar\'e inequality we  have
\[
 \kappa \ge (2-s) \int_{Q} \int_{Q} \frac{|\tilde v(x)-\tilde v(\bar x)|^2}{|x-\bar x|^{n+s}} \,dx\,d\bar x 
 \ge c_0   \int_{Q}  (\tilde v-t)^2,
\]
where $t = \frac 1{|Q|}\int_{Q} \tilde v\, dx$ and $c_0=c_0(n,s_0, {\rm diam}(Q))>0$.

Note that, by the triangle inequality, we have
\begin{equation}
\begin{split}
 1- (\kappa/c_0)^{1/2} &\le  \|\tilde v\|_{L^2(Q)}-    \|\tilde v-t\|_{L^2(Q)}   \\&
 \le t|Q|^{1/2} \le   \|\tilde v\|_{L^2(Q)}  + \|\tilde v-t\|_{L^2(Q)} \le  1+ (\kappa/c_0)^{1/2}.
 \end{split}
\end{equation}
Hence, if $\kappa$ is sufficiently small (which we may of course assume) we have
\begin{equation}\label{wtiowhtoiwh}
\frac 1 2| Q|^{-1/2}\le t \le 2|Q|^{-1/2}.
\end{equation}

On the other hand, $\tilde v$ satisfies $L_K \tilde v \le -\frac 1 2\tilde  v$. Thus, if we  fix  $\xi \in C^\infty_c (Q')$ such that $\int_{\R^n} \xi \,dx =1$ and $\xi\ge0$, where $Q'$ is the cube with the same center as $Q$ and half its diameter, we have 
\begin{equation}\label{wtnwowneoktgbeowb}
\begin{split}
 \int_{\R^n}  \tilde v L_K\xi \,dx  &\le -\frac{1}{2} \int_{\R^n}  \tilde v \xi \,dx \le -\frac 1 2 \int_{\R^n}  t \xi \,dx +  \int_{\R^n}  |\tilde v -t| \xi \,dx \\ &\le - \frac t 2  + \|\xi\|_{L^2(Q)} \|\tilde v -t\|_{L^2(Q)} 
\le - \frac t 2  + C (\kappa/c_0)^{1/2}.
 \end{split}
\end{equation}
At the same time, since  $\int_{\R^n}   L_K\xi \,dx =0$ we have
\[
\int_{\R^n}  \tilde v L_K\xi \,dx=   \int_{Q}  (\tilde v-t) L_K\xi \,dx  +  \int_{\R^n\setminus Q}  (\tilde v-t) L_K\xi \,dx.
\]

Now similarly as before
\[
\bigg|\int_{Q}  (\tilde v-t) L_K\xi \,dx\bigg| \le \|L_K\xi\|_{L^2(Q)} \|\tilde v -t\|_{L^2(Q)} \le C (\kappa/c_0)^{1/2}.
\]

Also, since  $|L_K\xi (x)|  \le C (2-s) {\rm dist }(x,Q')^{-n-s}$ for $x\in \R^n\setminus Q$, we have 
\[
\bigg|\int_{\R^n\setminus Q}  (\tilde v-t) L_K\xi \,dx \bigg| \le  C (2-s) \int_{\R^n\setminus Q} \frac{|\tilde v-t|}{  {\rm dist }(x,Q')^{n+s}} \,dx.
\]
Now, since  
\[
\int_{\R^n\setminus Q} \frac{2-s}{ {\rm dist }(x,Q')^{n+s}}\,dx \le C
\]
and 
\[
 {\rm dist }(x,Q')^{-n-s} \le C |x-\bar x|^{-n-s} \quad \mbox{for all } \bar x \in Q
 \]
we obtain, using also the definition of $t$,
\[
\begin{split}
(2-s) \int_{\R^n\setminus Q} \frac{|\tilde v(x) -t|}  { {\rm dist }(x,Q')^{n+s}} \,dx &\le  (2-s) \frac 1 {|Q|} \int_Q d\bar x \int_{\R^n\setminus Q}\,dx \, (2-s)\frac{|\tilde v(x) -\tilde v(\bar x)|}  { {\rm dist }(x,Q')^{n+s}} 
 \\
  &\le   C \bigg( \int_Q d\bar x \int_{\R^n\setminus Q}\,dx  \,  (2-s)\frac{|\tilde v(x) -\tilde v(\bar x)|^2}  { {\rm dist }(x,Q')^{n+s}}  \bigg)^{1/2}
 \\
 &\le C \bigg( \int_Q \int_{\R^n\setminus Q} (2-s) \frac{|\tilde v(x)-\tilde v(\bar x)|^2}{|x-\bar x|^{n+s}} \,dx\,d\bar x  \bigg)^{1/2}
 \\
 & \le C \kappa^{1/2}.
\end{split}
\]

Hence, we have shown
\[
\bigg| \int_{\R^n}  \tilde v L_K\xi \, dx\bigg| \le C \kappa^{1/2}.	
\]
Using also \eqref{wtnwowneoktgbeowb}, we deduce
\[
\frac t 2  \le -\int_{\R^n}  \tilde v L_K\xi    + C\kappa^{1/2} \le 2C\kappa^{1/2}. 
\]
Hence, recalling \eqref{wtiowhtoiwh}, this shows the desired  lower bound on $\kappa$.
\end{proof}
}

We can now give the
\begin{proof}[Proof of Proposition \ref{DircontrolsPot2}]
Throughout the proof, all the constants will depend only $n$, $s$, $\lambda$, $\Lambda$, and $s_0$. Let $D_0$ and $d_0$ be constants for which Lemmas \ref{tuttookintype1} and \ref{notype1impliestype2} hold  ---for this, take them to be, respectively, the largest of the constants $D_0$ in the lemmas and the smallest of the constants $d_0$.

Let $\mathcal F$ denote the family of cubes $Q$ with center in the lattice $(D_0/\sqrt{n})\, \mathbb Z^n$ and side-length $2D_0/\sqrt{n}$, that is, of the form
$$
Q=\frac{D_0}{\sqrt{n}}\,z_0+\left( -\frac{D_0}{\sqrt{n}},\frac{D_0}{\sqrt{n}} \right)^n \quad \text{for some } z_0\in\Z^n.
$$
Let $\mathcal F_R$ be the family of those cubes $Q\in \mathcal F$ such that $Q\subset B_R$. 
Given a cube $Q$, denote by $Q'$ the cube with the same center as $Q$ and half its diameter. Finally, define
$$
R_0:=\frac{3}{2}D_0,
$$
and recall that we assume $R>R_0$. 

It is then easy to check that:

\begin{itemize}
\item[(a)]
For all $Q\in \mathcal F_R$, we have $Q\subset B_R$ and $\textrm{diam}(Q)=2D_0$.
\vspace{1mm}

\item[(b)] Each point of $\R^n$ belongs to at most  $2^n$ cubes of the family $\mathcal F_R$.
\vspace{1mm}

\item[(c)] The union $\bigcup_{Q\in {\mathcal F_R}} \, Q'$ is disjoint and covers $B_{R-R_0}$ except for a set of measure zero.
\end{itemize}

Throughout the proof, it is easy to check that all arguments hold true when $s=2$ by replacing, within the double integrals, $2-s$ times one of the integrals by the pointwise gradient squared.

We first notice that, by properties (a) and (b),
\begin{equation}\label{chain}
\begin{split}
\mathcal E^{\rm Sob}_{B_R}(u) &\ge \frac{1}{4} \int_{B_R}\int_{\R^n} |u(x)-u(\bar x)|^2 K(x-\bar x)\,dx\,d\bar x
\\
&\ge
\frac{\lambda(2-s)}{4} \int_{B_R}\int_{\R^n} \frac{ |u(x)-u(\bar x)|^2 }{ |x-\bar x|^{n+s}} \,dx\,d\bar x
\\
&\ge \frac{\lambda(2-s)}{4\cdot2^n} \sum_{Q\in \mathcal F_R} \int_{Q}\int_{\R^n} \frac{ |u(x)-u(\bar x)|^2 }{ |x-\bar x|^{n+s}} \,dx\,d\bar x.
\end{split}
\end{equation}

Next, for each  $Q\in \mathcal F_R$ we have the following dicothomy. Either
\begin{equation}\label{type1}
\{ |u|\le c_0/2\}\cap Q\neq\varnothing
\end{equation}
or
\begin{equation}\label{type2}
\{ |u|\le c_0/2\}\cap Q =\varnothing.
\end{equation}

Now, if  $Q\in \mathcal F_R$ satisfies \eqref{type1} then by Lemma \ref{tuttookintype1}  (since $Q$ has diameter $2D_0$) there are two cubes $Q^{(1)}$ and $Q^{(2)}$ contained in $Q$, and with diameters $d_0$, such that
\[
\inf_{Q^{(1)}}u -\sup_{Q^{(2)}} u \ge c_0/4.
\]
It thus follows, using  the fractional Poincar\'e inequality and  \[\int_{Q'} W(u)\,dx\le |Q'| \max_{[-1,1]} W \le C,\] that
\begin{equation}\label{caseI}
\begin{split}
C (2-s)\int_{Q}\int_{Q} \frac{ |u(x)-u(\bar x)|^2 }{ |x-\bar x|^{n+s}}\,dx\,d\bar x  &\ge \inf_{t\in \R} \int_{Q}  |u(x)-t|^2 \,dx 
\\ &
\ge \left|\frac{(\inf_{Q^{(1)}}u -\sup_{Q^{(2)}} u)^2 }{2}\right|^2 \big| Q^{(i)}\big| 
\\
&\ge \left(\frac{c_0}{8}\right)^2 \bigg( \frac{d_0}{\sqrt n}\bigg)^n  \ge  c\int_{Q'} W(u) \, dx
\end{split}
\end{equation}
for some constant $c>0$.

On the other hand, if $Q$ satisfies \eqref{type2} then by Lemma \ref{notype1impliestype2} we have that
\[ |u|\ge 1-c_0 \quad \mbox{in }Q'.\]
Hence, using  Lemma \ref{tuttookintype2}  (with $Q$ replaced by $Q'$) we have
\begin{equation}\label{caseII}
(2-s)\int_{Q}\int_{\R^n} \frac{ |u(x)-u(\bar x)|^2 }{ |x-\bar x|^{n+s}}\,dx\,d\bar x \ge  c\int_{Q'} (1-|u|)^2\,dx \ge  c\int_{Q'} W(u)\,dx.
\end{equation}

Finally, recalling property (c) above, we combine \eqref{caseI} and \eqref{caseII} with \eqref{chain} to obtain
\[
\begin{split}
\mathcal E^{\rm Sob}_{Q_R}(u) &\ge \frac{\lambda(2-s)}{2^n} \sum_{Q\in \mathcal F_R} \int_{Q}\int_{\R^n} \frac{ |u(x)-u(\bar x)|^2 }{ |x-\bar x|^{n+s}} \,dx\,d\bar x
\\
&\ge c \sum_{Q\in \mathcal F_R} \int_{Q''} W(u) \, dx
\\
&\ge c \int_{B_{R-R_0}} W(u) \, dx
\\
&= c\ \mathcal E^{\rm Pot}_{B_{R-R_0}}(u),
\end{split}
\]
that finishes the proof.
\end{proof}

Let us give now the proof of  the bound for the full energy when $s\in (0,1)$.

\begin{proof} [Proof of Theorem \ref{thm1}] 
Let $s\in (s_0,1)$. By Corollary \ref{energy-est} we have that
\begin{equation}\label{est-Dir}\mathcal E_{B_{R}}^{\rm Sob}(u)\leq \frac{C}{(1-s)^2}R^{n-s}.
\end{equation}
Now, using Proposition \ref{DircontrolsPot2} we deduce from \eqref{est-Dir} the same estimate for ${\mathcal E}_{B_{R/2}}^{\rm Pot}(u)$, provided $R\ge 2 R_0$ (since $R-R_0\geq R/2$). The same bound for ${\mathcal E}_{B_{R/2}}^{\rm Pot}(u)$ is obvious when $1\le R\le 2R_0$.

Finally, a standard covering and scaling argument allows to control the full energy in $B_R$ instead of $B_{R/2}$.
\end{proof}

We close this section with a useful lemma concerning the decay towards $\pm 1$ of solutions $u_\ep$ of $(-\Delta)^s u_\ep  +\ep^{-s}W'(u_\ep)=0$ for points $x$ away from  $\{|u_\ep| \le \frac 9{10}\}$.
It will be used in the proof of Proposition \ref{propnew}.
\begin{lem}\label{lemdecay}
Given $s_0>0$, let $n\ge 2$, $s\in (s_0,2]$,  $W(u)=\frac 1 4 (1-u^2)^2$, and $K$ satisfy \eqref{L0} and \eqref{L2}. Let $u_\ep :\R^n\rightarrow (-1,1)$ be a solution of $L_{K}u + \ep^{-s}W'(u)=0$ in~$B_{2r}$, with $r\ge \ep>0$, satisfying $1-|u_\ep| \le c_0$ in~$B_{2r}$, where $c_0$ is the constant in \eqref{WWW}.  

Then, 
\[
0\le 1-|u_\ep| \le C \Big(\frac{\ep}{r}\Big)^s \quad \mbox{in } B_r,
\] 
where $C$ depends only on $n$, $\lambda$, $\Lambda$, and $s_0$.
\end{lem}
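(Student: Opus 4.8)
The plan is to reduce to $\varepsilon=1$ by scaling and then trap the defect $v:=1-|u_\varepsilon|$ from above by an explicit barrier made out of a fixed smooth cutoff, exploiting that the positive zeroth-order (``mass'') term produced by $W''>0$ near the wells both provides a comparison principle and dominates $L_K$ at the large scale $\rho:=r/\varepsilon$. First, since $|u_\varepsilon|>1-c_0>\tfrac13$ on the connected set $B_{2r}$ (recall $c_0<\tfrac23$), $u_\varepsilon$ does not change sign there; replacing $u_\varepsilon$ by $-u_\varepsilon$ if necessary we may assume $u_\varepsilon\in(1-c_0,1)$ in $B_{2r}$. Set $v:=1-u_\varepsilon$, so that $0\le v<2$ in $\R^n$ (because $|u_\varepsilon|<1$) and $0\le v\le c_0$ in $B_{2r}$. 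Using $W(u)=\tfrac14(1-u^2)^2$ and $W'(1)=0$, writing $W'(u_\varepsilon)=W'(u_\varepsilon)-W'(1)=-W''(\theta)\,v$ with $\theta\in(1-c_0,1)$ and recalling $W''\in[\tfrac12,2]$ there (as in \eqref{21huwgiuw}), the equation gives, exactly as in the pointwise bounds \eqref{wjhtiohweio} with the factor $\varepsilon^{-s}$ reinstated,
\[
-2\varepsilon^{-s}v\ \le\ L_Kv\ =\ \varepsilon^{-s}W'(u_\varepsilon)\ \le\ -\tfrac12\varepsilon^{-s}v\qquad\text{in }B_{2r}.
\]

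Now rescale: with $\rho:=r/\varepsilon\ge1$, set $\tilde v(y):=v(\varepsilon y)$ and $\tilde K(z):=\varepsilon^{n+s}K(\varepsilon z)$, which again satisfies \eqref{L0}--\eqref{L2} with the same $\lambda,\Lambda$. Then $L_{\tilde K}\tilde v+\tfrac12\tilde v\le0$ in $B_{2\rho}$, with $0\le\tilde v<2$ in $\R^n$ and $\tilde v\le c_0$ in $B_{2\rho}$; it suffices to prove $\tilde v\le C\rho^{-s}$ in $B_\rho$, since undoing the scaling then yields $1-|u_\varepsilon|\le C(\varepsilon/r)^s$ in $B_r$. (When $s=2$ the whole argument runs verbatim with $L_K$ replaced by the limiting second-order elliptic operator; it becomes the classical maximum-principle argument.) If $\rho\le\rho_0$ for a universal constant $\rho_0$ the estimate is trivial, since $\tilde v\le c_0\le1\le\rho_0^s\rho^{-s}$, so from now on we assume $\rho$ large.

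The barrier is the following. Fix once and for all a radial $g\in C^\infty(\R^n)$ with $0\le g\le1$, $g\equiv0$ on $B_1$, $g\equiv1$ outside $B_{7/4}$ (so $g-1\in C^2_c$), and put
\[
\Phi(x):=\delta+(2-\delta)\,g(x/\rho),\qquad \delta:=C_*\rho^{-s},
\]
with $C_*$ depending only on $n,\lambda,\Lambda,s_0$, to be fixed; since $\rho$ is large we have $\delta<2$. By construction $\Phi\ge2$ on $\R^n\setminus B_{7\rho/4}$, $\Phi\equiv C_*\rho^{-s}$ on $B_\rho$, and $0\le\Phi\le2$. The key computation is the scaling identity $L_{\tilde K}\Phi(x)=(2-\delta)\rho^{-s}\,(L_{K_\rho}g)(x/\rho)$ with $K_\rho(z):=\rho^{n+s}\tilde K(\rho z)\in\mathcal L_2$, together with the elementary uniform bound $|L_{K_\rho}g|=|L_{K_\rho}(g-1)|\le C(n,\lambda,\Lambda,s_0)$, valid for any $\phi\in C^2_c$ and any kernel in $\mathcal L_2$ with $s\ge s_0$: splitting the defining integral at $|z|=1$, the factor $2-s$ cancels the divergence of $\int_{|z|\le1}|z|^{2-n-s}\,dz$, while $\int_{|z|>1}K\le 2\Lambda\omega_{n-1}/s_0$. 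Hence $|L_{\tilde K}\Phi|\le C_4\rho^{-s}$, so that $L_{\tilde K}\Phi+\tfrac12\Phi\ge(-C_4+\tfrac12C_*)\rho^{-s}\ge0$ in $B_{7\rho/4}$ provided we take $C_*:=2C_4$.

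Finally set $h:=\Phi-\tilde v$. In $B_{7\rho/4}$ one has $L_{\tilde K}h+\tfrac12h=\bigl(L_{\tilde K}\Phi+\tfrac12\Phi\bigr)-\bigl(L_{\tilde K}\tilde v+\tfrac12\tilde v\bigr)\ge0$, while $h\ge2-\tilde v>0$ on $\R^n\setminus B_{7\rho/4}$; also $h$ is continuous on the compact set $\overline{B_{7\rho/4}}\subset B_{2r/\varepsilon}$, since $u_\varepsilon\in C^2(B_{2r})$ by Appendix~\ref{app-C}. If $\min_{\overline{B_{7\rho/4}}}h<0$, it cannot be attained on $\partial B_{7\rho/4}$ (where $h=2-\tilde v>0$), hence it is attained at an interior point $x_0$, which is then a global minimum of $h$ on $\R^n$; there $L_{\tilde K}h(x_0)\le0$ and $\tfrac12h(x_0)<0$, contradicting $L_{\tilde K}h+\tfrac12h\ge0$. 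Therefore $\tilde v\le\Phi$, so $\tilde v\le C_*\rho^{-s}$ in $B_\rho$, which is the desired estimate after scaling back. I do not expect a genuine obstacle here: the only point needing (routine) care is the regularity of $\tilde v$ up to the boundary in this last comparison step, which is why the barrier is set up on the slightly smaller ball $B_{7\rho/4}$; the real content is simply the observation that the good-signed mass term $\tfrac12\varepsilon^{-s}v$ coming from $W''\ge\tfrac12$ near the wells beats $L_K$, whose size at scale $\rho$ is exactly $\rho^{-s}$.
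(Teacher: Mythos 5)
Your proof is correct and follows essentially the same route as the paper's: scale to $\varepsilon=1$ (equivalently, work with $\rho=r/\varepsilon$), observe that $v=1-u_\varepsilon$ is a supersolution of $L_K v+\tfrac12 v\le 0$ in the large ball, and compare against a barrier $\text{(cutoff at scale }\rho)+\text{const}\cdot\rho^{-s}$ via the maximum principle, using that $|L_K(\text{cutoff}_\rho)|\lesssim\rho^{-s}$ with constant uniform in $s\ge s_0$. The only cosmetic difference is that the paper places the cutoff's transition layer in $B_{2\rho}\setminus B_\rho$ and runs the comparison on $B_{2\rho}$ directly, while you shrink slightly to $B_{7\rho/4}$; both choices work.
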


\begin{proof}
By scaling ---i.e., replacing $u$ by $u(\ep\,\cdot\,)$--- it is enough to consider the case $\ep=1$.
As in the proof of Lemma \ref{tuttookintype2}, for the explicit quartic potential $W$ we may take $c_0 := 1 -1/\sqrt 2$ and suppose  $1-c_0\le u\le 1$ in $B_{2r}$. Hence,  \eqref{21huwgiuw}  holds and,
similarly as in  \eqref{wjhtiohweio},  the function $v: = 1-u$ satisfies
\begin{equation}
\label{wjthoihwhe}
L_K v \le -\tfrac{1}{2}v \quad \mbox{in }B_{2r}.
\end{equation}

Fix now some smooth radial ``bowl-type'' function $\xi$ satisfying $\chi_{\R^n \setminus B_1} \ge \xi \ge \chi_{\R^n \setminus B_2}$, and let  $\xi_r(x) := \xi (x/r)$.
By scaling we have  $|L_K \xi_r |  \le C_0 r^{-s}$ in $\R^n$.

Let now $w :=2 \xi_r + b$, with $b := 4C_0 r^{-s}$. We have $L_K  w \ge -2C_0 r^{-s}  =   - \frac 1 2  b  \ge - \frac 1 2  w$ in $B_{2r}$. Hence,  recalling \eqref{wjthoihwhe} and using  that $w\ge 2+b \ge 2\ge v$ in 
$\R^n\setminus B_{2r}$ we obtain ---by the maximum principle--- $w\ge v$ in $\R^n$. Hence using that $\xi_r\equiv 0$ in $B_r$ we have shown $b\ge v$ in $B_r$, and the lemma follows.
\end{proof}

\section{Density estimates}\label{sec-5} 

In this section we establish density estimates when $s\in(0,1)$. We need to restrict our attention to the case of the fractional Laplacian since a crucial ingredient in the proof is the monotonicity formula from \cite{CC2}. This monotonicity formula is known to hold only for equations involving  the fractional Laplacian, since its proof relies on the so-called Caffarelli-Silvestre extension~\cite{CafSi}, that we recall here below.

By the result in \cite{CafSi}, $u: \R^n \to (-1,1)$ is a solution of $(-\Delta)^{s/2}u+W'(u)=0$ in~$\R^n$, for $s\in (0,2)$,  if and only if $u(x)=U(x,0)$ where $U: \R^{n+1}_+\to (-1,1)$, defined in the half-space $\R^{n+1}_+:= \{(x,y) \, : \, x\in \R^n, y>0\}$, is a  solution of 
\begin{equation}\label{ext}
\begin{cases}
\mbox{div}(y^{1-s}\nabla U(x,y))=0 & \mbox{in}\;\;\R^{n+1}_+\\
-d_s \lim_{y\rightarrow 0}y^{1-s}\partial_y U(x,y)= -W'(u(x))& \mbox{on}\;\;\R^{n}
\end{cases}
\end{equation}
and $d_s=2^{s-1}\Gamma(\frac{s}{2})/\Gamma(1-\frac{s}{2})>0$. 
Here $W$ is any $C^3$ potential and, for simplicity of notation, $\nabla$ denotes the full gradient ---i.e., with respect to the $(x,y)$ variables--- in~$\R^{n+1}_+$.

\begin{defi}\label{defextension}
Given a bounded function $u$ defined on $\R^n$, we will call the unique bounded extension $U$ of $u$ in $\R^{n+1}_+$ satisfying $\mbox{div}(y^{1-s}\nabla U(x,y))=0$, the \textit{$s$-extension} of $u$ (the existence and properties of this extension were studied in \cite{CafSi}; see also~\cite{C-Si1}).
\end{defi}

Let $\widetilde B^+_r((x_0,0))=\{(x,y)\in \R^{n+1}_+\, :\, |(x,y)-(x_0,0)|<r\}$ and $\widetilde B^+_r=\widetilde B^+_r((0,0))$ ---the tilde is used to distinguish balls in $\R^{n+1}$ from balls in $\R^n$. The energy associated to problem \eqref{ext} in a half-ball $\widetilde B_R^+$ is given by
\begin{equation}\label{energy-ext}
\widetilde{\mathcal E}_{R}(U)=\frac{d_s}{2}\int_{\widetilde B_R^+}y^{1-s} |\nabla U(x,y)|^2 \,dx\,dy + \int_{B_R}W(U(x,0))\,dx.
\end{equation}
As before, we distinguish between the Sobolev and the potential energies using the following notations:
\begin{equation*}
\widetilde{\mathcal E}_{R}^{\rm Sob}(U)=\frac{d_s}{2}\int_{\widetilde B_R^+}y^{1-s} |\nabla U(x,y)|^2 \,dx\,dy \quad \mbox{and} \quad \widetilde{\mathcal E}_{R}^{\rm Pot}(U)=\int_{B_R}W(U(x,0))\,dx.
\end{equation*}

We recall now a result from \cite{CRS} which allows to control the Sobolev energy of the $s$-extension $U$ of $u$ in $\R^{n+1}_+$ by the local contribution of the $H^{s/2}$-seminorm of~$u$ itself. We write the result for the specific case of half-balls in $\R^{n+1}$, since this is what we will need
later on. The result is true for a general Lipschitz domain $\Omega$, as one can see in Proposition 7.1 of \cite{CRS}.

\begin{lem}[Proposition 7.1 in \cite{CRS}]\label{up-down}
Let $s\in (0,2)$, $\widetilde B^+_{\rho}((x_0,0))$ be, as before, a half-ball in $\R^{n+1}_+$ centered at $(x_0,0)$ and with radius $\rho$, and let $U$ be the $s$-extension of $u$.

Then,
$$\int_{\widetilde B^+_{\rho}((x_0,0))} y^{1-s} |\nabla U(x,y)|^2\,dx\,dy \leq C \iint_{(\R^n\times \R^n )\setminus (B_{2\rho}^c(x_0)\times B_{2\rho}^c(x_0))} \frac{|u(x)-u(\bar x)|^2}{|x-\bar x|^{n+s}}\,dx\,d\bar x
$$
for some constant $C$ which depends only on $n$ and $s$.
\end{lem}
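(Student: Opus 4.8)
The plan is to follow the variational argument of Caffarelli, Roquejoffre, and Savin \cite[Proposition~7.1]{CRS}; we sketch it, taking $x_0=0$ for notational simplicity. The starting point is the classical characterization of the $s$-extension: $U$ is the unique minimizer of the weighted Dirichlet energy $V\mapsto\int_{\R^{n+1}_+}y^{1-s}|\nabla V|^2\,dx\,dy$ among all functions with trace $u$ on $\{y=0\}$, and, by convexity and integration by parts, for any bounded Lipschitz domain $D\subset\R^{n+1}_+$ the restriction $U|_D$ minimizes $\int_D y^{1-s}|\nabla V|^2$ among competitors $V$ that agree with $U$ on $\partial^+D:=\partial D\cap\{y>0\}$ and with $u$ on $\partial D\cap\{y=0\}$. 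Choosing $D=\widetilde B^+_{2\rho}$, this gives
\[
\int_{\widetilde B^+_{\rho}}y^{1-s}|\nabla U|^2\,dx\,dy\;\le\;\int_{\widetilde B^+_{2\rho}}y^{1-s}|\nabla U|^2\,dx\,dy\;\le\;\int_{\widetilde B^+_{2\rho}}y^{1-s}|\nabla V|^2\,dx\,dy
\]
for every admissible competitor $V$. Thus the whole problem reduces to constructing one such $V$ whose energy is bounded by the right-hand side of the lemma, i.e.\ by a quantity that depends only on $u$ through its values in $B_{2\rho}$ and its Gagliardo interactions with at least one endpoint in $B_{2\rho}$.

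The competitor is built by gluing. Let $c:=|B_{2\rho}|^{-1}\int_{B_{2\rho}}u\,dx$ and let $G$ be the $s$-extension of the localized, mean-corrected datum $(u-c)\chi_{B_{2\rho}}$, which has finite \emph{global} energy; by the Caffarelli--Silvestre identity,
\[
\int_{\R^{n+1}_+}y^{1-s}|\nabla G|^2\,dx\,dy \;=\; C_{n,s}\iint_{\R^n\times\R^n}\frac{\big|(u-c)\chi_{B_{2\rho}}(x)-(u-c)\chi_{B_{2\rho}}(\bar x)\big|^2}{|x-\bar x|^{n+s}}\,dx\,d\bar x .
\]
Expanding the right-hand side, the $B_{2\rho}\times B_{2\rho}$ part is directly a piece of the quantity in the statement, and the mixed $B_{2\rho}\times B_{2\rho}^c$ part (which carries $|u(x)-c|^2$ rather than $|u(x)-u(\bar x)|^2$) is absorbed into the statement's right-hand side by fractional Poincar\'e inequalities on $B_{2\rho}$ and on dyadic shells near $\partial B_{2\rho}$. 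Now pick a cutoff $\zeta$ with $\zeta\equiv1$ on $\widetilde B^+_{3\rho/2}$, $\zeta\equiv0$ near $\partial^+\widetilde B^+_{2\rho}$, and $|\nabla\zeta|\le C/\rho$, and set $V:=\zeta\,(c+G)+(1-\zeta)\,U$. Since $c+G$ and $U$ both have trace $u$ on $B_{2\rho}\times\{0\}$, $V$ has trace $u$ on the flat boundary of $\widetilde B^+_{2\rho}$, and $V=U$ near $\partial^+\widetilde B^+_{2\rho}$, so $V$ is admissible.

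It remains to estimate $\int_{\widetilde B^+_{2\rho}}y^{1-s}|\nabla V|^2$. On $\{\zeta=1\}$ one gets $\int y^{1-s}|\nabla(c+G)|^2\le\int_{\R^{n+1}_+}y^{1-s}|\nabla G|^2$, already controlled. On the transition shell, from $\nabla V=\zeta\nabla G+(1-\zeta)\nabla U+(c+G-U)\nabla\zeta$ one obtains, besides terms already bounded, the two ``bad'' contributions $\int_{\rm shell}y^{1-s}|\nabla U|^2$ and $\rho^{-2}\int_{\rm shell}y^{1-s}|c+G-U|^2$: the first is reabsorbed by a standard hole-filling iteration over a sequence of radii increasing to $2\rho$ (using that $\int_{\widetilde B^+_{2\rho}}y^{1-s}|\nabla U|^2<\infty$), and the second by a weighted Poincar\'e inequality applied to $c+G-U$, which vanishes on $B_{2\rho}\times\{0\}$, together with the energy bound for $G$. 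I expect this localization to be the main obstacle: the delicate point is to guarantee that only $u|_{B_{2\rho}}$ and the single-endpoint-in-$B_{2\rho}$ Gagliardo interactions survive in the final estimate, which is precisely what the truncation, the fractional Poincar\'e inequalities, and the absorption argument are designed to deliver. The $y$-infinite cylinder version, and the general Lipschitz-domain statement of \cite[Proposition~7.1]{CRS}, follow by the same scheme.
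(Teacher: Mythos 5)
The paper does not prove this lemma: it is stated as a citation of \cite[Proposition~7.1]{CRS}, with the remark that the general version is for Lipschitz domains and the half-ball case is what is needed. So the appropriate comparison is with the argument in \cite{CRS}, not with anything in this paper.

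Your overall scheme --- local energy-minimality of the $s$-extension in $\widetilde B^+_{2\rho}$, then a competitor glued from $U$ and the extension of a localized datum, then absorption --- is in the right spirit, but there is a concrete flaw in the construction of the auxiliary function $G$. You take $G$ to be the $s$-extension of $(u-c)\chi_{B_{2\rho}}$, and invoke the Caffarelli--Silvestre identity to equate its global weighted Dirichlet energy with $[(u-c)\chi_{B_{2\rho}}]^2_{H^{s/2}(\R^n)}$. But the statement you are proving covers all $s\in(0,2)$, and for $s\ge 1$ an indicator function has \emph{infinite} $H^{s/2}$-seminorm, regardless of how nice $u$ is near $\partial B_{2\rho}$. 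The sharp truncation thus produces an extension $G$ with infinite energy and the identity you rely on gives you nothing. The fix is standard --- replace $\chi_{B_{2\rho}}$ by a Lipschitz cutoff $\psi$ with $\psi\equiv1$ on $B_{3\rho/2}$ and $\mathrm{supp}\,\psi\subset B_{2\rho}$, set $g=\psi(u-c)$, and adjust the gluing region accordingly so that $c+G$ still has trace $u$ where $\zeta\ne0$ --- but as written the proof breaks. The same issue resurfaces when you ``absorb'' the mixed $B_{2\rho}\times B_{2\rho}^c$ contribution of $\iint |g(x)-g(\bar x)|^2|x-\bar x|^{-n-s}$: after integrating out $\bar x\in B_{2\rho}^c$ that term is a fractional Hardy integral $\int_{B_{2\rho}}|u-c|^2\,\mathrm{dist}(x,\partial B_{2\rho})^{-s}\,dx$, and the fractional Hardy inequality that would control it by the localized $H^{s/2}$-seminorm again only holds for $s<1$.

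Two further steps that you flag but do not carry out deserve more than an appeal to ``standard'' arguments: the hole-filling iteration needs the a priori finiteness of $\int_{\widetilde B^+_{2\rho}}y^{1-s}|\nabla U|^2$, which is true but should be justified (it follows from $u\in L^\infty$ and interior estimates for $\mathrm{div}(y^{1-s}\nabla U)=0$, cf.\ Definition~\ref{defextension}); and the weighted Poincar\'e on the transition shell applied to $c+G-U$ (which vanishes only on the flat part of the shell boundary) is a Hardy-type inequality in $y$, not a plain Poincar\'e, and its constant must be tracked against the hole-filling factor. None of these is fatal, but together with the cutoff issue they mean the proposal, as written, does not establish the lemma in the generality stated.
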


We also recall the monotonicity formula established in \cite{CC2}.

\begin{prop}[Proposition 3.2 in \cite{CC2}]\label{mon}
Let $s\in (0,2)$, $W\in C^3([-1,1])$ be nonnegative, and $U:\R^{n+1}_+\to (-1,1)$ be a solution of \eqref{ext}.

Then, 
$$\Phi(R):=\frac{1}{R^{n-s}} \widetilde{\mathcal E}_{R}(U)$$
is a nondecreasing function of $R>0$.
\end{prop}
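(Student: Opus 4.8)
The plan is to prove the monotonicity of $\Phi$ by a scaling (Pohozaev-type) argument, i.e.\ by showing $\Phi'(R)\ge 0$.

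\emph{First, I would rescale.} For $R>0$ set $U_R(x,y):=U(Rx,Ry)$. A change of variables in \eqref{energy-ext}, using that $y^{1-s}|\nabla U|^2$ picks up a factor $R^{1-s}R^{-2}$ and the volume element a factor $R^{n+1}$, gives $\widetilde{\mathcal E}_R(U)=R^{n-s}\tfrac{d_s}{2}\int_{\widetilde B_1^+} y^{1-s}|\nabla U_R|^2\,dx\,dy+R^n\int_{B_1} W(U_R(x,0))\,dx$, and hence
\[
\Phi(R)=\frac{d_s}{2}\int_{\widetilde B_1^+} y^{1-s}|\nabla U_R|^2\,dx\,dy+R^s\int_{B_1} W(U_R(x,0))\,dx.
\]
Moreover $U_R$ solves the analogue of \eqref{ext} on $\widetilde B_1^+$ with $W'$ replaced by $R^sW'$: $\mbox{div}(y^{1-s}\nabla U_R)=0$ in $\widetilde B_1^+$ and $-d_s\lim_{y\to 0}y^{1-s}\partial_y U_R=-R^sW'(U_R(\cdot,0))$ on $B_1$. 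A standard argument (using the regularity recalled below) shows that $\Phi$ is locally Lipschitz in $R$, so it suffices to bound $\Phi'(R)$ from below for a.e.\ $R>0$.

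\emph{Second, I would differentiate and integrate by parts.} With $\dot U_R:=\partial_R U_R=\tfrac1R(x,y)\cdot\nabla U_R$, differentiating the displayed formula for $\Phi(R)$ yields
\[
\Phi'(R)=d_s\int_{\widetilde B_1^+} y^{1-s}\nabla U_R\cdot\nabla\dot U_R\,dx\,dy+sR^{s-1}\int_{B_1} W(U_R(\cdot,0))\,dx+R^s\int_{B_1} W'(U_R(\cdot,0))\,\dot U_R(\cdot,0)\,dx.
\]
Integrating the first term by parts over $\widetilde B_1^+$ and writing $\partial^+\widetilde B_1^+:=\partial\widetilde B_1\cap\R^{n+1}_+$ for the spherical part of the boundary: the interior term $-\int\mbox{div}(y^{1-s}\nabla U_R)\dot U_R$ vanishes by the equation; the flat boundary term over $B_1\times\{0\}$ equals $-R^s\int_{B_1} W'(U_R(\cdot,0))\dot U_R(\cdot,0)\,dx$ by the Neumann-type condition; and there remains the spherical term $d_s\int_{\partial^+\widetilde B_1^+} y^{1-s}(\partial_r U_R)\dot U_R\,d\mathcal H^n$. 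The two $W'$-terms cancel, and since $|(x,y)|=1$ on $\partial^+\widetilde B_1^+$ one has $\dot U_R=\tfrac1R\partial_r U_R$ there, so
\[
\Phi'(R)=\frac{d_s}{R}\int_{\partial^+\widetilde B_1^+} y^{1-s}\,(\partial_r U_R)^2\,d\mathcal H^n+sR^{s-1}\int_{B_1} W(U_R(\cdot,0))\,dx\ \ge\ 0,
\]
using $s>0$ and $W\ge 0$. This gives the claim.

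\emph{Main obstacle.} The delicate point is justifying the integration by parts up to the degenerate boundary $\{y=0\}$, where the weight $y^{1-s}$ degenerates (for $s<1$) or blows up (for $s>1$). Here I would invoke the regularity theory for the Caffarelli--Silvestre extension (\cite{CafSi}, see also \cite{C-Si1}): $U$ is $C^\infty$ in $\R^{n+1}_+$ and Hölder up to $\{y=0\}$, and $y^{1-s}\partial_y U$ admits a continuous trace on $\{y=0\}$ equal to $\tfrac1{d_s}W'(u)$. One then performs the computation on $\widetilde B_1^+\cap\{y>\delta\}$ and lets $\delta\downarrow 0$, checking that the extra boundary integral over $\{y=\delta\}$ tends to $0$ while the flat boundary term is recovered in the limit from the continuity of $y^{1-s}\partial_y U$. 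Everything else in the argument is elementary.
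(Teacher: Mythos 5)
Your computation is correct and reproduces exactly the formula
\[
\Phi'(R)=\frac{d_s}{R^{n-s}}\int_{\partial^+\widetilde B_R^+} y^{1-s}(\partial_r U)^2\,d\mathcal H^n + \frac{s}{R^{n-s+1}}\int_{B_R} W(u)\,dx
\]
that the paper quotes from the proof of Proposition 3.2 in \cite{CC2}; rescaling to $U_R$ and differentiating in $R$ is just a tidy bookkeeping device for the same Pohozaev identity. Your discussion of the delicate point (passing the integration by parts through the degenerate weight $y^{1-s}$ at $\{y=0\}$, via regularity of the extension and a limit from $\{y>\delta\}$) is the right justification and is what \cite{CC2} relies on as well.
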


The main ingredients in the proof of our density estimate are the previous monotonicity formula and the $BV$ estimate for stable solutions established in Theorem~\ref{BV}.

Before giving the proof of Proposition \ref{density}, we
state the following easy lemma that allows to interpolate between $L^1$ and $W^{1,1}$.

\begin{lem}[Theorem 1 in \cite{Brezis1}]\label{interpo} Let $s\in (0,1)$ and $R>0$.
Let $u$ be a $C^1$ function in $B_R\subset \R^n$ satisfying
$$
R^{-n}\int_{B_R} |u + k|\,dx\leq V \quad \mbox{and}\quad R^{1-n}\int_{B_R} |\nabla u|\,dx\leq P
$$
for some $k\in \R$ and constants $V$ and $P$.

Then,
$$
R^{s-n}\int_{B_R}\int_{B_R}\frac{|u(x)-u(\bar x)|}{|x-\bar x|^{n+s}} \,dx\,d\bar x \leq C V^{1-s}P^{s}
$$
for some constant $C$ which depends only on $n$ and $s$.
\end{lem}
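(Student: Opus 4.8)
The plan is to reduce to the case $R=1$ by scaling, and then to bound the Gagliardo seminorm of $u$ on $B_1$ by splitting the double integral at a scale $\delta>0$ that will be optimized at the end. First I would set $v(y):=u(Ry)$ for $y\in B_1$: a change of variables gives $\int_{B_1}|v+k|\,dy\le V$, $\int_{B_1}|\nabla v|\,dy\le P$, and
\[
R^{s-n}\int_{B_R}\int_{B_R}\frac{|u(x)-u(\bar x)|}{|x-\bar x|^{n+s}}\,dx\,d\bar x=\int_{B_1}\int_{B_1}\frac{|v(y)-v(\bar y)|}{|y-\bar y|^{n+s}}\,dy\,d\bar y,
\]
so it suffices to prove the inequality when $R=1$. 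We may assume $P>0$, since otherwise $u$ is constant and the left-hand side vanishes.

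Next I would fix $\delta\in(0,2]$ and split $B_1\times B_1$ into the near-diagonal region $\{|x-\bar x|\le\delta\}$ and the far region $\{|x-\bar x|>\delta\}$. On the near-diagonal part, since $B_1$ is convex the segment $[\bar x,x]$ lies in $B_1$, and the fundamental theorem of calculus gives $|u(x)-u(\bar x)|\le|x-\bar x|\int_0^1|\nabla u(\bar x+t(x-\bar x))|\,dt$; after substituting $h=x-\bar x$, reordering the integrals by Tonelli, and applying the inner change of variables $z=\bar x+th$ (which stays in $B_1$), this contribution is bounded by $\big(\int_{\{|h|\le\delta\}}|h|^{1-n-s}\,dh\big)\,\|\nabla u\|_{L^1(B_1)}\le\frac{|S^{n-1}|}{1-s}\,\delta^{1-s}P$. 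On the far region the essential point is to keep the kernel, rather than replacing $|x-\bar x|^{-n-s}$ by $\delta^{-n-s}$: using $|u(x)-u(\bar x)|\le|u(x)+k|+|u(\bar x)+k|$, symmetry in the two variables, and $\int_{\{|x-\bar x|>\delta\}}|x-\bar x|^{-n-s}\,dx=\frac{|S^{n-1}|}{s}\delta^{-s}$, this contribution is at most $\frac{2|S^{n-1}|}{s}\,\delta^{-s}\,\|u+k\|_{L^1(B_1)}\le\frac{2|S^{n-1}|}{s}\,\delta^{-s}V$.

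Finally I would optimize by taking $\delta:=\min\{V/P,2\}$. If $V/P\le2$, both bounds are of order $V^{1-s}P^s$ and their sum is $\le\big(\tfrac{1}{1-s}+\tfrac{2}{s}\big)|S^{n-1}|\,V^{1-s}P^s$; if $V/P>2$, then with $\delta=2$ the near-diagonal term already accounts for the whole integral and is $\le\frac{2^{1-s}|S^{n-1}|}{1-s}P\le\frac{2|S^{n-1}|}{1-s}V^{1-s}P^s$, since $P<V$ forces $P^{1-s}<V^{1-s}$. This proves the lemma with $C=C(n,s)$. The only step that really needs care is the far-region estimate: bounding the kernel by its value at scale $\delta$ would lose the sharp exponents (it yields $V^{(1-s)/(n+1)}$ in place of $V^{1-s}$), so one must integrate $|x-\bar x|^{-n-s}$ exactly; apart from that, the argument is a routine fractional Gagliardo--Nirenberg interpolation between $L^1$ and $W^{1,1}$.
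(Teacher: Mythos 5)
Your proof is correct, and it takes a genuinely different route from the paper's. In the paper, Lemma~\ref{interpo} is dispatched in one line: after scaling, it is cited as a special case of \cite[Theorem~1]{Brezis1} (the Brezis--Mironescu fractional Gagliardo--Nirenberg theorem) with $p_1=p_2=p=1$, $s_1=0$, $s_2=1$, $\theta=1-s$, $\Omega=B_1$. You instead give a self-contained elementary derivation: reduce to $R=1$ by scaling (the exponents $R^{s-n}$, $R^{-n}$, $R^{1-n}$ match precisely), split the double integral at a scale $\delta$, bound the near-diagonal part by the fundamental theorem of calculus along segments (contribution $\tfrac{|S^{n-1}|}{1-s}\,\delta^{1-s}\,P$, using convexity of $B_1$ to keep the segment and the shifted point $z=\bar x+th$ inside $B_1$), bound the far part via the triangle inequality and the exact integrability of the kernel tail (contribution $\tfrac{2|S^{n-1}|}{s}\,\delta^{-s}\,V$ --- you rightly note that crudely replacing $|x-\bar x|^{-n-s}$ by $\delta^{-n-s}$ would lose the sharp exponents), and optimize $\delta=\min\{V/P,2\}$, with a separate word for the regime $V>2P$ where $P^{1-s}<V^{1-s}$ closes the estimate. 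This is the standard direct proof of this endpoint interpolation; what it buys you over the paper's citation is that the argument is transparent and the constant $C(n,s)\sim |S^{n-1}|\big(\tfrac1{1-s}+\tfrac1s\big)$ is explicit. One very minor omission: you only set aside the degenerate case $P=0$; the case $V=0$ (which forces $\delta=0$ and makes the far-region estimate indeterminate) should also be set aside, though it too forces $u$ to be constant on $B_1$, so the left-hand side vanishes and the conclusion is trivial.
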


\begin{proof}
It follows, after scaling, from  \cite[Theorem 1]{Brezis1} used with $p_1=p_2=p=1$, $s_1=0<1=s_2$, $\theta=1-s$, and $\Omega = B_1$.
\end{proof}

In the proof of Proposition \ref{density}, we will apply Lemma \ref{interpo} above with $k=\pm 1$, $V=\omega_0$, where $\omega_0$ is given in \eqref{hp-density}, and $P=C(1-s)^{-1}$, where $C(1-s)^{-1}$ comes from the $BV$ bound \eqref{BVest}.

We can now give the proof of our density estimate.

\begin{proof}[Proof of Proposition \ref{density}]

We argue by contradiction. Assume that there exists $\bar c\in (0,1)$ for which $R^{-n}\int_{B_R} |1+u| \,dx \le \omega_0$  and $\{u\ge -\bar c\}\cap B_{R/2}\neq \varnothing$.

Throughout the proof, all constants will depend only on $\bar c$, $n$, and $s$.

First, by continuity of $u$ and by taking $\omega_0<R^{-n} |B_{R/2}|$, there will be a point
$x_0\in B_{R/2}$  for which $|u(x_0)|\le \bar c$.
Moreover, by the uniform  continuity of $u$ (recall that $|\nabla u| \le C$ in $\R^n$; see Appendix~\ref{app-C}), we will have that $|u|\leq \frac{1+\bar c}{2}$ in  some ball of radius $r>0$ centered at $x_0$ (we emphasize that we can take $r$, as well as all other constants in the rest of the proof, to depend only on $\bar c$, $n$, and $s$). Using that $W(u)=\frac{1}{4}(1-u^2)^2$, we deduce
$$r^{s-n}\int_{B_r(x_0)} W(u) \,dx\geq \theta>0$$
for some positive constant $\theta$.

Let now $U$ be the $s$-extension of $u$ in $\R^{n+1}_+$. The previous lower bound on the potential energy in $B_r(x_0)$ leads to
$$r^{s-n}\widetilde{\mathcal E}_{r}(U)=r^{s-n}\left(\frac{d_s}{2}\int_{\widetilde B^+_r(x_0)}y^{1-s}|\nabla U|^2\,dx\,dy + \int_{B_r(x_0)} W(u)\,dx\right) \geq \theta,$$
where, for simplicity of notation, we keep denoting by $\widetilde{\mathcal E_r}$ the energy in the half-ball $\widetilde B^+_r((x_0,0))$ centered at $(x_0,0)$ (instead of at the origin).

Applying the monotonicity formula  of Proposition \ref{mon}, we deduce that
\begin{equation}\label{upper-bound}
\rho^{s-n} \widetilde{\mathcal E}_{\rho}(U)\geq \theta\quad \mbox{for every } \rho\ge r.
\end{equation}

We use now Lemma \ref{up-down} to translate the bound in \eqref{upper-bound} for the energy of the extension $U$ into a lower bound for the energy of $u$. We get

\begin{eqnarray}\label{bound2}
\theta&\leq &\rho^{s-n} \widetilde{\mathcal E}_{\rho}(U) \nonumber \\
& \leq &C\rho^{s-n} \left(\iint_{(\R^n\times \R^n )\setminus ( B^c_{2\rho}(x_0)\times B^c_{2\rho}(x_0))} \frac{|u(x)-u(\bar x)|^2}{|x-\bar x|^{n+s}}\,dx\,d\bar x +\int_{B_{\rho}(x_0)} W(u)\,dx  \right) \nonumber\\
&\leq &C \rho^{s-n} \iint_{B_{2\rho}(x_0)\times \R^n} \frac{|u(x)-u(\bar x)|^2}{|x-\bar x|^{n+s}}\,dx\,d\bar x ,
\end{eqnarray}
where in the last inequality we have used Proposition \ref{DircontrolsPot2} to bound the potential energy by the Sobolev energy in the larger ball, which requires to take $\rho\ge R_0$ (with $R_0$ being the constant in that proposition) since, then, $\rho+R_0\leq 2\rho$.

We aim now to use assumption \eqref{hp-density} and Lemma \ref{interpo} in order to find a contradiction with \eqref{bound2}.  To this end, we need to introduce a larger radius $R$, with $R\ge 4\rho$, and observe that the set of integration in \eqref{bound2} satisfies
\[
B_{2\rho}(x_0)\times \R^n
\subset \big(B_R(x_0)\times B_R(x_0)\big)\cup \big(B_{2\rho}(x_0)\times B_R^c(x_0)\big).
\]
Now, applying Lemma \ref{interpo}, we are able to control the $W^{1,s}$-seminorm of $u$ in $B_R$ by a positive power of $\omega_0$. More precisely, by \eqref{hp-density}, the quantity $R^{-n}\|u+1\|_{L^1(B_R)}$ is controlled by $\omega_0$. On the other hand, our $BV$ estimate \eqref{BVest} gives a bound for $R^{1-n}\|\nabla u\|_{L^1(B_R)}$ by a constant  $C(1-s)^{-1}$. Thus, using Lemma \ref{interpo}, $|u|\le 1$, and $R-2\rho\ge R/2$ (since we take $R\ge 4\rho$), we deduce
\begin{eqnarray}\label{bound3}
\theta&\leq &\rho^{s-n} \widetilde{\mathcal E}_{\rho}(U) \leq C\left(\frac{\rho}{R}\right)^{s-n}\,R^{s-n}\iint_{B_{R}(x_0)\times B_R(x_0)}\frac{|u(x)-u(\bar x)|}{|x-\bar x|^{n+s}}\,dx\,d\bar x   \nonumber \\
& & \hspace{3cm}+ C\rho^{s-n}\iint_{B_{2\rho}(x_0)\times B_R^c(x_0)} \frac{|u(x)-u(\bar x)|}{|x-\bar x|^{n+s}}\,dx\,d\bar x  \nonumber \\
&\leq &C \left(\frac{\rho}{R}\right)^{s-n} \omega_0^{1-s} + C\rho^{s-n}\iint_{B_{2\rho}(x_0)\times B_R^c(x_0)} \frac{dx\,d\bar x}{|x-\bar x|^{n+s}} \nonumber \\
&\leq &C \left(\frac{\rho}{R}\right)^{s-n} \omega_0^{1-s} + C\rho^{s}\int_{\frac{R}{2}}^{+\infty}\frac{r^{n-1}}{r^{n+s}}\,dr \nonumber \\
&\leq & C_1 \left[\left(\frac{\rho}{R}\right)^{s-n}\omega_0^{1-s} +\left(\frac{\rho}{R}\right)^s\right].
\end{eqnarray}

We now take $R$ and $\rho$ such that $\frac{\rho}{R}=(\frac{\theta}{4C_1})^{1/s}$. We may ensure $R\ge 4\rho$ (as required before) by making larger the constant $C_1$ in \eqref{bound3}, if necessary. Since we needed $\rho\ge \max \{r,R_0\}$  within the proof, with $R_0$ being the radius from Proposition~\ref{DircontrolsPot2}, this gives a lower bound for $R$ which becomes our final choice of radius $R_0$ in the statement of Proposition \ref{density}. 

Finally, with this choice of $\rho/R$, \eqref{bound3} becomes 
\begin{equation}\label{final2}\theta \leq C_1 \left(\frac{\theta}{4C_1}\right)^{\frac{s-n}{s}} \omega_0^{1-s} +  \frac{\theta}{4}.\end{equation}
Therefore, choosing $\omega_0$ small enough, we obtain a contradiction and conclude the proof.
\end{proof}

\section{Convergence results}\label{sec-6} 
The goal of this section is to prove the following convergence result, which will allow us to give the Proof of Theorem \ref{thm2}.
\begin{prop}\label{conv-L^1}
Let $n\ge 2$, $s\in(0,1)$, and $W(u)=\frac 1 4 (1-u^2)^2$. 
Let $u:\R^n\rightarrow (-1,1)$ be a stable solution of $(-\Delta)^{s/2} u +W'(u)=0$ in $\R^n$.

Then, for every sequence $R_j\uparrow \infty$ there exists a subsequence $R_{j_k}$ such that, defining  $u_{R} (x):= u(R x)$, we have 
\[
u_{R_{j_k}} \rightarrow  u_\infty: = \chi_\Sigma-\chi_{ \Sigma^c} \quad \mbox{ in } L^1_{\rm loc}(\R^n)\]
for some cone  $\Sigma\subset \R^n$ which is nontrivial $($i.e., it is not equivalent to $\R^n$ nor to $\varnothing$ up to sets of zero measure$)$ and which is a weakly stable set  in $\R^n$   for the fractional perimeter~$P_s$.

Moreover, we have the following convergence of the localized energies:
\begin{equation}\label{whoiwhoiehr}
\mathcal E^{\rm Sob}_{B_{R'}} (u_{R_{j_k}}) \to \mathcal E^{\rm Sob}_{B_{R'}}(u_\infty)  \quad \mbox{and} \quad  R_{j_k}^s \int_{B_{R'}} W(u_{R_{j_k}})\,dx \to 0= \int_{B_{R'}} W(u_\infty)\,dx,
 \end{equation}
as $k\uparrow\infty$, in every ball $B_{R'}\subset \R^n$.
\end{prop}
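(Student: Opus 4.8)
The plan is to run a blow\nobreakdash-down analysis completely parallel to the one for $s$\nobreakdash-minimal surfaces, combining the four ingredients proved above: the $BV$ bound of Theorem~\ref{BV}, the energy bound of Theorem~\ref{thm1}, the inequality $\mathcal E^{\rm Pot}\le C\,\mathcal E^{\rm Sob}$ of Proposition~\ref{DircontrolsPot2}, and the monotonicity formula of Proposition~\ref{mon}. Writing $u_R(x)=u(Rx)$, the function $u_R$ is a stable solution of $(-\Delta)^{s/2}u_R+R^sW'(u_R)=0$ in $\R^n$. Since \eqref{BVest} is independent of $W$ (Remark~\ref{remscaling}), Theorem~\ref{BV} applied with $\widetilde W=R^sW$ gives $\int_{B_\rho}|\nabla u_R|\le \frac{C}{1-s}\rho^{n-1}$ for all $\rho>0$, uniformly in $R$; hence $\{u_{R_j}\}$ is bounded in $BV_{\rm loc}\cap L^\infty$ and, up to a subsequence $R_{j_k}$, converges in $L^1_{\rm loc}(\R^n)$ and a.e.\ to some $u_\infty$. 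Moreover, by Theorem~\ref{thm1}, $\int_{B_\rho}W(u_R)\,dx=R^{-n}\int_{B_{\rho R}}W(u)\,dx\le \frac{C}{(1-s)^2}\rho^{n-s}R^{-s}\to0$, so $\int_{B_\rho}W(u_\infty)=0$ for every $\rho$, and since $W>0$ on $(-1,1)$ with zeros only at $\pm1$ we get $u_\infty=\chi_\Sigma-\chi_{\Sigma^c}$ with $\Sigma:=\{u_\infty=1\}$.

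Next I would use the monotonicity formula to identify $\Sigma$ as a cone and to upgrade the convergence of energies. Let $U$ be the $s$\nobreakdash-extension of $u$ and $\Phi(\rho):=\rho^{s-n}\widetilde{\mathcal E}_\rho(U)$; by Lemma~\ref{up-down} and Corollary~\ref{energy-est} one has $\widetilde{\mathcal E}^{\rm Sob}_\rho(U)\le C(1-s)^{-2}\rho^{n-s}$, and Proposition~\ref{DircontrolsPot2} bounds the potential part by the Sobolev part, so $\Phi$ is bounded; by Proposition~\ref{mon} it is nondecreasing, hence $\Phi_\infty:=\lim_{\rho\to\infty}\Phi(\rho)$ is finite. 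A scaling computation shows that the $s$\nobreakdash-extension $U_R$ of $u_R$ satisfies $\rho^{s-n}\bigl(\widetilde{\mathcal E}^{\rm Sob}_\rho(U_R)+R^s\int_{B_\rho}W(u_R)\bigr)=\Phi(\rho R)$, which tends to $\Phi_\infty$ for every fixed $\rho$. The extensions $U_{R_{j_k}}$ being bounded in the weighted space $H^1_{\rm loc}(y^{1-s}dx\,dy)$, they converge weakly, along a further subsequence, to $U_\infty$, the $s$\nobreakdash-extension of $u_\infty$; and since $\Phi(\rho R_{j_k})-\Phi(\rho' R_{j_k})\to0$ for all $\rho'<\rho$, the remainder term in the monotonicity formula of \cite{CC2} (an integral of $y^{1-s}|\partial_r U_{R_{j_k}}|^2$\nobreakdash-type over annuli) tends to $0$ along the sequence. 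This forces $U_\infty$ to be $0$\nobreakdash-homogeneous --- so $\Sigma$ is a cone --- and the convergence $U_{R_{j_k}}\to U_\infty$ to be strong in $H^1_{\rm loc}(y^{1-s})$; together with $\int_{B_{R'}}W(u_\infty)=0$ this yields the energy convergence \eqref{whoiwhoiehr}, i.e.\ $\mathcal E^{\rm Sob}_{B_{R'}}(u_{R_{j_k}})\to \mathcal E^{\rm Sob}_{B_{R'}}(u_\infty)$ and $R_{j_k}^s\int_{B_{R'}}W(u_{R_{j_k}})\to0$.

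The cone $\Sigma$ is nontrivial: since $|u(0)|<1$ and $|\nabla u|\le C$ (Appendix~\ref{app-C}), there is a fixed $r>0$ with $\int_{B_r}W(u)\ge c_0>0$, so $\Phi_\infty\ge\Phi(r)\ge r^{s-n}c_0>0$, whence $\mathcal E^{\rm Sob}_{B_1}(u_\infty)>0$ by the energy convergence, so $u_\infty$ is not a.e.\ constant. Finally, to see that $\Sigma$ is weakly stable for $P_s$ in $\R^n$ (Definition~\ref{def-weakly}), fix $\Omega\Subset\R^n$ and $X=X(x,t)\in C^\infty_c(\Omega\times(-1,1);\R^n)$ with flow $\phi^t_X$, and perturb each stable solution $u_{R_{j_k}}$ by the inner variation $v_{k,t}:=u_{R_{j_k}}\circ(\phi^t_X)^{-1}$. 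Because $u_{R_{j_k}}$ is a stable critical point of $\mathcal E^{\rm Sob}_\Omega+R_{j_k}^s\mathcal E^{\rm Pot}_\Omega$, its second inner variation equals the stability quadratic form \eqref{stable} evaluated at $\nabla u_{R_{j_k}}\cdot X(\cdot,0)$ (an admissible test function), hence is nonnegative; letting $k\to\infty$ and using the strong convergence $u_{R_{j_k}}\to u_\infty$ and \eqref{whoiwhoiehr} (so that no Sobolev energy is lost in the limit and the $R_{j_k}^s$\nobreakdash-weighted potential terms are negligible), together with $2\mathcal E^{\rm Sob}_\Omega(\chi_E)=P_s(E,\Omega)$, gives $\liminf_{t\to0}t^{-2}\bigl(P_s(\phi^t_X(\Sigma),\Omega)-P_s(\Sigma,\Omega)\bigr)\ge0$.

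The main obstacle is, inside this argument, the passage from mere boundedness and monotonicity of $\Phi$ to the \emph{strong} $H^1_{\rm loc}(y^{1-s})$ convergence of the extensions --- equivalently, to the sharp energy convergence \eqref{whoiwhoiehr} --- since, having only stability at our disposal, no competitor argument is available and one must exploit the precise form of the remainder term in the monotonicity formula of \cite{CC2}. A second delicate point is transferring the stability inequality to the limiting cone: one has to match carefully the second inner variation of the diffuse energy with the second variation of $P_s$ and control the errors coming both from the contact set $\partial\Sigma$ and from the $R_{j_k}^s$\nobreakdash-weighted potential and nonlocal tails.
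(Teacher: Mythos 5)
Your strategy follows the paper's quite closely: the uniform $BV_{\rm loc}$ and energy bounds give $L^1_{\rm loc}$ compactness of the blow-downs, the decay of the rescaled potential forces the limit to take values $\pm1$, the extension and the monotonicity formula from \cite{CC2} identify the limit as a cone, nontriviality follows from $\Phi(r)>0$, and stability of $\Sigma$ is transferred from the $u_{R_{j_k}}$ by a Taylor expansion under inner variations together with the energy convergence \eqref{whoiwhoiehr}. This is exactly the architecture of the paper's proof.

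The one genuine gap is the one you flag yourself as ``the main obstacle.'' Vanishing of $\int y^{1-s}|\partial_r U_{R_{j_k}}|^2$ on compact sets, plus weak convergence $U_{R_{j_k}}\rightharpoonup U_\infty$ in $H^1_{\rm loc}(y^{1-s})$, does yield $\partial_r U_\infty=0$ (so $\Sigma$ is a cone), but it does \emph{not} upgrade the convergence to strong: the radial derivative is only one component of $\nabla U$, and nothing in this argument excludes persistent oscillation of the tangential derivatives, which is precisely what weak lower semicontinuity allows. Without strong convergence you have only $\mathcal E^{\rm Sob}_{B_{R'}}(u_\infty)\le\liminf \mathcal E^{\rm Sob}_{B_{R'}}(u_{R_{j_k}})$, not the equality in \eqref{whoiwhoiehr}, and both your nontriviality argument and your Step on weak stability rely on that equality. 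The paper closes this gap with a separate compactness step: from the uniform $BV_{\rm loc}$ bound \eqref{W11uR} and $|u_R|\le 1$, the Br\'ezis--Mironescu type interpolation inequality of Lemma~\ref{interpo}, applied with an exponent $\sigma\in(s,1)$ \emph{strictly larger} than $s$, gives a uniform bound on the Gagliardo seminorm $[u_R]_{W^{\sigma/2,2}(B_N)}$; the compact embedding $W^{\sigma/2,2}(B_N)\hookrightarrow W^{s/2,2}(B_N)$ then produces strong local $W^{s/2,2}$ convergence of $u_{R_{j_k}}$, and applying Lemma~\ref{up-down} to the difference $U_{R_{j_k}}-U_\infty$ (the $s$-extension of $u_{R_{j_k}}-u_\infty$) converts this into the required strong convergence of extensions in $H^1_{\rm loc}(y^{1-s})$. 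Once this interpolation--compactness step is inserted, the rest of your argument goes through and coincides, in substance, with the paper's proof.
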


\begin{proof}We divide the proof into two steps.

\vspace{3pt}
-{\em Step 1.}  We start by proving that some subsequence of $u_{R_{j}}$ converges  in $L^1_{\rm loc}(\R^n)$ to $u_\infty : = \chi_\Sigma-\chi_{ \Sigma^c}$, for some nontrivial cone $\Sigma$, and that the convergence of energies \eqref{whoiwhoiehr} holds.

Throughout the proof, $C$ will denote (possibly different) positive constants which depend only on $n$ and $s$. 

Let us take a radius $R' \geq 1$. Theorem \ref{BV} and Corollary \ref{energy-est} yield that
\[
\int_{B_R} |\nabla u|\,dx \le C R^{n-1}\quad \mbox{and}\quad  \mathcal E_{B_R} (u) \le C R^{n-s} \quad \mbox{for all }R\ge1.
\]
Thus, for $R\ge 1$, by rescaling we deduce
\begin{equation}\label{W11uR}
\int_{B_{R'}} |\nabla u_R|\,dx \le C_{R'}  \quad \mbox{and} \quad  \mathcal E^{\rm Sob}_{B_{R'}} (u_R)  \le C_{R'},
\end{equation}
where $C_{R'}$ denote  constants which depend only on $R'$, $n$, and $s$.

Let $U$ be, as in the previous section, the $s$-extension of $u$ in $\R^{n+1}_+$.
By Lemma \ref{up-down}, and since the potential energy is nonnegative, we have 
\begin{equation*}
\widetilde {\mathcal  E}^{\rm {Sob}}_{R} (U)
\leq C\iint_{(\R^n\times \R^n)\setminus( B^c_{2R}\times B^c_{2R})}  \frac{|u(x)-u(\bar x)|^2}{|x-\bar x|^{n+s}}\,dx\,d\bar x= 4C\, \mathcal E^{\rm {Sob}}_{B_{2R}} (u) \le C R^{n-s}
\end{equation*}
for $R\ge 1$. In addition, by the monotonicity formula of Proposition \ref{mon}, the quantity
\[\Phi(R) = R^{s-n} \widetilde{\mathcal E}_R(U)\]
is monotone nondecreasing. At the same time, by the previous bound on the Sobolev energy and by Proposition \ref{DircontrolsPot2} (which gives control on the potential energy in $B_R$ by the Sobolev energy in $B_{2R}$ if we take $R+R_0\leq 2R$), we deduce that $\Phi$ is bounded above by a finite constant.
We deduce that 
\begin{equation}\label{mono-lim}
\Phi(R'R)-\Phi(\bar R) \rightarrow 0 \quad \text{ as } R'R\ge \bar R \uparrow \infty.
\end{equation}

Now, from the proof of the monotonicity formula, which is based on a Pohozaev identity ---see the proof of Proposition 3.2 in \cite{CC2}--- we have that
\begin{equation*}
\Phi'(\rho) = \frac{d_s}{\rho^{n-s}} \int_{\partial ^+ \widetilde B_\rho^+} y^{1-s} (\partial_{r} U)^2 \,d{\mathcal H}^n(x,y) + \frac{s}{\rho^{n-s+1}}\int_{B_\rho} W(u) \,dx,
\end{equation*}
where $\partial^+$ denotes the part of the boundary contained in the open half-space $\{y>0\}$ and $\partial_r$ denotes the radial derivative in $\R^{n+1}_+$.
After rescaling, this becomes
\begin{equation}\label{wnoiwoieht}
\Phi'(R\tilde\rho) = \frac{d_s}{R\tilde\rho^{n-s}} \int_{\partial^+ \widetilde B_{\tilde\rho}^+} \tilde y^{1-s} (\partial_{r} U_R)^2 \,d{\mathcal H}^n(\tilde x,\tilde y) + \frac{s}{R^{1-s}\,{\tilde\rho}^{n-s+1}}\int_{B_{\tilde\rho}} W(u_R) \,d\tilde x,
\end{equation}
where  $U_R$ is the $s$-extension of $u_R$.
We integrate now with respect to $ \rho$ to obtain, for $R'/2\geq \bar R/R$,  
\begin{equation}\label{wnoiwoieht2}
\begin{split}
\Phi(R' R)-\Phi(\bar R) &=\int_{\bar R}^{R' R} \Phi'(\rho)\,d\rho=R\int_{\bar R/R}^{R'}\Phi'(R{\tilde\rho})\,d{\tilde\rho}\\
&=
d_s \int_{\bar R/R}^{R'} d{\tilde\rho} \, {\tilde\rho}^{s-n} \int_{\partial ^+ \widetilde B_{\tilde\rho}^+} \tilde y^{1-s} (\partial_{r} U_R)^2 \,d{\mathcal H}^n(\tilde x,\tilde y)\\
&\hspace{3em} + sR^{s}\int_{\bar R/R}^{R'} d{\tilde\rho}\,{\tilde\rho}^{s-n-1}\int_{B_{\tilde\rho}}W(u_R)\,d\tilde x\\
&\ge d_s (R')^{s-n} \int_{\widetilde B^+_{R'}\setminus \widetilde B^+_{\bar R/R}} \tilde y^{1-s} (\partial_{r} U_R)^2 \,d  \tilde x\,d\tilde y,\\
&\hspace{3em} { + sR^{s}(R')^{s-n-1} \frac{R'}{2} \int_{B_{R'/2}} W(u_R)\,d\tilde x  }\quad\text{ if } R\geq 2\bar R/R'.
\end{split}
\end{equation}

Note that, given $R'$ and $\bar R$, we have  
$$
 \int_{\widetilde B^+_{\bar R/R}} \tilde y^{1-s} (\partial_{r} U_R)^2 \,d  \tilde x\,d\tilde y \leq C R^2 \int_{\widetilde B^+_{\bar R/R}} \tilde y^{1-s} \,d  \tilde x\,d\tilde y\leq C \bar R^{2-s+n}R^{s-n} \to 0 \quad\text{ as }R\uparrow\infty.
$$ 
This together with \eqref{mono-lim} and \eqref{wnoiwoieht2}, leads to
\begin{equation}\label{limithomog}
\int_{\widetilde B^+_{R'}} y^{1-s} (\partial_{r} U_R)^2 \,d\tilde x\,d\tilde y \rightarrow 0  \quad \mbox{as }R\uparrow \infty
\end{equation}
and
\begin{equation}\label{limitpm1}
R^s \int_{B_{R'/2}} W(u_{R}) \,d\tilde x\rightarrow 0  \quad \mbox{as }R\uparrow \infty
\end{equation}
for every $R'\ge 1$.

Next, choose any $\sigma \in(s,1)$ and let $N\ge 1$ be an integer.
By the $W^{1,1}$ estimate \eqref{W11uR} applied with $R'=N$, and since $|u|\le 1$, Lemma~\ref{interpo} (applied with $s$ replaced by $\sigma$) leads to
\[   \int_{B_{N}\times B_{N}} \frac{\big|u_R(x)-u_R(\bar x)\big|^2}{|x-\bar x|^{n+\sigma}} \,dx\,d\bar x\leq C_{N,\sigma}\]
for all $R\ge 1$,
where $C_{N,\sigma}$ is a constant depending only on $N$, $\sigma$, $n$, and $s$.
Hence, using that $|u_R|\le 1$ and the compactness of $W^{\sigma/2,2}$ inside $W^{s/2,2}$,  there exists a subsequence $u_{R_{j_k}}$, that we still denote by $u_{R_k}$, and a function $u_\infty$ such that
\begin{equation}\label{strong}
\big\| u_{R_{k}} -u_\infty\big\|_{L^2(B_{N})}  + \int_{B_{N}\times B_{N}} \frac{\big| (u_{R_{k}} -u_\infty)(x)- (u_{R_{k}}-u_\infty)(\bar x)\big|^2}{|x-\bar x|^{n+s}}   \,dx\,d\bar x\rightarrow  0
\end{equation}
as $k\uparrow\infty$.
In addition, letting $N\uparrow\infty$, taking further subsequences, and using a Cantor diagonal argument, we obtain a new subsequence converging in $L^2$ in every ball of $\R^n$.
 
Now, given $R'\ge 1$, we use once more Lemma \ref{up-down} (now applied to $U_{R_k}-U_\infty$ in $\widetilde B_{R'}^+$,  where $U_\infty$ denotes the $s$-extension of $u_\infty$) to control 
$$ 
\int_{\widetilde B_{R'}^+}  y^{1-s}\big| \nabla U_{R_k}- \nabla U_\infty\big|^2\,dx\,dy
$$
by the double integral in \eqref{strong} computed now in $B_{2R'}\times \R^n$. Taking $N>2R'$, since \eqref{strong} gives control on the integrals computed over $B_{2R'}\times B_N$, it only remains to make the double integral on $B_{2R'}\times (\R^n\setminus B_N)$ arbitrary small. Such bound is obvious, since $|u_{R_{k}} -u_\infty| \le2$, by taking $N$ large enough. Therefore, we conclude
\begin{equation}\label{hwiohrhee1}
 \int_{\widetilde B_{R'}^+}  y^{1-s}\big| \nabla U_{R_k}- \nabla U_\infty\big|^2\,dx\,dy  \rightarrow  0
\end{equation}
as $k\uparrow\infty$.

From this strong convergence and the local uniform convergence of $u_{R_k}$, passing to the limit in \eqref{limithomog} and \eqref{limitpm1} we obtain
\[  \int_{\widetilde B_{R'/2}^+}y^{1-s} (\partial_r U_\infty)^2 \,dx\,dy  = 0 \quad \mbox{and}\quad \int_{B_{R'/2}}W(u_\infty)\,dx =0\]
for every $R'\ge 1$. Therefore, $\partial_r U_\infty = 0$ in $\R^{n+1}_+$ and $W(u_\infty)= 0$ in $\R^n$.
In other words, $U_\infty$ and  its trace $u_\infty$ are homogeneous of degree $0$, and $u_\infty$ takes
values $\pm 1$ ---the two wells of $W$. Equivalently, we have that
\[u_\infty=\chi_\Sigma-\chi_{ \Sigma^c}\quad \mbox{in }\R^n\]
 for some cone $\Sigma$. In addition, by the same convergences for $u_{R_k}$ that we have just used, we see that \eqref{whoiwhoiehr} holds. 
 
Finally, using  the monotonicity of $\Phi$, \eqref{hwiohrhee1}, and \eqref{limitpm1}, we obtain 
 \begin{equation*}
\begin{split}
0&<\Phi(1)\le \Phi(R)=R^{s-n}\widetilde{\mathcal E}_R(U)=\widetilde{\mathcal E}^{\rm {Sob}}_1(U_R)+R^s\int_{B_1} W(u_R) \,dx)\\
&\le \lim_{R\uparrow \infty}\Big(\widetilde{\mathcal E}^{\rm {Sob}}_1(U_R)+R^s\int_{B_1} W(u_R) \,dx \Big)\\
&=\widetilde{\mathcal E}^{\rm {Sob}}_1(U_\infty).
\end{split}
 \end{equation*}
Thus ${U_\infty}$ and ${u_\infty}$ have positive energy,  and hence $\Sigma$ is nontrivial (i.e. it is not equal to $\R^n$ nor $\varnothing$ up to sets of measure zero).

\vspace{3pt}

-{\em Step 2}. It remains to prove that $\Sigma$ is a weakly stable set  in $\R^n$ for the fractional perimeter~$P_s$.

To this end, let us take an arbitrary  smooth vector field $X=X(x,t)$ which is compactly supported in $B_1\times(-1,1)$ (by scaling, since $\Sigma$ is a cone, we may take the support to be the unit ball) and let  $\Psi = \Psi_t(x)$ denote the map $(x,t) \mapsto \phi_X^t(x)$, where $\phi_X^t$ is the integral flow defined by the ODE
\[
\frac{d}{dt} \phi_X^t(x) = X(\phi_X^t(x),t) \quad \mbox{with initial condition }\quad   \phi_X^0(x) = x.
\]
Note that $\Psi_0= {\rm Id}$ in $\R^n$, $\Psi_t = {\rm Id}$ outside of $B_1$, and that $\Psi_t:\R^n\to\R^n$ is a diffeomorphism for $|t|$ small.

Let us introduce the rescaled energy functional (of which $u_R$ is a stable critical point)
\[
\mathcal E^R_{B_1} (v) := \frac{1}{4} \iint_{(\R^n\times \R^n) \setminus (B_1^c\times B_1^c)}\, \frac{| v(y)-v(\bar y) |^2}{
 | y-\bar y|^{n+s}} \,dy\, d\bar y + R^s\int_{B_1} W(v(y))\,dy \, ,\]

We first show that the function $u_{R,t} := u_R\circ \Psi_t^{-1}$ satisfies
\begin{equation}\label{theexpansion}
   \mathcal E^R_{B_1} (u_{R,t}) \ge  \mathcal E^R_{B_1} (u_R) - C_{\Psi} t^3, \quad \mbox{for }t\in(-T_{\Psi}, T_{\Psi}),
\end{equation}
where $T_{\Psi}>0$ and $C_{\Psi}$ will be, from now on, different positive constants which depend only on $X$, $n$, and $s$ ---in particular, they are independent of~$R$.

To prove this, and as in the proof of Lemma \ref{lem2A}, we make the change of variables $y= \Psi^{-1}_{ t}(x)$, $\bar y= \Psi^{-1}_{t}(\bar x)$ for $|t|$ small. Since $\Psi_{t}^{-1}$ sends $B_1$ and $B_1^c$ onto themselves, setting $A_1:=(\R^n\times \R^n)\setminus (B_1^c\times  B_1^c)$ we have
\begin{equation}\label{Eg-urt}
\mathcal E^R_{B_1}(u_{R,t}) =  \frac{1}{4}\iint_{A_1}\, \frac{| u_R(y)-u_R(\bar y) |^2}{
 \big| \Psi_{t}(y)-\Psi_{t}(\bar y)\big|^{n+s}} \,J_t(y) \,J_t(\bar y)\,dy \,d\bar y + R^{-s} \negthinspace\int_{B_1} W(u_R(y))\,J_t(y)\,dy,
\end{equation}
where $J_{t}$  is the Jacobian  ${\rm det} D\Psi_t$.

A Taylor expansion for $J_t$ yields
\begin{equation} \label{expansionJac}
\big| J_t(y) - 1  - h_1(y) t  - h_2(y) t^2 \big| \le C_{\Psi}t^3  \quad \mbox{for }t\in(-T_{\Psi}, T_{\Psi}),
\end{equation}
with $ \| h_1\|_{L^\infty(\R^n)} +\| h_2\|_{L^\infty(\R^n)}  \le C_{\Psi}$.
Similarly, we have
\[
\left|\, \Psi_{t}(y) - \Psi_{t}(\bar y) - ( y-\bar y)  -  |y-\bar y|\big( { g} _1(y,\bar y) t +  { g}_2(y,\bar y) t^2 \big)\,\right| \le C_{\Psi}  |y-\bar y| t^3
\]
for $t\in(-T_{\Psi}, T_{\Psi}) $, where
$\| { g}_1\|_{L^\infty (\R^n\times\R^n )} +\|  g_2\|_{L^\infty (\R^n\times\R^n )}  \le C_{\Psi}$.

Therefore
\begin{equation} \label{expansionK}
\begin{split}
\big| \Psi_{t}(y)-\Psi_{t}(\bar y)\big|^{-n-s}
&= \big| y-\bar y  + |y-\bar y|\,\big(  { g}_1(y,\bar y) t + { g}_2(y,\bar y) t^2 +O(t^3)\big)\big|^{-n-s}
\\
&\hspace{-2cm}= |y-\bar y|^{-n-s} \big( 1+ tk_1(y,\bar y) + t^2 k_2(y,\bar y) + O(t^3) \big),
\end{split}
\end{equation}
where $\| k_1\|_{L^\infty (\R^n\times\R^n )} +\| k_2\|_{L^\infty (\R^n\times\R^n )}  \le C_{\Psi}$.

Using  \eqref{expansionJac} and \eqref{expansionK} in \eqref{Eg-urt} we obtain 
\[
\big| \mathcal E^R_{B_1}(u_{R,t})  - \big(  \mathcal E^R_{B_1}(u_{R})  + a_1t + a_2 t^2 \big) \big|  \le C_{\Psi} \,t^3 \mathcal E^R_{B_1}(u_{R})
 \]
for some constants $a_1$ and $a_2$, since the quantity
\[t^3\mathcal E^R_{B_1}(u_{R}) =t^3\left( \frac{1}{4} \iint_{A_1}\, \frac{| u_R(y)-u_R(\bar y) |^2}{
 | y-\bar y|^{n+s}} \,dy\, d\bar y + R^s\int_{B_1} W(u_R(y))\,dy \right)\]
controls the error terms which are cubic in the variable $t$.
Now, since by assumption $u_R$ is a stable solution, it must be $a_1 =0$ and $a_2\ge 0$, and thus
\[
   \mathcal E^R_{B_1} (u_{R,t}) \ge  \mathcal E^R_{B_1} (u_R) - C_{\Psi}  \mathcal E^R_{B_1}(u_{R}) t^3, \quad \mbox{for }t\in(-T_{\Psi}, T_{\Psi}).
\]
Finally, thanks to \eqref{W11uR} and \eqref{limitpm1} we have $\mathcal E^R_{B_1} (u_R) \le C$, with $C$ depending only on $n$ and $s$. This concludes the proof of \eqref{theexpansion}.

Now, recalling the convergence of the energies, \eqref{whoiwhoiehr}, we have that \eqref{theexpansion} passes to the limit and we deduce
\[
   \mathcal E^{\rm Sob}_{B_1} (u_{\infty,t}) \ge  \mathcal E^{\rm Sob}_{B_1} (u_{\infty}) - C_{\Psi}  t^3 \quad \mbox{ for }t\in(-T_{\Psi}, T_{\Psi}),
\]
where $u_{\infty,t} = u_{\infty}\circ \Psi^{-1}_t$.

Since $u_\infty = \chi_\Sigma-\chi_{ \Sigma^c}$,  we have
\[
\mathcal E^{\rm Sob}_{B_1} (u_\infty) = 2 P_s(\Sigma,B_1)  \quad \mbox{and}\quad  \mathcal E^{\rm Sob}_{B_1} (u_{\infty,t}) =2P_s(\Psi_t(\Sigma), B_1).
\]
Thus,
\[
   P_s(\Psi_t(\Sigma),B_1) \ge  P_s(\Sigma, B_1)  - C_{\Psi}  t^3, \quad \mbox{for }t\in(-T_{\Psi}, T_{\Psi}).
\]

Recalling Definition \ref{def-weakly} and since the smooth compactly supported vector field~$X$ defining $\Psi$ was arbitrarily, we have shown that  $\Sigma$ is a weakly stable set in $B_1$  for the fractional perimeter $P_s$. Finally, using that $\Sigma$ is a cone, we easily deduce, by scaling, that it is in fact weakly stable in all of $\R^n$.
\end{proof}

We can now give the
\begin{proof} [Proof of Theorem \ref{thm2}] 
The first part of the statement on the $L^1$-convergence has just been proven in Proposition \ref{conv-L^1} above. We have also proved that $\Sigma$ is nontrivial. 

The last part of the statement, i.e., that  \eqref{wngioewhtioeh1} and \eqref{wngioewhtioeh2}  hold after choosing the representative of  $\Sigma$ (for which every point of $\Sigma$ with density $1$ belongs to its interior and every point of density 0 belongs to its complement) follows, as usual, from the local  $L^1$-convergence and the density estimate of Proposition \ref{density}.
\end{proof}

In  Proposition \ref{conv-L^1} we showed that the potential energies of sequences of blow-downs converge to zero. 
This was a consequence of the monotonicity formula.
To end this section we now give a stronger property (which will be useful in Section~\ref{sec-7}): a quantitative convergence of the potential energy to zero, as $\ep\downarrow 0$, for stable solutions of   $(-\Delta)^{s/2}u_\ep + \ep^{-s} W'(u_\ep)=0$ with $s\in (0,1)$.
\begin{prop}  \label{propnew}
Let  $n\ge 2$, $s\in (0,1)$, and $W(u)=  \frac 1 4 (1-u^2)^2$. For $\ep>0$, let $u_\ep:\R^n\rightarrow (-1,1)$  be a stable solution of $(-\Delta)^{s/2}u_\ep + \ep^{-s} W'(u_\ep)=0$ in $\R^n$.

If $R\ge \ep$, then 
\[
\int_{B_{R}} (\ep/R)^{-s} W(u_\ep)\,dx  \le C R^n (\ep/R)^{\beta},
\]
where  $\beta := \min\big(\frac{1-s}{2}, s \big) >0$ and $C$ depends only on $n$ and $s$.
\end{prop}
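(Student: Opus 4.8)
The plan is to reduce to the case $\ep=1$ by the scaling $v(x):=u_\ep(\ep x)$, which is a stable solution of $(-\Delta)^{s/2}v+W'(v)=0$ in $\R^n$; the statement then becomes: $\int_{B_\rho}W(v)\,dx\le C\rho^{\,n-s-\beta}$ for all $\rho\ge1$, with $C=C(n,s)$. The starting point is the trivial bound $\int_{B_\rho}W(v)\,dx\le C(1-s)^{-2}\rho^{n-s}$, which is Theorem \ref{thm1} (applicable since $v$ is stable in every ball). The gain must come from the double‑well structure of $W$, which forces $v$ close to $\pm1$ outside a thin transition region $T:=\{\,|v|<1-c_0\,\}$, with $c_0$ as in \eqref{WWW}. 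Lemma \ref{lemdecay} quantifies this: $W(v(x))\le C\max\bigl(1,{\rm dist}(x,T)\bigr)^{-2s}$ for every $x\in B_\rho$ (for ${\rm dist}(x,T)$ below a fixed constant one just uses $W(v)\le\tfrac14$). Hence, writing $N(r):=\bigl|(T+B_r)\cap B_\rho\bigr|$ and using the layer‑cake formula,
\[
\int_{B_\rho}W(v)\,dx\;\le\;C\Bigl(N(1)+2s\int_1^{\infty}N(r)\,r^{-2s-1}\,dr\Bigr),
\]
so everything reduces to the tubular estimate $N(r)\le\frac{C}{1-s}\,r\,\rho^{n-1}$ for $1\le r\le\rho$, which I will call $(\star)$.

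For $r$ of order $1$, $(\star)$ follows from the $BV$ estimate \eqref{BVest} together with Lemmas \ref{tuttookintype1} and \ref{notype1impliestype2}: any unit‑size cube $Q$ meeting $\{|v|\le c_0/2\}$ uses up a definite amount of gradient, since Lemma \ref{tuttookintype1} produces two subcubes on which $v$ differs by at least $c_0/4$, and a relative isoperimetric inequality plus the coarea formula then give $\int_Q|\nabla v|\ge c>0$; therefore there are at most $C(1-s)^{-1}\rho^{n-1}$ such cubes, while Lemma \ref{notype1impliestype2} shows $T$ lies within a bounded distance of $\{|v|\le c_0/2\}$. For large $r$ one cannot merely fatten this set, and here the density estimate of Proposition \ref{density} is used. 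Applied (in both the $|1+u|$ and the $|1-u|$ form, with $\bar c$ close to $1$) around a point of $T$, and combined with the decay of $1-|v|$ away from $T$ from Lemma \ref{lemdecay} to render the contribution of the far region negligible, it yields that for every $y\in T$ and $R\ge R_2$ the two phases $E^{\pm}:=\{\pm v>1-c_0\}$ satisfy $|E^{\pm}\cap B_R(y)|\ge cR^n$. In particular, by the relative isoperimetric inequality, ${\rm Per}(E^{+};B_r(z))\ge c\,r^{n-1}$ for $z\in\partial E^{+}$ and $r\ge R_2$, and likewise for $E^{-}$. Covering $(T+B_r)\cap B_\rho$ by balls of radius $r$ centred on $\partial E^{+}\cup\partial E^{-}$ and bounding their number by $C r^{1-n}\bigl({\rm Per}(E^{+};B_{C\rho})+{\rm Per}(E^{-};B_{C\rho})\bigr)$ — each perimeter being $\le C(1-s)^{-1}\rho^{n-1}$ for a suitably chosen level, again by coarea and \eqref{BVest} — gives $(\star)$.

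Granting $(\star)$, the conclusion is a routine computation: inserting $(\star)$ together with $N(r)\le|B_\rho|\le C\rho^n$ and splitting the integral at $r\simeq\rho$ gives
\[
\int_{B_\rho}W(v)\,dx\;\le\;\frac{C}{1-s}\,\rho^{\,n-\min(2s,1)}
\]
(with an extra factor $\log\rho$ when $2s=1$). Since $\beta=\min\bigl(\tfrac{1-s}{2},s\bigr)\le\min(s,1-s)=\min(2s,1)-s$, the right‑hand side is $\le C\rho^{\,n-s-\beta}$ for $\rho\ge1$ (the logarithmic case being harmless for $\rho\ge1$), and rescaling back to $u_\ep$ finishes the proof.

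The main obstacle is the large‑$r$ part of $(\star)$. Proposition \ref{density} only provides the weak lower bound $\int_{B_R(y)}|1\pm u|\ge\omega_0R^n$, which by itself is overwhelmed by the mass of the region where $v$ is already close to $-1$ (resp. $+1$), where $|1+v|$ (resp. $|1-v|$) is of size up to $c_0$ on a set of volume up to $|B_R|$. Turning this into the clean density estimate $|E^{\pm}\cap B_R(y)|\ge cR^n$ requires quantitatively using the decay of Lemma \ref{lemdecay} — to push $1-|v|$ below $\omega_0$ outside a constant‑size tube — and the constant‑scale version of $(\star)$ to bound that tube; only then does the $E^{-}$ (resp. $E^{+}$) contribution become negligible. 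The remaining points (the covering argument converting the perimeter bound into the linear‑in‑$r$ estimate, and the fact that $T$ sits within bounded distance of $\partial E^{+}\cup\partial E^{-}$ once fat blobs have been excluded) are elementary.
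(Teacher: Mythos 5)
Your strategy is broadly sound and relies on the same core ingredients as the paper's proof (Lemma \ref{lemdecay}, Theorem \ref{BV}, Proposition \ref{density}, and a covering argument), but its implementation is genuinely different, and the place where it is hardest is precisely the step the paper is engineered to avoid. After rescaling, the paper sets $r_x:=\max\bigl(\min\bigl(\tfrac18,\tfrac12{\rm dist}(x,\{|u_\ep|\le \tfrac{9}{10}\})\bigr),C_0\ep\bigr)$ and shows directly the lower bound $\int_{B_{4r_x}(x)}|\nabla u_\ep|\ge c\,r_x^{n-1}$ at \emph{every} admissible scale. The mechanism is a short three-way combination: if the gradient were small, Poincar\'e would make $u_\ep$ nearly constant $\equiv t$ on $B_{4r_x}$; the rescaled potential bound from Theorem \ref{thm1} forces $|t|$ close to $1$; and then $\ave|u_\ep\mp 1|$ would be small, contradicting the $L^1$ density estimate \eqref{whtiohwioh} coming from Proposition \ref{density}. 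Thus the paper never needs to extract a \emph{measure-theoretic} density lower bound $|\{\pm v>1-c_0\}\cap B_R(y)|\ge cR^n$ from the $L^1$ statement of Proposition \ref{density}; it then covers $B_1$ by a multiscale Vitali family of balls $B_{r_x}(x)$, counts them per dyadic scale using the $BV$ bound, and bounds the potential integrand per ball via Lemma \ref{lemdecay}.

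By contrast, your layer-cake reduction to the tubular estimate $N(r)\le \tfrac{C}{1-s}\,r\rho^{n-1}$ forces you into exactly the phase-density statement the paper sidesteps: to get the perimeter lower bound ${\rm Per}(E^\pm;B_r(z))\ge c\,r^{n-1}$ along $\partial E^\pm$, you need $|E^\pm\cap B_R(y)|\ge cR^n$, and you (correctly) identify that Proposition \ref{density} alone does not give this because the mass of $|1\pm v|$ can be carried by the bulk where $v\approx\mp 1$. Your proposed fix — use Lemma \ref{lemdecay} to make $1-|v|<\omega_0/4$ outside a constant-size tube, then the unit-scale tube bound (itself obtained from \eqref{BVest} and Lemmas \ref{tuttookintype1}–\ref{notype1impliestype2}) to make $|(T+B_{C_1})\cap B_R|$ a lower-order $O(R^{n-1})$ quantity for $R$ large, so that the remaining mass must sit on $E^-$ (resp. $E^+$) — does work, and then the perimeter-covering bound with ${\rm Per}(E^\pm;B_{C\rho})\lesssim (1-s)^{-1}\rho^{n-1}$ (coarea + \eqref{BVest}, at a suitably chosen level) closes $(\star)$ for $r\gtrsim 1$. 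Your bookkeeping at the end is also correct: one gets $\int_{B_\rho}W(v)\lesssim (1-s)^{-1}\rho^{n-\min(2s,1)}$ (with a $\log\rho$ at $s=\tfrac12$), and since $\min(\tfrac{1-s}{2},s)\le\min(s,1-s)=\min(2s,1)-s$ this is even slightly \emph{stronger} than the stated exponent $\beta$ for $s>\tfrac13$. So the proposal is essentially correct and fixable, but it expends substantial extra effort (the $E^\pm$-density step plus relative isoperimetric arguments) on something the paper's Poincar\'e-plus-potential-energy trick renders unnecessary; the two routes diverge precisely at the point you flag as the ``main obstacle.''
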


\begin{proof}
By scaling we may  (and do) assume without loss of generality that $R=1$.  

Given $x\in B_1$, let 
\[ 
r_x : = \max ( \min\big( \tfrac 1 8 , \tfrac 1 2 {\rm dist} (x, \{|u|\le \tfrac {9}{10}\}),  C_0\ep),
\]
where $C_0>0$ is a large enough constant, depending only on $n$ and $s$, to be chosen later. 
Note that if $\ep$ satisfies $1 \le 8C_0\ep$, then
\[
\int_{B_{1}} \ep^{-s} W(u_\ep)\,dx  \le (8C_0)^s  ({\textstyle \max_{[-1,1]} }W )   |B_{1}|   \le  C   \le C  C_0^{\beta}   \ep^{\beta}.
\]
Thus, we may (and do) assume that $\frac{1}{8}> C_0\ep$. In particular, $r_x\in [C_0\ep,\frac{1}{8}]$ for all $x\in B_1$.

We will take $C_0$ satisfying $C_0 \ge \max (R_0,1)$, where $R_0$ is the constant of Proposition \ref{density} for $\bar c = \tfrac{9}{10}$. Then, for the constant $\omega_0$ of Proposition \ref{density}, we claim that 
\begin{equation}\label{whtiohwioh}
\min\bigg( \int_{B_{4r_x} (x)}  |u_\ep -1|\,dx,  \int_{B_{4r_x} (x)}  |u_\ep +1|\,dx\bigg) \ge \omega_0 (2r_x)^n
\end{equation}
for all  $x\in B_1$ such that $r_x< \frac 1 8$.
Indeed,  since $r_x<\frac 1 8$, there exists $z\in \{|u|\le \tfrac {9}{10}\} \cap\overline{B}_{1+ \frac 1 4}$ such that $|x-z| \leq 2r_x$. Hence, by Proposition \ref{density}  ---applied to  $u = u_\ep (\ep \,\cdot\,)$---,
\[
\min\bigg( \int_{B_{2r_x} (z)}  |u_\ep -1|\,dx,  \int_{B_{2r_x} (z)}  |u_\ep +1|\,dx\bigg) \ge \omega_0 (2r_x)^n, 
\]
and \eqref{whtiohwioh} follows since  $B_{2r_x} (z)\subset  B_{4r_x} (x)$.

Note also that  $B_{4r_x} (x) \subset B_{3/2}$ for all $x\in B_1$.

On the other hand, thanks to the potential energy estimate in Theorem \ref{thm1} (rescaled) we have  (using $4r_x/\ep\ge C_0\ge 1$)
\begin{equation}\label{wngthwiohw}
C_0^s \ave_{B_{4r_x}}  \frac{1}{4} (1-|u_\ep|)^2 \,dx \le   (\ep/r_x)^{-s}  \ave_{B_{4r_x}}  W(u_\ep)\,dx \le C, 
\end{equation}
where $\ave$ denotes the average.

Let us next show that Poincar\'e's inequality, \eqref{whtiohwioh}, and \eqref{wngthwiohw} for $C_0$ sufficiently large, yield
\begin{equation} \label{whtoiwhoewih}
\int_{B_{4r_x} (x)} |\nabla u_\ep| \ge c  r_x^{n-1} \quad \mbox{whenever } r_x<\tfrac 1 8,
\end{equation}
for some constant $c>0$ depending only on $n$ and $s$.
Indeed, if $\int_{B_{4r_x} (x)} |\nabla u_\ep| =: \kappa \,  r_x^{n-1}$, then by Poincar\'e's inequality  we have 
\[
\ave_{B_{4r_x} (x)} |u_\ep-t| \le C\kappa \quad \mbox{for some }t\in [-1,1].
\]
But then  using  \eqref{wngthwiohw},  we have
\[
\begin{split}
|1-|t||^2 &= \ave_{B_{4r_x} (x)} |1-|t||^2 \le 2 \ave_{B_{4r_x}}  (1-|u_\ep|)^2  + 2\ave_{B_{4r_x}}  |u_\ep -t|^2
\\
& \le  CC_0^{-s}  + 4\ave_{B_{4r_x}}   |u_\ep -t| \le  C(C_0^{-s} + \kappa).
\end{split}
\]
Recalling now \eqref{whtiohwioh} we obtain
\[
\begin{split}
\frac{2^n \omega_0}{|B_4|} &\le \min\bigg( \ave_{B_{4r_x} (z)}  |u_\ep -1|\,dx,  \ave_{B_{4r_x} (z)}  |u_\ep +1|\,dx\bigg)
\\& \le  \ave_{B_{4r_x} (z)}  |u_\ep -t|\,dx + |1-|t|| 
\le C(\kappa + (C_0^{-s} + \kappa)^{1/2}),
\end{split}
\]
and this gives a lower bound for $\kappa$ provided the $C_0$ is chosen sufficiently large. Thus, \eqref{whtoiwhoewih} is now proved.

We now produce a covering of $B_1$, by some of the balls  $B_{r_x}(x)$,  as follows. 
Given $k \le -4$, let $X_k := \{x\in B_1 \,: \,r_x \in (2^k, 2^{k+1}]\}$ and let $\{ x_j^k\}_{j\in \mathcal J_k}$ be a  maximal subset of $X_k$  with the property that the balls $B_{\frac 1 4 r_{x_j^k}}(x_j^k)$ are disjoint. 
It then follows (using that all radii $r_x$ belong to $(2^k, 2^{k+1}]$ for $x\in X_k$) that 
$$
X_k \subset \bigcup_{j\in \mathcal J_k} B_{r_{x_j^k}}(x_j^k)
$$
and that the family of quadruple balls 
$$
\{ B_{4r_{x_j^k}}(x_j^k) \}_{j\in \mathcal J_k}
$$ 
has  (dimensional) finite overlapping.\footnote{That is, every point $x\in \R^n$ belongs to at most $N$ of these balls, with $N$ depending only on~$n$. This is easy to check: if $x\in \R^n$ belonged to $N$ of such balls, we would have the existence of $N$ points $x^k_j$ in $B_{4\cdot 2^{k+1}}(x)$ such that the balls $B_{\frac{1}{4} 2^k}(x^k_j)$ are disjoint and contained in $B_{9\cdot 2^{k}}(x)$.} 
Note also that, by construction,  the union of the sets $X_k$ when $k$ runs on  $\{\lfloor\log_2 (R_0\ep)\rfloor \le k \le -4\}$ covers all of $B_1$.

Now, on the one hand,  the BV estimate $\int_{B_{3/2}} |\nabla u_\ep| \,dx\le C$ (which follows from Theorem \ref{BV}) yields, for all $k \le -4$,
\begin{equation} \label{whtoiwhoewih2}
\#\mathcal J_k \le C(2^k)^{1-n}.
\end{equation}
Indeed, this follows using that the balls  $B_{ 4 r_{x_j^k}}(x_j^k)$  have finite overlapping and are contained in $B_{3/2}$: when $k<-4$  then $r_{x_j^k} <\frac 1 8$ and hence all  the balls satisfy \eqref{whtoiwhoewih} and are contained in $B_{3/2}$ by construction; while for $k=-4$ the radius of the balls is at least $\frac 1 {16}$ so their number must be bounded.

On the other hand, we claim that Lemma \ref{lemdecay} yields
\[
\int_{B_{r_x}(x)} \ep^{-s} W(u_\ep) \,dx\le \int_{B_{r_x}(x)} \ep^{-s} (1-|u_\ep|)^2 \,dx\le C \ep^{-s} \Big(\frac{\ep}{r_x}\Big)^\alpha r_x^n
\]
for any given $\alpha \in [0,2s]$. Indeed, note that if $r_x= C_0 \ep$ the previous estimate is trivial, while if $r_x> C_0\ep$ then $r_x\le \tfrac 1 2 {\rm dist} (x, \{|u|\le \tfrac {9}{10}\})$ and hence we may apply Lemma \ref{lemdecay}
(recall that $r_x\ge C_0\ep\ge \ep$).

Therefore, choosing $\alpha := \min\big( \frac{1+s}{2}, 2s\big)\in (0,1)$ we obtain ---using \eqref{whtoiwhoewih2}---
\[
\begin{split}
\int_{B_1} \ep^{-s} W(u_\ep)\,dx &\le C \sum_{k= \lfloor\log_2 (R_0\ep)\rfloor}^{-4}  \sum_{j\in\mathcal J_k} \int_{B_{r_{x^k_j}}(x^k_j)} \ep^{-s} W(u_\ep) \,dx
\\
& \le C  \sum_{k= \lfloor\log_2 (R_0\ep)\rfloor}^{-4}  \sum_{j\in\mathcal J_k} \ep^{-s} \Big(\frac{\ep}{r_{x_j}}\Big)^\alpha r_{x_j}^n
\\
& \le C \sum_{k= \lfloor\log_2 (R_0\ep)\rfloor}^{-4} \ep^{-s} \Big(\frac{\ep}{2^k}\Big)^\alpha (2^{k+1})^n\, \#\mathcal J_k
\\
& \le C \sum_{k= \lfloor \log_2 (R_0\ep)\rfloor}^{-4} \ep^{\alpha-s}  (2^k)^{n-\alpha} (2^k)^{1-n} \le C \ep^{\alpha-s}\sum_{k= -\infty}^{-4}  (2^k)^{1-\alpha} 
\\
& \le C\ep^{\beta}, 
\end{split}
\]
as we wanted to show.
\end{proof}

\section{Proofs of the classification results}\label{sec-7} 

In this section we give the proof of our classification results. In order to prove Theorem \ref{thmclas}, we will need some preliminary ingredients.

We start by recalling the main results in \cite{dPSV}, which are a consequence of an improvement of flatness theory for phase transitions in the ``genuinely nonlocal'' regime (meaning that the order~$s$ of the operator is less than 1). The first one will be used to conclude one-dimensionality of solutions.

\begin{thm}[Theorem 1.2 in \cite{dPSV}]\label{123}
Let $n\ge 2$, $s\in(0,1)$, and $W(u)=\frac 1 4 (1-u^2)^2$.  Let $u:\R^n\rightarrow (-1,1)$ be a  solution of $(-\Delta)^{s/2}u + W'(u)=0$ in $\R^n$.

Assume that there exists a function $a:(1,\infty) \rightarrow (0,1]$ such that  $a(R)\downarrow 0$ as $R\uparrow\infty$ and
such that, for all $R>0$, we have
\begin{equation}\label{ASS-R}
\{ e_R\cdot x\le -a(R)R\} \subset \big\{u\le -{\textstyle \frac 4 5} \big\}\subset  \big\{u\le {\textstyle \frac 4 5}\big \} \subset \{e_R\cdot x\le a(R)R\} \quad \mbox{in }B_{R}
\end{equation}
for some $e_R\in S^{n-1}$
which may depend on $R$.

Then, $u(x)=\phi(e\cdot x)$ for some direction $e\in S^{n-1}$ and an increasing function $\phi:\R\to (-1,1)$. \end{thm}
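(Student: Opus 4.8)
The plan is to obtain the one-dimensional symmetry from an iteration of an improvement-of-flatness estimate for the fractional Allen--Cahn equation, following the scheme that for the classical case goes back to De Giorgi and was carried out by Savin, here adapted to the genuinely nonlocal regime $s\in(0,1)$.

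\emph{Step 1 (flatness at every point and every large scale).} For $x_0\in\R^n$ and $\rho>0$ set
\[
\beta(x_0,\rho):=\frac{1}{\rho}\,\inf\Bigl\{\ell>0:\ \{|u|\le\tfrac45\}\cap B_\rho(x_0)\subset\{\,|\nu\cdot x-c|\le\ell\,\}\ \text{for some }\nu\in S^{n-1},\ c\in\R\Bigr\}.
\]
Restricting the slab provided by \eqref{ASS-R} at scale $R=2(|x_0|+\rho)$ to the ball $B_\rho(x_0)\subset B_R$ gives
\[
\beta(x_0,\rho)\le \frac{a(R)R}{\rho}\le 2\Bigl(1+\frac{|x_0|}{\rho}\Bigr)a\bigl(2(|x_0|+\rho)\bigr),
\]
so for each fixed $x_0$ we have $\beta(x_0,\rho)\to 0$ as $\rho\uparrow\infty$, uniformly for $x_0$ in compact sets. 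In particular every blow-down of $u$ centered at any point (along any subsequence) is the characteristic function $\chi_{\{\nu\cdot x>0\}}-\chi_{\{\nu\cdot x<0\}}$ of a half-space.

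\emph{Step 2 (improvement of flatness — the main ingredient).} The core of the proof is the following estimate, to be established separately: there exist $\varepsilon_0>0$ and $\theta\in(0,\tfrac12)$, depending only on $n$ and $s$, such that if $v$ solves $(-\Delta)^{s/2}v+W'(v)=0$ in $B_1$ with $\{|v|\le\tfrac45\}\cap B_1\subset\{|e'\cdot x|\le\varepsilon\}$ and $\varepsilon\le\varepsilon_0$, then
\[
\{|v|\le\tfrac45\}\cap B_\theta\subset\{|e''\cdot x|\le\tfrac12\theta\varepsilon\}\quad\text{for some }e''\in S^{n-1},\ |e''-e'|\le C\varepsilon,
\]
i.e. the \emph{relative} flatness at least halves when zooming in by $\theta$. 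Its proof is by compactness/contradiction: normalizing the excess of flatness of a hypothetical sequence of solutions $v_k$ violating the claim, one uses uniform H\"older and Harnack estimates and the density estimates for the equation to show that the rescaled graphs of the level sets $\{v_k=\lambda\}$ converge locally uniformly to the graph of a single function $w\colon (e')^{\perp}\to\R$ solving the linearized limit problem, and a Liouville-type estimate for that problem contradicts the normalization. For $s\in(0,1)$ the limit problem is the nonlocal one governing flatness of $s$-minimal interfaces, so its regularity is supplied by the Caffarelli--Roquejoffre--Savin theory; this is where the restriction $s<1$ enters.

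\emph{Step 3 (iteration and conclusion).} Fix $x_0\in\{u=0\}$. By Step~1 there is $R_*$ (bounded on compact sets of $x_0$) with $\beta(x_0,R_*)\le\varepsilon_0$, and restricting to a fixed large ball shows that the associated directions for different $x_0$ are all close to a common $\nu_*$. Applying Step~2 to the rescalings $x\mapsto u(x_0+\theta^kR_*x)$ and iterating yields $\beta(x_0,\theta^kR_*)\le 2^{-k}\varepsilon_0$ together with a geometrically convergent sequence of directions, which is precisely the $C^{1,\alpha}$-regularity of $\{u=0\}$ near $x_0$ with uniform estimates; a standard patching argument (using the common $\nu_*$ and the sublinear growth) then shows that $\{u=0\}$ is an entire $C^{1,\alpha}$ graph $\{\,x\cdot e=\phi(x-(x\cdot e)e)\,\}$ over a fixed hyperplane $e^{\perp}$, with $\phi$ sublinear, $|\phi(y)|=o(|y|)$; by the $C^{1,\alpha}$ estimates $\nabla\phi(y)\to0$ as $|y|\to\infty$. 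Moreover \eqref{ASS-R}, the sliding method, and the strong maximum principle give $\partial_e u>0$, so every level set $\{u=\lambda\}$ is likewise an entire sublinear $C^{1,\alpha}$ graph over $e^{\perp}$. A Liouville-type argument for the (linear, translation-invariant) equation satisfied by the level-set functions after the Caffarelli--Silvestre extension — using their sublinear growth and the maximum principle — then forces them to be affine with a common gradient; hence the level sets of $u$ are parallel hyperplanes, i.e. $u(x)=\phi_0(e\cdot x)$ for an increasing $\phi_0:\R\to(-1,1)$.

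\emph{Main obstacle.} Essentially all the work is in Step~2: identifying the correct linearized limit (which splits into a bulk contribution on $\R^n$ and an interface contribution that, for $s<1$, is itself a nonlocal operator of order $s$ on the hyperplane), establishing the uniform Harnack/H\"older and density estimates needed to pass to the limit, and proving the Liouville/regularity statement for the limit problem. Secondary technical points are the transfer in Step~1 from the one-sided ordering in \eqref{ASS-R} to a two-sided slab for the full set $\{|u|\le\tfrac45\}$ and its level sets, and the final Liouville argument of Step~3 for the equation satisfied by the level-set functions.
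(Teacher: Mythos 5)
The paper does not prove Theorem~\ref{123} at all: it is quoted verbatim as Theorem~1.2 from Dipierro--Serra--Valdinoci \cite{dPSV}, and the present article only uses it as a black box (together with the improvement--of--flatness Theorem~1.1 of \cite{dPSV}, which appears here repackaged as Proposition~\ref{cor:improvement} and Theorem~\ref{thm63utbg}). So strictly speaking there is no ``paper's proof'' to compare with.

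That said, your sketch is in the spirit of the strategy actually used in \cite{dPSV}, but Step~2 is misstated in a way that breaks Step~3. You formulate improvement of flatness purely in terms of the slab width $\varepsilon$ of $\{|v|\le \tfrac45\}\cap B_1$ for a solution of the $\ep$-free equation $(-\Delta)^{s/2}v+W'(v)=0$ in $B_1$. But after rescaling $v_k(x)=u(x_0+\theta^kR_*x)$, the function $v_k$ solves the equation with transition parameter $\ep_k\sim(\theta^kR_*)^{-1}$, and once $\theta^kR_*$ gets down to the intrinsic transition thickness ($\sim 1$ here), the set $\{|v_k|\le\tfrac45\}$ has width of order one and is simply not flat, so there is no improvement to hope for at that scale. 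The correct improvement-of-flatness statement (Theorem~1.1 in \cite{dPSV}, or Proposition~\ref{cor:improvement} of the present paper) carries an explicit coupling between the flatness $a$ and the rescaled $\ep$, of the form $2^{k-1}\ep\le(2^{-\alpha_0(k-1)}a)^{p_0}$, and the iteration therefore \emph{stops} after finitely many steps, at a scale comparable to the transition width. Consequently your claim in Step~3 that one obtains $C^{1,\alpha}$-regularity of $\{u=0\}$ by iterating to $k\to\infty$ does not make sense for a fixed solution of the unit-parameter equation; what the iteration actually yields is a uniform, quantitative flatness of the level sets at the $O(1)$ scale. The one-dimensionality is then deduced from this uniform flatness by a separate rigidity/sliding argument (this is how \cite{dPSV} closes the proof), not by a $C^{1,\alpha}$-then-Liouville argument applied to an entire graph of the zero level set as you propose. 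Filling the gap requires both the $\ep$-coupled version of the improvement of flatness and a correct concluding step; the rest of your outline (trapping in slabs at every scale from \eqref{ASS-R}, compactness for the improvement lemma, monotonicity from the sliding method) is consistent with the approach in \cite{dPSV}.
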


We next prove a corollary of Theorem 1.1 in \cite{dPSV} which will be useful in the sequel. It is an``iterated version'' of  Theorem 1.1 in \cite{dPSV} in the particular case $L= (-\Delta)^s$ and $f(u)= - W'(u) = u-u^3$. 
 
\begin{prop}\label{cor:improvement}
Let $s\in(0,1)$ and $n\ge2$. There exist constants~$\alpha_0\in (0,s/2)$, $
p_0\in(2,\infty)$, and~$a_0\in(0,1/4)$, depending only on $n$ and $s$, such that the following statement holds.
\smallskip

Let $a\in(0,a_0]$ and let 
\begin{equation}\label{ja}
j_a := \left\lfloor  \frac{\log a}{\log(2^{-\alpha_0})} \right\rfloor \in \mathbb N.
\end{equation}
Let $\eps>0$ and $k\in \mathbb N$  satisfy $ 2^{k-1}\ep \le \big(2^{-\alpha_0(k-1)}a \big)^{p_0}$  and let  $u_\ep: \R^n \rightarrow (-1,1)$ be a solution 
of 
\[
(-\Delta)^su_\eps + \varepsilon^{-s}W'(u_\eps) =0 \quad  \mbox{in  }B_{2^{j_a}} \subset \R^n
\]
satisfying $0\in \big\{-\frac 3 4 \le u \le  \frac 3 4 \big\}$ and
\begin{equation} \label{wethiowehtowh}
 \big\{\omega_j \cdot x\le -a 2^{j(1+\alpha_0)} \big\}\, \subset\, \big\{u\le  -{\textstyle \frac 3 4}\big\} \,\subset \,\big\{u\le  {\textstyle \frac 3 4}\big\}  \,\subset \, \big\{\omega_j\cdot x\le a 2^{j(1+\alpha_0)} \big\} \quad \mbox{in }B_{2^j}, 
\end{equation}
for $0\le j\le j_a$,  for some  $\omega_j\in S^{n-1}$ .

Then, for all $i= 1,2, \dots, k$ we have
\begin{equation} \label{hwoeithwoih}
\left\{\omega_{-i}\cdot x \le - 
\frac{a}{2^{(1+\alpha_0)i}} \right\} \,\subset \, \big\{u\le  -{\textstyle \frac 3 4} \big\} \,\subset\,  \big\{u\le  {\textstyle \frac 3 4} \big\}  \,\subset\, \left\{\omega_{-i}\cdot x\le  \frac{a}{2^{(1+\alpha_0)i}} \right\} \;\; \mbox{in }B_{2^{-i}},
\end{equation}
for certain $\omega_{-i}\in S^{n-1}$.
\end{prop}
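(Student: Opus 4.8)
The plan is to obtain \eqref{hwoeithwoih} by iterating, one dyadic scale at a time, the one-step improvement of flatness of \cite[Theorem~1.1]{dPSV} (specialized to $L=(-\Delta)^s$ and $f(u)=-W'(u)=u-u^3$). That result produces the constants $\alpha_0\in(0,s/2)$, $p_0\in(2,\infty)$, $a_0\in(0,1/4)$ of the statement, with the following property: if $b\le a_0$, if $w$ solves the $\varepsilon'$-equation in $B_{2^{j_b}}$ with $j_b=\lfloor\log b/\log(2^{-\alpha_0})\rfloor$ and $\varepsilon'\le b^{p_0}$, if $0\in\{-\tfrac34\le w\le\tfrac34\}$, and if for every $0\le l\le j_b$ the layer $\{|w|\le\tfrac34\}$ lies, inside $B_{2^l}$, in the slab $\{|e_l\cdot x|\le b\,2^{l(1+\alpha_0)}\}$ with the two half-spaces correctly ordered by the sign of $w$ (for some $e_l\in S^{n-1}$), then there is $e'\in S^{n-1}$ with $|e'-e_0|\le Cb$ such that
\[
\{e'\cdot x\le -b\,2^{-(1+\alpha_0)}\}\subset\{w\le-{\textstyle\tfrac34}\}\subset\{w\le{\textstyle\tfrac34}\}\subset\{e'\cdot x\le b\,2^{-(1+\alpha_0)}\}\quad\text{in }B_{1/2}.
\]
I would then feed this statement, in turn, to the dilates $u_\varepsilon(2^{-i}\,\cdot\,)$, $i=0,1,\dots,k-1$.

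Concretely, one argues by induction on $i$, the case $i=0$ being exactly \eqref{wethiowehtowh} with $j=0$. Assume \eqref{hwoeithwoih} holds at level $i$ (read as the $j=0$ case of \eqref{wethiowehtowh} when $i=0$), with some $\omega_{-i}\in S^{n-1}$, and set $v(x):=u_\varepsilon(2^{-i}x)$ and $\delta_i:=a\,2^{-i\alpha_0}\le a_0$. By the scaling of the equation, $v$ solves $(-\Delta)^s v+(2^i\varepsilon)^{-s}W'(v)=0$ in $B_{2^{j_a+i}}$, and $0\in\{-\tfrac34\le v\le\tfrac34\}$ since $v(0)=u_\varepsilon(0)$. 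Rescaling \eqref{wethiowehtowh} for $0\le j\le j_a$ together with \eqref{hwoeithwoih} at the already-established levels $0,1,\dots,i$, and using that the $2^{-i}$-dilation sends the scale $2^{l-i}$ of the original picture to the scale $2^l$ of the $v$-picture, a direct computation gives, for every $0\le l\le j_a+i$,
\[
\{\omega_{l-i}\cdot x\le-\delta_i\,2^{l(1+\alpha_0)}\}\subset\{v\le-{\textstyle\tfrac34}\}\subset\{v\le{\textstyle\tfrac34}\}\subset\{\omega_{l-i}\cdot x\le\delta_i\,2^{l(1+\alpha_0)}\}\quad\text{in }B_{2^l},
\]
where for $l\ge i$ the direction $\omega_{l-i}$ comes from \eqref{wethiowehtowh} and for $l<i$ it is one of the directions produced at a previous step of the induction. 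Since $j_a+i=\lfloor\log\delta_i/\log(2^{-\alpha_0})\rfloor$, this is precisely the flatness tower required by \cite[Theorem~1.1]{dPSV} for a scale-$1$ layer of flatness $\delta_i$; the remaining hypothesis, the smallness $2^i\varepsilon\le\delta_i^{p_0}=(2^{-i\alpha_0}a)^{p_0}$, follows from $2^{k-1}\varepsilon\le(2^{-\alpha_0(k-1)}a)^{p_0}$ and $0\le i\le k-1$, since $2^i\le 2^{k-1}$ and $2^{-i\alpha_0}\ge 2^{-(k-1)\alpha_0}$. Hence the one-step improvement applies to $v$ and yields $\omega'\in S^{n-1}$, which we rename $\omega_{-(i+1)}$, with
\[
\{\omega_{-(i+1)}\cdot x\le-\delta_i\,2^{-(1+\alpha_0)}\}\subset\{v\le-{\textstyle\tfrac34}\}\subset\{v\le{\textstyle\tfrac34}\}\subset\{\omega_{-(i+1)}\cdot x\le\delta_i\,2^{-(1+\alpha_0)}\}\quad\text{in }B_{1/2}.
\]
Undoing the dilation (replacing $x$ by $2^ix$) turns $B_{1/2}$ into $B_{2^{-(i+1)}}$ and $\delta_i2^{-(1+\alpha_0)}$ into $2^{-i}\delta_i2^{-(1+\alpha_0)}=a\,2^{-(i+1)(1+\alpha_0)}$, which is \eqref{hwoeithwoih} at level $i+1$. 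Running this for $i=0,\dots,k-1$ gives \eqref{hwoeithwoih} for all $1\le i\le k$.

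The genuinely nontrivial input is, of course, \cite[Theorem~1.1]{dPSV}; the content of the proposition is the bookkeeping that makes the iteration self-sustaining, and there I expect the only real subtlety to be the following. Because the fractional Laplacian feels all scales, improving the flatness inside $B_{2^{-(i+1)}}$ requires knowing the conical-rate flatness at every scale from $2^{-i}$ up to roughly $2^{-i}a^{-1}$; the finitely many scales $<1$ are supplied by the conclusions already proven, while the scales $\ge 1$ are supplied precisely by assumption \eqref{wethiowehtowh}, imposed at all large scales $2^0,\dots,2^{j_a}$ (after each rescaling these become the scales $2^{j-i}$ that the nonlocal improvement-of-flatness statement needs). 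The second point is that zooming in degrades the parameter, $\varepsilon\mapsto 2^i\varepsilon$, so the process can be sustained only finitely many times; the inequality $2^{k-1}\varepsilon\le(2^{-\alpha_0(k-1)}a)^{p_0}$ in the statement, together with \eqref{ja}, is tailored exactly so that the smallness $2^i\varepsilon\le\delta_i^{p_0}$ survives for every $i\le k-1$. Beyond correctly matching the normalizations of \cite[Theorem~1.1]{dPSV} to those in \eqref{ja}--\eqref{hwoeithwoih}, no step presents a real obstacle.
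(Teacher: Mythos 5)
Your proposal is correct and is essentially the paper's own proof: both iterate the one-step improvement of flatness of \cite[Theorem~1.1]{dPSV} applied to the successive blow-ups $u_\ep(2^{-i}\,\cdot\,)$, verifying at each step that the required conical flatness tower is furnished by \eqref{wethiowehtowh} at scales $\ge 1$ and by the previously established instances of \eqref{hwoeithwoih} at scales $<1$, and that the smallness condition on the rescaled parameter $2^i\ep$ survives because of the hypothesis $2^{k-1}\ep\le(2^{-\alpha_0(k-1)}a)^{p_0}$. (Your scaling convention $v(x)=u_\ep(2^{-i}x)$, under which $\ep\mapsto 2^i\ep$, is the correct direction; the paper's line ``$u^{(i)}(x):=u_\ep(2^{i-1}x)$'' has the sign reversed, but the rest of its argument, including the assertion that $\ep$ is replaced by $2^{i-1}\ep$, is consistent with the zoom-in and with what you wrote.)
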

\begin{proof}
The proof will apply inductively Theorem 1.1 of \cite{dPSV} to $u^{(i)}(x) : = u_\ep(2^{i-1}x)$. 
Indeed, recall first that ---see \eqref{21huwgiuw}--- we have $-W''(t) \in [-2,-1/2]$ for $|t|\ge \frac{1}{\sqrt 2}$. Since $\frac 3 4 \ge \frac 1 {\sqrt 2}$ we may take the constant $\kappa$ from  \cite{dPSV} equal to $1/4$.

Notice that the case $i=1$ of \eqref{hwoeithwoih} follows directly from Theorem 1.1 in \cite{dPSV} since $ 2^{k-1}\ep \le \big(2^{-\alpha_0(k-1)}a \big)^{p_0}$  and $k\ge 1$ guarantee $\ep\le a^{p_0}$. 

Assume now that \eqref{hwoeithwoih} holds for $ i = 1, 2, \dots, i_\circ-1$ and that $i_\circ\le k$.  Then $u^{(i_\circ)}(x)$ satisfies  the assumptions of Theorem 1.1 in \cite{dPSV} with 
 and $a$ replaced by $2^{-\alpha_0(i-1)}a$  and $\ep$ replaced by $2^{i-1}\ep$  (too see this it may be useful to notice that, by the definition of $j_a$ in \eqref{ja}, we have $ j_{2^{-\alpha_0(i-1)}a} = j_a+ (i-1)$), since we have
\[ 
2^{(i_\circ-1)}\ep \le 2^{k-1}\ep \le \big(2^{-\alpha_0(k-1)}a \big)^{p_0}\le  \big(2^{-\alpha_0(i_\circ-1)}a \big)^{p_0}.
\]

Hence, $u^{(i_\circ)}$ satisfies the conclusion  Theorem 1.1 in \cite{dPSV} so, after rescaling, we  obtain that \eqref{hwoeithwoih}  also holds also for $i = i_\circ$.
\end{proof}

The second result is an easy consequence of Proposition \ref{cor:improvement}: flatness implies a $C^{1,\alpha}$ type result.

\begin{thm}\label{thm63utbg}
Let $n\ge 2$, $s\in(0,1)$, and $W(u)=\frac 1 4 (1-u^2)^2$. Given $\tilde a>0$ there exist positive constants $\sigma_0$, $\delta_0$,  $\alpha_0$, and $\varrho_0$,  depending only on $\tilde a$, $n$, and $s$, such that the following holds. 
Assume that  $u_{\tilde \ep}$ is a solution of $(-\Delta)^{s/2} u_{\tilde \ep} + {\tilde \ep}^{-s} W'(u_{\tilde \ep})=0$ in~$B_1$ satisfying 
\begin{equation}\label{xxxeee11}
\{x_n< -\sigma_0\} \subset \{u_{\tilde \ep}<-{\textstyle \frac 3 4}  \} \subset \{u_{\tilde \ep}<{\textstyle \frac 3 4} \} \subset \{x_n< \sigma_0\} \quad \mbox{in } B_1.
\end{equation}

Then,  for all $z\in \{u_{\tilde \ep}=0\} \cap B_{3/4}$ and $k\ge 2$ satisfying $2^{-k}  \ge \tilde \ep^{\,\delta_0}$ we have 
\begin{equation}\label{xxxeee22}
\begin{split}
\{\omega_{z,k}\cdot(x-z)<  -\tilde a 2^{-(1+\alpha_0)k}\varrho_0 \}  \subset \{u_{\tilde \ep} <-{\textstyle \frac 3 4}  \} \subset \\
 \subset \{u_{\tilde \ep}<\textstyle \frac 3 4\} \subset \{\omega_{z,k}\cdot(x-z)< \tilde a 2^{-(1+\alpha_0)k}\varrho_0\}
 \end{split}
\end{equation}
in $B_{2^{-k} \varrho_0}(z)$, for some $\omega_{z,k}\in S^{n-1}$.
\end{thm}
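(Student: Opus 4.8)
The plan is to deduce Theorem \ref{thm63utbg} from the iterated improvement of flatness of Proposition \ref{cor:improvement}, applied around each zero $z$ of $u_{\tilde\ep}$ to a fixed rescaling $v(x):=u_{\tilde\ep}(z+\varrho_0 x)$ of the solution.

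I would first fix the constants, in order. Let $\alpha_0\in(0,s/2)$, $p_0\in(2,\infty)$ and $a_0\in(0,1/4)$ be those of Proposition \ref{cor:improvement} (depending only on $n$ and $s$); set $a:=\min\{a_0,\tilde a/2\}$, let $j_a\in\mathbb N$ be given by \eqref{ja} (so in particular $j_a\ge1$), and put $\varrho_0:=2^{-j_a-3}$ and $\sigma_0:=a\varrho_0/4$; these depend only on $\tilde a$, $n$, $s$. Finally, frozen last, choose $\delta_0\in(0,1)$ small with $\delta_0<\tfrac1{1+\alpha_0 p_0}$ and $2^{\,(1+\alpha_0 p_0)-2/\delta_0}\le \varrho_0 a^{p_0}$; again $\delta_0$ depends only on $\tilde a$, $n$, $s$.

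Now take $z\in\{u_{\tilde\ep}=0\}\cap B_{3/4}$ and $k\ge2$ with $2^{-k}\ge\tilde\ep^{\delta_0}$, and set $v(x):=u_{\tilde\ep}(z+\varrho_0 x)$. Since $z+\varrho_0 B_{2^{j_a}}\subset B_{7/8}\subset B_1$, by scaling $v$ solves $(-\Delta)^{s/2}v+\ep_v^{-s}W'(v)=0$ in $B_{2^{j_a}}$ with $\ep_v:=\tilde\ep/\varrho_0$, and $v(0)=0\in[-\tfrac34,\tfrac34]$. I would then verify the hypotheses of Proposition \ref{cor:improvement} for $v$. From \eqref{xxxeee11} and $u_{\tilde\ep}(z)=0$ one reads off $-\sigma_0\le z_n<\sigma_0$; hence, for each $j=0,\dots,j_a$, in $B_{2^j}$ one has $\{x_n\le-a2^{j(1+\alpha_0)}\}\subset\{v\le-\tfrac34\}$ (such $x$ satisfy $(z+\varrho_0 x)_n<\sigma_0-a\varrho_0<-\sigma_0$, so $u_{\tilde\ep}(z+\varrho_0 x)<-\tfrac34$ by \eqref{xxxeee11}) and $\{v\le\tfrac34\}\subset\{x_n\le a2^{j(1+\alpha_0)}\}$ (since $\{v\le\tfrac34\}\subset\{x_n\le 2\sigma_0/\varrho_0\}=\{x_n\le a/2\}$), i.e.\ \eqref{wethiowehtowh} holds with $\omega_j=e_n$. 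For the smallness hypothesis $2^{k-1}\ep_v\le(2^{-\alpha_0(k-1)}a)^{p_0}$, rewrite it as $2^{(k-1)(1+\alpha_0 p_0)}\tilde\ep\le\varrho_0 a^{p_0}$; since $\tilde\ep\le2^{-k/\delta_0}$ and $\delta_0<\tfrac1{1+\alpha_0 p_0}$, the left side is at most $2^{(k-1)(1+\alpha_0 p_0)-k/\delta_0}\le 2^{(1+\alpha_0 p_0)-2/\delta_0}$ (the exponent is decreasing in $k\ge2$), which is $\le\varrho_0 a^{p_0}$ by the choice of $\delta_0$. Proposition \ref{cor:improvement} then applies; taking $i=k$ in \eqref{hwoeithwoih} gives $\omega_{-k}\in S^{n-1}$ with $\{\omega_{-k}\cdot x\le-a2^{-(1+\alpha_0)k}\}\subset\{v\le-\tfrac34\}\subset\{v\le\tfrac34\}\subset\{\omega_{-k}\cdot x\le a2^{-(1+\alpha_0)k}\}$ in $B_{2^{-k}}$, and undoing $x\mapsto(x-z)/\varrho_0$ and using $a<\tilde a$ yields \eqref{xxxeee22} with $\omega_{z,k}:=\omega_{-k}$ in $B_{2^{-k}\varrho_0}(z)$.

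The main (and essentially only) subtlety will be the bookkeeping above: choosing $\varrho_0$ from $j_a$, then $\sigma_0$ from $a$ and $\varrho_0$, and only afterwards $\delta_0$ small enough so that the $\ep_v$-smallness required by Proposition \ref{cor:improvement} is implied by $2^{-k}\ge\tilde\ep^{\delta_0}$, uniformly in $k\ge2$ and in $\tilde\ep>0$ (for $\tilde\ep$ so large that no admissible $k\ge2$ exists the statement is vacuous). A minor additional point is the passage from the non-strict sublevel sets $\{v\le\pm\tfrac34\}$ produced by Proposition \ref{cor:improvement} to the strict ones $\{u_{\tilde\ep}<\pm\tfrac34\}$ appearing in \eqref{xxxeee22}; this is standard and harmless, the strict inequality $a<\tilde a$ providing the slack, or one invokes the strong maximum principle for $(-\Delta)^{s/2}v=\ep_v^{-s}(v-v^3)$ near the transition region.
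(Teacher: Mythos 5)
Your argument is correct and matches the paper's own proof: both rescale $u_{\tilde\ep}$ about each zero $z$ by a fixed factor $\varrho_0\sim 2^{-j_a}$, verify hypothesis \eqref{wethiowehtowh} of Proposition~\ref{cor:improvement} with $\omega_j=e_n$ using the flatness assumption \eqref{xxxeee11} and $|z_n|<\sigma_0$, apply the iterated improvement of flatness, and rescale back. The only difference is cosmetic bookkeeping: you choose $\delta_0$ so that the smallness requirement $2^{k-1}\ep\le(2^{-\alpha_0(k-1)}a)^{p_0}$ follows directly from $2^{-k}\ge\tilde\ep^{\delta_0}$ for every $k\ge2$ (with the statement vacuous for large $\tilde\ep$), whereas the paper fixes $\delta_0=\frac{1}{2+\alpha_0p_0}$, first reduces to small $\tilde\ep$ via \eqref{hweiothiowehoiw}, and then absorbs the multiplicative constant by further shrinking $\sigma_0$.
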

\begin{proof}
 Let $a_0$, $\alpha_0$, $p_0$ be the constants from Proposition \ref{cor:improvement}  (which depend only on $n$ and $s$) and define $\delta_0 := \frac{1}{2+\alpha_0p_0}$.
It remains to choose $\sigma_0>0$ depending only on $\tilde a$, $n$, and $s$.
Note that we may assume 
\begin{equation}\label{hweiothiowehoiw}
\tilde a \tilde \ep^{(1+\alpha_0)\delta_0} \le \sigma_0
\end{equation}
since otherwise \eqref{xxxeee22}  follows immediately from \eqref{xxxeee11}.

Choose
\begin{equation}\label{ewhtiowhiowh}
a : = \min\{\tilde a, a_0\} \qquad \mbox{and} \qquad \varrho_0:= 2^{-j_a-2},
\end{equation}
where $j_a$ was defined as in \eqref{ja}.

Now, for any $z\in \{u_{\tilde \ep} =0\} \cap B_{3/4}$, let
\[ v^z : = u_{\tilde \ep}( z + \varrho_0\,\cdot\,).\]
Note that $v^{z}$ satisfies $(-\Delta)^{s/2} v^{z} + (\tilde\ep/\varrho_0)^{-s} W'(v^{z})=0$ in $B_{\frac{1}{4\varrho_0}} = B_{2^{j_a}}$.

Choose now $\sigma_0>0$ small so that $\sigma_0/\varrho_0 \le a$. Then it is immediate to verify that, thanks to \eqref{xxxeee11},  $v^{z}$ satisfies the assumption  \eqref{wethiowehtowh} of Proposition \ref{cor:improvement} with $\omega_j = e_n$ for all $j=0,\dots, j_a$.

Hence, defining $\ep := \tilde \ep/\varrho_0$, provided 
\begin{equation} \label{whoithowi}
2^{k-1}\ep  \le (2^{-\alpha_0(k-1)} a)^{p_0},
\end{equation}
Proposition \ref{cor:improvement} yields  
\begin{equation}\label{xxxeee3}
\{\omega_{z,k}\cdot x  < -\tilde a 2^{-(1+\alpha_0)k} \}  \subset \{u_{\tilde \ep} <-{\textstyle \frac 3 4}  \} \subset 
\{u_{\tilde \ep}<\textstyle \frac 3 4\} \subset \{\omega_{z,k}\cdot x< \tilde a 2^{-(1+\alpha_0)k}\}
\end{equation}
in $B_{2^{-k}}$, for some $\omega_{z,k}\in S^{n-1}$.  Note that \eqref{xxxeee3} immediately yields \eqref{xxxeee22} after scaling. 
Thus, it only remains to show that \eqref{whoithowi} is satisfied thanks to our assumption $2^{-k}  \ge \tilde \ep^{\,\delta_0}$ and our choice of $\delta_0$. 

Indeed, 
\[
\eqref{whoithowi} \quad  \Leftrightarrow \quad \frac{\tilde \ep}{\varrho_0} \le ( 2^{1-k})^{\alpha_0p_0+1}  a^{p_0}\quad  \Leftrightarrow \quad  2^{-k} \ge  \frac{(\varrho_0 a^{p_0})^{-\frac{1}{\alpha_0p_0 +1}}}{2} \tilde \ep^{\frac{1}{1+\alpha_0p_0}} = c_a\tilde \ep^{\frac{1}{1+\alpha_0p_0}}.
\]
But  since we choose $\delta_0< \frac{1}{1+\alpha_0p_0}$,  recalling \eqref{hweiothiowehoiw} we can absorb the multiplicative constant after possibly decreasing $\sigma_0$ in order to ensure that $\tilde \ep$ is sufficiently small ---recall that $a$ was fixed in \eqref{ewhtiowhiowh}.
\end{proof}

Now, we introduce a  class of ``good'' sets. By definition, the characteristic function of a ``good'' set must be the limit of stable solutions to the fractional Allen-Cahn equation with parameter $\ep$, as a sequence of $\ep$ tends to $0$. As we will show in Proposition \ref{good} below, these sets  are ``good'' in the sense that they inherit several good properties from the approximating sequence, such as $BV$  and energy estimates, a monotonicity formula, density estimates, and the improvement of flatness. As approximating sequence, we will take later the blow-downs of an entire stable solution to the fractional Allen-Cahn equation.

\begin{defi}\label{defA}
We say that a set $E\subset \R^n$ belongs to the class $\mathcal A$ when
there exists a sequence of functions  $u_j$, with $|u_j|< 1$,  which are stable solutions of
\begin{equation}\label{approx-AC}(-\Delta)^{s/2}u_j+\varepsilon_j^{-s} W'(u_j)=0 \quad \mbox{in } \R^n, \quad \mbox{with}\quad \varepsilon_j\downarrow 0 \;\;\mbox{as}\;\;j\uparrow \infty,\end{equation}
where $W(u) = \frac 1 4 (1-u^2)^2$, and such that
$$u_j \stackrel{L^1_{\rm loc}}{\longrightarrow} \chi_E-\chi_{E^c}\;\;\mbox{as}\;\;j\uparrow \infty.$$
\end{defi}

\begin{prop}\label{good}
Any set $E\in\mathcal A$ satisfies the following properties.
\begin{enumerate}
\item \textbf{BV and energy estimates}. 
\begin{equation}\label{estimates-sets}{\rm Per}(E,B_R)\leq CR^{n-1}\quad \mbox{and}\quad P_s(E,B_R)\leq CR^{n-s},\end{equation}
where $C$ is a constant which depends only on $n$ and $s$.
\vspace{0.5em}
\item \textbf{Monotonicity formula}. Let $\bar u :=\chi_E-\chi_{E^c}$ and let $\bar U$ be its $s$-extension in $\R^{n+1}_+$ (see Definition \ref{defextension}). We set
$$\Phi_E(R)=\frac{1}{ R^{n-s}}\int_{\widetilde B_{R}^+} y^{1-s}|\nabla \bar U(x,y)|^2\,dx\,dy.$$
Then, $\Phi_E$ is a nondecreasing function of $R$ and $\Phi_E(R)$ is constant if and only if $E$ is a cone (i.e., $\bar U$ is homogeneous of degree $0$).
\vspace{0.5em}
\item \textbf{Density estimate}. For some positive constant $\omega_0$, which depends only on $n$ and $s$, we have that
if
\begin{equation*}\label{hp-densitysets}
R^{-n}|E\cap B_R|  \leq \omega_0 \quad (\mbox{respectively,  }  R^{-n}| E^c\cap B_R|  \leq \omega_0) 
\end{equation*}
for some $R>0$, then
\begin{equation*}\label{th-densitysets}
|E \cap B_{R/2}| = 0 \quad (\mbox{respectively,  }  | E^c \cap B_{R/2}| = 0). 
\end{equation*}
\item \textbf{Improvement of flatness}. There exists $\sigma_0>0$, which depends only on $n$ and $s$, such that
$$\mbox{if}\quad \partial E\cap B_2 \subset \{x\in \R^n\,:\, |x\cdot e_n|\leq  \tilde \sigma_0\},$$
then $\partial E\cap B_{1/2}$ is a $C^{1,\alpha}$ graph in the $e_n$ direction.
\vspace{0.5em}
\item \textbf{Blow-up}. Let $E_{r_i,x_0}:=\frac{E-x_0}{r_i}$ with $r_i\downarrow 0$ as $i\uparrow \infty$. If $E_{r_i,x_0}\stackrel{L^1_{\rm loc}}{\longrightarrow}  E_*$, then $E_* \in \mathcal A$.
\end{enumerate}
\end{prop}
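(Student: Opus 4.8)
Let $u_j$, $\varepsilon_j\downarrow 0$, be a sequence as in Definition~\ref{defA} for $E$, and write $\bar u:=\chi_E-\chi_{E^c}=\lim_j u_j$ in $L^1_{\rm loc}$, with $U_j$ and $\bar U$ the corresponding $s$-extensions. After the scaling $\tilde u_j(x):=u_j(\varepsilon_j x)$, which is a \emph{stable} solution of $(-\Delta)^{s/2}\tilde u_j+W'(\tilde u_j)=0$ in all of $\R^n$, each of the five items is obtained by passing to the limit $j\uparrow\infty$ in a corresponding statement from Sections~\ref{sec-3}--\ref{sec-6}; since $s$ is fixed, the factors $(1-s)^{-1}$ appearing there are harmless constants. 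For item (1), Theorems~\ref{BV} and~\ref{thm1} applied to $\tilde u_j$ and rescaled back give $\int_{B_\rho}|\nabla u_j|\le C\rho^{n-1}$ and $\mathcal E^{\rm Sob}_{B_\rho}(u_j)\le C\rho^{n-s}$ for all $\rho>\varepsilon_j$, with $C=C(n,s)$; passing to a subsequence with $u_j\to\bar u$ a.e., lower semicontinuity of the total variation and Fatou's lemma on the Gagliardo integral of \eqref{def-per} yield \eqref{estimates-sets}.

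\textbf{Monotonicity formula (2).} The extension $U_j$ solves \eqref{ext} with potential $\varepsilon_j^{-s}W$, so by Proposition~\ref{mon} the function $R\mapsto R^{s-n}\bigl(\tfrac{d_s}{2}\int_{\widetilde B_R^+}y^{1-s}|\nabla U_j|^2\,dx\,dy+\varepsilon_j^{-s}\int_{B_R}W(u_j)\,dx\bigr)$ is nondecreasing. For each fixed $R$ one has $\varepsilon_j^{-s}\int_{B_R}W(u_j)\to 0$ by Proposition~\ref{propnew}, while $\int_{\widetilde B_R^+}y^{1-s}|\nabla U_j|^2\,dx\,dy\to\int_{\widetilde B_R^+}y^{1-s}|\nabla\bar U|^2\,dx\,dy$ by the compactness argument in Step~1 of the proof of Proposition~\ref{conv-L^1} (uniform $W^{\sigma/2,2}$ bounds through Lemma~\ref{interpo}, then Lemma~\ref{up-down}). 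Hence $\Phi_E$ is a pointwise limit of nondecreasing functions, so it is nondecreasing; and, since $W(\bar u)\equiv 0$, the Pohozaev identity behind Proposition~\ref{mon} shows, exactly as in that same Step~1, that $\Phi_E$ is constant if and only if $\partial_r\bar U\equiv 0$ in $\R^{n+1}_+$, i.e.\ if and only if $E$ is a cone.

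\textbf{Density estimate (3) and blow-up (5).} For (3), fix $\bar c=\tfrac12$ and let $\omega_0$, $R_0$ be as in Proposition~\ref{density}. For $j$ large one has $R/\varepsilon_j\ge R_0$ and
\[
(R/\varepsilon_j)^{-n}\!\!\int_{B_{R/\varepsilon_j}}\!\!|1+\tilde u_j|\,dx=R^{-n}\!\!\int_{B_R}\!\!|1+u_j|\,dx\longrightarrow 2R^{-n}|E\cap B_R|;
\]
so, if $R^{-n}|E\cap B_R|$ is below a suitable dimensional constant, Proposition~\ref{density} applied to $\tilde u_j$ gives $u_j<-\tfrac12$ in $B_{R/2}$ for $j$ large, and letting $j\uparrow\infty$ with $\bar u\in\{\pm1\}$ a.e.\ yields $|E\cap B_{R/2}|=0$; the complementary claim follows by replacing $u_j$ with $-u_j$. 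For (5), $u_j^{(i)}(x):=u_j(x_0+r_i x)$ is a stable solution of $(-\Delta)^{s/2}u_j^{(i)}+(\varepsilon_j/r_i)^{-s}W'(u_j^{(i)})=0$ in $\R^n$ converging in $L^1_{\rm loc}$, as $j\uparrow\infty$, to $\chi_{E_{r_i,x_0}}-\chi_{(E_{r_i,x_0})^c}$; choosing $j(i)$ with $\varepsilon_{j(i)}/r_i<1/i$ and $\|u_{j(i)}^{(i)}-(\chi_{E_{r_i,x_0}}-\chi_{(E_{r_i,x_0})^c})\|_{L^1(B_i)}<1/i$, the functions $w_i:=u_{j(i)}^{(i)}$ together with $\delta_i:=\varepsilon_{j(i)}/r_i\downarrow 0$ witness $E_*\in\mathcal A$, by the triangle inequality and $E_{r_i,x_0}\to E_*$ in $L^1_{\rm loc}$.

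\textbf{Improvement of flatness (4), and the main obstacle.} Take $\tilde a=1$ in Theorem~\ref{thm63utbg} and let $\sigma_0,\delta_0,\alpha_0,\varrho_0$ be the resulting constants. If $\tilde\sigma_0=\tilde\sigma_0(n,s)$ is chosen small (e.g.\ $\tilde\sigma_0=\sigma_0/2$), then, after possibly replacing $e_n$ by $-e_n$, the hypothesis $\partial E\cap B_2\subset\{|x\cdot e_n|\le\tilde\sigma_0\}$ together with the density estimate (3) upgrades $L^1_{\rm loc}$-convergence to Hausdorff convergence of the transition regions, as in \eqref{wngioewhtioeh2}; in particular, for $j$ large, $\{|u_j|\le\tfrac34\}\cap B_1\subset\{|x\cdot e_n|\le\sigma_0\}$, with $u_j<-\tfrac34$ on $\{x\cdot e_n<-\sigma_0\}\cap B_1$ and $u_j>\tfrac34$ on $\{x\cdot e_n>\sigma_0\}\cap B_1$. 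Applying Theorem~\ref{thm63utbg} to $u_j$ with $\tilde\varepsilon=\varepsilon_j$ and letting $j\uparrow\infty$ (so the constraint $2^{-k}\ge\varepsilon_j^{\delta_0}$ becomes vacuous for each fixed $k$), we obtain that at every $z\in\partial E\cap B_{1/2}$ and every $k\ge 2$ the set $\partial E\cap B_{2^{-k}\varrho_0}(z)$ lies in a slab of width at most $C2^{-(1+\alpha_0)k}$; this discrete $C^{1,\alpha_0}$ criterion makes $\partial E\cap B_{1/2}$ a $C^{1,\alpha}$ graph in the $e_n$-direction. This transfer of flatness is the one genuinely delicate point: promoting the mere $L^1_{\rm loc}$-convergence $u_j\to\bar u$ to uniform control of the set $\{|u_j|\le\tfrac34\}$ is exactly what the density estimates of Proposition~\ref{density} (and hence the restriction $s<1$) are needed for, in the same way as in the proof of Theorem~\ref{thm2}; once this is in place, the improvement-of-flatness content is entirely supplied by Theorem~\ref{thm63utbg} and the remaining arguments are soft.
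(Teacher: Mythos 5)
Your argument follows the paper's proof of Proposition~\ref{good} essentially verbatim in structure: item~(1) from Theorem~\ref{BV} and the energy estimate with lower semicontinuity and Fatou; item~(2) from Proposition~\ref{mon} applied to $U_j$, the strong $W^{1,2}_{\rm loc}(y^{1-s})$ convergence of the extensions from the compactness in Step~1 of Proposition~\ref{conv-L^1}, the vanishing of the potential term via Proposition~\ref{propnew}, and the Pohozaev identity for the cone characterization; item~(3) by passing Proposition~\ref{density} to the limit; item~(4) from Theorem~\ref{thm63utbg} combined with the Hausdorff convergence furnished by the density estimates; item~(5) by a diagonal extraction. Your only cosmetic departure is to rescale to $\tilde u_j = u_j(\varepsilon_j\cdot)$, solving the equation with $\varepsilon = 1$, which is equivalent to (and perhaps a shade cleaner than) invoking the $W$-independence of Theorem~\ref{BV} as the paper does.

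There is one genuine, though small, gap in item~(4): you set $\tilde a = 1$ once and for all in Theorem~\ref{thm63utbg}. The resulting slab inclusions at scales $2^{-k}\varrho_0$ only control the consecutive normals through $|\omega_{z,k} - \omega_{z,k+1}| \lesssim \tilde a\, 2^{-\alpha_0 k}$, so summing the geometric series gives $|e_n - \omega_{z,k}| \lesssim \tilde a/(1-2^{-\alpha_0})$ for all $k$. With $\tilde a = 1$ this bound need not be small — it can exceed~$1$ — and then the conclusion that $\partial E \cap B_{1/2}$ is a $C^{1,\alpha}$ \emph{graph in the $e_n$-direction} does not follow, because the limiting normals may wander too far from $e_n$. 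The paper fixes this at the end of the argument by taking $\tilde a$ small depending on $n,s$ (through $\alpha_0$) before calibrating $\sigma_0$, $\varrho_0$, and $\tilde\sigma_0$; your proof needs the same adjustment (replace ``Take $\tilde a = 1$'' by ``Take $\tilde a$ small enough, depending on $n$ and $s$, so that $C_0\tilde a/(1-2^{-\alpha_0}) < 1/10$''). The rest of the argument is sound.
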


\begin{rem}
None of the properties (1)-(5) in Proposition \ref{good} are known to hold within the class of all weakly stable sets for the fractional perimeter $P_s$.
This is the reason that brings us to introduce the class $\mathcal A$.
\end{rem}

The results established in our paper allow us to give the

\begin{proof}[Proof of Proposition \ref{good}]
Recall that (see Definition \ref{defA}) $E\in \mathcal A$ if there exists a sequence $u_j$ of stable solutions to the fractional Allen-Cahn equation with parameter $\ep_j\downarrow 0$ such that $u_j \to \chi_E- \chi_{E^c}$ as $j\uparrow \infty$.

(1) The first estimate in \eqref{estimates-sets} follows easily passing to the limit the $BV$ estimate $u_j$ established in \eqref{BVest}  (the total variation is lower-semicontinuous). 
We emphasize again ---see Remark \ref{remscaling}--- that the $BV$ estimate \eqref{BVest}  is independent of the potential $W$ in the statement of Theorem \ref{BV}. We are strongly  using this here since $u_j$ satisfies $(-\Delta)^{s/2}u_j+\varepsilon_j^{-s} W'(u_j)=0$ and hence the associated potential $\varepsilon_j^{-s} W(u)$ converges to infinity  (since $\ep_j \downarrow 0$) except at $u=\pm1$. Similarly, the estimate on the fractional perimeter follows passing to the limit the estimate of Corollary \ref{energy-est}. 
\smallskip

(2) Denote $\bar u : = \chi_{E}-\chi_{E^c}$.  Let $U_j$ be the $s$-extension of $u_j$ (as defined in Section~\ref{sec-5}). By Proposition \ref{mon} (rescalled), we know that 
$$\Phi_j(R):= \frac{1}{R^{n-s}} \Big\{  \frac{d_s}{2} \int_{\tilde B_R^+} y^{1-s} |\nabla U_j (x,y)|^2 \,dx\,dy + \int_{B_R} \ep_j^{-s} W(u_j(x))\,dx \Big\}
$$
is a nondecreasing function of $R$.

Hence, we have a sequence $\Phi_j(R)$ of nondecreasing functions which is uniformly bounded (in $R$ and $j$) by the energy estimate \eqref{egest} of Theorem \ref{thm1}. 
Moreover, arguing as in the proof of Proposition \ref{conv-L^1} ---see \eqref{strong}---,   the convergence $u_j\to \bar u$ in $L^1_{\rm loc}(\R^n)$ can be upgraded to a strong convergence  in $W^{s/2,2}_{\rm loc}(\R^n)$ and also  $U_j\to \bar U$ in $W^{1,2}_{\rm loc}(\R^{n+1}_+, y^{1-s})$.
Also, thanks to Proposition \ref{propnew}, the potential term in $\Phi_j(R)$ converges to zero (as $\ep_j \downarrow 0$) for any fixed $R>0$.

Hence, passing to the limit as $j\to \infty$, we deduce that this monotonicity property is satisfied by the limiting function $\Phi_E(R)$. Moreover similarly as in \eqref{wnoiwoieht}-\eqref{wnoiwoieht2} we obtain
\begin{equation}
\Phi_j(R_2)-\Phi_j(R_1)  \ge d_s \int_{B_{R_2}\setminus B_{R_1}} \frac{y^{1-s}}{( |x|^2 +y^2)^{\frac{n-s}{2}}}(\partial_{r} U_j)^2 \,dx\,dy.
\end{equation}
Since $U_j\to \bar U$ strongly $W^{1,2}_{\rm loc}(\R^{n+1}_+, y^{1-s})$ we  also obtain that  if $\Phi_E$ is constant for $\bar U$ then $E$ must be a cone.

\smallskip

(3) The density estimate follows easily by passing to the limit the corresponding density estimate (established in Proposition \ref{density}) for the approximating sequence $u_j$.

\smallskip

(4) It follows from Theorem \ref{thm63utbg} and the density estimates in Proposition \ref{density}. Indeed,  fix $\tilde a>0$ and assume that  $\partial E\cap B_2 \subset \{x\in \R^n\,:\, |x\cdot e_n|\leq \tilde \sigma_0\}$. Then the density estimates imply the convergence of $\{u_j>t\}$ in Hausdorff distance towards $E$ for all $t\in (-1,1)$ {(see the last part of the statement of Theorem \ref{thm2})} we have, for $\sigma_0 = 8\tilde \sigma$,
\[
\big\{x_n< - \sigma_0\big\} \subset \{u_j<-{\textstyle \frac 3 4 } \} \subset \{u_j<{\textstyle \frac 3 4 }\} \subset \big\{x_n<  \sigma_0\big\} \quad \mbox{in } B_{1}.
\]  

Then, Theorem \ref{thm63utbg} (rescaled) yields that for all $z\in \{u_{j}=0\} \cap B_{\frac{3}{4}}$ and $k\ge 1$ satisfying $2^{-k}  \ge \ep_j^{\,\delta_0}$ we have 
\[
\begin{split}
\{\omega_{z,k}\cdot(x-z)<  -\tilde a 2^{-(1+\alpha_0)k} \varrho_0\}  \subset \{u_{\tilde \ep_j} <-{\textstyle \frac 3 4 } \} \subset \\
 \subset \{u_{\tilde \ep}<{\textstyle \frac 3 4 }\} \subset \{\omega_{z,k}\cdot(x-z)< \tilde a 2^{-(1+\alpha_0)k}\varrho_0\}
 \end{split}
 \]
in $B_{2^{-k}\varrho_0}(z)$, for some $\omega_{z,k}\in S^{n-1}$. After passing this information to the limit (using again Hausdorff convergence), we deduce that for all $z\in \partial E \cap B_{3/4}$ we have
\begin{equation}\label{wtnoiwhtiowhwo}
\{\omega_{z,k}\cdot(x-z) \le -\tilde a 2^{-(1+\alpha_0)k}\varrho_0\} \subset E  \subset \{\omega_{z,k}\cdot(x-z)< \tilde a 2^{-(1+\alpha_0)k}\varrho_0\}
\end{equation}
in $B_{2^{-k}\varrho_0}(z)$ for all $k \ge 0$.
Similarly as for classical minimal surfaces, this implies that $\partial E$ is a $C^{1,\alpha}$ graph inside $B_{1/2}$ provided $\tilde a$ is chosen small enough depending on $\alpha_0$ (note that \eqref{wtnoiwhtiowhwo} yields $|\omega_{z,k} -\omega_{z,k+1}| \le C_0 \tilde a 2^{-\alpha_0 k}$ and hence, by triangle inequality and summing a geometric series $|e_n - \omega_{z,k}| \le  C_0 \tilde a \frac{1}{1-2^{-\alpha_0}}<\frac {1}{10}$ for all $k\ge 0$, provided $\tilde a$ is chosen small).

\smallskip

(5) Since $E^i:=E_{r_i,x_0}$ belongs to $\mathcal A$ for every $i\in \mathbb N$, then it can be approximated by a sequence $u^i_j$ as in \eqref{approx-AC}. By assumption for each $m\in \mathbb N$ there exists $i_m$ such that $| (E^{i_m}\setminus E_*) \cup (E_*\setminus E^{i_m} ))\cap B_m| \le \frac 1 m$. Also given this $i_m$ there exists $j_m$  such that 
$$
 \int_{B_m} \big|u^{i_m}_{j_m} - (\chi_{E^{i_m}}-\chi_{(E^{i_m})^c}) \big| \le \frac 1 m.
 $$ 
 Hence, $u^{i_m}_{j_m} \stackrel{L^1_{\rm loc}}{\longrightarrow} \chi_{E_*}-\chi_{E^c_*}$ and thus $E_*\in \mathcal A$.
\end{proof}

We also need the following classification theorem for cones in $\mathcal A$ which are translation invariant in all directions but two of them.
\begin{lem}\label{hwioheoithe}
Assume that some nontrivial cone $\Sigma\subset \R^n$ belongs to $\mathcal A$ and is of the form 
\[\widetilde\Sigma \times \R^{n-2}\]
for some cone $\widetilde\Sigma\subset \R^2$. 
Then, $\Sigma$ is a half-space.
\end{lem}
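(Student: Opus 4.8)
The strategy is to reduce the problem to a $2$-dimensional classification of stable $s$-minimal cones, where it is known (and easy) that the only stable cones with a vertex at the origin are half-planes. More precisely, suppose $\Sigma = \widetilde\Sigma \times \R^{n-2}$ with $\widetilde\Sigma \subset \R^2$ a cone, and $\Sigma \in \mathcal A$. First I would observe that, since $\Sigma$ is nontrivial and $\Sigma = \widetilde\Sigma \times \R^{n-2}$, the trace $\widetilde\Sigma$ is a nontrivial cone in $\R^2$, hence (up to measure zero) a finite union of sectors. The key point is to show that $\widetilde\Sigma$ must be a half-plane. By the monotonicity formula part (2) of Proposition \ref{good}, $\Phi_\Sigma$ is constant (because $\Sigma$ is a cone), which is automatic and gives no extra info; the real input must come from stability.

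\textbf{Using stability via blow-up at a boundary point.} The heart of the argument: if $\widetilde\Sigma$ were \emph{not} a half-plane, its boundary $\partial\widetilde\Sigma$ would consist of at least two rays emanating from $0$, so $\partial\Sigma$ contains at least two distinct hyperplanes-through-a-$(n-2)$-plane meeting along $\{0\}\times\R^{n-2}$, or a single such pair forming a wedge. Pick a point $x_0 \in \partial\Sigma$ with $x_0 \notin \{0\}\times\R^{n-2}$, i.e. $x_0 = (\tilde x_0, x_0'')$ with $\tilde x_0 \in \partial\widetilde\Sigma\setminus\{0\}$. Along the ray through $\tilde x_0$, the blow-up of $\widetilde\Sigma$ at $\tilde x_0$ is a half-plane (a single ray blown up becomes a line), so the blow-up $E_*$ of $\Sigma$ at $x_0$ (which lies in $\mathcal A$ by part (5) of Proposition \ref{good}) is a half-space. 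By part (4), improvement of flatness, $\partial\Sigma$ is then a $C^{1,\alpha}$ graph near $x_0$; combined with the cone structure and the translation invariance in $\R^{n-2}$, this forces $\partial\widetilde\Sigma$ to be smooth at $\tilde x_0$, hence $\partial\widetilde\Sigma \setminus \{0\}$ is smooth. A smooth cone in $\R^2$ with vertex only at $0$ that is a union of rays: each connected component of $\partial\widetilde\Sigma$ away from $0$ is a single ray (since it's a smooth connected $1$-manifold inside a cone it must be a ray), so $\widetilde\Sigma$ is a union of sectors with straight edges, i.e. a ``symmetric'' cone. Now I would invoke the stability inequality for $s$-minimal cones directly: a symmetric $s$-minimal cone in $\R^2$ (other than a half-plane) is \emph{unstable} in $\R^2\setminus\{0\}$ — this can be shown by testing stability \eqref{stabilityweak} with a radial vector field localized near the edge, exactly as in the classical computation showing the ``cross'' is unstable in dimension $\ge 2$ once you don't localize away from a vertex... but here the subtlety is that our stability is in $\R^2\setminus\{0\}$ with vector fields vanishing near $0$. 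So instead I would exploit that $\Sigma$, being in $\mathcal A$ and a cone, is weakly stable in all of $\R^n$ (Theorem \ref{thm2}, or rather the construction behind Proposition \ref{conv-L^1} shows limits of blow-downs are weakly stable in $\R^n$, not just $\R^n\setminus\{0\}$; and any $E\in\mathcal A$ inherits weak stability in $\R^n$ by passing the stability inequality to the limit), and test weak stability with a vector field supported near $\{0\}\times\R^{n-2}$, using the product structure to reduce the $n$-dimensional stability integral to a $2$-dimensional one with an extra favorable power of $(2-s)$ or a dimensional constant.

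\textbf{The reduction to $\R^2$ and the main obstacle.} Concretely: write $x = (\tilde x, x'') \in \R^2 \times \R^{n-2}$. For a test vector field of the form $X(x) = \tilde X(\tilde x)\,\zeta(x'')$ with $\tilde X \in C^\infty_c(\R^2;\R^2)$ (no requirement of vanishing near $0$, since we have weak stability in $\R^n$) and $\zeta$ a cutoff in the $x''$ variables, the second variation of $P_s(\Sigma, \cdot)$ can be computed and, after integrating out the $x''$ variables and optimizing/letting the cutoff spread, it reduces to the second variation of a one-dimensional-boundary functional for $\widetilde\Sigma$ in $\R^2$, namely a fractional-perimeter-type stability inequality for $\widetilde\Sigma$. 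Then the claim is that no symmetric cone $\widetilde\Sigma\subset\R^2$ other than a half-plane can satisfy this: one tests with a vector field rotating the cone near one edge, which strictly decreases the (fractional) perimeter to second order because the two edges ``attract'' each other nonlocally — this is the nonlocal analogue of the fact that two intersecting lines are not area-minimizing and the intersection point can be resolved. The \textbf{main obstacle} I anticipate is making this dimensional reduction rigorous: the $s$-perimeter is nonlocal, so cutting off in the $x''$ variables introduces error terms coupling $\widetilde\Sigma$-geometry with the tail of the cutoff, and one must show these errors are lower order (they are, because the $x''$-kernel is integrable against the spreading cutoff, producing a finite dimensional constant times the purely $2$-dimensional quantity). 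A cleaner route, which I would try first, is to avoid explicit computation entirely: blow DOWN $\Sigma$ — but $\Sigma$ is already a cone, so that gives nothing. Hence the genuinely needed ingredient is a direct instability computation for symmetric cones in $\R^2$; I expect the authors handle it by noting $\Sigma$ weakly stable in $\R^n$ plus the product structure implies $\widetilde\Sigma$ is, in a suitable sense, a weakly stable set for a $2$-dimensional functional, and then citing or reproving that the only such cone smooth away from $0$ is a half-plane (this is where being in $\mathcal A$, not merely weakly stable, is used: the improvement-of-flatness step above already showed $\partial\widetilde\Sigma$ is a union of rays, so only finitely many candidate cones remain and each non-trivial one is explicitly unstable). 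I would conclude by ruling out each such symmetric cone, leaving $\widetilde\Sigma$ a half-plane and therefore $\Sigma$ a half-space.
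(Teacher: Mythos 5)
Your plan correctly isolates the geometric structure — improvement of flatness for the class $\mathcal A$ does yield that $\partial\widetilde\Sigma\setminus\{0\}$ is a union of rays, and the paper uses this (to guarantee a definite separation $|\nu_1-\nu_2|^2\ge c>0$ between the normals of two non-aligned rays). But the mechanism you propose for the final step, a dimensional reduction of the weak stability of $\Sigma$ to a ``$2$-dimensional functional'' followed by a classification of $2$D stable cones, is not what the proof does, and you correctly flag that making this reduction rigorous is a genuine obstacle that your sketch does not resolve. The paper sidesteps the reduction entirely. (Also, after establishing that $\partial\widetilde\Sigma\setminus\{0\}$ is a union of rays, there is a continuum of candidate cones parametrized by the wedge angle, not finitely many, so the last step cannot be done by enumeration.)

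The actual argument is a Sternberg--Zumbrun-type computation run directly on the approximating sequence $u_j$ of stable Allen--Cahn solutions, not on $\Sigma$ itself. Taking $\xi = |\nabla u_j|\,\psi$ in the stability inequality (with $\psi$ a \emph{fixed} compactly supported cutoff, not a spreading one) and using that each $\partial_{x_i}u_j$ solves the linearized equation yields the inequality $I_2\le I_3$ of display \eqref{I2-I3}: the nonlocal ``second fundamental form'' term
\[
I_2 = \iint_{\R^n\times\R^n}\frac{|n_j(x)-n_j(y)|^2}{|x-y|^{n+s}}\psi^2(x)\,|\nabla u_j|(x)\,|\nabla u_j|(y)\,dx\,dy
\]
is controlled by the commutator term $I_3$, which in turn is bounded by $C_1(n,s)$ uniformly in $j$ via the $BV$ estimate of Theorem \ref{BV}. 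On the other side, passing $\nabla u_j\rightharpoonup -2D\chi_\Sigma$ weak-$*$ and exploiting that $\nu_\Sigma$ jumps across the wedge, one proves $I_2\ge I_1 \ge C(n,s)\,r^{-s}$ for the $r$-regularized integral $I_1$ over $\partial\Sigma\times\partial\Sigma$. The chain $C(n,s)\,r^{-s}\le I_1\le I_2\le I_3\le C_1(n,s)$ then blows up as $r\downarrow 0$, giving the contradiction. This quantitative $r^{-s}$ divergence — coming from the singular nonlocal self-interaction of the two non-aligned rays crossed with $\R^{n-2}$ — is the precise ingredient missing from your ``direct instability computation,'' which remains a placeholder in your proposal.
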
 
\begin{proof}
If $\Sigma$  (which is nontrivial) is not a half-space, then $\tilde \Sigma$ is not a half-plane (and is also nontrivial). Hence, $\partial\widetilde\Sigma$ must contain at least two non-aligned rays. 
Recall that, thanks to the density estimates for the class $\mathcal A$, we can chose a representative among sets that differ from $\Sigma$ for a set of measure zero such that every point of the topological boundary of $\Sigma$ has positive density for both $\Sigma$ and $\Sigma^c$. 
{Also, thanks to the improvement of flatness property for the class $\mathcal A$, the outwards normal vectors (in~$\R^2$) $\nu_1$ and $\nu_2$ to these two non-aligned rays need to form some positive angle, that is, $|\nu_1-\nu_2|^2\ge c>0$, for some positive constant $c$ depending only on $n$ and $s$.
Let us denote by $\widetilde H_1$ and $\widetilde H_2$ these two non aligned rays, that is for $i=1,2$, we set
$$\widetilde H_i:=\{\widetilde x \in \R^2\,|\,\widetilde x=t\,\omega_i,\,t>0\},$$
for some vectors $\omega_1,\,\omega_2\in \R^2$ such that $\omega_i \cdot \nu_i=0$ for $i=1,2$.
Moreover, we set
$$H_i =\widetilde H_i\times \R^{n-2},\quad i=1,\,2.$$

With these notations, we have that
$$H_1\times H_2 \subset \partial \Sigma \times \partial \Sigma.$$}
We fix a non-increasing cutoff function $\xi \in C^\infty_c([0,3))$ such that $\xi \equiv 1$ in $[0,2]$, and $\xi \le 1$, and we define 
\begin{equation}\label{psi}
\psi(x) = \xi\bigg(\sqrt{x_1^2+x_2^2}\bigg) \xi(|x_3|)\cdots \xi(|x_n|).
\end{equation}

In what follows, we change notation and we denote points in $\R^n \times \R^n$ by $(x,y)$  ---instead of the usual $(x, \bar x)$.

We claim that, for every $0<r<1$, the following estimate holds:
\begin{equation}\label{I1}
I_1 : = \iint_{\partial\Sigma \times \partial\Sigma}  \frac{\big|\nu_\Sigma (x) -\nu_\Sigma(y)\big|^2}{(r^2 +|x-y|^2)^{\frac{n+s}{2}}}\,  \psi^2(x)  \psi^2(y)d\mathcal H^{n-1}(x) d\mathcal H^{n-1}(y) \ge C(n,s) r^{-s},
\end{equation}
for some positive constant $C(n,s)$ depending on $n$ and $s$.

To prove the claim, we use the notation $x=(\widetilde x,x')\in \R^2 \times \R^{n-2}$. We have that
\begin{equation*}
\begin{split}
I_1 &\ge c\int_{H_1}\int_{H_2}\frac{\psi^2(x)  \psi^2(y)}{(r^2+|x-y|^2)^{\frac{n+s}{2}}}d\mathcal H^{n-1}(x) d\mathcal H^{n-1}(y)\\
&\ge c\iint_{\R^{n-2}\times \R^{n-2}}dx'\,dy'\int_{\widetilde H_1}d\mathcal H^1({\widetilde x})\int_{\widetilde H_2}d\mathcal H^1({\widetilde y})\frac{\psi^2(x)  \psi^2(y)}{(r^2+|x-y|^2)^{\frac{n+s}{2}}}.
\end{split}
\end{equation*}
Using the change of variables $X=(0,|\widetilde x|)$, $Y=(0,-|\widetilde y|)$ and the triangle inequality $|\widetilde x -\widetilde y| \le |\widetilde x|+|\widetilde y|=|X-Y|$, we get 
\begin{equation*}
\begin{split}
I_1 &\ge c \iint_{\R^{n-2}\times \R^{n-2}}dx'\,dy'\int_{\{0\}\times\R^{+}}dX\int_{\{0\}\times\R^{-}}dY\frac{\psi^2(X,x')\psi^2(Y, y')}{(r^2+|X-Y|^2+|x'-y'|^2)^{\frac{n+s}{s}}}\\
&\ge c \int_{B_1^{n-2}}dx'\int_{\{y'\in \R^{n-2}\,|\,|x'-y'|<1\}}dy'\int_0^1 dX\int_{-1}^0dY\frac{1}{(r^2+|X-Y|^2+|x'-y'|^2)^{\frac{n+s}{s}}}\\
&\ge C c|B_1^{n-2}| \int_0^1d\rho\int_0^1 dX\int_{-1}^0dY\frac{\rho^{n-3}}{(r^2+|X-Y|^2+\rho^2)^{\frac{n+s}{s}}},
\end{split}
\end{equation*}
where we are identifying a point $(0,X)\in \{0\}\times \R^+$ with the real number $X\in \R^+$ and the integration over $\{0\}\times \R^+$ with integration over $\R^+$ (analogously for $Y\in \R^-$), and in the last inequality we have used polar coordinates in $\R^{n-2}$. 

Finally, using the change of variables $\bar X=X/r$, $\bar Y=Y/r$, $\bar \rho=\rho/r$, and recalling that $0<r<1$, we deduce
\begin{equation*}
\begin{split}
I_1 &\ge C(n,s)\frac{1}{r^{n+s}}\int_0^1d\rho\int_0^1 dX\int_{-1}^0dY\frac{\rho^{n-3}}{\left(1+\frac{|X-Y|^2}{r^2}+\frac{\rho^2}{r^2}\right)^{\frac{n+s}{2}}}\\
&=C(n,s)\frac{1}{r^{n+s}}\int_0^{1/r}d\bar\rho\int_0^{1/r} d\bar X\int_{-1/r}^0 d\bar Y \frac{r^n\cdot \bar \rho^{n-3}}{(1+|\bar X-\bar Y|^2+\bar \rho^2)^{\frac{n+s}{2}}}\\
&\ge C(n,s)r^{-s}\int_0^{1}d\bar\rho\int_0^{1} d\bar X\int_{-1}^0d\bar Y \frac{ \bar \rho^{n-3}}{(1+|\bar X-\bar Y|^2+\bar \rho^2)^{\frac{n+s}{2}}}=C(n,s) r^{-s},
\end{split} 
\end{equation*}
which concludes the proof of \eqref{I1}.

Recall now that since $\Sigma$ belongs to the class $\mathcal A$, there exists, by definition, a sequence of functions  $u_j: \R^n \to (-1,1)$  which are stable solutions of $(-\Delta)^{s/2}u_j+\varepsilon_j^{-s} W'(u_j)=0$
and such that
$$u_j \stackrel{L^1_{\rm loc}}{\longrightarrow} \chi_\Sigma-\chi_{\Sigma^c}\;\quad \mbox{as}\;\;j\uparrow \infty.$$

We claim now that the following inequality holds:

\begin{equation}\label{I2-I3}
\begin{split}
I_2: = \iint_{\R^n \times \R^n} &\frac{\big|n_j (x) -n_j(y)\big|^2}{|x-y|^{n+s}} \psi(x)^2 |\nabla u_j|(x) dx\, |\nabla u_j|(y) dy
\\
&\qquad \le  \iint_{\R^n \times \R^n} \frac{|\psi(x) -\psi(y)|^2}{|x-y|^{n+s}}  |\nabla u_j|(x) dx\, |\nabla u_j|(y) dy =:I_3, 
\end{split}
\end{equation}
where 
\begin{equation}\label{nu}
n_j(x)  : =
\begin{cases} 
\frac{\nabla u_j}{|\nabla u_j|} &\quad \mbox{where } \nabla u \neq 0 \\
0  & \quad\mbox{where } \nabla u = 0.
\end{cases}
\end{equation}

Let us prove \eqref{I2-I3}. We start by observing that the stability condition \eqref{stable} (with $\Omega=\R^n$), written for the functions $u_j$, is equivalent to requiring that
\begin{equation}\label{stable2}
\int_{\R^n} \xi(x)(-\Delta)^{s/2}\xi(x)\,dx + \int_{\R^n}\varepsilon^{-s}_j W''(u_j)\xi^2(x)\,dx\ge 0
\end{equation}
for any Lipschitz function $\xi$ which is compactly supported in $\R^n$.

Let us now choose $\xi$ of the form $\xi=\eta \cdot \psi$, where $\eta$ is a Lipschitz function and $\psi \in C^\infty_0(\R^n)$.
A simple computation gives that
$$(-\Delta)^{s/2}\xi(x)=\psi(x)(-\Delta)^{s/2}\eta(x) + \int_{\R^n}\eta(y) \frac{\psi(x)-\psi(y)}{|x-y|^{n+s}}\,dy,$$
which implies
\begin{equation*}
\begin{split}
\int_{\R^n}\xi(x)(-\Delta)^{s/2}\xi(x)\,dx &= \int_{\R^n} \psi^2(x)\eta(x) (-\Delta)^{s/2}\eta(x)\,dx\\
&\hspace{1em}+\iint_{\R^n \times \R^n}\eta(x)\eta(y)\psi(x) \frac{\psi(x)-\psi(y)}{|x-y|^{n+s}}\,dx\,dy\\
&=\int_{\R^n} \psi^2(x)\eta(x) (-\Delta)^{s/2}\eta(x)\,dx\\
&\hspace{1em}+\frac{1}{2}\iint_{\R^n \times \R^n}\eta(x)\eta(y)\frac{|\psi(x)-\psi(y)|^2}{|x-y|^{n+s}}\,dx\,dy.
\end{split}
\end{equation*}
Hence, the stability condition \eqref{stable2} becomes
\begin{equation}\label{stable3}
\begin{split}
&\int_{\R^n} \psi^2(x)\eta(x) (-\Delta)^{s/2}\eta(x)\,dx
+\frac{1}{2}\iint_{\R^n \times \R^n}\eta(x)\eta(y)\frac{|\psi(x)-\psi(y)|^2}{|x-y|^{n+s}}\,dx\,dy\\
&\hspace{2em} + \int_{\R^n}\varepsilon^{-s}_j W''(u_j)\eta^2(x)\psi^2(x)\,dx \ge 0.
\end{split}
\end{equation}

We use now the fact that, for any $i=1,\dots,n$, $\partial_{x_i}u_j$ satisfies the linearized equation $(-\Delta)^{s/2}v+\varepsilon_j^{-s}W''(u)v=0$. By multiplying the (vectorial) equation satisfied by $\nabla u_j$ by $\nabla u_j$ itself, we deduce that
\begin{equation}\label{eq-grad}
\nabla u_j \cdot(-\Delta)^{s/2}\nabla u_j + \varepsilon_j^{-s}W''(u)|\nabla u_j|^2=0.
\end{equation}

Let us now choose $\eta=|\nabla u_j|$ in the stability inequality \eqref{stable3} (this is an admissible choice by the regularity results of Appendix \ref{app-C}) and use \eqref{eq-grad}, to obtain
\[
\begin{split}
&\int_{\R^n}\psi^2(x)|\nabla u_j|(x)(-\Delta)^{s/2}|\nabla u_j|(x)\,dx \\
&\hspace{2em}+\frac{1}{2}\iint_{\R^n \times \R^n}|\nabla u_j(x)||\nabla u_j(y)|\frac{|\psi(x)-\psi(y)|^2}{|x-y|^{n+s}}\,dx\,dy\\
&\hspace{2em}-\int_{\R^n}\psi^2(x)\nabla u_ j(x) \cdot (-\Delta)^{s/2} \nabla u_j(x)\,dx \ge 0.
\end{split}
\]
To conclude the proof of \eqref{I2-I3}, it is enough to observe that 
\[
\begin{split}
&\int_{\R^n}\psi^2(x)|\nabla u_j|(x)(-\Delta)^{s/2}|\nabla u_j|(x)\,dx -\int_{\R^n}\psi^2(x)\nabla u_ j(x) \cdot (-\Delta)^{s/2} \nabla u_j(x)dx\\
&=\negmedspace \iint_{\R^n \times \R^n}\negmedspace\psi^2(x)\negmedspace\left(|\nabla u_j|(x) \frac{|\nabla u_j|(x)-|\nabla u_j|(y)}{|x-y|^{n+s}}-\nabla u_j(x)\cdot \frac{\nabla u_j(x)-\nabla u_j(y)}{|x-y|^{n+s}}\right)\negmedspace dx dy\\
&=\negmedspace \iint_{\R^n \times \R^n}\negmedspace\psi^2(x)\frac{\nabla u_j(x)\cdot \nabla u_j(y) - |\nabla u_j|(x)|\nabla u_j|(y)}{|x-y|^{n+s}}dxdy.
\end{split}
\]
This, together with the definition \eqref{nu} of $n_j$, proves \eqref{I2-I3}.

By our uniform $BV$ estimates, we have that  $\nabla u_j \to -2D\chi_{\Sigma}$ weakly$^*$ as Radon measures (see, e.g., Proposition 3.13 and formula (3.11) in \cite{AFP}). Here and in the following, we denote by $D\chi_{\Sigma}$ the perimeter measure and by $|D\chi_{\Sigma}|$ its total variation.

Let us show that
\[ I_1 \le I_2\]
for $j$ large enough, where these quantities were defined in \eqref{I1} and \eqref{I2-I3}.

Indeed, we start by observing that
\[
\begin{split}
I_2 &= 2 \iint_{ \R^n\times \R^n}    \frac{ \big( |\nabla u_j|(x) |\nabla u_j|(y) - \nabla u_j(x)\cdot \nabla u_j(y) \big)}{|x-y|^{n+s}} \psi^2(x)\, dx\, dy \\
&\ge  2 \iint_{ \R^n\times \R^n}    \frac{ \big( |\nabla u_j|(x) |\nabla u_j|(y) - \nabla u_j(x)\cdot \nabla u_j(y) \big)}{(r^2+ |x-y|^2)^{\frac{n+s}{2}}} \psi^2(x)\psi^2(y) \, dx\, dy. 
\end{split}
\]

Now, we claim that:
\begin{equation}\label{conv1}
\begin{split}
&\lim_{j\rightarrow \infty} \iint_{ \R^n\times \R^n} \frac { \nabla u_j(x) \cdot\nabla u_j(y)} {(r^2+ |x-y|^2)^{\frac{n+s}{2}}} \psi^2(x)\psi^2(y) \, dx\, dy  \\
&\hspace{2em}= \iint_{ \R^n\times \R^n}  \frac{ 2 D \chi_{\Sigma}(dx) \cdot 2 D \chi_\Sigma (dy)} {(r^2+ |x-y|^2)^{\frac{n+s}{2}}} \psi^2(x)\psi^2(y) 
\end{split}
\end{equation}
and
\begin{equation}\label{conv2}
\begin{split}
&\liminf_{j\rightarrow \infty} \iint_{ \R^n\times \R^n} \frac { |\nabla u_j|(x) |\nabla u_j|(y)} {(r^2+ |x-y|^2)^{\frac{n+s}{2}}} \psi^2(x)\psi^2(y) \, dx\, dy  \\
&\hspace{2em} \ge\iint_{ \R^n\times \R^n}  \frac{ 2|D \chi_{\Sigma}|(dx) 2|D \chi_\Sigma |(dy)} {(r^2+ |x-y|^2)^{\frac{n+s}{2}}} \psi^2(x)\psi^2(y).
\end{split}
\end{equation}

We first prove \eqref{conv1}.
For every $y\in \R^n$, we define 
$$G_j(y):=\int_{\R^n}\nabla u_j(x)\frac{\psi^2(x)\psi^2(y)}{(r^2+|x-y|^{2})^{\frac{n+s}{2}}}\,dx=\int_{\R^n}\nabla u_j(x)\Psi(x,y)\,dx.$$
Using the BV estimate for $u_j$ (which is uniform in $j$) and that, for $y$ fixed, the function $\Psi$ is smooth and compactly supported (as function of $x$), we deduce that the family of functions $\{G_j\}_{j\in \mathbb N}$ is equibounded and equicontinuous. This, combined with the weak$^*$-convergence of $\nabla u_j$ to $-2D\chi_{\Sigma}$, gives that, as $j\rightarrow \infty$, $G_j$ converges uniformly to $G$, where
$$G(y):= -2\int_{\R^n}D\chi_{\Sigma}(dx)\Psi(x,y).$$
Hence, we have that
\begin{equation*}
\begin{split}
&\iint_{ \R^n\times \R^n} \nabla u_j(x) \cdot\nabla u_j(y)\Psi(x,y)\, dx\, dy\\
&\hspace{2em}-\iint_{ \R^n\times \R^n}  2 D \chi_{\Sigma}(dx) \cdot 2 D \chi_\Sigma (y)\Psi(x,y) dy  \\
&\hspace{1em}=\int_{\R^n}\nabla u_j(y)\cdot G_j(y)\, dy +2\int_{R^{n}}D\chi_{\Sigma}(dy)\cdot G(y)\\
&\hspace{1em}=\int_{\R^n}\nabla u_j(y)\left(G_j(y)-G(y)\right)\,dy+\int_{\R^n}G(y)\left(\nabla u_j(y)+2 D\chi_{\Sigma}(dy)\right)\rightarrow 0,
\end{split}
\end{equation*}
where the first term tends to zero thanks to the uniform convergence of $G_j$ to $G$ and the uniform BV estimate (on compact sets) for $u_j$, and the second term also vanishes in the limit since $\nabla u_j$ weak$^*$-converge to $-2 D\chi_{\Sigma}$ and $G$ is smooth and compactly supported.

To get \eqref{conv2}, we reason similarly as before and use the lower-semicontinuity of the total variation.

Hence, from \eqref{conv1} and \eqref{conv2}, we get, as $j\to \infty$,
\[
I_2\ge 4I_1 -o(1) \ge I_1.
\]

Finally, let us show that 
\[
I_3 \le C_1(n,s),
\]
where $I_3$ was defined in \eqref{I2-I3}.

Indeed,  by Theorem \ref{BV} we have  $\int_{B_\varrho (x)} |\nabla u_j(y)|\,dy \le C(n,s) \varrho^{n-1}$ for all $x\in \tilde B_3\times[-3,3]^{n-2}$ and $\rho >0$ (here $\tilde B_3$ denote the ball in $\R^2$ centered at $0$ and with radius $3$). Hence
for all $x\in \tilde B_3\times[-3,3]^{n-2}$, defining $A_j(x)= B_{2^{j+1}}(x) \setminus B_{2^j}(x)$ and using that $|\psi(x) -\psi(y)|^2 \le C(n)(|x-y|^2\wedge 1)$, we have

\[
\begin{split}
 \int_{\R^n}  \frac{(|x-y|^2\wedge 1)}{|x-y|^{n+s}} |\nabla u_j (y)|  dy &\le C\sum_{j\in \Z}  \frac{2^{2j}\wedge 1}{2^{j(n+s)}} 2^{j (n-1)} 
 \\
 &=  C \bigg(\sum_{j<0} 2^{(1-s)j}  +  \sum_{j\ge 0} 2^{-(1+s)j}\bigg) \le C_1(n,s).
\end{split}
\]

Hence we have shown that,  for $j$ large enough,  
\[C(n,s) r^{-s}\le  I_1 \le I_2 \le I_3\le  C_1(n,s). \]
Choosing $r>0$ small we obtain a contradiction.
\end{proof}

We have now all the ingredients to prove our main theorem.

\begin{proof} [Proof of Theorem \ref{thmclas}]
By our convergence result Theorem \ref{thm2} for any given  blow-down sequence $u_{R_j}(x)= u(R_jx)$ with $R_j\uparrow \infty$, there is a subsequence $R_{j_\ell}$ such that
\[ u_{R_{j_\ell}} \rightarrow \chi_{\Sigma}-\chi_{\Sigma^c} \quad \mbox{ in }L^1(B_1),\]
where $\Sigma$ is a cone which is a weakly stable set in $\R^n$ for the $s$-perimeter, which belongs to the class $\mathcal A$ (by definition of $\mathcal A$), and which is nontrivial. 
We next prove that under our assumption, i.e., that the half-spaces are the only smooth (away from $0$) stable nonlocal $s$-minimal cones in $\R^m\setminus\{0\}$ for any $3\leq m\leq n$, $\Sigma$ must be a half-space. The proof follows Federer's dimension reduction argument and is done by contradiction. Indeed, assume that $\Sigma$ is not a half-space, then, by the just mentioned assumption, there exists at least one point $p_1\in \partial \Sigma\cap S^{n-1}$ at which $\partial \Sigma$ is not smooth in any neighbourhood of $p_1$.

Let us consider the blow-up of $\Sigma$ at $p_1$,
$$\Sigma_{p_1,r}:=\frac{\Sigma-p_1}{r}.$$
Using the energy estimate and the monotonicity formula (points (1) and (2) in Proposition \ref{good}) we have that, up to a subsequence,
$$\Sigma_{p_1,r}\stackrel{L^1_{\rm loc}}{\longrightarrow} \Sigma_1,$$
where $\Sigma_1$ belongs to $\mathcal A$ by point (5) in Proposition \ref{good}. Moreover, by the density estimate (point (3) in Proposition \ref{good}), the convergence of blow-ups in the  $L^1_{\rm loc}$-sense can be upgraded to a local uniform convergence (i.e., locally  in Hausdorff distance). 
Now, if $\Sigma_1$ were a half-space, then by the improvement of flatness property (point (4) of Proposition \ref{good}) we would deduce that $\partial\Sigma$ is smooth in some neighbourhood of $p_1$, reaching a contradiction. Hence,  $\Sigma_1$ is not  a half-space. Now, $\Sigma_1$ being the blow-up of a cone at a point $p_1\neq 0$, we find that it must be translation invariant in the direction $p_1$.\footnote{This is done exactly as in the case of minimal surfaces: 
\[x\in \Sigma_{p_1,r}  \  \Leftrightarrow \  (p_1 + rx) \in \Sigma   \  \Leftrightarrow \  \frac{\lambda}{r}(p_1 + rx) \in \Sigma \  \Leftrightarrow \ \big( {\textstyle \frac{\lambda-1}{r}}p_1 + \lambda x\big)\in \Sigma_{p_1, r} \]
and hence, taking $\lambda = 1+tr$ and sending $r\to 0$, we obtain $x\in \Sigma_{1} \ \Leftrightarrow\  x+ tp_1\in \Sigma_1$.
} Hence,  up to a rotation, $\Sigma_1$  it must be of the form
$$\Sigma_1=\widetilde \Sigma_1\times \R,$$
where $\widetilde \Sigma_1 \subset \R^{n-1}$ is a nontrivial cone different from a half-space (since we proved that $\Sigma_1$ cannot be a half-space).

We can now iterate the same argument: if  $\tilde \Sigma_1$ were  smooth then it should  be flat by our assumption. Hence  $\widetilde \Sigma_1$ is  not smooth and thus there exists $p_2 = (\tilde p_2, 0)$, with $\tilde p_2\in \widetilde \Sigma_1 \cap S^{n-2}$, such that the blow-up $\Sigma_2$ of $\Sigma_1$ at the point $p_2$ belongs to $\mathcal A$ is nontrivial, is not a half-space, and is translation invariant in the direction $x_1$ and $x_2$. In other words, up to a rotation this second blow-up must be  some set $\Sigma_2$  in $\mathcal A$ of the form 
$$\Sigma_2=\widetilde \Sigma_2\times \R^2,$$
where $\widetilde \Sigma_2 \subset \R^{n-2}$ is a nontrivial cone different from a half-space.

 After $n-2$ iterations we arrive at a blow-up $\Sigma_{n-2}$ which belongs to $\mathcal A$ and must be of the form
$$\Sigma_{n-2}=\widetilde \Sigma_{n-2}\times \R^{n-2},$$
where $\widetilde \Sigma_{n-2} \subset \R^{2}$ is a nontrivial cone different from a half-space. Hence, using  Lemma \ref{hwioheoithe}, we reach a contradiction, proving that the initial cone $\Sigma$ must be a half-space.

Having proved that $\Sigma$ must be a half-space, we now recall that (by  Theorem \ref{thm2}) the convergence of sub-levelsets of $u_{R_{j_\ell}}$ to the half-space $\Sigma$ (in $B_1$) holds also in the sense of the Hausdorff distance. As a consequence $u$ satisfies the asymptotic  flatness assumption of Theorem \ref{123} and hence it follows that   $u(x)=\phi(e\cdot x)$ for some direction $e\in S^{n-1}$ and some increasing function $\phi:\R\to (-1,1)$. 
\end{proof}

\begin{rem}\label{thesame}
The following will be used in Appendix \ref{app-A} to deal with global stable $s$-minimal sets.
Notice that, reasoning exactly as in the proof of Theorem \ref{thmclas}, one can prove that under the same assumption, i.e. that, for some pair $(n,s)$ with $n\ge 3$ and $s\in(0,1)$, hyperplanes are the only stable $s$-minimal cones in $\R^n\setminus\{0\}$, then any set $E\subset \R^n$ belonging to the class $\mathcal A$ is necessarily a half-space. Indeed, after doing a blow-down of $E$, we reduce to a cone $\Sigma$ which is stable and belongs to $\mathcal A$. Hence, by the exact same argument as before, one get that $\Sigma$ is an half-space and finally, by the improvement of flatness property of $\mathcal A$, that so is $E$. More generally, an analogue abstract result which reduces the classification of a stable set $E$ to the classification of stable cones (smooth away from the origin), holds whenever $E$ belongs to a class for which the properties listed in Proposition \ref{good} are satisfied (BV and energy estimates, monotonicity formula, density estimates, improvement of flatness, and blow-up).
\end{rem}

We can now easily deduce our rigidity result in $\R^3$.

\begin{proof} [Proof of Corollary \ref{corclas2}]
Thanks to Theorem \ref{thmclas} we just need to show that for $s$ sufficiently close to 1,  any weakly  stable $s$-minimal cone in $\R^{3}\setminus\{0\}$ whose boundary is smooth away from $0$ must be either trivial or a half-space. This result was obtained in our previous paper \cite{CCS}. 
\end{proof}

Instead, to prove Corollary \ref{corclas3} on monotone solutions in $\R^4$ we first need to establish the following result, which holds in any dimension.

\begin{prop}\label{implication}
Let $n\ge 2$, $s\in(0,1)$, and $W(u)=\frac 1 4 (1-u^2)^2$.
Assume that $u:\R^n \to (-1,1)$ is a solution of \eqref{theequation} satisfying $\partial_{x_n} u>0$. Define $u^{\pm} := \lim_{x_{n} \to \pm\infty} u$. 

If each $u^+$ and $u^-$ is either a increasing 1D solution or is identically $\pm 1$, then $u$ is a minimizer in $\R^n$. 
\end{prop}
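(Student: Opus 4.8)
The plan is to show that $u$ minimizes the localized energy $\mathcal E_\Omega$ over every bounded open set $\Omega\subset\R^n$, by constructing an ordered foliation of $\R^n$ by solutions, passing through $u$ and reaching both wells $\pm1$, and then running the classical sliding/calibration argument. First I would record the reductions. Monotonicity $\partial_{x_n}u>0$ makes $u$ a stable solution. Since $u$ is bounded and increasing in $x_n$, the translates $u(\cdot+te_n)$ converge, as $t\to\pm\infty$, locally uniformly and hence (by the interior estimates of Appendix~\ref{app-C}) in $C^2_{\rm loc}(\R^n)$, to functions which are solutions of \eqref{theequation} depending only on $x'=(x_1,\dots,x_{n-1})$; by definition these limits are $u^\pm$, so $u^\pm$ are solutions of \eqref{theequation} in $\R^n$ that are constant in $x_n$, with $-1<u^-\le u\le u^+<1$, and $\{u(\cdot+te_n)\}_{t\in\R}$ is \emph{strictly} increasing in $t$. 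Note also that $u^\pm\not\equiv\mp1$ (again because $u$ is increasing in $x_n$).

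Next I would build the foliation. Under the hypothesis, either $u^+\equiv+1$ or $u^+(x)=\phi^+(a^+\cdot x')$ with $a^+\in S^{n-1}$ orthogonal to $e_n$ and $\phi^+:\R\to(-1,1)$ increasing with $\phi^+(\pm\infty)=\pm1$; in the latter case the $x_n$-independent extension $\bar u^+$ is again a solution of \eqref{theequation} in $\R^n$ and is monotone in the direction $a^+$, and symmetrically for $u^-$, $a^-$, $\phi^-$. I would then glue these into a single family $\{\psi_\lambda\}_{\lambda\in\R}$ of solutions of \eqref{theequation} with $\psi_0=u$: for $\lambda\in[0,1)$ set $\psi_\lambda:=u\bigl(\cdot+\tfrac{\lambda}{1-\lambda}e_n\bigr)$, and for $\lambda\ge1$ set $\psi_\lambda:=\bar u^+\bigl(\cdot+(\lambda-1)a^+\bigr)$ (or $\psi_\lambda\equiv1$ if $u^+\equiv1$), and symmetrically for $\lambda<0$ using $u^-$, $a^-$. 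The points to verify are: each $\psi_\lambda$ is a solution; $\lambda\mapsto\psi_\lambda$ is continuous in $C^2_{\rm loc}$ and strictly increasing in $\lambda$ — across the interface $\lambda=1$ this uses $u(\cdot+te_n)\le u^+\le\bar u^+(\cdot+\sigma a^+)$ for all $t,\sigma\ge0$; $\psi_\lambda\to\pm1$ in $C^2_{\rm loc}$ and $\mathcal E_\Omega(\psi_\lambda)\to0$ as $\lambda\to\pm\infty$ (using the uniform gradient bound of Appendix~\ref{app-C} and dominated convergence against the $\mathcal L_2$ kernel); and, crucially, that for every fixed $x$ the map $\lambda\mapsto\psi_\lambda(x)$ is onto $(-1,1)$ — indeed the part $|\lambda|<1$ sweeps $\bigl(u^-(x'),u^+(x')\bigr)$ while the two $1$D tails sweep $[u^+(x'),1)$ and $(-1,u^-(x')]$, respectively.

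Finally I would run the sliding argument. The elementary input, exactly as in \cite{CSV}, is the submodularity of $\mathcal E_\Omega$ under $v_1\wedge v_2:=\min(v_1,v_2)$ and $v_1\vee v_2:=\max(v_1,v_2)$:
\[
\mathcal E_\Omega(v_1\wedge v_2)+\mathcal E_\Omega(v_1\vee v_2)\le\mathcal E_\Omega(v_1)+\mathcal E_\Omega(v_2),
\]
which holds because $\mathcal E^{\rm Pot}_\Omega$ is unchanged (a pointwise rearrangement) while in $\mathcal E^{\rm Sob}_\Omega$ the only change comes from ``crossing'' pairs $(x,\bar x)$, for which the integrand strictly decreases by the elementary inequality $(b-d)(a-c)\le0$ valid whenever $a\le c$ and $b\ge d$. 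Given a competitor $v$ with $v=u=\psi_0$ in $\Omega^c$, one may first assume $-1\le v\le1$ (truncating to $[-1,1]$ does not increase $\mathcal E_\Omega$ since $W\ge0=W(\pm1)$ and the truncation is $1$-Lipschitz). Then the foliation $\{\psi_\lambda\}$ calibrates $\mathcal E_\Omega$: writing $v(x)=\psi_{\ell(x)}(x)$ with $\ell\equiv0$ in $\Omega^c$ (and $\ell=\pm\infty$ where $v=\pm1$), one compares $v$ with the leaves $\psi_\lambda$ term by term, using that $v\wedge\psi_\lambda=v=u$ and $v\vee\psi_\lambda=\psi_\lambda$ in $\Omega^c$ for $\lambda\ge0$ (the roles switching for $\lambda\le0$), the ordering of the leaves, and the submodularity inequality; letting $\lambda\to\pm\infty$, where $\psi_\lambda\to\pm1$ and $\mathcal E_\Omega(\psi_\lambda)\to0$, propagates minimality from the constant leaves $\pm1$ along the foliation and yields $\mathcal E_\Omega(v)\ge\mathcal E_\Omega(\psi_0)=\mathcal E_\Omega(u)$. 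This is precisely the nonlocal version of the classical Alberti–Ambrosio–Cabré sliding argument (as used, e.g., in \cite{CSV,dPSV}), and goes through verbatim once the submodularity above is in hand. I expect the main obstacle to be the construction and bookkeeping of the foliation — keeping it strictly ordered and $C^2_{\rm loc}$-continuous across the interface between the vertical translates of $u$ and the translates of the $1$D limits, and checking that it reaches the wells while covering all of $(-1,1)$ at every point (this is where the hypothesis on $u^\pm$ is used decisively) — together with the passage to the limit $\lambda\to\pm\infty$ in the energies; the case $u^\pm\equiv\pm1$ is the already-understood setting \eqref{limits}, so the real content of the proposition is the reduction of the general $1$D-limit case to a foliation argument of the same kind.
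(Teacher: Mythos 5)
Your overall plan — build an ordered foliation of solutions through $u$ that reaches the wells $\pm1$, then run a sliding argument — is in the same spirit as the paper's proof, and the foliation you construct (vertical translates $u(\cdot+te_n)$ glued across $u^\pm$ to horizontal translates of the 1D limits) is a sound, even elegant, construction. The gap is in the final step, where you assert that the foliation ``calibrates $\mathcal E_\Omega$'' and ``propagates minimality from the constant leaves $\pm1$ along the foliation'' via submodularity. Submodularity alone cannot do this: for a competitor $v$ and a leaf $\psi_\lambda$ it gives $\mathcal E_\Omega(v\wedge\psi_\lambda)+\mathcal E_\Omega(v\vee\psi_\lambda)\le\mathcal E_\Omega(v)+\mathcal E_\Omega(\psi_\lambda)$, but to extract anything useful you already need to know that $\psi_\lambda$ is a \emph{minimizer}, not merely a solution — which is exactly what is to be proved. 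Sending $\lambda\to+\infty$ only yields $\mathcal E_\Omega(v\wedge\psi_\lambda)\le\mathcal E_\Omega(v)+o(1)$ while $v\wedge\psi_\lambda\to v$, which is vacuous, and the symmetric attempt from $\lambda\to-\infty$ is equally empty.

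The missing ingredient, which the paper supplies, is to first take an actual minimizer $\hat v$ of $\mathcal E_\Omega$ with datum $u$ outside $\Omega$ (which exists by the direct method and is a $C^2$ solution in $\Omega$), and then run the sliding \emph{with the strong maximum principle} against $\hat v$: the foliation traps $\hat v$ strictly for $|\lambda|$ large; setting $\lambda_0:=\inf\{\lambda:\psi_\lambda\ge\hat v\}$ one gets, if $\lambda_0>0$, an interior contact point, and the nonlocal strong maximum principle applied to the difference $\psi_{\lambda_0}-\hat v$ (which satisfies a linearized equation in $\Omega$) forces $\psi_{\lambda_0}\equiv\hat v$, contradicting $\psi_{\lambda_0}>u=\hat v$ on $\Omega^c$; hence $\lambda_0\le0$, and symmetrically from below, so $\hat v\equiv u$ and therefore $\mathcal E_\Omega(v)\ge\mathcal E_\Omega(\hat v)=\mathcal E_\Omega(u)$. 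Submodularity does enter the paper's proof, but not in the sliding: it is used, together with the minimality of $u^\pm$ (itself proved by the 1D foliation argument), to first squeeze $\hat v$ between $u^-$ and $u^+$, after which the paper slides only the vertical translates of $u$. Your extended foliation reaching $\pm1$ does let you dispense with the squeeze step — a genuine simplification — but only once you introduce the minimizer $\hat v$ and carry out the maximum-principle sliding that your ``calibration'' paragraph glosses over. As written, the proposal has a real gap at precisely the step it labels ``verbatim.''
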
 

The result for $s=2$ was proven in \cite[Theorem 1.1]{FariVal}. Here we show that the same type of argument works also for $s\in (0,2)$.

\begin{proof}[Proof of Proposition \ref{implication}]
Let $\Omega \subset \R^n$ be a bounded domain and let $v$ be a minimizer of $\mathcal E_\Omega$ with exterior datum $u$ outside of $\Omega$. 
Let us first show that $u^-\le v\le u^+$ in $\R^n$. Indeed, in the complement of $\Omega$ the inequality holds since $v=u$ there and  $u^-\le u\le u^+$ in $\R^n$.
To show that $u^-\le v\le u^+$ in $\Omega$, let us consider $\overline w^\pm = \max(v, u^\pm)$ and  $\underline w^\pm = \min(v, u^\pm)$. 

Assume by contradiction that $v>u^+$ at some point in $\Omega$. Then, arguing similarly as in \eqref{claimA-ii}, we would have 
\begin{equation}\label{wniogoihb}
\mathcal E_\Omega(\overline w^+)+\mathcal E_\Omega(\underline w^+)<\mathcal E_\Omega(v)+\mathcal E_\Omega(u^+).
\end{equation}
But now since on the one hand $v$ is a minimizer with exterior datum equal to $u$ in the complement of $\Omega$ we have $\mathcal E_\Omega(v) \le \mathcal E_\Omega(\underline w^+)$. On the other hand, since $u^+$ is either $+1$ or $1D$ and increasing,  it follows\footnote{A simple way of proving this consists of using the standard argument which involves the foliation of $\R^{n}\times(-1,1)$ given by the horizontal translations of the graph of $u^+$; see \cite[Proof of Proposition~6.2]{CC2}.} that  $u^+$ is also a minimizer.   Hence, noticing that $\overline w^+$ and $u^+$ coincide outside of $\Omega$, we obtain $\mathcal E_\Omega( u^+)\le \mathcal E_\Omega(\overline w^+)$. We therefore reach a contradiction with \eqref{wniogoihb}.

A similar argument using  $\overline w^-$ and $\underline w^-$ shows $u^-\le v$. 

Finally, using the standard ``foliation'' $\big\{u(x', x_n +t), \ t\in \R\big\}$, unless $v\equiv u$ we may  find a translation of  the graph of $u$ touching by above (or by below) the graph of $v$ as some point interior point in  $\Omega$, and this contradicts the strong maximum principle. Hence the only possibility is that $v\equiv u$ and thus $u$ is a minimizer in $\Omega$. Since $\Omega$ is an arbitrary bounded domain, we conclude that $u$ is a minimizer in~$\R^n$.
\end{proof}

We can finally give the

\begin{proof} [Proof of Corollary \ref{corclas3}]
By Corollary \ref{corclas2} the limits  $u^{\pm} := \lim_{x_{4} \to \pm\infty} u$ (which are stable solutions in $\R^3$)  must be either $\pm 1$ or increasing $1D$ solutions.
Thus, by Proposition \ref{implication}, $u$ is a minimizer in $\R^4$. Now, since $s$ is sufficiently close to 1, the corollary follows from Theorem 1.5 in  \cite{dPSV}.
\end{proof}

\appendix

\section{Smooth stable $s$-minimal surfaces in $\R^3$ are flat when $s\sim 1$}\label{app-A}

We give next the details of the proof of Theorem \ref{thmRn} and, as a consequence, of Corollary~\ref{corR3}.  

To prove Theorem \ref{thmRn}, we need to introduce the following definition, which is analogous to the one of the class $\mathcal A$ introduced in Section~\ref{sec-7}. 

\begin{defi}\label{defAprime}
We say that a set $E\subset \R^n$ belongs to the class $\mathcal A'$ when 
there exist a sequence of sets $E_j\subset \R^n$ with $E_j \to E$ in $L^1_{\rm loc}(\R^n)$ such that: 
\begin{itemize}
 \item the boundaries $\partial E_j$ are $(n-1)$-dimensional manifolds of class $C^2$;
 \item $E_j$ are weakly stable sets for the $s$-perimeter in $\R^n$. 
\end{itemize}
\end{defi}

\begin{prop}\label{good2}
Any set of the class $\mathcal A'$ satisfies the five properties  $(1)$-$(5)$ in Proposition $\ref{good}$.
\end{prop}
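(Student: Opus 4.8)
The plan is to reproduce, almost line by line, the proof of Proposition \ref{good}, with the approximating sequence of stable Allen--Cahn solutions replaced by the approximating sequence of $C^2$ weakly stable sets. So fix $E\in\mathcal A'$, let $E_j$ be as in Definition \ref{defAprime} ($\partial E_j\in C^2$, each $E_j$ weakly stable for $P_s$ in $\R^n$, $E_j\to E$ in $L^1_{\rm loc}$), and note that each of the five properties is already available at the level of smooth stable $s$-minimal sets; the proposition then follows by passing to the limit exactly as in Section \ref{sec-7}. As the authors observe, all the technical points here are completely analogous to those encountered there for the semilinear equation.

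Properties (1), (3) and (5) are the direct ones. For (1): for a smooth stable $s$-minimal set $E_j$ the bound ${\rm Per}(E_j,B_R)\le CR^{n-1}$ is the main estimate of \cite{CSV} (with $s$ fixed, $C=C(n,s)$), and $P_s(E_j,B_R)\le CR^{n-s}$ follows from it through the interpolation Lemma \ref{interpo}, just as Corollary \ref{energy-est} was deduced from Theorem \ref{BV}; since ${\rm Per}(\,\cdot\,,B_R)$ and $P_s(\,\cdot\,,B_R)$ are lower semicontinuous under $L^1_{\rm loc}$ convergence, (1) passes to $E$. For (3): the uniform density estimates for $C^2$ stable $s$-minimal sets are the classical ones of \cite{CRS}, and they are preserved under $L^1_{\rm loc}$ limits; as usual, this fixes the good representative of $E$ for which topological boundary points are exactly the points of density in $(0,1)$. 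For (5): a dilation of a $C^2$ weakly stable set is again a $C^2$ weakly stable set, so each rescaling $E_{r_i,x_0}$ lies in $\mathcal A'$, and a Cantor diagonal argument identical to the one in Proposition \ref{good}(5) gives $E_*\in\mathcal A'$.

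For (2), let $\bar u_j:=\chi_{E_j}-\chi_{E_j^c}$ and let $\bar U_j$ be its $s$-extension, so that $\Phi_{E_j}(R):=R^{s-n}\int_{\widetilde B_R^+}y^{1-s}|\nabla\bar U_j|^2\,dx\,dy$ is nondecreasing by the monotonicity formula for the $s$-perimeter (\cite{CRS}; the analogue of Proposition \ref{mon}). Using the uniform bounds of (1) together with Lemma \ref{interpo} and the extension inequality of Lemma \ref{up-down}, one upgrades $\bar u_j\to\bar u$ in $L^1_{\rm loc}(\R^n)$ to strong convergence in $W^{s/2,2}_{\rm loc}(\R^n)$, and $\bar U_j\to\bar U$ in $W^{1,2}_{\rm loc}(\R^{n+1}_+,y^{1-s})$ --- this is the compactness argument from Step 1 of the proof of Proposition \ref{conv-L^1}. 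Hence $\Phi_{E_j}\to\Phi_E$ locally uniformly, so $\Phi_E$ is nondecreasing, and the rescaled Pohozaev identity \eqref{wnoiwoieht2} passes to the limit, giving that $\Phi_E$ is constant if and only if $\partial_r\bar U\equiv0$, i.e. $E$ is a cone.

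Finally, for (4) I would invoke the improvement of flatness for smooth stable $s$-minimal surfaces --- the geometric counterpart of Theorem \ref{thm63utbg}, available from the nonlocal improvement-of-flatness theory of \cite{CRS} (and, in the stable setting, in the spirit of \cite{dPSV}): if $\partial E_j\cap B_2\subset\{|x\cdot e_n|\le\tilde\sigma_0\}$, then at all dyadic scales $\partial E_j$ stays trapped between hyperplanes whose normals converge geometrically, so $\partial E_j\cap B_{1/2}$ is a $C^{1,\alpha}$ graph with constants depending only on $n$, $s$ and $\tilde\sigma_0$. Combining this with the Hausdorff convergence $E_j\to E$ furnished by (3), the iterated trapping estimates pass to $E$ exactly as in the proof of Proposition \ref{good}(4), and $\partial E\cap B_{1/2}$ is a $C^{1,\alpha}$ graph. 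The main obstacle, precisely as in Section \ref{sec-7}, is the compactness step in (2): one has to exclude a loss of $s$-perimeter in the limit and secure the strong $W^{s/2,2}_{\rm loc}$ and weighted $W^{1,2}_{\rm loc}$ convergences that make the monotonicity formula and the energy identities pass to $E$. A secondary point requiring attention is ensuring that the improvement-of-flatness input in (4) is indeed available for $C^2$ stable --- as opposed to merely $s$-area-minimizing --- hypersurfaces.
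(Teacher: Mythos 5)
Your overall strategy---reproducing Proposition~\ref{good} with the Allen--Cahn approximants $u_j$ replaced by the $C^2$ weakly stable sets $E_j$ of Definition~\ref{defAprime}---is exactly the paper's strategy, and you correctly single out the two delicate points. The problem is that you merely \emph{flag} them (``the main obstacle'', ``a secondary point requiring attention'') without closing them, whereas in the paper they \emph{are} the proof. For property~(2), the monotonicity formula of \cite[Theorem~8.1]{CRS} is proved there only for \emph{minimizers} of $P_s$, so invoking it for $C^2$ weakly stable $E_j$ as ``the analogue of Proposition~\ref{mon}'' is a genuine gap. The paper's resolution is that the proof of Theorem~8.1 in \cite{CRS} only needs the energy comparison for sufficiently small Lipschitz competitors, and a $C^2$ weakly stable set is strictly stable on each compact subset (the first eigenvalue of the Jacobi operator is strictly monotone in the domain), hence minimizing against small enough smooth---and then Lipschitz---perturbations supported in any fixed ball; only with this observation can one run the \cite{CRS} argument for each $E_j$ and pass to the limit. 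For property~(4), you rightly worry about improvement of flatness being available for stable rather than minimizing hypersurfaces, but then do not supply the reason. The paper's resolution is that \cite[Theorem~6.8]{CRS} applies to any \emph{viscosity} solution of the nonlocal minimal surface equation---minimality is used in \cite{CRS} only in Theorem~5.1 to produce the viscosity property---and a $C^2$ stationary (a fortiori stable) set is such a viscosity solution by the first-variation computation (cf.\ \cite{FFFMM}). Without these two justifications the chain of implications you describe does not go through.

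Two smaller imprecisions. In~(1) you deduce $P_s(E_j,B_R)\le CR^{n-s}$ ``through the interpolation Lemma~\ref{interpo}''; the paper instead mirrors the proof of Corollary~\ref{energy-est}, i.e.\ the elementary pointwise interpolation estimate \eqref{key2} (Lemma~\ref{interpo} controls a different, $W^{1-\sigma,1}$-type quantity and is only used for the density estimate). In~(3) you cite the ``classical'' density estimates of \cite{CRS}, but those are proved for minimizers; the paper re-derives the density estimates for the class $\mathcal A'$ from the $BV$ bound and the stable-set monotonicity formula, following Section~\ref{sec-5}, so as to stay within the stability setting.
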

\begin{proof}

(1) \textbf{$\mathbf{BV}$ and energy estimates}. Since $E_j$ have smooth boundaries, the fact that these sets are weakly stable  easily gives that they are also stable in the sense of  Definition~1.6 in \cite{CSV}. Hence, by Corollary 1.8 in \cite{CSV}, we obtain that ${\rm Per}(E_j,B_R)\leq CR^{n-1}$. Since the perimeter is lower semicontinuous, the same estimate holds by approximation for sets in $\mathcal A'$. Now, this $BV$ estimate leads to the corresponding energy estimate by the exact same argument that we have given for solutions of the fractional Allen-Cahn equation. 

\smallskip
(2) \textbf{Monotonicity formula}.  We claim that if $F\subset \R^n$ is a weakly stable set with $C^2$ boundary, then the quantity 
$$\Phi_F(R)=\frac{1}{ R^{n-s}}\int_{\widetilde B_{R}^+} y^{1-s}|\nabla \bar V(x,y)|^2\,dx\,dy$$
is nondecreasing in $R$,
where $\bar V$ is the $s$-extension of $\bar v :=\chi_F-\chi_{F^c}$. 

This fact follows from the proof of Theorem 8.1 in \cite{CRS}.
Indeed, although Theorem~8.1 in \cite{CRS} is stated for minimizers, its proof only needs that $\bar V$ is a minimizer with respect to sufficiently small Lipschitz perturbations (the size of the perturbations used actually converges to zero). If $\partial E$ is smooth and $E$ is weakly stable, then it is a standard fact that any compact subset of $E$ is strictly stable (since the first eigenvalue of the Jacobi operator is strictly monotone with respect to inclusion of domains). In particular, for any given ball, smooth perturbations of $\partial E$ supported in this ball and with small enough size (depending on the ball) lead to an increased nonlocal perimeter. Since $\partial E$ is smooth, it is easy to see that the same property holds also for Lipchitz perturbations of sufficiently small size. As a consequence, the argument in the proof of Theorem 8.1 in \cite{CRS} (without any modification) also applies to the case of weakly stable sets with smooth boundary.\footnote{We actually expect that ---by a different proof similar to that of Proposition 3.2 in \cite{CC2}--- the monotonicity formula also holds for any critical point of the fractional perimeter with smooth boundary (not necessarily stable). 
The technical details for such proof would become quite more involved and this is why here we treat only the case of stable critical points ---for which the less technical argument in \cite{CRS} can be used.}

Now, with an analogous argument as in the proof of Proposition \ref{good}, if $E$ belongs to $\mathcal A'$, then we can take a sequence of approximating sets $E_j$ with smooth boundary. Since the convergence of $E_j\to E$ in $L^1_{\rm loc}$ gives that  $\bar U_j \to \bar V$   strongly in  $W^{1,2}_{\rm loc}(\R^{n+1}_+, y^{1-s})$,  where  $\bar U_j$  are the $s$-extensions of $\bar u_j :=\chi_{E_j}-\chi_{E_j^c}$, we obtain that  $\Phi_{E}$ must be  monotone since $\Phi_{E_j}$ are.

\smallskip
(3) \textbf{Density estimate}.  The density estimate for sets in the class $\mathcal A'$ follows from the $BV$ estimate with the exact same arguments as those in Section~\ref{sec-5}.

\smallskip
(4) \textbf{Improvement of flatness}. If $F\subset \R^n$ is any stable (or even stationary) set with $C^2$ boundary, the  improvement of flatness result in \cite[Theorem 6.8]{CRS} applies to $F$ (without any change in its proof). This is true because (unlike in the case ``$s=1$'' of classical minimal surfaces) the proof  of \cite[Theorem 6.8]{CRS} applies to any  viscosity solution of the nonlocal minimal surface equation  (in the sense given in \cite[Theorem 5.1]{CRS}).
In \cite{CRS} the minimality assumption is only used to show that the considered surfaces are viscosity solutions of the nonlocal minimal surface equation (this is done in \cite[Theorem 5.1]{CRS}). If one assumes that the boundary of $F$ is smooth then it is easy to see using the computation of the first variation (see for instance \cite{FFFMM}) that $F$ must be a viscosity solution of the nonlocal minimal surface equation. Hence, \cite[Theorem 6.8]{CRS} applies to $F$.

As a consequence (with a similar approximation argument as in the proof of Proposition \ref{good}), we obtain that the improvement of flatness property holds true for sets $E$ in the class $\mathcal A'$.

\smallskip
(5) \textbf{Blow-up}.  The closedness of the class $\mathcal A'$ under blow-up follows by the same argument as in the proof of Proposition \ref{good}.
\end{proof}

\begin{proof} [Proof of Theorem \ref{thmRn}]
Thanks to Proposition \ref{good2}, by the same argument as in the proof of Theorem \ref{thmclas} {(see Remark \ref{thesame})}, we may reduce the classification in $\R^n$ of stable $s$-minimal sets in the class $\mathcal A'$  to the classification of stable  $s$-minimal cones (smooth away from $0$)  in dimensions $3\le m\le n$. 
\end{proof}

\begin{proof} [Proof of Corollary \ref{corR3}]
It  follows from  Theorem \ref{thmRn} analogously as in the proof Proof of Corollary \ref{corclas2}.
\end{proof}

\section{On the control of the potential energy by the Sobolev energy}\label{app-B}

In this section we give a short proof of a weaker version of the estimate in Proposition \ref{DircontrolsPot2}. It is a weaker estimate since it has an additional additive term on its right hand side.

\begin{prop}\label{DircontrolsPot1}
Given $s_0>0$, let $n\ge 2$, $s\in(s_0,2]$,  $W(u)=\frac 1 4 (1-u^2)^2$, and $K$ satisfy \eqref{L0} and \eqref{L2}. Let $u:\R^n\rightarrow (-1,1)$ be a stable solution of   $L_Ku + W'(u)=0$ in $\R^n$  $($meaning $-a_{ij}\partial_{ij} u + W'(u)=0$ when $s=2$$)$.

 Then, 
\begin{equation}\label{estDirPot1} 
\mathcal E^{\rm Pot}_{B_{R}}(u) \le 
\begin{cases}
 \vspace{2mm}
 C\big(\mathcal E^{\rm Sob}_{B_{R+1}}(u) + R^{n-s}\big) \quad& \mbox{if }s\in (0,1) \\
 \vspace{2mm}
 C\big(\mathcal E^{\rm Sob}_{B_{R+1}}(u) + R^{n-1}\log R\big) \quad& \mbox{if }s=1\\
  C\big(\mathcal E^{\rm Sob}_{B_{R+1}}(u) + R^{n-1}\big) \quad& \mbox{if }s\in (1,2],\\  
 \end{cases}
\end{equation}
for all $R\ge 1$, where $C$ is a constant which depends only on $n$, $\lambda$, $\Lambda$, and $s_0$.
\end{prop}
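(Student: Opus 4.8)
The plan is to test the stability inequality \eqref{stable} against a carefully chosen test function that detects the transition region. The natural candidate is $\xi = \eta\, g(u)$, where $\eta$ is a cutoff equal to $1$ on $B_R$, supported in $B_{R+1}$, with $|\nabla\eta|\le 2$, and where $g$ is a bounded function of the solution to be selected so that the zero-order term $W''(u)\xi^2$ controls the potential. A convenient choice is $g$ with $g(\pm 1)=0$, $g>0$ in $(-1,1)$, and $g'' \cdot g \le -c\, W$ near the wells, which is possible since $W''(\pm1)>0$; for instance $g(u)$ a smooth function behaving like $(1-u^2)^{1/2}$. The point is that plugging $\xi = \eta g(u)$ into \eqref{stable} produces, on the left, the Sobolev seminorm of $\eta g(u)$ plus $\int W''(u)\eta^2 g(u)^2$, and the latter is $\le -c\int \eta^2 W(u)$ up to controlled errors coming from the region where $u$ is away from $\pm1$ (which has bounded measure per unit volume and contributes an $\mathcal E^{\rm Sob}$ term by the pointwise gradient bound of Appendix~C).

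The key steps in order: (i) record the pointwise identity $L_K(g(u)) = g'(u) L_K u - \text{(a remainder)}$, where for $s=2$ the remainder is $g''(u)|\nabla u|^2$ and for $s<2$ it is the nonlocal analogue, nonnegative when $g$ is concave along $u$, i.e. an integral of $g''$-type terms; this lets us relate the Dirichlet form of $g(u)$ to that of $u$. (ii) Estimate the Sobolev seminorm $\iint |\xi(x)-\xi(\bar x)|^2 K$ by splitting $\xi(x)-\xi(\bar x) = \eta(x)(g(u(x))-g(u(\bar x))) + g(u(\bar x))(\eta(x)-\eta(\bar x))$; the first piece is bounded by $C\,\mathcal E^{\rm Sob}_{B_{R+1}}(u)$ using $|g'|\le C$ and $g(\pm1)=0$ (so that the tails where $u\approx\pm1$ are harmless), while the second piece, involving $\nabla\eta$ supported in the annulus $B_{R+1}\setminus B_R$, is where the extra additive error is generated: it costs $\iint_{\text{annulus}\times\R^n} |\eta(x)-\eta(\bar x)|^2 K$, which after integrating the kernel gives $CR^{n-s}$ for $s<1$, $CR^{n-1}\log R$ for $s=1$, and $CR^{n-1}$ for $s>1$ — precisely the three cases in \eqref{estDirPot1}. (iii) Combine with $-\int W''(u)\xi^2 \ge c\int_{B_R} W(u) - C\mathcal E^{\rm Sob}_{B_{R+1}}(u)$, using \eqref{WWW} on the well regions and the gradient bound plus a covering of $\{|u|\le 1-c_0\}$ to handle the transition region, whose potential integral is itself $\le C\mathcal E^{\rm Sob}$ on $B_{R+1}$ (this is where, in the $s<1$ case, one may alternatively invoke Lemma~\ref{tuttookintype2}, but a cruder bound suffices here). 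Rearranging the stability inequality then yields \eqref{estDirPot1}.

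The main obstacle I expect is step (i)--(ii) in the genuinely nonlocal case $s<2$: controlling the Dirichlet form of the composition $g(u)$ by that of $u$ requires more than the chain rule, since $g$ is only Lipschitz-with-concavity and one must handle the cross terms carefully, and one also needs the kernel estimates \eqref{L0}–\eqref{L2} to pass between $K$ and $|z|^{-n-s}$. A clean way around this is to note $|g(u(x))-g(u(\bar x))| \le \|g'\|_\infty |u(x)-u(\bar x)|$ directly, which bounds the first splitting piece by a multiple of $\mathcal E^{\rm Sob}_{B_{R+1}}(u)$ without needing any concavity — concavity of $g$ along $u$ is then only used for the sign in the zero-order term, not for the Dirichlet part. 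The secondary subtlety is the annulus kernel computation giving the three-case dichotomy; this is elementary but must be done with the factor $(2-s)$ from \eqref{L0} tracked so that the constant stays bounded as $s\uparrow 2$, and the logarithmic borderline case $s=1$ handled separately. The case $s=2$ is entirely classical: $\xi=\eta g(u)$, integrate by parts, and the annulus term is simply $\int_{\text{annulus}}|\nabla\eta|^2 g(u)^2 \le CR^{n-1}$.
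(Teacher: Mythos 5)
Your overall strategy (a cutoff times a function of $u$ as test function in the stability inequality, with the annulus contribution producing the three-case additive error) is in the same spirit as the paper's proof, and your computation of the kernel integral over the annulus is correct. But the argument as written has two genuine gaps, one of which is fatal.

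\textbf{The fatal gap: stability alone cannot close the estimate.} You want $W''(u)\,g(u)^2 \le -c\,W(u)$, at least in the regions that matter, in order to extract $\int_{B_R} W(u)$ from the zero-order term. But $W''(u)=3u^2-1$ is \emph{positive} near the wells $u=\pm1$, and $W(u)>0$ there too, so for \emph{any} $g\ge 0$ the inequality $W''(u)g(u)^2\le -cW(u)$ fails near the wells. (Your displayed condition ``$g''\cdot g\le -cW$ near the wells'' does hold for $g=\sqrt{1-u^2}$ in a pointwise sense, but $g''g$ is not the quantity that appears in the stability functional; the relevant quantity is $W''(u)g(u)^2$, and that has the wrong sign.) Concretely, with $g^2=1-u^2$ one has $-W''(u)g(u)^2 = 4W(u)-2u^2(1-u^2)$, so stability yields
\[
4\int_{B_R}W(u)\,\eta^2 \le \tfrac12\iint |\xi(x)-\xi(\bar x)|^2K\,dx\,d\bar x + 2\int u^2(1-u^2)\,\eta^2,
\]
and the last term is \emph{not} controlled by $\mathcal E^{\rm Sob}$ on a priori grounds (near $u=1$, $u^2(1-u^2)\sim 2(1-u)$ dominates $W\sim(1-u)^2$). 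Exactly this residual term is what the paper controls by a \emph{separate} argument that uses the \emph{equation} $L_Ku+W'(u)=0$: multiply the equation by $u\eta^2$, integrate by parts, and use the gradient bound to get $\int_{B_R} u^2(1-u^2)\le\int_{B_{R+1}}|\nabla u|^2+CR^{n-1}$ (this is the paper's $I_R$ step). Your proposal never invokes the equation in this way, and without it the well-region contribution cannot be absorbed.

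\textbf{The secondary gap: the choice $g\sim(1-u^2)^{1/2}$ has unbounded derivative.} You claim the Sobolev piece $\eta(x)\bigl(g(u(x))-g(u(\bar x))\bigr)$ is controlled by $\|g'\|_\infty^2\,\mathcal E^{\rm Sob}_{B_{R+1}}(u)$, but $g'(u)=-u/\sqrt{1-u^2}$ is unbounded as $u\to\pm1$, so $\|g'\|_\infty=\infty$ and this bound fails; equivalently, in the local case $|\nabla(g(u))|^2=\frac{u^2}{1-u^2}|\nabla u|^2$ is not controlled by $|\nabla u|^2$. The paper sidesteps both problems at once by taking $g(u)=1-u^2$ (full power): then $g'=-2u$ is bounded, so the Dirichlet part of $\xi$ is harmless, and the residual $u^2(1-u^2)^2$ in $-W''g^2=4W-3u^2(1-u^2)^2$ has the right structure to be absorbed by the equation-based $I_R$ estimate. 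With those two modifications your scheme becomes essentially the paper's proof (Appendix~B), which combines stability with $\xi=(1-u^2)\eta$ and a Caccioppoli-type identity from the equation, so the two ingredients that are missing in your write-up are precisely the ones that make it work.
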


For brevity we give the proof only for the archetypal case $- \Delta u + W'(u) =0$, where $W(u) = \frac 1 4 (1-u^2)^2$. 
The same proof can be modified, with not too much effort, to cover also the operators of fractional order, as well as more general double well potentials as in Remark \ref{otherW}.

\begin{proof}[Proof of Proposition \ref{DircontrolsPot1} in the particular case $- \Delta u -(u-u^3) =0$]
Integrating by parts and using that $|\nabla u| \le C$ in $\R^n$ for some dimensional constant $C$, we obtain
\begin{eqnarray*}
I_R & := &\int_{B_R} u^2(1-u^2)^2\,dx \le \int_{B_R} u^2(1-u^2)\,dx = \int_{B_R} u(-\Delta u)\,dx \\
&\le & \int_{B_R} |\nabla u|^2\,dx + CR^{n-1}.
\end{eqnarray*}

Also, letting $\eta_R = 1-(|x|-R)_+$ (note that $\eta_R= 0$ on $\partial B_{R+1}$) and testing the stability inequality  $\int  (1-3u^2) \xi^2\,dx \le \int |\nabla \xi|^2\,dx$ in $B_{R+1}$ with the function $\xi = (1-u^2)\eta_R$, we obtain
\[
\begin{split}
J_R :& = \int_{B_R} (1-u^2)^3\,dx \le \int_{B_{R+1}} (1-3u^2 +2u^2) \big((1-u^2)\eta_R\big)^2 \,dx
\\
&\le\int_{B_{R+1}} \big|\nabla \big((1-u^2)\eta_R\big) \big|^2\,dx + 2\int_{B_{R+1}}u^2(1-u^2)^2\eta_R^2\,dx
\\
&\le \int_{B_{R+1}} 4u^2 |\nabla u|^2\,dx + C |B_{R+1}\setminus B_R|  + 2 I_{R+1} \le 6\int_{B_{R+1}}  |\nabla u|^2\,dx + CR^{n-1}.
\end{split}
\]

Therefore,
\[
\int_{B_R} (1-u^2)^2\,dx  = \int_{B_R} (1-u^2)^2(u^2 +1-u^2)\,dx = I_R + J_R  \le 7\int_{B_{R+1}} |\nabla u|^2\,dx  + CR^{n-1},
\]
as claimed.
\end{proof}

\section{Regularity of solutions}\label{app-C} 

In this appendix, for the reader's convenience, we prove a local smoothness result for  solutions of  semilinear equations involving the fractional Laplacian. Such result is well-know to experts but it is not easy to find in a clean form in the existing literature.  Our goal is to show that every bounded {\em distributional solution}\footnote{We say that a measurable function  $u:\R^n \to \R$   is a distributional of $L_K u =g$ in a domain $\Omega\subset\R^n$  if $\int (uL_K \xi - g\xi)dx=0$ for all $\xi \in C^\infty_c(\Omega)$.} of the fractional semilinear equation \eqref{equK}, for kernels in the class $\mathcal L_2$, satisfies interior $C^{2,\alpha}$ estimates in compact subsets of $\Omega$. As we see next, this is a consequence of known regularity results for linear equations. 
When $\Omega=\R^n$, the situation is simpler than in the following arguments and one can conclude via a bootstrap argument that $u\in C^2(\R^n)$ even for kernels  $K$ in the class $\mathcal L_0$, i.e., kernels satisfying only \eqref{L0}. This is because the equation is posed in all space and one can differentiate it without introducing errors that come from rough exterior data  (in particular the truncation arguments  given in the proof of Propostion \ref{wjhtiowhoipthwio} are not needed).

We will prove the following.

\begin{prop}\label{wjhtiowhoipthwio} 
Let $s_0\in (0,1)$. Assume that  $u: \R^n \to \R$ be a bounded function which solves $L_K u = f(u)$ in $B_1$ in the sense of distributions, with $f\in C^2$ and $K$ satisfying \eqref{L0} and \eqref{L2} for some positive constants  $\lambda$ and $\Lambda$ and for some $s\in (s_0,1)$. 

Then, 
\begin{equation}\label{gwiohioweh}
\| u\|_{C^{2,\alpha} (B_{1/2})} \le C(n, s_0, \lambda, \Lambda,  f, \|u\|_{L^\infty(\R^n)}),
\end{equation}
where $\alpha = \alpha(n, s_0, \lambda, \Lambda)$ is a positive constant. 
\end{prop}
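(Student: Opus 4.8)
The plan is to derive \eqref{gwiohioweh} by a bootstrap on linear estimates for the nonlocal operator $L_K$, treating the semilinear term $f(u)$ successively as a more and more regular right-hand side. I would begin by recording that since $u\in L^\infty(\R^n)$ and $f\in C^2$, the function $g:=f(u)$ is bounded in $B_1$, but a priori has no smoothness, so the strategy is to upgrade the regularity of $u$ (hence of $g=f(u)$) step by step. The first obstacle, and the reason a naive application of interior Schauder estimates for $L_K$ fails, is that such estimates require control of $u$ in all of $\R^n$ (because $L_K$ is nonlocal), and $u$ is merely bounded outside $B_1$ --- the ``rough exterior data'' mentioned in the appendix. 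The fix, which I would implement carefully, is a truncation: fix a sequence of nested balls $B_{1/2}\subset B_{r_2}\subset\cdots\subset B_{r_k}\subset B_1$ and, at each step, write the equation in the inner ball splitting $L_Ku = L_K(u\chi_{B_{r_j}}) + \int_{\R^n\setminus B_{r_j}}(u(x)-u(\bar x))K(x-\bar x)\,d\bar x$; the ``tail'' term is smooth in the strictly smaller ball $B_{r_{j+1}}$ (one can differentiate under the integral since $K$ is $C^2$ away from $0$ by \eqref{L2} and $u\in L^\infty$), with estimates depending only on $n$, $s_0$, $\lambda$, $\Lambda$ and $\|u\|_{L^\infty(\R^n)}$.

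Concretely, the first step is to show $u\in C^\beta_{\rm loc}(B_1)$ for some $\beta>0$: this follows from the interior Hölder estimate for distributional solutions of $L_K v = h$ with $h\in L^\infty$ and $K\in\mathcal L_2$ --- a De Giorgi--Nash--Moser type result available in the literature (e.g. Caffarelli--Silvestre, or Cozzi, or the parabolic versions therein) --- applied to $v=u$, $h=g$, after the truncation above. Once $u\in C^\beta_{\rm loc}$, we get $g=f(u)\in C^\beta_{\rm loc}$ (since $f\in C^2\subset C^{0,1}_{\rm loc}$), and the interior Schauder estimate for $L_K$ with $C^2$ kernel (again available for kernels in $\mathcal L_2$ satisfying \eqref{L2}, see Caffarelli--Silvestre or Jin--Xiong, Ros-Oton--Serra) gives $u\in C^{s+\beta}_{\rm loc}(B_1)$, provided $s+\beta$ is not an integer --- one can always decrease $\beta$ slightly to avoid this. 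Iterating: from $u\in C^{s+\beta}_{\rm loc}$ we get $g=f(u)\in C^{\min(s+\beta,1)+\cdots}$ --- more precisely, since $f\in C^2$, composition preserves $C^{k,\gamma}$ regularity for $k\le 2$, so $g\in C^{\gamma'}_{\rm loc}$ with $\gamma' = \min(s+\beta, 2)$ at this stage --- and then $u\in C^{s+\gamma'}_{\rm loc}$. After finitely many steps the exponent exceeds $2$, and at that point one more application of Schauder gives $u\in C^{2,\alpha}(B_{1/2})$ with $\alpha = \alpha(n,s_0,\lambda,\Lambda)>0$ (chosen so that $2+\alpha$ is the relevant non-integer target, using that $f\in C^2$ caps the gain from the composition at the level of $C^2$ data).

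The step I expect to be the main technical obstacle is the bookkeeping around the truncation and the non-integer exponents: one must verify at each iteration that the tail term is genuinely smooth on the shrinking ball with constants under control (this uses \eqref{L2} crucially, together with $\|u\|_{L^\infty(\R^n)}$), and that the Schauder gain $v\in C^{\sigma}\Rightarrow u\in C^{\sigma+s}$ is applied only when $\sigma+s\notin\Z$, adjusting $\beta$ downward finitely often. A minor additional point is that $f\in C^2$ (not $C^\infty$) limits the regularity of the composition $f(u)$: once $u$ reaches $C^{1,\gamma}$, $f(u)$ is $C^{1,\gamma}$; once $u$ reaches $C^{2,\gamma}$, $f(u)$ is only $C^{1,\gamma'}$ in general (the chain rule produces $f'(u)\nabla u$, and $f''(u)$ times products), which is exactly why the final estimate stops at $C^{2,\alpha}$ and not higher --- this is consistent with the statement. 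All constants depend only on the quantities listed in \eqref{gwiohioweh} since each linear estimate invoked has that dependence and the number of iteration steps is bounded in terms of $s_0$.
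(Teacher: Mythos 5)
Your proposal follows essentially the same blueprint as the paper: establish an interior $C^\alpha$ estimate for the linear problem, truncate to neutralize the rough exterior data (using that $K$ is $C^2$ away from the origin, i.e.\ \eqref{L2}, so the tails are smooth), and bootstrap through $f(u)$, with the final cap at $C^{2,\alpha}$ coming exactly from $f\in C^2$. There are two implementation differences worth flagging. First, the paper does not invoke a full Schauder theorem: it only uses the basic $L^\infty$-to-$C^\alpha$ estimate \eqref{hiwuguw1} (Caffarelli--Silvestre) and then gains regularity by applying that same estimate to incremental quotients of the truncated function $\bar u = u\eta$, improving by $\alpha$ per step over $N\sim 1/\alpha$ steps; your version gains $s$ per step by citing a $C^\beta\Rightarrow C^{s+\beta}$ Schauder result. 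Both routes close, and yours is arguably more efficient, but the paper's version is slightly more self-contained. Second, the paper uses a \emph{smooth} cutoff $\eta$ and cites Serra's Corollary 1.2 to quantify how $L_K(v\eta)$ differs from $L_K v$; you propose a sharp cutoff $u\chi_{B_{r_j}}$, and the decomposition you wrote, $L_Ku = L_K(u\chi_{B_{r_j}}) + \int_{\R^n\setminus B_{r_j}}(u(x)-u(\bar x))K(x-\bar x)\,d\bar x$, is not an identity. For $x$ interior to $B_{r_j}$ the correct relation is $L_Ku(x) = L_K(u\chi_{B_{r_j}})(x) - \int_{\R^n\setminus B_{r_j}} u(\bar x)K(x-\bar x)\,d\bar x$, whose second term is indeed $C^\infty$ in a strictly smaller ball with bounds controlled by $\|u\|_{L^\infty(\R^n)}$. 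This algebraic slip is easily repaired and does not affect the strategy; in fact, if one cites the Schauder estimate in its localized form (with $\|v\|_{L^\infty(\R^n)}$ plus $\|L_Kv\|_{C^\beta(B_{2r})}$ on the right), the truncation becomes unnecessary and the bootstrap can be run directly on $u$.
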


\begin{proof}
Let us show first that if $v$ is a distributional solution  of $L_K v =g$ in $B_{2r}(x_0)$   belonging to $L^\infty(\R^n)$, then it satisfies 
\begin{equation}\label{hiwuguw1}
r^\alpha [v]_{C^\alpha(B_r(x_0))} \le C(n, s_0, \lambda, \Lambda) \big( \|v\|_{L^\infty(\R^n)} + r^{s}\|g\|_{L^\infty(B_{2r}(x_0))}\big).
\end{equation} 
Indeed,  since $L_K$ is translation invariant we can apply the $C^\alpha$ estimate for solutions to integro-differential equations  in  \cite{CS-reg}  to  $v\ast \phi^\ep$ and $g\ast\phi^\ep$, where $\phi^\ep$ is a smooth mollifier and then send $\ep\to 0$.

Second,  if $0<r_1< r_2<r_3$, $B_{r_3}(x_0)\subset B_1$, and $\eta\in C^\infty_c(B_{r_3}(x_0))$, $0\le \eta\le 1$ is some radial cutoff satisfying $\eta\equiv 1$ in $B_{r_2}(x_0)$,
then thanks to the smoothness of the tails of the kernels assumed in \eqref{L2} we have ---see the proof of  Corollary 1.2 in \cite{Ser} for more details---  
\begin{equation}\label{hiwuguw2}
\begin{split}
\| L_K (v\eta) \|_{C^\beta(B_{r_1}(x_0))}  &\le C\big(\|L_Kv\|_{C^\beta(B_{r_3}(x_0))} + \|v \|_{L^\infty (\R^n)}\big)\quad \mbox{and}
\\
\quad \|v\eta\|_{C^\beta(\R^n)} &\le C\|v\|_{C^\beta(B_{r_3}(x_0))} 
\end{split} 
\end{equation} 
 for all $\beta\le 2$, where $C$ depends only on $n$, $\Lambda$, and $r_i$. 
 
Finally, we show that using  \eqref{hiwuguw1}-\eqref{hiwuguw2} we can adapt the standard local  bootstrap argument for semilinear equations to make it work on our nonlocal equation $L_K u = f(u)$ in $B_1$. 
Observe first that since $u\in L^\infty(\R^n)$, applying \eqref{hiwuguw1} to $v=u$ we obtain (up to a scaling and covering argument) $\|u\|_{C^\alpha(B_{1-\varrho})} \le C_1$, where $\varrho>0$.
Our next goal will be to show that, whenever $k\alpha\le 2$, the following implication holds
\begin{equation}\label{wiothewiothw}
 \|u\|_{C^{k\alpha} (B_{1-k\varrho})} \le C_k \quad \Rightarrow\quad  \|u\|_{C^{(k+1)\alpha} (B_{1-(k+1)\varrho})} \le C_{k+1}.
 \end{equation}
 Here, the constants $C_k$ depend only on $n$, $s_0$, $\lambda$, $\Lambda$, $ f$, $\|u\|_{L^\infty(\R^n)}$, and $\varrho>0$.
 
Indeed, let $r_1 := 1-(k+1/2)\varrho$, $r_2 := 1-(k+1/4)\varrho$, and $r_3:=1-k\varrho$ and choose the cut-off $\eta$ as above.  Define $\bar u := u\eta$. Thanks to the assumption in \eqref{wiothewiothw}, and using $f\in C^2$,  
$Lu = f(u)$, and $u \in C^{k\alpha} (B_{r_3})$  we obtain  $f(u)\in C^{k\alpha} (B_{r_3})$. Hence using \eqref{hiwuguw2} we find
we obtain 
\[
\| L_K \bar u \|_{C^\beta(B_{r_1})}  + \|\bar u \|_{C^\beta(\R^n)} \le CC_{k},\quad \mbox{where } \beta:= \alpha k \le 2.
\]

By the $C^\alpha$ estimate in \eqref{hiwuguw1}, used with  $v$ replaced by the incremental quotients (or incremental quotients of derivatives) of order $\beta= \alpha k \le 2$  of  $\bar u$, we obtain $ \|u\|_{C^{(k+1)\alpha} (B_{1-(k+1)\varrho})} =  \|\bar u\|_{C^{(k+1)\alpha} (B_{1-(k+1)\varrho})} \le C_{k+1}$, proving \eqref{wiothewiothw}. Hence after  $N:= 1/\alpha+1$ iterations (taking $\varrho = \frac{1}{2N}$) we obtain   \eqref{gwiohioweh}.
\end{proof}

\end{document}